\newtheorem{prop}{Proposition}[section]
\newtheorem{theorem}[prop]{Theorem}
\newtheorem{corollary}[prop]{Corollary}
\newtheorem{lemma}[prop]{Lemma}
\newtheorem{definition}[prop]{Definition}
\newtheorem{remark}[prop]{Remark}
\newtheorem{eg}[prop]{Example}
\newcommand{\0}{\mathbf{0}}
\newcommand{\1}{\mathbf{1}}
\newcommand{\A}{\mathcal{A}}
\newcommand{\B}{\mathcal{B}}
\newcommand{\cC}{\mathcal{C}}
\newcommand{\C}{\mathbb{C}}
\newcommand{\Cx}{\C\langle x_1,\ldots,x_d\rangle}
\newcommand{\D}{\mathcal{D}}
\newcommand{\cF}{\mathcal{F}}
\newcommand{\cL}{\mathcal{L}}
\newcommand{\M}{\mathcal{M}}
\newcommand{\bN}{\mathbb{N}}
\newcommand{\cN}{\mathcal{N}}
\newcommand{\cP}{\mathcal{P}}
\newcommand{\R}{\mathbb{R}}
\newcommand{\cR}{\mathcal{R}}
\newcommand{\K}{\mathcal{K}}
\newcommand{\U}{\mathcal{U}}
\newcommand{\X}{\mathcal{X}}
\newcommand{\Z}{\mathbb{Z}}
\newcommand{\diag}{\mathrm{diag}}
\newcommand{\dom}{\mathrm{dom}}
\newcommand{\id}{\mathrm{id}}
\newcommand{\im}{\mathrm{im}}
\newcommand{\lcm}{\mathrm{lcm}}
\newcommand{\Null}{\mathrm{Nullity}}
\newcommand{\rk}{\mathrm{rk}}
\newcommand{\rank}{\mathrm{rank}}
\newcommand{\tr}{\mathrm{tr}}
\newcommand{\Tr}{\mathrm{Tr}}
\newcommand{\diff}{\mathrm{d}}
\newcommand{\Subs}{\mathrm{Subs}}
\def\moverlay{\mathpalette\mov@rlay}
\def\mov@rlay#1#2{\leavevmode\vtop{%
\baselineskip\z@skip \lineskiplimit-\maxdimen
\ialign{\hfil$#1##$\hfil\cr#2\crcr}}}
\def\plangle{\moverlay{(\cr<}}
\def\prangle{\moverlay{)\cr>}}
\title[Universality of free random variables]{Universality of free random variables:\\atoms for non-commutative rational functions}
\author[O. Arizmendi]{Octavio Arizmendi}
\thanks{O.A. was supported by Conacyt grant A1- S-9764}
\address{Department of Probability and Statistics. Centro de Investigaci\'{o}n en Matem\'{a}ticas, Guanajuato, M\'{e}xico.}
\email{octavius@cimat.mx}
\author[G. C\'ebron]{Guillaume C\'ebron}\thanks{G.C. and S.Y. were partly supported by the Project MESA (ANR-18-CE40-006) and by the Project STARS (ANR-20-CE40-0008) of the French National Research Agency (ANR)}
\address{Institut de Mathématiques de Toulouse; UMR5219; Université de Toulouse; CNRS; UPS, F-31062 Toulouse, France}
\email{guillaume.cebron@math.univ-toulouse.fr}
\author[R. Speicher]{Roland Speicher}
\thanks{O.A. and R.S. gratefully acknowledge financial support by the SFB-TRR 195 “Symbolic Tools in Mathematics and
their Application” of the German Research Foundation (DFG)}
\address{Saarland University, Department of Mathematics, 66041 Saarbr\"ucken, Germany}
\email{speicher@math.uni-sb.de}
\author[S. Yin]{Sheng Yin}
\thanks{S.Y acknowledges with appreciation the support by the National Natural Science Foundation of China (Grant No. 12031004) for his stay at Harbin.}
\address{Research Institute for Mathematical Sciences, Kyoto University, Kyoto 606-8502, Japan}
\email{yin@math.uni-sb.de}
\thanks{We thank Serban Belinschi, Hari Bercovici,  Marek Bo\.zejko, Beno\^it Collins, Jorge Garza-Vargas, and Dan-Virgil Voiculescu for constructive feedback on an earlier version of this work.}
\begin{document}

\maketitle

\begin{abstract}
Consider a tuple $(Y_1,\dots,Y_d)$ of normal operators in a tracial operator algebra setting with prescribed sizes of the eigenspaces for each $Y_i$. We address the question what one can say about the sizes of the eigenspaces for any non-commutative polynomial $P(Y_1,\dots,Y_d)$ in those operators?
We show that for each polynomial $P$ there are unavoidable eigenspaces, which occur in $P(Y_1,\dots,Y_d)$ for any $(Y_1,\dots,Y_d)$ with the prescribed eigenspaces for the marginals.
We will describe this minimal situation both in algebraic terms - where it is given by realizations via matrices over the free skew field and via rank calculations - and in analytic terms - where it is given by freely independent random variables with prescribed atoms in their distributions. 
The fact that the latter situation corresponds to this minimal situation allows to draw many new conclusions about atoms in polynomials of free variables. In particular, we give a complete description of atoms in the free commutator and the free anti-commutator. Furthermore, our results do not only apply to polynomials, but much more general also to non-commutative rational functions. Since many random matrix models become asymptotically free in the large $N$ limit, our results allow us to calculate the location and size of atoms in the asymptotic eigenvalue distribution of polynomials and rational functions in randomly rotated matrices.
\end{abstract}

\section{Introduction}

What can we say about the distribution of a non-commutative selfadjoint polynomial $P$ applied to selfadjoint operators $Y_1,\dots,Y_d$, if we fix the marginal distributions for all the $Y_i$? Of course, there can be many possibilities for those distributions of $P(Y_1,\dots,Y_d)$, depending on the actual choice of $(Y_1,\dots,Y_d)$; but it seems that there are some generic features (depending on $P$ and on the marginal distributions) which are common to all realizations. Progress in free probability theory in the last years supports the idea that actually the specific case where we choose all the variables to be freely independent realizes those generic features.

At the moment, questions addressing densities and their smoothness properties of the distributions are out of reach; however, for dealing with atoms there has been quite some progress in the last couple of years. In particular, it was shown that in the case of free variables $X_1,\dots,X_d$ atoms cannot be created via polynomials; i.e., if each $X_i$ has no atom in its distribution, then for non-constant polynomials $P$, $P(X_1,\dots,X_d)$ also has no atoms. 
We will now push all this much further to the general situation where we allow arbitrary atoms for the individual variables. In such a situation the distribution of $P(Y_1,\dots,Y_d)$ will in general also have atoms. Depending on $P$ and on the atoms for the marginal distributions, there will actually be unavoidable locations and sizes for atoms in the evaluations:  for any operator tuple
$(Y_1,\dots,Y_d)$ with the prescribed atoms for the marginals, the distribution of $P(Y_1,\dots,Y_d)$ will have atoms at those unavoidable locations with the mass of such an atom being at least of  the unavoidable size. One of our main results is that in the case of free variables $X_1,\dots,X_d$, the atoms of $P(X_1,\dots,X_d)$ are exactly at those unavoidable locations with masses given exactly by the unavoidable sizes. We emphasize that the unavoidable locations and sizes do not depend on any information about the non-atomic parts in the marginal distributions. 

One should note that many random matrix ensembles become asymptotically free in the large $N$ limit (for example, independent ensembles whose eigenspaces are rotated randomly against each other). Thus our results say that, in the asymptotic regime, polynomials in randomly rotated matrices have eigenspaces of minimal size; and for many situations we are actually able to calculate the size of those eigenspaces explicitly.

\subsection{Main results}
Let us now present the precise statements of our main results. For undefined notions the reader is referred to Section \ref{sect:preliminaries}. In particular, $\mu_X$ denotes the analytic distribution of the selfadjoint (or more general, normal) operator $X$ (see Def.~\ref{def:anal-distr}). The atomic part of this is denoted by $\mu^p_X$ (see Def. \ref{def:point spectrum}).
Our results do not only hold for non-commutative polynomials, but much more general also for non-commutative rational expressions in the variables. The basics about non-commutative rational expressions and functions will be recalled in Subsection \ref{subsect:free field}.
Note that our input operators are usually normal and bounded, but we do not require this for the outcome after evaluating non-commutative polynomials or rational functions in them. In the general case, the size of an ``atom'' is replaced by the general notion of the rank of an operator; this is defined in Def.~\ref{def:rank}.
For a normal operator $Y$ and a complex number $\lambda$, those concepts are related by
$\mu_Y(\{\lambda\})=\mu_Y^p(\{\lambda\})=1-\rank(\lambda-Y)$. We will use the notation $\mu_Y^p$ also in the general, not necessarily normal, case.

\begin{theorem}\label{thm:atomic}
Let $X_1,\dots,X_d$ and $Y_1,\dots,Y_d$  be normal variables in tracial $W^*$-probability spaces with $X_1,\dots,X_d$ being $*$-free and  such that, for each $1\leq i \leq d$, we have
$$\mu^p_{X_i}\leq\mu^p_{Y_i}.$$
Then, for each rational expression $R$ in $d$ non-commuting variables, $R(X_1,\dots,X_d)$ is well-defined whenever $R(Y_1,\dots,Y_d)$ is well-defined, and we have
$$\rank(R(X_1,\dots,X_d))\geq \rank(R(Y_1,\dots,Y_d));$$
in other words, for each such rational expression $R$ in $d$ non-commuting variables,
$$\mu^p_{R(X_1,\dots,X_d)}\leq  \mu^p_{R(Y_1,\dots,Y_d)}.$$
\end{theorem}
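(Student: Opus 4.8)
The plan is to reduce the statement about a general rational expression $R$ to a statement about affine-linear matrix pencils, and then to identify the rank at free variables with an algebraic (inner-rank) invariant that manifestly dominates every other evaluation.

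\textbf{Step 1: Linearization.} First I would invoke the realization theory for non-commutative rational functions recalled later in the paper: every rational expression $R$ in $d$ variables admits a linearization, i.e. a matrix $L=L(x_1,\dots,x_d)$ whose entries have degree $\le 1$, together with the auxiliary data assembling $R$ as a Schur complement of $L$, so that for every tuple $T=(T_1,\dots,T_d)$: (i) $R(T)$ is well-defined if and only if a prescribed square block of $L(T)$ is invertible, and (ii) in that case $\rank(R(T))$ is obtained from $\rank(L(T))$ by a formula that does not depend on $T$ (of the shape $\rank(R(T))=\rank(L(T))-\text{const}$ after the usual normalization of $\rank$). Hence it suffices to prove, for every affine-linear matrix $L$ over $\Cx$, that $\rank(L(X_1,\dots,X_d))\ge\rank(L(Y_1,\dots,Y_d))$; the well-definedness claim then follows automatically, since if the relevant block of $L(Y)$ is invertible the rank inequality forces the corresponding block of $L(X)$ to have full rank as well.

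\textbf{Step 2: The rank at free variables as an inner rank.} Write $L(T)=A_0\otimes 1+\sum_{i=1}^d A_i\otimes T_i$ with scalar matrices $A_i$. The core of the argument is to show that $\rank(L(X_1,\dots,X_d))$ depends only on $\mu^p_{X_1},\dots,\mu^p_{X_d}$ and equals an algebraic invariant $\rho(L)$, namely the (normalized) inner rank of $L$ over the universal division ring of fractions attached to the free product of the marginal algebras carrying these atoms. Two ingredients enter. (a) The non-atomic parts are irrelevant: realizing $X_i=u_i^*Z_i u_i$ with $u_1,\dots,u_d$ free Haar unitaries, free from fixed normal $Z_i$ having distribution $\mu_{X_i}$, one argues that a diffuse summand of $Z_i$ behaves like an element free from everything and subject to no algebraic relations, so it can neither lower the rank nor raise it past the value forced by the atoms alone. (b) The free product representation is faithful on the rational closure — an Atiyah-type statement extending, to the atomic setting, the absence-of-atoms results quoted in the introduction — which upgrades the trivial bound $\rank(L(X))\le\rho(L)$ to the equality $\rank(L(X))=\rho(L)$.

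\textbf{Step 3: Domination of every evaluation, and the obstacle.} For an arbitrary normal tuple $Y$ with $\mu^p_{X_i}\le\mu^p_{Y_i}$, the tracial von Neumann algebra generated by the $Y_i$ receives from the universal algebra defining $\rho(L)$ a morphism compatible with $L$, because the atoms of $Y_i$ realize, with at least the prescribed masses, the relations built into that universal object; since inner rank does not increase under a ring homomorphism and agrees with the usual rank after passing to the trace-completion, $\rank(L(Y))\le\rho(L)$. Combined with Step 2 this gives $\rank(L(Y))\le\rho(L)=\rank(L(X))$, and with Step 1 the theorem follows; if one prefers to isolate first the case $\mu^p_{X_i}=\mu^p_{Y_i}$, monotonicity of $\rho(L)$ in each $\mu^p_{X_i}$ (larger atoms impose more relations, hence smaller inner rank) reduces to it. I expect the genuine difficulty to be Step 2(b): proving that free variables are \emph{generic enough} to attain $\rho(L)$ even when the marginals have atoms, equivalently that the kernel of $L(X)$ contains only the unavoidable algebraic vectors. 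This should demand either an operator-valued subordination computation of $\rank(L(X))$ matched against a combinatorial formula for $\rho(L)$, or a structural result showing the rational closure of the free product of the marginal algebras is von Neumann regular; handling the non-atomic parts cleanly in 2(a), especially when $\sum_\lambda\mu^p_{X_i}(\{\lambda\})<1$ so that the atomic part is not itself an admissible variable, is the secondary technical nuisance.
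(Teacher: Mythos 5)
Your sketch follows essentially the same architecture as the paper's proof: linearization reduces a rational expression to an affine-linear pencil $L$; Theorem~\ref{thm:main} realizes $\rank(L(X))$ as a normalized inner rank over a free field built from formal unitary entries and diffuse variables (the ``Atiyah-type'' ingredient you isolate in Step 2(b) is exactly the $\Delta$-maximality criterion of \cite{MSY19}, established here via free compressions of Haar unitaries and Proposition~\ref{prop:Rcyclic}); and domination of any $Y$ comes from a unital homomorphism of the formal polynomial ring onto $W^*(Y)$-data obtained by diagonalizing $I_r\otimes Y_k$ inside a $II_1$ factor (Proposition~\ref{Matrix_form:general_case}), plus the facts that inner rank is non-increasing under homomorphisms and dominates von Neumann rank. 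The one piece of machinery you leave implicit that the paper cannot avoid is the approximation step (Lemmas~\ref{lemma:approximation} and~\ref{lem:comparison of Sylvester rank}) needed when the $\mu^p_{X_i}$ have irrational masses, since the free-field model and your ``universal'' $\rho(L)$ only make literal sense for rational atom weights.
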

 
Our theorem includes in particular the statement that the location and the size of the unavoidable atoms do not depend on the non-atomic parts in the marginal distributions of the free tuple. We put this down as a separate theorem, as this will be a main step in the proof of the general statement.

\begin{theorem}\label{thm:nonatomic}
Let $(X_1,\dots,X_d)$ and $(Y_1,\dots,Y_d)$  be two $d$-tuples of $*$-free normal variables in tracial $W^*$-probability spaces such that, for each $1\leq i \leq d$, we have
$$\mu^p_{X_i}=\mu^p_{Y_i}.$$
Then, for each rational expression $R$ in $d$ non-commuting variables, $R(X_1,\dots,X_d)$ is well-defined if and only if $R(Y_1,\dots,Y_d)$ is well-defined, and we have
$$\rank(R(X_1,\dots,X_d))= \rank(R(Y_1,\dots,Y_d));$$
in other words, for each such rational expression $R$ in $d$ non-commuting variables,
$$\mu^p_{R(X_1,\dots,X_d)}= \mu^p_{R(Y_1,\dots,Y_d)}.$$
\end{theorem}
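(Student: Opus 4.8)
Theorem~\ref{thm:nonatomic} is formally a special case of Theorem~\ref{thm:atomic} --- apply the latter with $(X_i)$ and $(Y_i)$ interchanged, which is legitimate because $\mu^p_{X_i}=\mu^p_{Y_i}$ yields both rank inequalities and well-definedness in both directions. Since the intended logical order is the reverse, however, the plan is to prove Theorem~\ref{thm:nonatomic} on its own. First I would reduce to affine linear pencils through the realization theory recalled in Subsection~\ref{subsect:free field}: a rational expression $R$ admits a linear representation $R=uQ^{-1}v$ with $u\in M_{1\times N}(\C)$, $v\in M_{N\times1}(\C)$ and $Q(x)=Q_0+\sum_{i=1}^dQ_ix_i$ an affine pencil over $M_N(\C)$, in such a way that every inversion carried out inside $R$ is faithfully tracked by the invertibility of $Q$. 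Then $R(X_1,\dots,X_d)$ is well-defined exactly when $Q(X_1,\dots,X_d)$ is invertible as an operator affiliated with $M_N(\M)$ --- equivalently when $\rank Q(X_1,\dots,X_d)$ is full --- and, when it is, a Schur-complement computation in the bordered pencil $\widetilde Q(x)=\bigl(\begin{smallmatrix}0&u\\ v&Q(x)\end{smallmatrix}\bigr)$ gives $\rank\widetilde Q(X_1,\dots,X_d)=N+\rank R(X_1,\dots,X_d)$ (here $\rank$ of a matrix over $\M$ is the $\mathrm{Tr}_N\otimes\tau$-dimension of its support). Hence it suffices to prove: for every affine pencil $Q$ over $M_N(\C)$, the number $\rank Q(X_1,\dots,X_d)$ depends on the tuple only through $\mu^p_{X_1},\dots,\mu^p_{X_d}$.

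Now write $Q(X)=Q_0\otimes1+\sum_iQ_i\otimes X_i\in M_N(\C)\otimes\M$ and use the $M_N(\C)$-valued conditional expectation $E=\id\otimes\tau$. Because $X_1,\dots,X_d$ are $*$-free, the subalgebras $M_N(\C)\otimes W^*(X_i)$ are free over $M_N(\C)$, and the $M_N(\C)$-valued distribution of $Q_i\otimes X_i$ is a function of $Q_i$ and of $\mu_{X_i}$ alone; hence, by operator-valued free convolution, the $M_N(\C)$-valued distribution of $Q(X)$ --- in particular $\rank Q(X)=N-\dim_\tau\ker Q(X)$ --- is determined by $(Q_0,\dots,Q_d)$ and $(\mu_{X_1},\dots,\mu_{X_d})$. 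What is left is to erase the dependence on the non-atomic parts, which I would do one variable at a time: fixing $i$ and writing $\M=\cN*W^*(X_i)$ with $\cN$ generated by the remaining variables, the point is that for any $A_0\in M_N(\cN)$ and $B\in M_N(\C)$ --- with $M_N(\C)\otimes\cN$ and $M_N(\C)\otimes W^*(X_i)$ free over $M_N(\C)$ --- the rank of $A_0+B\otimes X_i$ depends on $X_i$ only through $\mu^p_{X_i}$. Granting this, one iterates over $i=1,\dots,d$, performs the analogous replacements for $(Y_i)$ inside one sufficiently large free product so that $*$-freeness is preserved at each stage, and thereby transports both tuples to a single common reference tuple carrying the prescribed atoms; the equalities of ranks (and of well-definedness) follow.

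The whole difficulty thus lies in that one-variable claim, and this is the step I expect to be the main obstacle. The mechanism is the one already visible in the scalar Bercovici--Voiculescu atom formula: the eigenprojections of $B\otimes X_i$ come only from the eigenprojections of $B$ and of $X_i$, so the continuous part $\mu^c_{X_i}$ contributes none of them and the ``atomic data'' of $B\otimes X_i$ is a function of $B$ and $\mu^p_{X_i}$ only; and the nullity at $0$ of the operator-valued free convolution $A_0+B\otimes X_i$ should depend on its two summands only through their atomic data. To turn ``should'' into a proof I would use the operator-valued subordination description of the $M_N(\C)$-valued Cauchy transform of $A_0+B\otimes X_i$, showing that along $\pm i\varepsilon$, $\varepsilon\downarrow0$, the subordination function attached to the $X_i$-side has the boundary behaviour of a measure whose only singular masses are the atoms of $\mu_{X_i}$, so that the limiting fixed-point relations that read off the atom of $A_0+B\otimes X_i$ involve $\mu_{X_i}$ solely through $\mu^p_{X_i}$ --- an $M_N(\C)$-valued, linear-pencil version of the statement that free convolution never creates atoms out of non-atomic pieces. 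Establishing the requisite boundary regularity of the subordination solutions and justifying the limit is the genuine work; the reductions in the first two paragraphs are routine once the realization/Atiyah-property framework of Subsection~\ref{subsect:free field} is in hand.
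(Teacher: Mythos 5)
Your proposal takes a genuinely different route from the paper, and it is worth separating its two strands. The opening observation---that Theorem~\ref{thm:nonatomic} follows from Theorem~\ref{thm:atomic} by applying the latter twice with the tuples interchanged---is correct, but in the paper's logical development both theorems are derived independently from Theorem~\ref{thm:main}, so this observation does not shorten the work. Your standalone plan is an \emph{analytic} one: reduce to affine linear pencils via formal linear representations (this part is the same as the paper, cf.\ Lemma~\ref{lem:inner rank of display}), then control $\rank Q(X)$ through operator-valued free convolution and the boundary behaviour of $M_N(\C)$-valued subordination functions, isolating a one-variable claim about $A_0+B\otimes X_i$. The paper's route is purely algebraic: it constructs, for atoms of rational weights, a model $\bar X_k = \bar U_k\bar D_k\bar U_k^{-1}$ by matrices over the free field $\cC\plangle\cdot\prangle$, realizes it analytically through free compressions of Haar unitaries whose $\Delta$-maximality (Corollary~\ref{cor:free-compressions-Haar}, Theorem~\ref{th:Delta_maximal}) gives a rank-preserving isomorphism with the free field, and from there the von~Neumann rank \emph{equals} a normalized inner rank, which manifestly cannot see the non-atomic data. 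The passage to irrational weights is then a short approximation argument via Lemma~\ref{lemma:approximation} and the Lipschitz estimate of Lemma~\ref{lem:comparison of Sylvester rank}. What each approach buys: the paper gets an exact algebraic identity $\rank(A(X))=\tfrac1n\rho_{\cR}(A(\bar X))$ and, with it, the Atiyah-type consequences in Subsection~\ref{subsect:Atiyah}; your route would instead give analytic insight into how the atom of a pencil emerges from subordination, closer in spirit to the matrix Bercovici--Voiculescu theorem of~\cite{BBL2021}, which the paper explicitly cites as the alternative analytic path.

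However, as you honestly flag, the one-variable claim that closes your plan --- that for $A_0\in M_N(\cN)$ and constant $B$, with $M_N(\cN)$ and $M_N(W^*(X_i))$ free over $M_N(\C)$, the nullity of $A_0+B\otimes X_i$ depends on $X_i$ only through $\mu^p_{X_i}$ --- is precisely the hard analytic content, and it is not established here; it is essentially the operator-coefficient strengthening of~\cite{BBL2021}. Two subsidiary points would also need attention before the plan is airtight: (a) a formal linear representation gives $\dom_\A(R)\subset\dom_\A(Q^{-1})$ as an inclusion, not an equivalence, so ``$R(X)$ well-defined iff $Q(X)$ invertible'' as stated is not quite right --- one should argue, as the paper does via Corollary~\ref{cor:isomorphic rational closure} and the regularity of the von Neumann rank, that the rank equalities for all matrix polynomials transfer well-definedness; and (b) the iterative replacement of variables one at a time must be carried out inside a single large free product so that freeness is maintained throughout, which you note but would have to set up carefully. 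So the proposal outlines a plausible alternative proof, different from the paper's, but its central step remains to be done.
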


Note that this means that for determining the atoms in $R(X_1,\dots,X_d)$, for $X_1,\dots,X_d$ being $*$-free, we can model the non-discrete parts in $\mu_{X_i}$ by any non-atomic distribution; choosing semicircular distributions for this seems to be a good option.

Another consequence of Theorem \ref{thm:nonatomic} is that it allows us to define, for each non-commutative rational function $R$, a
corresponding rational convolution $R^\square$ on discrete sub-probability measures on $\C$, according to
$R^\square(\mu_{X_1}^p,\dots,\mu_{X_d}^p)=\mu^p_{R(X_1,\dots,X_d)}$.

We want to point out that our rational expressions are only allowed to contain the operators, but not their adjoints. The following example shows that an extension of Theorem \ref{thm:atomic} to the $*$-case cannot hold without additional assumptions. Consider, for $d=1$, the $*$-polynomial $R(X)=XX^*$ and take for $X$ a Haar unitary variable $U$. Then the distribution of the normal variable $U$ is the uniform distribution on the unit circle in $\C$ and thus has no atoms; hence the condition $\mu^p_U\leq \mu_Y^p$ is void and we can take for $Y$ any normal operator; in particular, a normal operator for which $R(Y)=YY^*$ has no atom. On the other hand the distribution of $R(U)=UU^*=1$ has an atom of weight 1 at $\lambda=1$, thus violating $\mu^p_{UU^*}\leq \mu^p_{YY^*}$. We expect that under stronger assumptions a version of Theorem \ref{thm:atomic} still holds for $*$-rational functions. We conjecture this to be the case at least when we assume that, for each $i=1,\dots,d$, the $*$-distribution of $X_i$ is the same as the $*$-distribution of $Y_i$. This will be addressed in forthcoming investigations.

The proof of our general results will proceed via modeling the situation of given marginal distributions, in the case of rational weights for the atoms, by a purely algebraic object, namely by matrices over the free field. The free field is the universal field of fractions of non-commutative polynomials.
Those matrices over the free field contain thus all algebraic information common to all realizations by operator tuples $(Y_1,\dots,Y_d)$ with the given atoms in their marginals.
An unavoidable atom at 0 in the distribution of $P(Y_1,\dots,Y_d)$ corresponds in this model to the non-invertibility 
of the matrices. 
By results of \cite{MSY19}, the special operator tuples $(X_1,\dots,X_d)$, where $X_1,\dots,X_d$ are free, yield an analytic model for this algebraic situation. The inner rank on the algebraic side is then the same as the von Neumann operator algebraic rank on the analytic side, which in turn corresponds to the size of the atom in $P(X_1,\dots,X_d)$ at 0. (By shifting $P$ by a constant, we can of course move the location of any atom to 0.) This shows then that, in the case of rational weights for the input atoms, the unavoidable atoms are realized, both in locations and sizes, exactly in the free situation. General approximation arguments allow to extend this to the general situation of arbitrary weights for the atoms.

Moreover, according to \cite[Theorem 5.24]{MSY19}, we know that the von Neumann rank is equal to the inner rank over some skew field whenever an operator tuple $(X_1,\dots,X_d)$ satisfies the strong Atiyah property.
So a natural question is that to what extend the von Neuman rank over an operator tuple $(X_1,\dots,X_d)$ can be identified with an algebraic rank such as inner rank.
In the case of rational weights, our result (Theorem \ref{thm:main}) indeed confirms that von Neumann rank over such tuples is equal to some normalized inner rank.
This rank equality explains the rationality of the von Neumann rank for the free tuples of operators with atoms of rational weights.

We want to emphasize that the restriction to a tracial setting for our operator tuples $(Y_1,\dots,Y_d)$ is crucial for the above arguments. Since
the evaluation map from the free field to algebras is only well-defined for stably finite algebras -- i.e., in our setting for finite von Neumann algebras --  the above modeling with the free field captures only such situations. It is of course an interesting question whether our results extend also to non-tracial situations. In order to address this, new insights are necessary and we will leave this for future work.

The insight that the atoms in the free situation correspond exactly to the unavoidable atoms leads to some very precise statements on atoms for selfadjoint polynomials in free selfadjoint variables $X_1,\dots,X_d$. If we have \textit{one} example of  an operator tuple $(Y_1,\dots,Y_d)$ for which we can calculate the size of an atom for $P(Y_1,\dots,Y_n)$, then this size is a bound from above for the size of the atom for $P(X_1,\dots,X_d)$. In particular,
if we find one realization $(Y_1,\dots,Y_d)$ of the marginal distributions for which we don't have an atom at $\lambda$ for $P(Y_1,\dots,Y_d)$ then also $P(X_1,\dots,X_n)$ cannot have an atom at $\lambda$. This has as direct consequence that the only possible locations for atoms of $P(X_1,\dots,X_d)$ are at the positions $P(\lambda_1,\dots,\lambda_d)$ where, for $i=1,\dots,d$, $\lambda_i$ is the location of an atom for $X_i$. 

For getting lower bounds on the size of atoms for $P(X_1,\dots,X_d)$, we need apriori a bound from below for the size of an atom for $P(Y_1,\dots,Y_d)$ when considering \emph{all} possibilities for $Y_1,\dots,Y_d$ with the prescribed atoms for the marginals. This usually has to rely on general arguments about the size of the intersection of the involved eigenspaces for the atoms. It is a bit surprising that in many cases we can match those general lower bounds with the upper bounds from special realizations and thus get the exact size of the atoms in the free situation.

The following theorem gives a general condition for a polynomial $P$ in two variables ensuring that we can calculate exactly the size of any atom of $P$.
\begin{theorem} \label{diagonalbound2}Let $X$ and $Y$ be free selfadjoint random variables and let $a\in\mathbb{R}$.  Let $P$ be a selfadjoint polynomial which satisfies the following: for all $\lambda,\rho$ such that $P(\lambda,\rho)=a$ then $P(\lambda,\tilde \rho)\neq a$ and $P(\tilde \lambda,\rho)\neq a$ whenever $\lambda\neq \tilde \lambda$ and $\rho\neq \tilde \rho$. 

Then $P(X,Y)$ has an atom at $a$ if and only there are $\lambda$ and $\rho$ such that $X$ has an atom at $\lambda$ of size $s$ and $Y$ has an atom at $\rho$ of size $t$, such that  $r=t+s-1>0$ and $P(\lambda,\rho)=a$. 

Furthermore, if $r>0$, and $s(a)$ and $t(a)$ denote the mass of these (unique) atoms, then the mass at $a$ is given by $s(a)+t(a)-1$.
\end{theorem}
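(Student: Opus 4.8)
The plan is to deduce Theorem~\ref{diagonalbound2} from the general machinery of Theorems~\ref{thm:atomic} and~\ref{thm:nonatomic}, by pairing a lower bound coming from a concrete diagonal realization with an upper bound coming from the same realization, so that they match under the hypothesis on $P$. First I would invoke Theorem~\ref{thm:nonatomic} to replace $X$ and $Y$ by any convenient free normal variables with the same atoms: since the non-atomic parts are irrelevant, I may assume the atomic parts of $X$ and $Y$ are finitely supported on (say) $\{\lambda_1,\dots,\lambda_m\}$ and $\{\rho_1,\dots,\rho_n\}$ with masses $s_j$, $t_k$, and treat the general case afterwards by an approximation argument (cutting down to finitely many atoms and letting the truncation level tend to the support).

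For the \emph{upper bound}, I would build an explicit commuting (hence tractable) model $(Y_1,Y_2)$. Take $Y_1 = \diag$-type operator that is $\lambda_j$ on a subspace $V_j$ of trace $s_j$ and takes non-atomic values elsewhere, and independently $Y_2$ similarly with eigenspaces $W_k$ of trace $t_k$; arrange the joint distribution so that the eigenspace intersections $V_j\cap W_k$ have trace exactly $\max(s_j+t_k-1,0)$ (this is the minimal possible intersection, realized e.g.\ by taking the ``staircase'' coupling of the two partitions). Then $P(Y_1,Y_2)$ restricted to $V_j\cap W_k$ equals the scalar $P(\lambda_j,\rho_k)$, and off $\bigcup_{j,k} V_j\cap W_k$ one checks, using that $P$ is a polynomial and the remaining spectral measure is non-atomic in at least one coordinate, that no atom at $a$ is created. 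Collecting contributions, $\mu^p_{P(Y_1,Y_2)}(\{a\}) \le \sum_{P(\lambda_j,\rho_k)=a}\max(s_j+t_k-1,0)$. By Theorem~\ref{thm:atomic} this is also an upper bound for $\mu^p_{P(X,Y)}(\{a\})$. The hypothesis on $P$ — that for a given value $a$ the level set $\{P=a\}$ meets each horizontal and each vertical line of atoms at most once — guarantees that the sum on the right has at most one nonzero term, namely the unique pair $(\lambda,\rho)$ with $P(\lambda,\rho)=a$, giving the clean bound $s(a)+t(a)-1$.

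For the \emph{matching lower bound}, I would argue that for \emph{any} realization $(Y_1,Y_2)$ of the prescribed marginals, the eigenspaces $E_\lambda(Y_1)$ and $E_\rho(Y_2)$ have traces $s$ and $t$, so their intersection has trace at least $s+t-1$ (the standard rank/nullity estimate $\dim(U\cap V)\ge \dim U+\dim V-\dim(U+V)$ in the trace-normalized von Neumann setting, which follows from $\rank(p\wedge q)\ge \tau(p)+\tau(q)-1$); on that intersection $P(Y_1,Y_2)$ is the scalar $P(\lambda,\rho)=a$, so $\mu^p_{P(Y_1,Y_2)}(\{a\})\ge s+t-1$. Here the hypothesis on $P$ is used again, but in the other direction: it ensures that distinct atom-pairs $(\lambda,\rho)$, $(\lambda',\rho')$ mapping to the same $a$ cannot occur, so there is a \emph{unique} such pair and no double-counting issue, and in particular $r = s+t-1$ is well-defined. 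Applying this to the free tuple $(X,Y)$ gives $\mu^p_{P(X,Y)}(\{a\})\ge r$; combined with the upper bound, $\mu^p_{P(X,Y)}(\{a\}) = r = s(a)+t(a)-1$ when $r>0$, and $P(X,Y)$ has no atom at $a$ otherwise (also covering the case where $a$ is not of the form $P(\lambda,\rho)$ for atoms $\lambda,\rho$, where the upper bound already forces mass $0$). Finally I would handle infinitely many atoms and unbounded support by the approximation argument alluded to in the excerpt, approximating $X$ (resp.\ $Y$) by finitely-atomic free normal variables and using continuity of $\mu^p$ under such truncations together with Theorem~\ref{thm:atomic}.

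The step I expect to be the main obstacle is the ``no spurious atoms off the diagonal intersections'' part of the upper bound: one must show that $P(Y_1,Y_2)$, evaluated on the complement of $\bigcup V_j\cap W_k$ — where the joint spectral measure of $(Y_1,Y_2)$ still has a non-atomic component in at least one variable — produces no atom at $a$. This is precisely the statement that polynomials do not create atoms out of non-atomic inputs, applied in a relative/fibered form over the coordinate that remains non-atomic; making this rigorous for a commuting but genuinely two-dimensional spectral measure (rather than for free variables) requires care, and is where the polynomial (as opposed to merely measurable) nature of $P$ and the hypothesis on its level sets both get used.
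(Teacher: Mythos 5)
Your overall strategy (lower bound from eigenspace intersections, upper bound from a concrete realization, matching via Theorem~\ref{thm:atomic}) is the same as the paper's, but your upper bound goes through a genuinely different model and is not complete as stated. The paper works entirely with \emph{finitely many atoms of rational weight, realized by finite diagonal matrices} $X_n,Y_n\in M_n(\C)$. Since diagonal matrices commute, $P(X_n,Y_n)$ is again diagonal with entries $P(\lambda_i,\rho_i)$: the nullity at $a$ is a pure counting problem, and the whole upper bound reduces to exhibiting one good ordering (Case~1: block the matched pair, Case~2: an elementary induction showing one can always order the eigenvalues so that $P(\lambda_i,\rho_i)\neq a$ for every $i$). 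Because the model is finite-dimensional there is no continuous spectrum anywhere, and the issue you flag — whether $P$ can ``create'' an atom at $a$ on the portion of the joint spectral measure that is non-atomic in at least one coordinate — simply never arises. That issue is precisely the main obstacle you identify in your own commuting-operator model, and you are right that it requires a careful ``fibered'' argument (split into horizontal slabs where $P(\lambda_j,\cdot)$ is applied to a non-atomic restriction, use the hypothesis to rule out constancy equal to $a$, and choose a product coupling on the doubly continuous part so the algebraic level curve gets zero mass). That can be made to work, but none of these points are carried out in your sketch, and the paper's finite-matrix route avoids them entirely.

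There is also a small but genuine error in your uniqueness step. You write that the hypothesis on $P$ ``ensures that distinct atom-pairs $(\lambda,\rho)$, $(\lambda',\rho')$ mapping to the same $a$ cannot occur, so there is a unique such pair.'' The hypothesis does \emph{not} forbid two distinct pairs $(\lambda_1,\rho_1)$, $(\lambda_2,\rho_2)$ with $\lambda_1\neq\lambda_2$, $\rho_1\neq\rho_2$ both satisfying $P=a$; it only forbids the mixed pairs $(\lambda_1,\rho_2)$ and $(\lambda_2,\rho_1)$ from also satisfying $P=a$. The correct reason for uniqueness of the pair with $r=s+t-1>0$ is probabilistic and independent of $P$: if $(\lambda_1,\rho_1)$ and $(\lambda_2,\rho_2)$ both had $s_i+t_i>1$, then
\[
2<(s_1+t_1)+(s_2+t_2)=(s_1+s_2)+(t_1+t_2)\le 1+1=2,
\]
a contradiction, since $s_1+s_2\le 1$ and $t_1+t_2\le1$ are the total masses of distinct atoms of $X$ and of $Y$. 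So only one pair can contribute, and the upper bound $\max(s+t-1,0)$ for that pair is exactly what the paper extracts from its diagonal model.
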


In particular, this theorem  allows a description of the atoms of the free additive convolution and, when  $a\neq0$, also for the free multiplicative convolution, the free commutator and the free anti-commutator. With some additional work we can achieve a full description of the atoms for all these cases.

\subsection{How to model free variables}

Here we want to give the main ideas -- mostly in terms of a concrete example -- involved in the the proof of our results. To be definite, we will in the following talk about  atoms of distributions of (normal) operators. Of course, all of them have to live in a tracial von Neumann algebra setting.

A universal algebraic model for an operator $X$ without atoms in its distribution is given by a formal variable $x$. Rational functions of $x$ model then the corresponding rational function of any such operator. In the case of several operators the free (skew) field, generated by $d$ formal non-commutative variables $x_1,\dots,x_d$, is the universal algebraic model for any $d$-tuple of,
not necessarily commuting, operators without atoms in their distributions.

If we want models for operators which have atoms in their distributions, then we can model this via matrices over the free field. For example, if we want to model an operator $X$ with distribution $\frac 13\delta_1+\frac 13\delta_2+\frac 13\tilde\mu$, where $\tilde\mu$ is a probability measure without atoms, then the following $3\times 3$-matrix over the free field (generated by the formal variable $x_1$) will do the job: 
$$
A=\begin{pmatrix}
1&0&0\\
0&2&0\\
0&0&x_1
\end{pmatrix}.
$$

The weight for an atom at $\lambda$ corresponds in this algebraic setting to
the inner rank (a purely algebraic notion); $\rank(A-\lambda)=k$ corresponds to an atom at $\lambda$ of weight $1-k/3$. In this way, it is clear that with $n\times n$-matrices over the free field we can thus model only atoms whose weights are multiples of $1/n$.

Now assume that we want to model two operators $X$ and $Y$ with prescribed distributions of their atoms. For example, in addition to having an operator $X$ with distribution $\frac 13\delta_1+\frac 13\delta_2+\frac 13\tilde\mu$ as before we would also like to have another operator $Y$ with only one atom, of size $1/3$ at 0. This latter operator could be modeled by
$$\tilde B=\begin{pmatrix}
0&0&0\\
0&x_2&0\\
0&0&x_3
\end{pmatrix}.
$$
Even though we have used different formal variables to model this one, it is clear that $A$ and $\tilde B$, living in $3\times 3$-matrices over the free field generated by the variables $x_1,x_2,x_3$ are not universal models for pairs of operators with the prescribed atoms in their distributions. What we have modeled here are operators where there exist definite relations between their eigenspaces. In order to get rid of those specific relations we should rotate the two matrices against each other, again in a generic way. For this we replace $\tilde B$ by the matrix
$$
B=U\tilde BU^{-1}=
\begin{pmatrix}
u_{11}&u_{12}&u_{13}\\
u_{21}&u_{22}&u_{23}\\
u_{31}&u_{32}&u_{33}
\end{pmatrix}
\begin{pmatrix}
0&0&0\\
0&x_2&0\\
0&0&x_3
\end{pmatrix}
\begin{pmatrix}
u_{11}&u_{12}&u_{13}\\
u_{21}&u_{22}&u_{23}\\
u_{31}&u_{32}&u_{33}
\end{pmatrix}^{-1}.
$$
The matrix $U$ 
has here 9 other formal variables as entries, and the matrix $U^{-1}$ is calculated over the free field generated by $u_{11},\dots,u_{33}$, thus its entries are rational functions in the $u_{11},\dots,u_{33}$; note that the matrix $U$ is invertible over the free field. The matrices $A$ and $B$, living in matrices over the free field generated by the 12 variables $x_1,x_2,x_3,u_{11},\dots,u_{33}$, are our wanted universal algebraic model for two operators with the prescribed atomic distributions. Of course, we have to show that any concrete pair of such operators is a specialization of $A$ and $B$. This is essentially the diagonalization of the given operators with respect to the projections onto their eigenspaces, but since we have to stay within our tracial von Neumann algebra setting and also to deal with not necessarily rational weights of the atoms the technical details are a bit more involved.

If we are now interested in the possible atoms appearing in rational functions $R(X,Y)$ for arbitrary operators $X$ and $Y$ with the prescribed weights for their atoms, then doing this calculation for $R(A,B)$ over the free field will yield exactly the unavoidable atoms and weights which are common to all $R(X,Y)$. In particular, in order that $R(X,Y)$ is well-defined (as an unbounded operator), $R(A,B)$ must be well-defined over the free field. Clearly, not all rational functions make sense when we have atoms; for example, $A^{-1}$ makes sense in the free field, but $B^{-1}$ does not. Of course, there are usually choices for $X$ and $Y$ for which $R(X,Y)$ is not defined, even though $R(A,B)$ makes sense.

From the above representation as $3\times 3$-matrices over the free field, it is also clear that in this case, where the marginal atoms are of weight $1/3$, the atoms of any meaningful rational function in $A$ and $B$ can also have only weights which are multiples of $1/3$. This will in general make a relation to the so-called Atiyah property.

Up to this point we have given universal algebraic models for operator tuples with prescribed atoms in the marginals, relying on the fact that the free field yields via specialization any possible realisation for such situations. This reduction to the free field shows in particular that the non-atomic parts in the prescribed distributions do not come into the calculations. All non-atomic distributions are in the free field just represented by formal variables and thus do not influence the search for the unavoidable atoms in rational functions. We can, and actually will (see Proposition \ref{Matrix_form:general_case}), even put part of the prescribed atoms into those formal variables which allows us to approximate general weights for atoms via rational weights in the matrices over the free field.

Doing concrete rank calculations over the free field, in order to find the unavoidable atoms, is however not easy  -- in particular, as we also have to deal with terms like $U^{-1}$. Note that the occurrence of $U^{-1}$ means that even if we are only interested in polynomials in our operators, in our calculations in the free field we have nevertheless to deal  with rational functions in the $u_{ij}$.

 We will now use another, much more surprising, connection between algebraic and analytic models. Namely, there are actually specific analytic situations which are not just a specialization, but which are isomorphic to the free field. This means that the rank calculations for the weights of unavoidable atoms in the free field can also be done in those analytic settings, using analytic tools. As those analytic settings correspond to free random variables, we have actually quite some tools, provided by free probability theory, for analytic calculations.

The starting point for this correspondence between the algebraic and the analytic side is the characterization from \cite{MSY19} that the free field generated by formal variables $x_1,\dots,x_d$ is isomorphic to the division closure (in the affiliated unbounded operators) of operators $X_1,\dots,X_d$ if and only if those operators maximize a specific quantity $\Delta(X_1,\dots,X_d)$. Such a maximal $\Delta$ is for example achieved if the $X_1,\dots,X_d$ are free and each $X_i$ has no atoms in its distribution. Whereas $x_1,x_2,x_3$ in the above example can be chosen as such free variables (with some arbitrary distribution without atoms), we also have to deal with the matrix $U$. 
We could just replace all entries of $U$ by free operators with, say, semicircular distribution and then try to calculate the distribution of those matrices, using operator-valued free probability tools. However, this will get out of hand soon, in particular, for big sizes of the considered matrices and it is not clear whether this brings a clear advantage over rank calculations in the free field. What we do instead is to stay in the scalar-valued setting and make all relevant operators there free. For this we need that the matrix $U$ itself has to be a Haar unitary matrix which also should be free from $A$ and $\tilde B$. This can be achieved by taking the entries $u_{ij}$ as given by a free compression of a Haar unitary by matrix units. It takes then some effort to show that in this case $\Delta(u_{11},\dots,u_{33})$ is also maximal and that furthermore the $u_{ij}$ are also free from $x_1,x_2,x_3$. Having proved all this gives then that $\Delta(x_1,x_2,x_3,u_{11},\dots,u_{33})$ is also maximal, and thus
we can use this analytic setting to calculate the unavoidable atoms. As it is set up now, we have that $U$ is a Haar unitary which is $*$-free from $\tilde B$ and from $A$. But this means that $A$ and $B$ themselves are free and each of them has the prescribed distribution for the atoms (and where we have the freedom to fill in for the non-atomic parts any non-atomic distribution, for example, semicirculars). So in the end we see that if we choose our operators to be free and with the prescribed distribution, then calculating atoms for any rational function in them will give the unavoidable atoms for the prescribed atoms of the marginals. 

Note that we can now also turn this connection around and use it to get some information about atoms of polynomials or rational functions of free variables. In particular, we see from the fact that in the free field the non-atomic parts don't play a role this is now also true for taking rational functions in free variables $X_1,\dots,X_d$. The atoms for such rational functions $R(X_1,\dots,R_d)$ depend only on the atoms of the operators $X_i$.

Another advantage of knowing that the situation of free random variables corresponds to the ``minimal'' situation concerning atoms is that this allows to get results on atoms for polynomial free convolutions. By comparing the free situation with any other suitably chosen example, we can exclude atoms for the free case if they do not show up in the considered example.

\subsection{Organization of the article}
There are six more sections after the introduction and they are organised as follows.
In Section \ref{sect:preliminaries} we introduce the necessary notions with some basic relevant facts used in the later sections.

In Section \ref{sect:matrix models} we establish several important lemmas and propositions concerning non-commutative random variables.
More precisely, it contains two subsections: in the first one we bring non-commutative random variables into some matrix forms that will be used for our main results in Section \ref{sec:calculation} and \ref{sec:comparison}; in the second one, we prove that the free compressions of free Haar unitaries have maximal $\Delta$ and thus we can identify them with formal variables.

In Section \ref{sec:calculation}, we first establish algebraic models for $\ast$-free normal random variables with atoms of rational weights.
Some consequences of our first main result are given in the last two subsections of this section.
Subsection \ref{subsect:Atiyah} concerns the link between our result and the strong Atiyah conjecture for groups and Atiyah properties for non-commutative variables.
In Subsection \ref{subsect:approximation of free variables} we give the proof of Theorem \ref{thm:nonatomic} with some applications.

In Section \ref{sec:comparison} we compare the rank and atoms for matrices in polynomials as well as rational functions between free variables and general random variables.
The first subsection is devoted to the proof of Theorem \ref{thm:atomic}.
As a first consequence, the universality of the rational closure of free variables is stated in Subsection \ref{subsect:universality}.
Then we show, in Subsection \ref{subsect:matrix approximation}, that (restricting to polynomials) the minimal situation realized by free variables actually can be obtained via matrices.

Section \ref{sect:application} collects applications and examples that can be studied with the help of our main results.
Comparing to specific examples we obtain general  criteria to bound the size of the atoms of various general polynomials.
By using these methods, in Section \ref{sect:commutator},  we  give a complete description of atoms for the free commutator and the free anti-commutator and provide an application on the spectrum of universal covers.

\section{Preliminaries}\label{sect:preliminaries}

\subsection{Tracial $W^*$-probability spaces, affiliated unbounded operators, analytic and point spectrum distributions}
In this section we will define the basic non-commutative probabilistic notions and recall some relevant facts around them.

\begin{definition}
A \emph{tracial $W^*$-probability space} $(\M,\tau)$ consists of
a von Neumann algebra $\M$ and a faithful and normal trace $\tau:\M\to\C$. Elements in $\M$ are often addressed as \emph{(non-commutative) random variables}.
\end{definition}

In the setting of a tracial $W^{\ast}$-probability space $\left(\M,\tau\right)$, we will usually be interested in the closed and densely defined linear operators affiliated with $\M$.
The set of all such affiliated unbounded operators is known to be a $\ast$-algebra containing $\M$. In a sense it is the non-commutative analogue of the set of all measurable functions of the elements from $\M$; correspondingly, we will denote it by $L^0(\M)$.
Moreover, the algebra $L^0(\M)$ is stably finite (see, for example, \cite[Lemma 5.3]{MSY19}), which will allow us to consider the evaluations of rational functions in elements from $L^0(\M)$.

Recall that an operator $X$ is called \emph{normal} if we have $XX^*=X^*X$. Normal operators can be identified with classical (complex-valued) random variables, and thus we associate to each normal operator $X$ in a $W^\ast$-probability space $(\M,\tau)$ its distribution, which is a probability measure on $\C$ with compact support, and which is determined by the $*$-moments of $X$. In the tracial setting we can also allow affiliated unbounded operators; for those the appearing probability measures do not need to have compact support or even finite moments. 
We record these facts here as the following definition.

\begin{definition}\label{def:anal-distr}
Let $(\M,\tau)$ be a
tracial $W^*$-probability space. 
\begin{enumerate}
\item For a normal operator $X\in \M$, the \emph{analytic distribution} of $X$ with respect to $\tau$
is defined as the unique Borel probability measure $\mu_X$ on $\C$ that satisfies 
$$\int_\C z^k \overline{z}^l\, d\mu_X(z) = \tau(X^k (X^\ast)^l)\qquad\text{for all $k,l\in\mathbb{N}_0$.}$$
\item For a normal unbounded operator $Y\in L^0(\M)$ its \emph{analytic distribution $\mu_Y$} is defined as the Borel probability measure on $\C$ that is obtained by $\mu_Y = \tau \circ E_Y$, where $E_Y$ denotes the resolution of identity associated to $Y$; this is well-defined, since $E_Y$ takes its values in $\M$ as $Y$ is a normal operator affiliated to $\M$; this extends the concept of analytic distributions for bounded normal operators from (1).
\end{enumerate}
\end{definition}

Whereas the analytic distribution only makes sense for normal operators, the atomic part of such a distribution has a much more algebraic meaning and can also be extended to general operators in our setting. This is made precise in the following definitions. 

\begin{definition}
A \emph{discrete sub-probability measure} on $\C$ is of the form
$$\sum_{\lambda\in\C} \alpha_\lambda \delta_\lambda, $$
where we have that $0\leq \alpha_\lambda\leq 1$ for all $\lambda$ and that
$\sum_{\lambda\in\C} \alpha_\lambda\leq 1$. Hence in particular, only countably many $\alpha_\lambda$ are different from zero. The case that all $\alpha_\lambda$ are equal to zero is also allowed. The corresponding discrete sub-probability measure will then be denoted by 0.
\end{definition}

\begin{definition}\label{def:point spectrum}
Let $(\M,\tau)$ be a
tracial $W^*$-probability space.
Let $X\in L^0(\M)$ be given.
Its \emph{point spectrum} is defined as
\[
\sigma_p(X):=\{\lambda\in\C:\ker(\lambda-X)\neq\{0\}\}.
\]
Furthermore, we define the \emph{point spectrum distribution} of $X$ as the discrete sub-probability measure
$$\mu_X^p:=\sum_{\lambda\in\sigma_p(X)}\tau(p_{\ker(\lambda-X)})\delta_\lambda,
$$
where $p_{\ker(Y)}$ stands for the orthogonal projection onto the kernel of an operator $Y\in L^0(\M)$.
\end{definition}

Note that each $\lambda\in\sigma_p(X)$ is an \emph{eigenvalue} of $X$, in the sense that there exists a nonzero eigenvector $\xi\in L^2(\M)$ such that $X\xi=\lambda\xi$.
Hence we will also call the subspace $\ker(\lambda-X)$ of $L^2(\M)$ an \emph{eigenspace} of $X$. 
For a given operator $X\in\M$ its point spectrum is the same as its spectrum in $L^0(\M)$ since an operator is invertible in $L^0(M)$ if and only if it has trivial kernel; of course, the spectrum with respect to $\M$ is in general larger.
Therefore, if two operators $X$ and $Y$ generate isomorphic von Neumann algebras, then $\mu_X^p=\mu_Y^p$.
In particular, this is true for operators $X$ and $Y$ with the same $\ast$-distribution.

The weight $\tau(p_{\ker(\lambda-X)})$ for the eigenvalue $\lambda$ in the definition of $\mu_X^p$ is a measure, as a normalized kind of dimension function, for the size of the corresponding eigenspace;
note that in the setting of type $II$ von Neumann algebras those eigenspaces are infinite-dimensional and their dimensions are measured with respect to the size of the whole Hilbert space. It is the basic ingredient in the setting of type $II$ von Neumann algebras that we have a continuous dimension, taking on all values in the interval $[0,1]$, for the size of infinite-dimensional subspaces of the ambient Hilbert space.

Moreover, if $X$ in $L^0(\M)$ is normal, then $\lambda\in\sigma_p(X)$ if and only if $\lambda$ is an \emph{atom} of the analytic distribution $\mu_X$, i.e., if $\mu_X(\{\lambda\})>0$; in which case we have
$\mu_X(\{\lambda\})=\tau(p_{\ker(\lambda-X)})$.
Therefore, for a normal operator $X$, we will also say that $\lambda\in\sigma_p(X)$ is an \emph{atom of $X$}.
Clearly, for a normal $X$ we can decompose the measure $\mu_X$ as the sum of an atomic part (which is a discrete measure supported on $\sigma_p(X)$ and which coincides with the point spectrum distribution $\mu_X^p$ of $X$) and a non-atomic part (which is a measure which has no atoms).

If $X\in \M$ is an operator which is not normal, a canonical probability measure $\mu_X$ called the \emph{Brown measure} can still be constructed on the spectrum of $X$, generalizing the definition of the analytic distribution of normal variables. Thanks to \cite{Haagerup2009}, we always have the inequality $\mu^p_X\leq \mu_X$. In particular, each $\lambda\in\sigma_p(X)$ is an atom of the Brown measure $\mu_X$. However, this inequality can be strict, and the weight of the atom for the Brown measure can be larger than the weight in the point spectrum distribution.

\subsection{Free random variables}

The situation corresponding to unavoidable atoms will be given by free random variables. Free probability theory and its fundamental notion of freeness or free independence are by now well-established non-commutative analogues of classical probability theory and its notion of independence. We refer to the standard references \cite{MS17,NS06,VDN92} for the basic notions and results from free probability theory. For our constructions only a few properties of free random variables will be important. We will not explicitly address random matrices in the following, but at least we want to point out that often random matrix models become freely independent in the large size limit, thus our results have quite some implications for atoms in the asymptotic eigenvalue distribution  of polynomials or rational functions in random matrices.

The notion of freeness refers to a setting of a non-commutative probability space $(\M,\tau)$ and given subalgebras $A_i$.

\begin{definition}
Let $(\M,\tau)$ be a $W^\ast$-probability space and let $I$ be an index set.

1)  Let, for each $i\in I$, $\A_i\subset \M$, be a unital subalgebra.
The subalgebras $(\A_i)_{i\in I}$ are called \emph{free} or \emph{freely independent}, 
if $\tau(a_1\cdots
a_k)=0$ whenever we have: $k$ is a positive integer; $a_j\in\A_{i(j)}$ (with $i(j)\in
I$) for all $j=1,\dots,k$; $\tau(a_j)=0$ for all $j=1,\dots,k$; and neighboring elements
are from different subalgebras, i.e., $i(1)\not=i(2)$, $i(2)\not= i(3)$,$\dots$,
$i(k-1)\not=i(k)$.

2) Let, for each $i\in I$, $X_i\in\M$. The random variables $(X_i)_{i\in I}$ are called
\emph{free} or \emph{freely independent}, if their generated unital subalgebras are free,
i.e., if $(\A_i)_{i\in I}$ are free, where, for each $i\in I$, $\A_i$ is the unital
subalgebra of $\M$ which is generated by $X_i$.

3) Let, for each $i\in I$, $X_i\in\M$. The random variables $(X_i)_{i\in I}$ are called
\emph{$*$-free} or \emph{$*$-freely independent}, if their generated unital $*$-subalgebras are free,
i.e., if $(\A_i)_{i\in I}$ are free, where, for each $i\in I$, $\A_i$ is the unital
$*$-subalgebra of $\M$ which is generated by $X_i$.
\end{definition}

For selfadjoint $X_i$, there is no difference between freeness and $*$-freeness. In the case of general normal $X_i$, we will usually consider $*$-free variables.

Let us point out that the notion of freeness, which looks quite algebraic, goes well with von Neumann algebras. Namely, if unital subalgebras $\A_i$ are free in $(\M,\tau)$, then also their generated von Neumann subalgebras are free. 

A crucial fact on freeness is that if we have a tuple $X=(X_1,\dots,X_d)$ of free selfadjoint operators, then for any selfadjoint non-commutative polynomial $P$, the analytic distribution of the evaluation $P(X):=P(X_1,\dots,X_d)$ (which is again a selfadjoint operator) is uniquely determined by the marginal analytic distributions $\mu_{X_i}$.
Hence, for each selfadjoint polynomial $P$ in
$d$ non-commuting variables we have a corresponding polynomial free convolution $P^\square$, mapping $d$-tuples of probability measures on $\R$ to another probability measure on $\R$, corresponding to
$P^\square(\mu_{X_1},\dots,\mu_{X_d})=\mu_{P(X_1,\dots,X_d)}$.
However, there are no explicit descriptions of this functional relationship and getting qualitative and quantitative properties of $\mu_{P(X)}$ out of corresponding information for the marginal distributions is one of the big problems. Up to now such concrete descriptions were only known for the case of the sum and the product of free random variables (corresponding to the operations of additive and multiplicative, respectively, free convolution). On the conceptual side, there was quite some progress by relating polynomials in free independent variables via linearization to the operator-valued version of additive convolution. This general approach goes back to the work \cite{BMS17}, and was brought to fruition for the problem of dealing with atoms in the recent work \cite{BBL2021}. However, it is hard to get explicit results on atoms from those approaches.

Our starting point here to address general polynomials (and also rational functions) in free variables is the crucial recent insight \cite{MSY19} that in the case of marginal distributions without atoms, rational functions in such free variables can be modelled by an abstract algebraic object, the so-called free field. In the next subsection we will recall the basic properties of this object. By realizing marginal distributions with atoms (of rational weights) via matrices over the free field we will then be able to get our results on atoms of functions of free variables, not only for polynomials, but also for non-commutative rational functions.

Our results show that the polynomial convolutions $P^\square$, as well as their generalizations to rational functions, can actually also be considered as mappings within the class of discrete sub-probability measures; and in this setting they also make sense not just for selfadjoint polynomials and measures on $\R$, but actually for arbitrary rational functions and discrete sub-probability measures on $\C$. For each non-commutative rational function $R$, we have a
corresponding rational convolution $R^\square$ on discrete sub-probability measures on $\C$, according to
$R^\square(\mu_{X_1}^p,\dots,\mu_{X_d}^p)=\mu^p_{R(X_1,\dots,X_d)}$.

Note that it is crucial for our arguments that the operators $X_1,\dots,X_d$ are normal, even though the evaluation $R(X_1,\dots,X_d)$ does not have to be so. Indeed, if we start from non-normal operators it will be hard to find polynomials which preserve normality.

\subsection{The category of $\Cx$-algebras, rational and division closures}\label{sec:category of Cx-algebras}
We denote by $\Cx$ the algebra of non-commutative polynomials in the indeterminates $x_1,\dots,x_d$.

In this article, we are interested in $d$-tuples of elements in various algebras.
It is natural to represent such a tuple as a $\Cx$-algebra in the category of $\Cx$-algebras.
The following definition is based on the so-called \emph{$\cR$-ring}, where $\cR$ is another ring with some homomorphism (see, for example, \cite[Section 7.2]{Coh06}).
We change the name accordingly since we are mainly interested in algebras instead of rings.

\begin{definition}\label{def:subhomomorphism}
A $\Cx$-\emph{algebra} is a unital complex algebra $\A$ together with a homomorphism $\phi:\Cx\rightarrow\A$.
We denote
\[
\Sigma_\phi:=\coprod_{n=1}^\infty\{A\in M_n(\Cx):\phi(A)\text{ is invertible in }M_n(\A)\},
\]
where we also denote by $\varphi$ the matricial amplification $\varphi((a_{ij})_{i,j=1}^n):=(\varphi(a_{ij}))_{i,j=1}^n$ for any homomorphism $\varphi$.

Let $\A$ and $\B$ be $\Cx$-algebras with homomorphisms $\phi_\A:\Cx\rightarrow\A$ and $\phi_\B:\Cx\rightarrow\B$.
\begin{enumerate}
\item A \emph{$\Cx$-algebra homomorphism} $f:\A\rightarrow\B$ is a homomorphism such that $f\circ \phi_\A=\phi_\B$.
\item A \emph{($\Cx$-algebra) subhomomorphism} is a $\Cx$-algebra homomorphism $f:\A_f\rightarrow\B$, where $\A_f$ is a $\Cx$-subalgebra of $\A$ such that
  \begin{itemize}
  \item its homomorphism $\phi_f:\Cx\rightarrow\A_f$ agrees with $\phi_\A$
  \item\label{it:invertiblity}
 and $\Sigma_{\phi_\B}\subset \Sigma_{\phi_f}$. 
  \end{itemize}
\item Two subhomomorphisms from $\A$ to $\B$ are called \emph{equivalent} if they agree on an $\Cx$-subalgebra $\A_0$ of $\A$ such that their common restriction to $\A_0$ is again a subhomomorphism.
\item A \emph{specialization} from $\A$ to $\B$ is an equivalence class of subhomomorphisms from $\A$ to $\B$.
\end{enumerate}
\end{definition}

Note that the definition of subhomomophism (Item \eqref{it:invertiblity} of Definition \ref{def:subhomomorphism}) is slightly different from the one in \cite[Section 7.2]{Coh06} since subhomomorphisms therein are considered for epic skew fields instead of algebras (see Section~\ref{subsect:free field}). However, the two definitions coincide if one restrict oneself to epic skew fields: if $\K$ and $\cL$ are two epic skew fields, a subhomorphism $f$ from $\K$ to $\cL$ with $f:\K_f\rightarrow\cL$ satisfies that an element $x\in\K_f$ such that $x\notin\ker f$ is invertible in $\K_f$.

In the sequel, for a unital complex algebra $\A$, the $\Cx$-algebra structure usually comes from the evaluation of some tuple $X=(X_1,\dots,X_d)$ over $\A$.
Namely, we usually take the homomorphism $\phi:\Cx\rightarrow\A$ as $\phi(P)=P(X)$, $\forall P\in\Cx$.
In particular, we also write $\Sigma_X:=\Sigma_\phi$ in such a case.

The following construction is an important example of $\Cx$-algebra that will be considered later.

\begin{definition}
Let $X=(X_1,\dots,X_d)$ be a tuple of elements in a unital complex algebra $\A$.
The \emph{rational closure of $X$ in $\A$} is the set, denoted by $\A_X$, of all entries of inverses of evaluations of matrices in $\Sigma_X$ at $X$.
\end{definition}

For a tuple $X=(X_1,\dots,X_d)$ over a unital complex algebra $\A$, we denote by $\C\langle X\rangle$ the subalgebra of $\A$ which is generated by $X_1,\dots,X_d$.
A rational closure $\A_X$ of $X$ in $\A$ is always a subalgebra of $\A$ and contains $\C\langle X\rangle$ (see \cite[Section 7.1]{Coh06} for more details).

Let $X$ and $Y$ be two tuples over unital complex algebras $\A$ and $\B$ respectively.
If $f:\A_f\rightarrow\B_Y$ is a subhomormorphism from a $\A_X$ to $\B_Y$ with the homomorphism $\phi_f:\Cx\rightarrow\A_f$, then $f$ is actually surjective due to the property $\Sigma_Y\subset\Sigma_{\phi_f}$.
Hence $f$ naturally induce an isomorphism from $\A_f/\ker(f)$ onto $\B_Y$.
Moreover, if $\B_Y=\A_X$, then $\A_f=\A_X$ and $f$ reduces to the identity map, as follows.

\begin{prop}
\label{prop:isomorphism}Let $X=(X_1,\dots,X_d)$ be a tuple over a unital complex algebra $\A$. The identity map is the only one specialization from the rational closure $\A_X$ to itself.
\end{prop}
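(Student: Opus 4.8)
The statement to prove is that the only specialization from the rational closure $\A_X$ to itself is the identity. The overall strategy is to unwind the definitions: a specialization is an equivalence class of subhomomorphisms, so I want to show that any subhomomorphism $f:\A_f\to\A_X$ (where $\A_f$ is a $\Cx$-subalgebra of $\A_X$ whose $\Cx$-structure agrees with that of $\A_X$, and satisfying $\Sigma_X\subset\Sigma_{\phi_f}$) must coincide with the identity on a $\Cx$-subalgebra on which the restriction is still a subhomomorphism — in fact I will show $\A_f=\A_X$ and $f=\id$ outright, which is the strongest possible conclusion and subsumes the equivalence-class statement.

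\textbf{Key steps.} First I would record the observation already made in the paragraph preceding the proposition: since $f:\A_f\to\A_X$ has $\Sigma_X\subset\Sigma_{\phi_f}$, the map $f$ is surjective onto $\A_X$ (every element of $\A_X$ is an entry of $A(X)^{-1}$ for some $A\in\Sigma_X\subset\Sigma_{\phi_f}$, hence lies in the image of $f$), and therefore $f$ induces an isomorphism $\A_f/\ker f\cong\A_X$. Second, I would show $\ker f=\{0\}$. For this, note that $\A_f$ contains $\C\langle X\rangle$ (because $\phi_f$ agrees with $\phi_\A$, so $X_i\in\A_f$, and $\A_f$ is a subalgebra), and $f$ fixes each $X_i$, hence $f$ restricted to $\C\langle X\rangle$ is the identity. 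Now take $r\in\ker f$; since $r\in\A_f\subset\A_X$, write $r$ as an entry of $A(X)^{-1}$ for some $A\in\Sigma_X$. The point is that $f$ commutes with the matricial amplification and fixes $\C\langle X\rangle$, so $f(A(X))=A(X)$ and hence $A(X)=f(A(X))$ is invertible in $M_n(\A_X)$ with inverse $f(A(X)^{-1})$; comparing entries, $f$ fixes every entry of $A(X)^{-1}$, in particular $f(r)=r$. Thus $f=\id$ on all of $\A_f$, so $\ker f=\{0\}$ and $f$ is injective. Combined with surjectivity, $\A_f=\A_X$ and $f=\id_{\A_X}$. Finally, two subhomomorphisms equal to the identity on $\A_X$ are the same equivalence class (they already agree on all of $\A_X$, on which the restriction — the identity — is trivially a subhomomorphism), so there is a unique specialization.

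\textbf{The main obstacle.} The only genuinely delicate point is the second step: one must be careful that an arbitrary element of the rational closure $\A_X$ really is fixed by $f$, i.e. that writing $r$ as an entry of $A(X)^{-1}$ and applying $f$ entrywise is legitimate. This hinges on two facts that need to be invoked cleanly — that $f$, being an algebra homomorphism, commutes with the matricial amplification (so $f(A(X))=(f(A(X)_{ij}))_{ij}=(A(X)_{ij})_{ij}=A(X)$ since the entries of $A(X)$ lie in $\C\langle X\rangle$), and that a homomorphism sends the inverse of an invertible matrix to the inverse of its image, so $f(A(X)^{-1})=f(A(X))^{-1}=A(X)^{-1}$. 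Once these are in place the argument is essentially formal; the subtlety is purely that $\A_f$ might a priori be a proper subalgebra and one must verify it is forced to be all of $\A_X$, which the surjectivity argument of the first step handles. No convergence, no analysis — this is a short diagram-chase once the definitions are unpacked.
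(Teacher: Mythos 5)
Your proposal is correct and follows essentially the same route as the paper: both use the subhomomorphism condition $\Sigma_X \subset \Sigma_{\phi_f}$ together with the fact that an algebra homomorphism respects matrix inverses to show that $f$ fixes every entry of $A(X)^{-1}$ for $A \in \Sigma_X$, starting from the observation that $f$ is the identity on $\C\langle X\rangle$. One small point of bookkeeping: the paper derives $\A_f = \A_X$ directly from $\Sigma_X \subset \Sigma_{\phi_f}$ (this containment forces every entry of $A(X)^{-1}$, $A\in\Sigma_X$, to lie in $\A_f$, hence $\A_X \subset \A_f$), whereas you attribute this to surjectivity of $f$ --- but surjectivity alone only gives $f(\A_f)=\A_X$, and you still need the ``$f=\id$ on $\A_f$'' computation (which you do supply) before concluding $\A_f=\A_X$; the direct containment argument is slightly cleaner and avoids that last inference.
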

\begin{proof}Let $\phi:\Cx\to\A_X$ be given by the evaluation map at $X$ inside $\A_X$. Let $f:\A_f\to \A_X$ be a subhomorphism from $\A_X$ to itself, with $\A_f\subset \A_X$ and $\phi_f:\Cx\to\A_X$ given by the evaluation map at $X$ inside $\A_X$.

We have $\Sigma_{\phi}\subset \Sigma_{\phi_f}$ because of the subhomorphism $f:\A_f\to \A_X$, and as a consequence, all entries of evaluations of inverses of matrices in $\Sigma_{\phi}$ at $X$ are in fact entries of evaluations of inverses of matrices in $\Sigma_{\phi_f}$, and by consequence belong to $\A_f$.
As a consequence, $\A_X\subset \A_f$ which implies that  $\A_X=\A_f$.

The identity map and the homomorphism $f:\A_X\to \A_X$ coincides on $\mathbb{C}\langle X\rangle$ so they also coincide on the rational closure $\A_X$ of $X$. Indeed, if $c\in \A_X$, there exists some $m\times m$ matrix $A\in\Sigma_{X}$ and some integers $1\leq i,j\leq m$ such that $c$ is the $(i,j)$-entry of $A(X)^{-1}$.
Now, we have
$$I_m=f(I_m)=f(A(X)A(X)^{-1})=f(A(X))f(A(X)^{-1})=A(X)f(A(X)^{-1}),$$
which means that
$f(A(X)^{-1})=A(X)^{-1},$
and $f(c)=c$.
\end{proof}

Moreover, if $f:\A\rightarrow\B$ is an $\Cx$-isomorphism for two $\Cx$-algebras $\A$ and $\B$, then for any tuple $X$ over $\A$ and $Y$ over $\B$, $f$ naturally induce an isomorphism from $\A_X$ onto $\B_Y$.
This allows us to give a criterion for the isomorphism of rational closures that are given by tuples over tracial $W^\ast$-probability spaces.
That is, if two tuples have the same $\ast$-distribution, then they generate isomorphic von Neuamnn algebras and thus have isomorphic rational closures.
We record this observation as the following remark.

\begin{remark}\label{rem:rational closure by dist}
Let $(\M_1,\tau_1)$ and $(\M_2,\tau_2)$ be two tracial $W^\ast$-probability spaces.
Suppose that $X=(X_1,\dots,X_d)\in \M_1^d$ and $Y=(Y_1,\dots,Y_d)\in \M_2^d$ are two tuples that have the same $\ast$-distribution.
Then the rational closures $L^0(\M_1)_{X}$ and $L^0(\M_2)_{Y}$ are isomorphic.
\end{remark}

A rational closure $\A_X$ has a property called \emph{division closed}, i.e., if $a\in\A_X$ is invertible in $\A$, then $a^{-1}\in\A_X$.
The smallest subalgebra with such a property is another important example of $\Cx$-algebras.

\begin{definition}
Let $X=(X_1,\dots,X_d)$ be a tuple over a unital complex algebra $\A$. 
The \emph{division closure} of $X$ in $\A$ is the smallest division closed subalgebra of $\A$ containing $\C\langle X\rangle$.
We denote it by $\C\plangle X\prangle$.
\end{definition}

A priori the division closure $\C\plangle X\prangle$ is contained in the rational closure $\A_X$ for a given tuple $X$.
However, they may agree under certain circumstance, for example, when one of them is a skew field (see, for example, \cite[Proposition 4.9]{MSY19}).

As $\Cx$-subalgebras inside rational closures, the isomorphism of division closures follows from that of rational closures.

\begin{remark}\label{rem:isomorphic division closure}
Let $\A_1$ and $\A_2$ be two unital complex algebras with the division closure $\D_i:=\C\plangle X_i\prangle$ for some tuple $X_i$ over $\A_i$ , $i=1,2$.
If $f:\A_1\rightarrow\A_2$ is an $\Cx$-isomorphism, then $f(D_1)=D_2$.
\end{remark}

\begin{proof}
It's not difficult to see that $f$ preserves the division closedness.
So $f(\D_1)$ is division closed, and thus $\D_2\subset f(\D_1)$.
Hence $f^{-1}(\D_2)\subset \D_1$.
Note that $f^{-1}(\D_2)$ is division closed, we have $\D_1\subset f^{-1}(\D_2)$.
Therefore, $\D_1\subset f^{-1}(\D_2)\subset \D_1$, from which the conclusion follows.
\end{proof}

\subsection{Free field}\label{subsect:free field}

In this subsection, we want to introduce the free field that is constituted by non-commutative rational functions.
We will also introduce some closely related notions such as rational expressions and linear representations.

Let us point out that the free field is actually a non-commutative algebra, hence ``free skew field'' might be a more precise name (at least for people coming from a commutative setting).

It is well-known that commutative polynomials can be embedded uniquely into the field of fractions, which is a field consisting of the quotients of polynomials.
However, the non-commutative analogue of the field of fractions is highly non-trivial.
In order to introduce this object, let us first give some necessary notations.

\begin{definition}
A $\Cx$-algebra $\K$ is called a $\Cx$-\emph{field} if $\K$ is a skew field.

Let $\K$ be a $\Cx$-\emph{field} with $\phi:\Cx\rightarrow\K$. 
\begin{enumerate}
\item We call $\K$ \emph{epic} if $\K$ is generated by the image $\phi(\Cx)$, i.e., there is no proper subfield of $\K$ containing $\phi(\Cx)$.
\item An epic $\Cx$-field $\K$ is called \emph{field of fractions of} $\Cx$ if the homomorphism $\phi$ is injective.
\end{enumerate}
\end{definition}

One may expect that, like in the commutative case, a (skew) field of fractions of $\Cx$ (if it exists) is unique.
However, it turns out that there exist fields of fractions of $\Cx$ which are not isomorphic (see, for example, \cite[Exercise 7.2.13]{Coh06}).
To understand the non-commutative counterpart of commutative rational functions, we need some universal property for $\Cx$-fields with the help of subhomomorphisms.

\begin{definition}
An epic $\Cx$-field $\U$ is called a \emph{universal $\Cx$-field} if for any epic $\Cx$-field $\K$ there is a unique specialization from $\U$ to $\K$.
If $\U$ is in addition a field of fractions of $\Cx$, then we call $\U$ the \emph{universal field of fractions of} $\Cx$.
\end{definition}

There indeed exists a universal field of fractions of $\Cx$, which we call the \emph{free field} and denote it by $\C\plangle x_1,\dots,x_d\prangle$.
We won't go further into the details of the constructions of the free field and refer the interested reader to \cite[Chapter 7]{Coh06} for more details.
An element in the free field is called a \emph{non-commutative rational function}.

A non-commutative rational function can be represented by \emph{non-commutative rational expressions}.
They are syntactically valid combinations of $\C$ and symbols $x_1,\ldots,x_d$ with $+$, $\cdot$, ${}^{-1}$, and $()$, which are respectively corresponding to addition, multiplication, taking inverse, and ordering these operations.
Non-commutative rational expressions are formal objects or expressions rather than functions.
For example, $x_1 + (-1) \cdot x_1$ and $0$ are two distinct non-commutative rational expressions though they represent the same non-commutative rational function.
We refer here to \cite{HW15} for a rigorous definition of non-commutative rational expressions that is based on the graph theory and by which arithmetic operations for rational expressions can be interpreted as operations on graphs.
Moreover, in \cite{Ami66}, Amitsur constructed the free field $\C\plangle x_1,\dots,x_d\prangle$ as equivalence classes of non-commutative rational expressions (see also \cite[Section 2]{K-VV12}).
For simplicity, we usually omit "non-commutative" and call them rational expressions unless otherwise indicated.
The same convention will also used for non-commutative polynomials and non-commutative rational functions.

Now we can recursively define the domain of rational expressions.
\begin{definition}
Let $\A$ be a unital complex algebra. For each rational expression $R$ in indeterminates $x_1,\dots,x_d$, we define its $\A$\emph{-domain} $\dom_{\A}(R)\subseteq\A^d$ together with its \emph{evaluation} $R(X)$ for any tuple $X=(X_1,\dots,X_d)\in\dom_{\A}(r)$ by the following rules:
\begin{enumerate}
\item If $R=\lambda$ for any $\lambda\in\C$, we define $\dom_{\A}(R)=\A^{d}$ and $R(X)=\lambda$.
\item If $R=x_i$ for some $1\leq i\leq d$, we define $\dom_{\A}(R)=\A^{d}$ and $R(X)=X_{i}$;
\item For two rational expressions $R_{1}$, $R_{2}$, we define
\[\dom_{\A}(R_{1}\cdot R_{2})=\dom_{\A}(R_{1}+R_{2})=\dom_{\A}(R_{1})\cap\dom_{\A}(R_{2})\]
and
\begin{align*}
(R_1\cdot R_2)(X)=R_1(X)\cdot R_2(X),\\
(R_1+R_2)(X)=R_1(X)+R_2(X).
\end{align*}
\item For a rational expression $R$, we define
\[
\dom_{\A}(R^{-1})=\{X\in\dom_{\A}(R): R(X)\text{ is invertible in }\A\}
\]
and
\[R^{-1}(X)=(R(X))^{-1}.\]
\end{enumerate}
\end{definition}

A very useful tool for studying rational expressions or functions is the so-called linearization or linear representation.
It is well-known and used in many different theories of mathematics (see, for example, \cite{CR99, HMS18}).
In order to introduce this tool, let us first define linear matrices.

\begin{definition}
A matrix $A$ in $M_{m}(\C\langle x_{1},\dots,x_{d}\rangle)$ (which we also identify with $M_m(\C)\otimes\C\langle x_{1},\dots,x_{d}\rangle$) is called \emph{linear} if it is of the form 
\[
A=A_{0}\otimes1+A_{1}\otimes x_{1}+\cdots+A_{d}\otimes x_{d},
\]
where $A_{0},A_{1}\dots,A_{d}$ are $m\times m$ matrices over $\C$.
Note that many authors also call such a matrix \emph{affine linear} since it allows a constant term. 
\end{definition}

The following notion of linear representation of rational expressions is taken from \cite{HMS18} and will be important in the sequel.

\begin{definition}\label{def:formal linear representation}
Let $R$ be a rational expression.
A \emph{formal linear representation of $R$ of dimension $m$} is a tuple $\rho=(u,Q,v)$ consisting of an $m\times m$ linear matrix $Q$ over $\C\left<x_1,\dots,x_d\right>$, a row $u\in M_{1,m}(\C)$ and a column $v\in M_{m,1}(\C)$ for some integer $m$ such that for any complex algebra $\A$
\begin{itemize}
\item $\dom_\A(R)\subset\dom_\A(Q^{-1})$, where
\[
\dom_\A(Q^{-1}):=\{X\in A^d:Q(X)\text{ is invertible in }M_m(\A)\}
\]
\item for any tuple $X=(X_1,\dots,X_d)\in\dom_\A(R)$, we have $R(X)=-uQ(X)^{-1}v$.
\end{itemize}
We denote its \emph{display} by
\[
L:=\begin{pmatrix}0 & u\\v & Q\end{pmatrix}.
\]
\end{definition}

For each rational expression, we can construct a formal linear representation, see \cite[Section 5.2]{HMS18}.
We remark that rational functions also have linear representations which are more or less the same as the ones of rational expressions.
But we won't go into details of their linear representations since they are not needed in this article.
We refer the interested reader to \cite{CR99, Mai2015} for more details.

\subsection{Sylvester rank function}

We will need two rank functions in the sequel.
One is defined algebraically for general rings and the other one is defined via von Neumann algebra theory.
However, they share some common characteristics, which are captured by the notion of \emph{algebraic rank function} (\cite{Mal80}) or \emph{Sylvester matrix rank function} (\cite[Section 7]{Sch85}).
Let us introduce this notion as \emph{Sylvester rank function}.
Since we are mainly interested in objects like von Neumann algebras and free fields, we will state all definitions for unital complex algebras.

\begin{definition}\label{def:Sylvester rank}
For a unital complex algebra $\A$, a \emph{Sylvester rank function on $\A$} is an $\R^+$-valued function $\rk$ on the set of rectangular matrices over $\A$ satisfying:
\begin{enumerate}
\item\label{it:rank of constant} $\rk(0)=0$ and $\rk(1)=1$;
\item\label{it:rank of product} $\rk(AB)\leq\min(\rk(A),\rk(B))$;
\item\label{it:rank of direct sum} $\rk(\begin{pmatrix}A & \0\\\0 & B\end{pmatrix})=\rk(A)+\rk(B)$;
\item\label{it:rank of triangle} $\rk(\begin{pmatrix}A & C\\\0 & B\end{pmatrix})\geq\rk(A)+\rk(B)$.
\end{enumerate}
Moreover, if $\rk(A)>0$ for any non-zero matrix $A$, then we say $\rk$ is \emph{faithful}.
\end{definition}

Item \ref{it:rank of triangle} is not really needed in our study but we keep it for the sake of completeness.
We record here some basic properties of a Sylvester rank function.

\begin{lemma}\label{lem:properties of Sylvester rank}
Let $\A$ be a unital complex algebra with a Sylvester rank function $\rk$.
\begin{enumerate}
\item If $A,B$ are matrices over $\A$ such that $B$ is invertible, then $\rk(AB)=\rk(A)$.
\item If $A\in M_n(\A)$ is invertible, then $\rk(A)=n$.
In particular, $\rk(I_n)=n$.
\item\label{it:subadditivity of Sylvester rank} $\rk$ is subadditive, i.e., for any rectangular matrices $A,B$,
\[
\rk(A+B)\leq\rk(A)+\rk(B).
\]
\end{enumerate}
\end{lemma}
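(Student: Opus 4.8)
The plan is to deduce all three assertions directly from the four defining properties in Definition~\ref{def:Sylvester rank}, the recurring device being to factor a matrix into known pieces and to apply the submultiplicativity property \ref{it:rank of product} in whichever direction is needed. For the first assertion, $\rk(AB)\leq\rk(A)$ is immediate from \ref{it:rank of product}; for the reverse inequality I would write $A=(AB)B^{-1}$ and apply \ref{it:rank of product} once more to obtain $\rk(A)\leq\rk(AB)$, so that $\rk(AB)=\rk(A)$.

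For the second assertion I would first compute $\rk(I_n)=n$ by induction on $n$: the base case is $\rk(I_1)=\rk(1)=1$ from \ref{it:rank of constant}, and the inductive step
$$\rk(I_n)=\rk\begin{pmatrix}1&\0\\ \0&I_{n-1}\end{pmatrix}=\rk(1)+\rk(I_{n-1})=1+(n-1)=n$$
uses the block-diagonal property \ref{it:rank of direct sum}. Then, given an arbitrary invertible $A\in M_n(\A)$, the first assertion applied to the product $I_n\cdot A$ (with the invertible factor on the right) yields $\rk(A)=\rk(I_n\cdot A)=\rk(I_n)=n$; the statement $\rk(I_n)=n$ is a special case.

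For the third assertion, suppose $A$ and $B$ are both $m\times n$ matrices over $\A$. Then
$$A+B=\begin{pmatrix}I_m&I_m\end{pmatrix}\begin{pmatrix}A&\0\\ \0&B\end{pmatrix}\begin{pmatrix}I_n\\I_n\end{pmatrix},$$
so two applications of \ref{it:rank of product} together with the block-diagonal property \ref{it:rank of direct sum} give
$$\rk(A+B)\leq\rk\begin{pmatrix}A&\0\\ \0&B\end{pmatrix}=\rk(A)+\rk(B).$$

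None of these steps is a genuine obstacle; the only subtlety is that the axioms supply one-sided estimates only (submultiplicativity, plus the two block-triangular inequalities), so each equality has to be extracted by exhibiting a factorization that produces the opposite inequality. Property \ref{it:rank of triangle} is not needed.
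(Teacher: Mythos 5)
Your proof is correct and follows essentially the same approach as the paper: item (1) by the factorization $A=(AB)B^{-1}$, item (2) by reducing to $\rk(I_n)$ via property (3) of the definition, and item (3) by factoring through $\diag(A,B)$ and applying submultiplicativity. The one small difference is cosmetic: in the subadditivity step the paper pads $A+B$ with zero blocks into a square matrix $\begin{pmatrix}A+B & \0\\ \0 & \0\end{pmatrix}$ and peels the rank back off with the block-direct-sum axiom, whereas you factor $A+B$ directly as a product of rectangular matrices $\begin{pmatrix}I_m & I_m\end{pmatrix}\diag(A,B)\begin{pmatrix}I_n\\ I_n\end{pmatrix}$, which is a touch cleaner and uses the same two axioms.
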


\begin{proof}
Let $A,B$ be matrices over $\A$ such that $B$ is invertible.
According to Item \ref{it:rank of product} of Definition \ref{def:Sylvester rank}, $\rk(AB)\leq\rk(A)$.
Since $A=(AB)B^{-1}$, we apply Item \ref{it:rank of product} again to see that $\rk(A)\leq\rk(AB)$.
Hence we have $\rk(AB)=\rk(A)$.

Combining the fact $\rk(1)=1$ and Item \ref{it:rank of direct sum} of Definition \ref{def:Sylvester rank}, we see that $\rk(I_n)=n$.
Now, let $A\in M_n(\A)$ be an invertible matrix, then by the above paragraph we have $\rk(A)=\rk(I_n)=n$ since $AA^{-1}=I_n$.

Finally, we consider the matrix factorization
\[
\begin{pmatrix}A+B & \0\\\0 & \0\end{pmatrix}
=\begin{pmatrix}\1 & \1\\\0 & \0\end{pmatrix}
\begin{pmatrix}A & \0\\\0 & B\end{pmatrix}
\begin{pmatrix}\1 & \0\\\1 & \0\end{pmatrix}.
\]
Then the rank of the left hand side is $\rk(A+B)$ thanks to Item \ref{it:rank of direct sum} of Definition \ref{def:Sylvester rank}, while the rank of the right hand side is smaller than or equal to
\[
\rk\big(\begin{pmatrix}A & \0\\\0 & B\end{pmatrix}\big)=\rk(A)+\rk(B)
\]
thanks to Item \ref{it:rank of product} and \ref{it:rank of direct sum}.
This completes the proof of the lemma.
\end{proof}

In general a square matrix $A$ with maximal $\rk(A)$ may not be invertible.
Let us introduce the following notion of regularity for Sylvester rank function which connects the maximality of a Sylvester rank function with the invertibility of matrices.

\begin{definition}
Let $\A$ be a unital complex algebra with a Sylvester rank function $\rk$.
If for any integer $n$ and any matrix $A\in M_n(\A)$ with $\rk(A)=n$, $A$ is invertible, then we say $\rk$ is \emph{regular}.
\end{definition}

We know that for each rational expression we can associate a formal linear representation. The explicit link between the rank of a polynomial expression and the rank of its formal linearization has been established in \cite[Lemma 3.4]{BBL2021}. Let us reformulate this rank equality in our more general situation, for rational expressions and an arbitrary tuple of random variables.
\begin{lemma}\label{lem:inner rank of display}
Let $\A$ be a complex algebra with a Sylvester rank function $\rk$ and let $X=(X_1,\dots,X_d)$ be a given tuple over $M_n(\A)$ for some integer $n$.
Let $R$ be a rational expression and $X\in\dom_{M_n(\A)}(R)$.
For any formal linear representation $\rho=(u,Q,v)$ of $R$ of dimension $m$ (see Definition \ref{def:formal linear representation}), we have
\[
\rk(R(X))+nm=\rk(L(X)).
\]
where $L$ is the display of $\rho$.
\end{lemma}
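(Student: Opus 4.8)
The plan is to run the standard Schur-complement (linearization) argument, now in the setting of a Sylvester rank function. Since $X\in\dom_{M_n(\A)}(R)\subseteq\dom_{M_n(\A)}(Q^{-1})$, the matrix $Q(X)$ is invertible in $M_m(M_n(\A))\cong M_{nm}(\A)$. Writing $L(X)$ as a $2\times 2$ block matrix over $M_n(\A)$ — where the scalar blocks $0$, $u$, $v$ of the display are, after evaluation, the corresponding matrices over $M_n(\A)$ obtained by replacing each complex entry $c$ by $c\,I_n$ — we have
\[
L(X)=\begin{pmatrix}\0 & u(X)\\ v(X) & Q(X)\end{pmatrix},
\]
with $\0$ the $n\times n$ zero matrix, $u(X)\in M_{n,nm}(\A)$, $v(X)\in M_{nm,n}(\A)$.

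First I would conjugate $L(X)$ by unipotent triangular matrices to produce a block-diagonal form. Concretely, left-multiply by $\begin{pmatrix}I_n & -u(X)Q(X)^{-1}\\ \0 & I_{nm}\end{pmatrix}$ and right-multiply by $\begin{pmatrix}I_n & \0\\ -Q(X)^{-1}v(X) & I_{nm}\end{pmatrix}$; both are invertible over $M_{n(1+m)}(\A)$, being unipotent triangular. A direct block computation gives
\[
\begin{pmatrix}-u(X)Q(X)^{-1}v(X) & \0\\ \0 & Q(X)\end{pmatrix}.
\]
By the defining property of the formal linear representation $\rho=(u,Q,v)$, applied over the algebra $M_n(\A)$, the top-left block equals $R(X)$ (this is precisely why amplifying $u,v$ by $I_n$ is harmless: the evaluation map $\C\langle x_1,\dots,x_d\rangle\to M_n(\A)$ sends scalars to scalar multiples of $I_n$, and $R(X)=-u(X)Q(X)^{-1}v(X)$ is exactly what Definition~\ref{def:formal linear representation} asserts).

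It then remains to read off the ranks. By Lemma~\ref{lem:properties of Sylvester rank}(1), multiplication by invertible matrices preserves $\rk$, so
$\rk(L(X))=\rk\!\left(\begin{pmatrix}R(X) & \0\\ \0 & Q(X)\end{pmatrix}\right)$;
by Item~\ref{it:rank of direct sum} of Definition~\ref{def:Sylvester rank} this equals $\rk(R(X))+\rk(Q(X))$; and by Lemma~\ref{lem:properties of Sylvester rank}(2), since $Q(X)$ is invertible in $M_{nm}(\A)$, we have $\rk(Q(X))=nm$. Combining these identities yields $\rk(L(X))=\rk(R(X))+nm$, as claimed. There is no genuine obstacle here: the whole content is the block factorization plus the axioms of a Sylvester rank function, and the only thing demanding care is the bookkeeping of block sizes and the observation that the triangular conjugating factors are unipotent, hence invertible over $\A$.
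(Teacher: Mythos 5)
Your argument is correct and is essentially the same Schur-complement factorization the paper uses; the only cosmetic difference is that you conjugate by unipotent triangular matrices (leaving $Q(X)$ in the bottom-right block and then invoking $\rk(Q(X))=nm$), whereas the paper absorbs $Q(X)^{-1}$ into the left factor so that the bottom-right block becomes $I_{nm}$ directly.
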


\begin{proof}
Since $X\in\dom_{M_n(\A)}(R)$, we have $Q(X)$ is invertible and $R(X)=-uQ(X)^{-1}v$.
Thus the factorization
\begin{align*}
\begin{pmatrix}R(X) & \0\\\0 & I_{mn}\end{pmatrix}
&=\begin{pmatrix}-uQ(X)^{-1}v & \0\\\0 & I_{mn}\end{pmatrix}\\
&=\begin{pmatrix}I_n & -uQ(X)^{-1}\\\0  & Q(X)^{-1}\end{pmatrix}
\begin{pmatrix}\0 & u\\v & Q(X)\end{pmatrix}
\begin{pmatrix}I_n & \0\\-Q(X)^{-1}v & I_{mn}\end{pmatrix}
\end{align*}
follows.
Hence
\[
\rk\big(\begin{pmatrix}R(X) & \0\\\0 & I_{mn}\end{pmatrix}\big)
=\rk\big(\begin{pmatrix}\0 & u\\v & Q(X)\end{pmatrix}\big)
\]
as the rank function $\rk$ is preserved by invertible matrices.
So we see $\rk(L(X))=\rk(R(X))+\rk(I_{nm})=\rk(R(X))+nm$, which completes the proof.
\end{proof}

\begin{prop}\label{prop:comparison rational closure}
Let $\A$ and $\B$ be two unital complex algebras with faithful and regular Sylvester rank functions $\rk_\A$ and $\rk_\B$ respectively.
Let $X=(X_1,\dots,X_d)\in \A^d$, $Y=(Y_1,\dots,Y_d)\in \B^d$ be two tuples satisfying that for any matrix $A$ over $\C\langle x_1,\dots,x_d\rangle$,
\[
\rk_\B(A(Y))\leq \rk_\A(A(X)).
\]
Then we have the following.
\begin{enumerate}
\item There is a unique specialization from the rational closure $\A_{X}$ to $\B_{Y}$.
\item For any rational expression $R$ in variables $x_1,\dots,x_d$ such that $R(Y)$ is well-defined, $R(X)$ is also well-defined and
\[\rk_\B(R(Y))\leq \rk_\A(R(X)).\]
\end{enumerate}
\end{prop}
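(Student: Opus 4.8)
The plan is to deduce everything from Lemma \ref{lem:inner rank of display}, which reduces the rank of a rational-expression evaluation to the rank of a \emph{linear} matrix, together with the hypothesis that the linear-matrix inequality $\rk_\B(A(Y))\leq\rk_\A(A(X))$ holds for all matrices $A$ over $\C\langle x_1,\dots,x_d\rangle$ (in particular for linear ones). First I would prove item (2). Given a rational expression $R$ with $Y\in\dom_\B(R)$, pick a formal linear representation $\rho=(u,Q,v)$ of $R$ of some dimension $m$; by Definition \ref{def:formal linear representation}, $Q(Y)$ is invertible in $M_m(\B)$, so by Lemma \ref{lem:properties of Sylvester rank}(2) we have $\rk_\B(Q(Y))=m$, and since $Q$ is linear the hypothesis gives $\rk_\A(Q(X))\geq\rk_\B(Q(Y))=m$, hence $\rk_\A(Q(X))=m$; regularity of $\rk_\A$ then forces $Q(X)$ to be invertible in $M_m(\A)$, so $X\in\dom_\A(Q^{-1})$. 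Now I need to know $X\in\dom_\A(R)$, not just $X\in\dom_\A(Q^{-1})$; this is the one genuinely delicate point, and I would handle it by induction on the construction of the rational expression $R$ (addition, multiplication, inversion), applying the linearization argument at each inverse that appears as a sub-expression — equivalently, by using that a formal linear representation of $R$ controls the invertibility needed at every node of $R$, so the hypothesis propagates through all of them. Once $X\in\dom_\A(R)$ is established, apply Lemma \ref{lem:inner rank of display} with $n=1$ to both $X$ and $Y$: $\rk_\A(R(X))=\rk_\A(L(X))-m$ and $\rk_\B(R(Y))=\rk_\B(L(Y))-m$, where $L$ is the display of $\rho$. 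Since $L$ is a linear matrix, the hypothesis gives $\rk_\B(L(Y))\leq\rk_\A(L(X))$, and subtracting $m$ yields $\rk_\B(R(Y))\leq\rk_\A(R(X))$, as desired.

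For item (1), I would build the specialization from $\A_X$ to $\B_Y$ directly on entries of inverses of matrices in $\Sigma_X$. If $A\in\Sigma_X$, i.e.\ $A\in M_k(\C\langle x_1,\dots,x_d\rangle)$ with $A(X)$ invertible in $M_k(\A)$, then by Lemma \ref{lem:properties of Sylvester rank}(2) and the (matricial) faithfulness-plus-regularity machinery $\rk_\A(A(X))=k$; however, to conclude $A(Y)$ is invertible I need the inequality in the \emph{other} direction at the level of matrices, namely $\rk_\A(A(X))\leq\rk_\B(A(Y))$. Since that is not assumed, I instead restrict to matrices $A$ that arise as linearizations of rational expressions $R$ with $Y\in\dom_\B(R)$ — equivalently, I use the fact (from item (2)) that $\Sigma_Y\subseteq\Sigma_X$ after passing through linear representations, so every element of $\B_Y$ is of the form $R(Y)$ for some rational expression $R$ well-defined at $Y$, hence also well-defined at $X$, and the assignment $R(Y)\mapsto R(X)$... no: I want the map to go from $\A_X$ to $\B_Y$. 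The cleaner route is: define $f$ on $\A_X$ by sending the $(i,j)$-entry of $A(X)^{-1}$ to the $(i,j)$-entry of $A(Y)^{-1}$ whenever $A\in\Sigma_X$ \emph{and} $A(Y)$ happens to be invertible, and take $\A_f$ to be the subalgebra of $\A_X$ generated by such entries; the subhomomorphism condition $\Sigma_Y\subseteq\Sigma_{\phi_f}$ is then automatic, because any $B\in\Sigma_Y$ lies in $\Sigma_X$ by the regularity argument applied to the linearization viewpoint. Well-definedness of $f$ (independence of the representing matrix $A$) follows from the standard fact that two such representations of the same element of $\A_X$ are related by operations preserved under any $\Cx$-homomorphism; uniqueness of the specialization follows from Proposition \ref{prop:isomorphism} together with the observation that any two specializations $\A_X\to\B_Y$ agree on $\C\langle X\rangle$ and hence, being subhomomorphisms that invert everything in $\Sigma_Y$, agree on all of $\A_X$ by the entry-by-entry argument used in the proof of Proposition \ref{prop:isomorphism}.

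The main obstacle, as flagged above, is bookkeeping the domain condition: showing that $Y\in\dom_\B(R)$ implies $X\in\dom_\A(R)$, not merely that the top-level linearization $Q$ of $R$ is invertible at $X$. A rational expression has inverses nested at arbitrary depth, and $\dom_\A(R)$ is defined by requiring invertibility at \emph{each} such node. The fix is to observe that a formal linear representation is built recursively (as in \cite[Section 5.2]{HMS18}) so that invertibility of the single linear matrix $Q$ at a point already encodes invertibility of all the intermediate sub-expressions at that point; alternatively, one argues by structural induction on $R$, at each inversion node invoking the already-established rank inequality for the sub-expression plus regularity of $\rk_\A$. Either way this is the step that needs care; the rest is a routine combination of Lemma \ref{lem:inner rank of display}, Lemma \ref{lem:properties of Sylvester rank}, regularity, and the formalism of specializations from Subsection \ref{sec:category of Cx-algebras}.
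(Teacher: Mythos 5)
Your argument for item (2) is essentially the paper's: you define the class of rational expressions for which the conclusion holds and close it under $+$, $\cdot$, and ${}^{-1}$, handling each inversion node via a formal linear representation (Lemma \ref{lem:inner rank of display}) together with regularity of $\rk_\A$. That is exactly the set $\mathcal{R}$ argument in the paper. Your alternative option (a) --- that the single linear matrix $Q$ from a formal linear representation already encodes the invertibility of every sub-expression --- is not guaranteed by Definition \ref{def:formal linear representation}, which only requires $\dom_\A(R)\subset\dom_\A(Q^{-1})$ and not the reverse inclusion; if you go that route you must cite the specific property of the HMS18 construction. But since you also propose the structural induction, item (2) is fine.

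Item (1) has a genuine gap that you flag but do not close. Your map $f$ is defined entrywise by $A(X)^{-1}_{ij}\mapsto A(Y)^{-1}_{ij}$, and you dismiss well-definedness as ``the standard fact that two such representations of the same element of $\A_X$ are related by operations preserved under any $\Cx$-homomorphism.'' This is precisely the point that needs an argument: if $A(X)^{-1}_{ij}=B(X)^{-1}_{kl}$ in $\A$, nothing you have said forces $A(Y)^{-1}_{ij}=B(Y)^{-1}_{kl}$ in $\B$. The rank hypothesis does give you one direction of control (if an entry at $X$ vanishes, so does the corresponding entry at $Y$, by applying the display-rank identity and faithfulness), but turning a pointwise vanishing criterion into well-definedness of a candidate algebra map requires extra bookkeeping. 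The paper avoids this entirely with a graph construction, following \cite[Theorem 6.3]{Jaikin2019}: set $Z_i:=X_i\oplus Y_i$ in $\cC:=\A\oplus\B$, note $\Sigma_Z=\Sigma_X\cap\Sigma_Y$ so the rational closure $\cC_Z$ sits inside $\A_X\oplus\B_Y$, and show the coordinate projection $\pi_1:\cC_Z\to\A_X$ is injective using exactly the faithfulness-plus-display argument you sketched. Then $f:=\pi_2\circ\pi_1^{-1}$ is automatically a well-defined $\Cx$-algebra homomorphism on $\mathrm{im}(\pi_1)$, and $\Sigma_Y\subset\Sigma_X$ (via regularity) shows it is a subhomomorphism. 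This diagonalization trick is the missing ingredient, and it also removes your reliance on item (2) inside the proof of item (1); you should learn it, because it recurs whenever one needs to compare rational closures without an a~priori map between them.
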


\begin{proof}
Let us first give a proof of the first item which is inspired by the proof of \cite[Theorem 6.3]{Jaikin2019}. Let $Z:=(X_1\oplus Y_1,\ldots,X_d\oplus  Y_d)\in \cC:=\A\oplus \B$.
For all $A\in M_n(\Cx)$, $$A(Z)=A(X)\oplus A(Y),$$
which means that $\Sigma_{Z}= \Sigma_{X}\cap \Sigma_{Y}$.
So we see that the rational closure $\cC_Z\subset\A_X\oplus\B_Y$ and thus we can consider the projection maps $\pi_1:\mathcal{C}_Z \to \A_X$ and $\pi_2:\mathcal{C}_Z \to \B_Y$.

Let $c$ be an element in $\cC_Z$.
There exists some $m\times m$ matrix $A\in\Sigma_{Z}$ and some integers $1\leq i,j\leq m$ such that $c$ is the $(i,j)$-entry of $A(Z)^{-1}$.
That is, we can write $c=e_iA(Z)^{-1}e_j^T$, where $(e_1,\dots,e_m)$ stands for the canonical basis of $\C^m$.
Suppose that $\pi_1(c)=e_1A(X)^{-1}e_j^T=0$.
According to the proof of Lemma \ref{lem:inner rank of display},
\[
\rk_\A(\pi_1(c))+m=\rk_\A\big(\begin{pmatrix}0 & e_i\\e_j^T & A(X)\end{pmatrix}\big)\text{ and }\rk_\B(\pi_2(c))+m=\rk_\B\big(\begin{pmatrix}0 & e_i\\e_j^T & A(Y)\end{pmatrix}\big).
\]
(Lemma \ref{lem:inner rank of display} addresses linear matrices but its proof actually works for matrices that are not necessarily linear.)
So by our assumption,
\[
\rk_\B(\pi_2(c))\leq\rk_\A(\pi_1(c))=0.
\]
Because of the faithfulness of $\rk$, it allows us to say that $\pi_2(c)=0$.
Consequently, $c=(0,0)$.
Hence $\pi_1$ is injective and it is possible to define an $\Cx$-algebra isomorphism $g:\im(\pi_1)\to \mathcal{C}_Z$.

Now, we consider $f:=\pi_2\circ g:\im(\pi_1)\to \B_Y$ and thus $\Sigma_{\phi_f}=\Sigma_Z$.
Then we want to see that it is a subhomomorphisms from $\A_X$ to $\B_Y$, i.e., $\Sigma_Y\subset\Sigma_Z$.
If $A\in \Sigma_{Y}$, we know that, by the regularity of $\rk_\B$,
$$n=\rk_\B(A(Y))\leq \rk_\A(A(X)).$$
Thanks to the regularity of $\rk_\A$, $A\in \Sigma_{X}$ and thus $\Sigma_Y\subset\Sigma_X$.
It follows that $\Sigma_Y\subseteq\Sigma_Z$, as desired.

Finally, in order to see that the above subhomomorphism induces a unique specialization, let $f':\A_{f'}\rightarrow\B_Y$ be another subhomomorphism with the homomorphism $\phi_{f'}:\Cx\rightarrow\A_{f'}$.
Then $f'$ is equivalent to $f$ since $\A_{f'}\subset\im(\pi)$ and $f'$ agrees with $f$ on $\A_{f'}$.
The inclusion $\A_{f'}\subset\im(\pi)$ is due to the fact that $\Sigma_Y\subset\Sigma_{\phi_{f'}}\subset\Sigma_X$ and thus $\Sigma_{\phi_{f'}}\subset\Sigma_Z$.
Moreover, $f'$ and $f$ agree since $f'(A(X)^{-1})=f(A(X)^{-1})=A(Y)^{-1}$ when $A\in\Sigma_{\phi_{f'}}$.

Now, let $\mathcal{R}$ be the set of rational expressions such that $R(Y)$ and $R(X)$ are well-defined, and
$\rk_\B(R(Y))\leq \rk_\A(R(X)).$
This set contains the constant and the variables $x_1,\ldots,x_d$. Moreover, it is stable by sums and products: it is clear for the well-definedness, and the inequality
$\rk_\B(R(Y))\leq \rk_\A(R(X))$
is a consequence of Lemma~\ref{lem:inner rank of display} as there exist formal linear representations for sums and products.

In order to conclude that $\mathcal{R}$ contains all rational expressions such that $R(Y)$ is well-defined, it suffices to prove that, for all $R\in \mathcal{R}$, whenever $R(Y)$ is invertible, then $R(X)$ is invertible and 
\[\rk_\B(R(Y)^{-1})\leq \rk_\A(R(X)^{-1}).\]
So let  $R\in \mathcal{R}$ such that $R(Y)$ is invertible. We have
\[1=\rk_\B(R(Y))\leq \rk_\A(R(X))\]
which implies that $R(X)$ is invertible, because of the regularity of $\rk$. The inequality
\[\rk_\B(R(Y)^{-1})\leq \rk_\A(R(X)^{-1})\]
is again a consequence of Lemma~\ref{lem:inner rank of display} as there exist formal linear representations for the inverse of any rational expression.
\end{proof}

\begin{corollary}\label{cor:isomorphic rational closure}
Let $\A$ and $\B$ be two unital complex algebras with faithful and regular Sylvester rank functions $\rk_\A$ and $\rk_\B$ respectively.
Let $X=(X_1,\dots,X_d)\in \A^d$, $Y=(Y_1,\dots,Y_d)\in \B^d$ be two tuples satisfying that for any matrix $A$ over $\C\langle x_1,\dots,x_d\rangle$,
\[
\rk_\A(A(X))=\rk_\B(A(Y)).
\]
Then the rational closures $\A_{X}$ and $\B_{Y}$ are isomorphic. Moreover, for any rational expression $R$ in variables $x_1,\dots,x_d$, $R(Y)$ is well-defined if and only if $R(X)$ is also well-defined and
\[\rk_\B(R(Y))= \rk_\A(R(X)).\]
\end{corollary}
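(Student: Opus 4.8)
The plan is to obtain this as the two-sided version of Proposition~\ref{prop:comparison rational closure}, applying that result in both directions. The hypothesis $\rk_\A(A(X))=\rk_\B(A(Y))$ for all $A\in M_n(\Cx)$ contains both inequalities $\rk_\B(A(Y))\leq\rk_\A(A(X))$ and $\rk_\A(A(X))\leq\rk_\B(A(Y))$. Applying Proposition~\ref{prop:comparison rational closure}(2) to the pair $(X,Y)$ shows that whenever $R(Y)$ is well-defined then so is $R(X)$, with $\rk_\B(R(Y))\leq\rk_\A(R(X))$; applying the same proposition to the pair $(Y,X)$ gives the reverse implication together with $\rk_\A(R(X))\leq\rk_\B(R(Y))$. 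Combining the two yields the asserted equivalence of well-definedness and the equality $\rk_\B(R(Y))=\rk_\A(R(X))$ for every rational expression $R$.

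It remains to exhibit the isomorphism $\A_X\cong\B_Y$. First I would observe that $\Sigma_X=\Sigma_Y$: if $A\in M_n(\Cx)$ lies in $\Sigma_X$ then $\rk_\A(A(X))=n$ by regularity of $\rk_\A$, hence $\rk_\B(A(Y))=n$, hence $A\in\Sigma_Y$ by regularity of $\rk_\B$, and the converse is symmetric. Following the construction in the proof of Proposition~\ref{prop:comparison rational closure}, set $Z:=(X_1\oplus Y_1,\dots,X_d\oplus Y_d)$ over $\cC:=\A\oplus\B$; since $A(Z)=A(X)\oplus A(Y)$ for every $A\in M_n(\Cx)$, we get $\Sigma_Z=\Sigma_X\cap\Sigma_Y=\Sigma_X=\Sigma_Y$, and the rational closure $\cC_Z$ is contained in $\A_X\oplus\B_Y$, so that the coordinate projections give $\Cx$-algebra homomorphisms $\pi_1:\cC_Z\to\A_X$ and $\pi_2:\cC_Z\to\B_Y$.

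Next I would check that $\pi_1$ and $\pi_2$ are bijections. Surjectivity of $\pi_1$ is immediate: any generator $c=e_iA(X)^{-1}e_j^T$ of $\A_X$ with $A\in\Sigma_X=\Sigma_Z$ is the image under $\pi_1$ of the $(i,j)$-entry of $A(Z)^{-1}\in M_m(\cC_Z)$, and likewise for $\pi_2$. For injectivity of $\pi_1$, suppose $\pi_1(c)=0$ for $c=e_iA(Z)^{-1}e_j^T$; applying the rank identity from the proof of Lemma~\ref{lem:inner rank of display} to the bordered matrices below, we get
\[
\rk_\B(\pi_2(c))+m=\rk_\B\begin{pmatrix}0 & e_i\\ e_j^T & A(Y)\end{pmatrix}=\rk_\A\begin{pmatrix}0 & e_i\\ e_j^T & A(X)\end{pmatrix}=\rk_\A(\pi_1(c))+m=m,
\]
so $\rk_\B(\pi_2(c))=0$ and hence $\pi_2(c)=0$ by faithfulness of $\rk_\B$, whence $c=(0,0)$. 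Injectivity of $\pi_2$ follows symmetrically from faithfulness of $\rk_\A$. Therefore $f:=\pi_2\circ\pi_1^{-1}:\A_X\to\B_Y$ is a $\Cx$-algebra isomorphism, and by construction it sends $R(X)$ to $R(Y)$ whenever both are defined.

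The only point requiring care is that the rank identity of Lemma~\ref{lem:inner rank of display} is being used for the matrices $\begin{pmatrix}0 & e_i\\ e_j^T & A(\cdot)\end{pmatrix}$, which are not linear; this is legitimate because, as already remarked in the proof of Proposition~\ref{prop:comparison rational closure}, the proof of that lemma works verbatim for arbitrary (not necessarily linear) matrices. Apart from this bookkeeping, the corollary is a direct specialization of arguments already carried out above, so I do not expect any genuine obstacle.
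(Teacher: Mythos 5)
Your proof is correct, and the final conclusion matches the paper's, but the route through the isomorphism is noticeably different. The paper treats Proposition~\ref{prop:comparison rational closure} as a black box, obtaining a specialization $\A_X\to\B_Y$ and another $\B_Y\to\A_X$, and then invokes Proposition~\ref{prop:isomorphism} (uniqueness of the specialization from $\A_X$ to itself, applied to both composites) to conclude the two specializations are mutually inverse, hence genuine isomorphisms; the statement about rational expressions is then read off from Proposition~\ref{prop:comparison rational closure}(2) in both directions, exactly as you do in your opening paragraph. You instead re-open the proof of Proposition~\ref{prop:comparison rational closure}: you observe $\Sigma_X=\Sigma_Y=\Sigma_Z$ (using regularity on both sides), and then in the direct-sum construction you verify that \emph{both} coordinate projections $\pi_1,\pi_2$ from $\cC_Z$ are bijective onto $\A_X$ and $\B_Y$ respectively (injectivity requiring faithfulness on both sides, plus the bordered-matrix rank identity with your correct remark that this identity in Lemma~\ref{lem:inner rank of display} is not restricted to linear matrices); the isomorphism is then $\pi_2\circ\pi_1^{-1}$. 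Your version is more self-contained and avoids having to check that specializations compose and that the composite is again a specialization, which is what Proposition~\ref{prop:isomorphism} would really require (this compatibility under composition of equivalence classes of subhomomorphisms is a slightly delicate bookkeeping point that the paper leaves implicit); the cost is that you duplicate about a page of the argument for Proposition~\ref{prop:comparison rational closure}. Both are valid proofs.
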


Not that if each tuple is closed under taking the adjoint (for example if they are self-adjoint variables), it is also possible thanks to \cite[Theorem 6.3]{Jaikin2019} to go beyond and prove an isomorphism between the regular closures (which include the rational closures).
\begin{proof}
Thanks to Lemma~\ref{prop:comparison rational closure}, we know that there is a specialization from $\A_X$ to $\B_{Y}$ and another one from $\B_Y$ to $\A_{X}$.
Thanks to Proposition~\ref{prop:isomorphism}, those two specialisations are inverses of each other, and by consequence, are isomorphisms of algebras. The last part of the corollary is a consequence of the last part Proposition~\ref{prop:isomorphism}.
\end{proof}

\begin{lemma}\label{lem:comparison of Sylvester rank}
Let $\A$ be a unital complex algebra with a Sylvester rank function $\rk$, and let $X=(X_1,\dots,X_d)$ and $Y=(Y_1,\dots,Y_d)$ be given tuples over $\A$.
\begin{enumerate}
\item For a given matrix $A\in M_n(\C\left<x_1,\dots,x_d\right>)$, there exists a constant $c:=c(A)$ such that
\[
\rk(A(X)-A(Y))\leq c\max_{i=1,\dots,d}\{\rk(X_i-Y_i)\}
\]
and $c\leq n (d+2d^2+\cdots k d^k)$, where $k$ is the degree of $A$, i.e., the maximal degree of all entries of $A$.
Moreover, if $A$ is linear, i.e., $k=1$, we have
$$\rk(A(X)-A(Y))\leq n\cdot (\rk(X_1-Y_1)+\ldots+\rk(X_d-Y_d)).$$
\item Let $R$ be a rational expression such that $X,Y\in\dom_{\A}(R)$.
For any formal linear representation $(u,Q,v)$ of $R$ of dimension $n$, we have
\[
\rk(R(X)-R(Y))\leq n \cdot(\rk(X_1-Y_1)+\ldots+\rk(X_d-Y_d)).
\]
\end{enumerate}
\end{lemma}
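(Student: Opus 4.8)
The plan is to prove the linear case of Item~(1) directly from the four axioms of a Sylvester rank function, then to bootstrap to arbitrary polynomial matrices via a homogeneous decomposition combined with a telescoping identity, and finally to obtain Item~(2) by applying the linear case of Item~(1) to the linear matrix $Q$ of a formal linear representation of $R$ (Definition~\ref{def:formal linear representation}).

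\emph{Linear case of Item~(1).} Writing $A=A_0\otimes 1+A_1\otimes x_1+\cdots+A_d\otimes x_d$ with $A_i\in M_n(\C)$, one has, inside $M_n(\A)\cong M_n(\C)\otimes\A$, the identity $A(X)-A(Y)=\sum_{i=1}^d A_i\otimes(X_i-Y_i)$. By subadditivity of $\rk$ (Lemma~\ref{lem:properties of Sylvester rank}(\ref{it:subadditivity of Sylvester rank})) it is enough to bound each term, and since $A_i\otimes(X_i-Y_i)=(A_i\otimes 1)(I_n\otimes(X_i-Y_i))$, the product axiom (Item~\ref{it:rank of product} of Definition~\ref{def:Sylvester rank}) gives $\rk(A_i\otimes(X_i-Y_i))\le\rk(I_n\otimes(X_i-Y_i))$, while iterating the direct-sum axiom (Item~\ref{it:rank of direct sum}) gives $\rk(I_n\otimes(X_i-Y_i))=n\,\rk(X_i-Y_i)$. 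Summing over $i$ produces $\rk(A(X)-A(Y))\le n\sum_{i=1}^d\rk(X_i-Y_i)$.

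\emph{General polynomial case of Item~(1).} I would decompose $A=\sum_{j=0}^k A^{(j)}$ into its homogeneous components and write $A^{(j)}=\sum_{|w|=j}C_w\otimes w$, where $w$ ranges over the $d^j$ words of length $j$ and $C_w\in M_n(\C)$. The elementary input is that for a single word $w=x_{i_1}\cdots x_{i_j}$ the telescoping identity
\[
w(X)-w(Y)=\sum_{l=1}^j X_{i_1}\cdots X_{i_{l-1}}\,(X_{i_l}-Y_{i_l})\,Y_{i_{l+1}}\cdots Y_{i_j}
\]
together with subadditivity and the product axiom yields $\rk(w(X)-w(Y))\le jM$, where $M:=\max_{1\le i\le d}\rk(X_i-Y_i)$. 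Factoring $C_w\otimes(w(X)-w(Y))=(C_w\otimes 1)(I_n\otimes(w(X)-w(Y)))$ exactly as in the linear case, then summing over the $d^j$ words of each degree and over $j=1,\dots,k$ (the degree-$0$ part cancels in the difference), gives
\[
\rk(A(X)-A(Y))\le\sum_{j=1}^k n\,j\,d^j\,M=n\,(d+2d^2+\cdots+kd^k)\,M,
\]
which is precisely the asserted estimate, with the stated bound on $c$.

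\emph{Item~(2).} Given a formal linear representation $(u,Q,v)$ of $R$ of dimension $n$, the hypothesis $X,Y\in\dom_{\A}(R)\subset\dom_{\A}(Q^{-1})$ makes $Q(X)$ and $Q(Y)$ invertible in $M_n(\A)$ with $R(X)=-uQ(X)^{-1}v$ and $R(Y)=-uQ(Y)^{-1}v$, so the resolvent identity
\[
R(X)-R(Y)=u\,Q(X)^{-1}\,(Q(X)-Q(Y))\,Q(Y)^{-1}\,v
\]
exhibits $R(X)-R(Y)$ as a product having $Q(X)-Q(Y)$ among its factors; the product axiom then gives $\rk(R(X)-R(Y))\le\rk(Q(X)-Q(Y))$, and since $Q$ is a linear matrix of size $n$, the already-proved linear case of Item~(1) finishes the proof with $\rk(Q(X)-Q(Y))\le n\sum_{i=1}^d\rk(X_i-Y_i)$. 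I do not expect a real obstacle anywhere; the only delicate point is to regard the scalar matrices $A_i$, $C_w$ consistently as the elements $A_i\otimes 1$, $C_w\otimes 1$ of $M_n(\A)$ so that the product axiom genuinely lets one pull out the scalar factor, the rest being bookkeeping with Definition~\ref{def:Sylvester rank} and Lemma~\ref{lem:properties of Sylvester rank}.
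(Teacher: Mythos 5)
Your proof is correct and follows essentially the same route as the paper: the linear case via subadditivity and the tensor factorization, the general polynomial case by decomposing into matrix-valued monomials and the telescoping identity for word differences, and Item (2) via the resolvent identity for $Q(X)^{-1}-Q(Y)^{-1}$ reducing to the linear case. The only cosmetic difference is that the paper factors out $u$ and $v$ by padding to a square matrix and invoking the direct-sum axiom, whereas you invoke the product axiom directly on the three-factor product, which is equivalent.
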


\begin{proof}
First we want to prove Item (1).
We denote $L:=\max_{i=1,\dots,d}\{\rk(X_i-Y_i)\}$ for the sake of simplicity.
Suppose that $A=M\otimes x_{i_1}\cdots x_{i_k}$ is a matrix-valued monomial of degree $k$ for indices $i_1,\dots,i_k\in\{1,\dots,d\}$.
Then we want to show that $\rk(A(X)-A(Y))\leq nkL$.
Note that we can write
\[
A(X)-A(Y)=(M\otimes 1)\cdot(I_n\otimes(\prod_{j=1}^kX_{i_j}-\prod_{j=1}^kY_{i_j})),
\]
as a consequence of Item \eqref{it:rank of product} and \eqref{it:rank of direct sum} of Definition \ref{def:Sylvester rank}, we have
\[
\rk(A(X)-A(Y))\leq n\cdot\rk(\prod_{j=1}^kX_{i_j}-\prod_{j=1}^kY_{i_j}).
\]
Since
\[
\prod_{j=1}^kX_{i_j}-\prod_{j=1}^kY_{i_j}=\sum_{j=1}^k X_{i_{1}}\cdots X_{i_{j-1}}(X_{i_j}-Y_{i_j})Y_{i_{j+1}}\cdots Y_{i_{k}},
\]
by Item \eqref{it:rank of product} of Definition \ref{def:Sylvester rank} and Item \eqref{it:subadditivity of Sylvester rank} Lemma \ref{lem:properties of Sylvester rank},
\[
\rk(\prod_{j=1}^kX_{i_j}-\prod_{j=1}^kY_{i_j})\leq k\cdot\rk(X_{i_j}-Y_{i_j})\leq kL,
\]
as desired.
Therefore, if $A$ is a general matrix in $M_n(\Cx)$, the desired inequality follows by counting the monomials in $A$ with degree smaller or equal than $k$.
Moreover, the particular inequality for the linear case with respect to $\sum_{i=1}^d\rk(X_i-Y_i)$ can be seen by a similar argument.

Next, we want to prove Item (2).
Since $X,Y\in\dom_{\A}(R)$, $Q(X)$ and $Q(Y)$ are invertible, and we have $R(X)=-uQ(X)^{-1}v$ and $R(X)=-uQ(Y)^{-1}v$.
We write
$$\rk(R(X)-R(Y))=\rk(u(Q(X)^{-1}-Q(Y)^{-1})v)\leq \rk(Q(X)^{-1}-Q(Y)^{-1})$$
due to the matrix factorization
\[
\begin{pmatrix}u(Q(X)^{-1}-Q(Y)^{-1})v &\0 \\ \0 &\0_{n-1}\end{pmatrix}
=\begin{pmatrix}u \\ \0\end{pmatrix}(Q(X)^{-1}-Q(Y)^{-1})
\begin{pmatrix}v & \0\end{pmatrix}.
\]
We compute
$$\rk(Q(X)^{-1}-Q(Y)^{-1})=\rk(Q(Y)^{-1}(Q(Y)-Q(X))Q(X)^{-1})\leq \rk(Q(Y)-Q(X))$$
since the rank is preserved by invertible matrices. We conclude thanks to the first item for linear matrices.
\end{proof}

\subsection{Inner rank and von Neumann rank}

The first example of a Sylvester rank function is based on the inner rank, which is defined as follows.

\begin{definition}
Let $\A$ be a unital complex algebra.
For $A\in M_{m,n}(\A)$, $m,n\in\bN$, we define the \emph{inner rank} $\rho_\A(A)$ by
$$\rho_\A(A):=\min\{r\geq 1:A=PQ, P\in M_{m\times r}(\A), Q\in M_{r\times n}(\A) \}$$
and $\rho_\A(\0)=0$ for any rectangular matrix $\0$.
In addition, we call $A$ \emph{full} if $\rho_\A(A)=\min\{m,n\}$.
\end{definition}

The first two items in Definition \ref{def:Sylvester rank} follows from the definition of inner rank.
But an inner rank may not be a Sylvester rank function in general.
In the sequel, inner rank will majorly be considered on the unital complex algebra $\C\langle x_1,\dots,x_d\rangle$ of non-commutative polynomials.
This algebra is known as a \emph{Sylvester domain} (see, for example, its definition and properties \cite[Section 5.5]{Coh06}).
On a Sylvester domain, Item \ref{it:rank of direct sum} is satisfied and it's not difficult to see that Item \ref{it:rank of triangle} also holds.
So $\rho_{\C\langle x_1,\dots,x_d\rangle}$ is a faithful Sylvester function.

Moreover, the inner rank on $\C\langle x_1,\dots,x_d\rangle$ extends to the free field $\C\plangle x_1,\dots,x_d\prangle$ (which is actually a property of a Sylvester domain).
So we will not distinguish these two inner ranks and usually denote them by $\rho$.
On the free field, the inner rank $\rho$ is actually regular, i.e., full matrices are invertible over the free field.
Let us record here this important fact as a remark.

\begin{remark}
The inner rank $\rho$ on the free field $\C\plangle x_1,\dots,x_d\prangle$ is a faithful and regular Sylvester rank function.
\end{remark}

Another important example of Sylvester rank functions is an analytic rank on von Neumann algebras, which is defined as follows.

\begin{definition}\label{def:rank}
Let $(\M,\tau)$ be a tracial $W^*$-probability space. For $A\in M_n(L^0(\M))$, we define its \emph{von Neumann rank} by
$$\rank(A):=\Tr_n\otimes \tau(p_{\overline{\im(A)}}),$$
where $p_{\overline{\im(A)}}$ is the orthogonal projection onto the closure of the image of $A$ and $\Tr_n$ is the unnormalized trace on $M_n(\C)$.
\end{definition}

Recall that in Definition \ref{def:point spectrum} the weight of each $\lambda$ in the point spectrum $\sigma_p(X)$ of $X\in L^0(\M)$ is defined with the help of kernel.
So we also have
\[
\mu_X^p=\sum_{\lambda\in\sigma_p(X)}(1-\rank(\lambda-X))\delta_\lambda,
\]
thanks to the well-known equality $\rank(Y)=1-\tau(p_{\ker(Y)})$, that holds for any operator $Y\in L^0(\M)$ (see, for example, \cite[Lemma 5.2]{MSY19}).
Therefore, for two operator $X,Y\in L^0(\M)$ we have $\mu_X^p\leq\mu_Y^p$ if and only if $\rank(\lambda-Y)\leq\rank(\lambda-X)$ for each $\lambda\in\C$.

It is probably well-known to experts that the von Neumann rank is indeed a Sylvester rank function on $L^0(\M)$.
We state this here as a lemma and include a proof for reader's convenience.

\begin{lemma}\label{lem:vN rank is Sylvester}
Let $(\M,\tau)$ be a tracial $W^*$-probability space.
Then $\rank$ is a faithful and regular Sylvester rank function on $L^0(\M)$.
\end{lemma}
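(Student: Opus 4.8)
The plan is to check directly the four axioms of Definition~\ref{def:Sylvester rank}, together with faithfulness and regularity, for the function $\rank$ on $L^0(\M)$, working throughout with the identification $M_n(L^0(\M))=L^0(M_n(\M))$, where $M_n(\M)=M_n(\C)\otimes\M$ is itself a tracial von Neumann algebra carrying the faithful normal trace $\Tr_n\otimes\tau$. Two elementary facts will be used repeatedly. First, for $A\in M_n(L^0(\M))$ one has $\rank(A)=\Tr_n\otimes\tau(1-p_{\ker A})$: this is nothing but the recalled identity $\rank(Y)=1-\tau(p_{\ker Y})$ applied inside $L^0(M_n(\M))$. Second, $\rank(A)=\rank(A^*)$: from the polar decomposition $A=V|A|$ with $V\in M_n(\M)$ a partial isometry satisfying $VV^*=p_{\overline{\im A}}$ and $V^*V=p_{\overline{\im A^*}}$, traciality of $\Tr_n\otimes\tau$ gives $\rank(A)=\Tr_n\otimes\tau(VV^*)=\Tr_n\otimes\tau(V^*V)=\rank(A^*)$.

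The first axiom is immediate, since $p_{\overline{\im\0}}=\0$ and $p_{\overline{\im 1}}=1$. For the product axiom, the inclusions $\overline{\im(AB)}\subseteq\overline{\im A}$ and $\ker B\subseteq\ker(AB)$ (both valid for the strong products of affiliated operators) give $p_{\overline{\im(AB)}}\leq p_{\overline{\im A}}$ and $p_{\ker B}\leq p_{\ker(AB)}$, hence $\rank(AB)\leq\rank(A)$ and $\rank(AB)\leq\rank(B)$. For the block-diagonal axiom, $\overline{\im(A\oplus B)}=\overline{\im A}\oplus\overline{\im B}$, so that $p_{\overline{\im(A\oplus B)}}=p_{\overline{\im A}}\oplus p_{\overline{\im B}}$, and since the unnormalized matrix trace is additive along the decomposition $M_m(\M)\oplus M_n(\M)\subset M_{m+n}(\M)$ we obtain $\rank(A\oplus B)=\rank(A)+\rank(B)$.

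The triangular axiom is the one requiring an argument. Write $M=\begin{pmatrix}A & C\\ \0 & B\end{pmatrix}$ with $A\in M_m(L^0(\M))$, $B\in M_n(L^0(\M))$, acting on $H_1\oplus H_2$ with $H_1=L^2(\M)^{\oplus m}$ and $H_2=L^2(\M)^{\oplus n}$, and restrict the coordinate projection $\pi\colon(x,y)\mapsto y$ to the closed $\M$-submodule $\ker M$. The bottom block-row of $M$ forces $By=0$ for every $(x,y)\in\ker M$, so $\pi(\ker M)\subseteq\ker B$, while $\ker(\pi|_{\ker M})=\{(x,\0):Ax=\0\}$ is, as an $\M$-submodule, a copy of $\ker A$. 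The rank--nullity property of the Murray--von Neumann dimension for the bounded module map $\pi|_{\ker M}$, together with monotonicity of this dimension, gives $\Tr_{m+n}\otimes\tau(p_{\ker M})\leq\Tr_m\otimes\tau(p_{\ker A})+\Tr_n\otimes\tau(p_{\ker B})$; subtracting from $m+n$ and using the first fact of the opening paragraph turns this into $\rank(M)\geq\rank(A)+\rank(B)$. Faithfulness then follows at once: if $A\neq 0$ then $p_{\overline{\im A}}\neq 0$, hence $\rank(A)=\Tr_n\otimes\tau(p_{\overline{\im A}})>0$ by faithfulness of the trace. For regularity, suppose $A\in M_n(L^0(\M))$ has $\rank(A)=n$; faithfulness forces $p_{\overline{\im A}}=1$, and then $\rank(A^*)=\rank(A)=n$ forces $\ker A=0$, so by the recalled characterization that an element of $L^0$ is invertible exactly when its kernel is trivial, $A$ is invertible in $L^0(M_n(\M))$.

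The only delicate point --- the main obstacle --- is the triangular axiom: one has to make sure that $\ker M$, $\ker A$, $\ker B$ are genuine closed $\M$-submodules of the relevant $L^2$-spaces, that $P\mapsto\Tr\otimes\tau(p_P)$ is a well-behaved (additive, monotone) dimension function on them, and that the rank--nullity identity holds for the bounded module map $\pi|_{\ker M}$ even though $A,B,C$ are only affiliated, possibly unbounded, operators. All of this is standard dimension theory for finite von Neumann algebras; once it and the two elementary facts of the first paragraph are available, the remaining verifications are routine.
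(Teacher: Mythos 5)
Your proof is correct, and it fills in the details that the paper leaves implicit. The route is largely the same — direct verification of the axioms inside $L^0(M_n(\M))$ — but there are two genuine differences worth noting. For the second half of the product axiom, where the paper argues $\rank(AB)\le\rank(B)$ via $\rank((AB)^*)\le\rank(B^*)$ combined with $\rank(X)=\rank(X^*)$ (traciality), you instead use the inclusion $\ker B\subseteq\ker(AB)$ together with the kernel formula $\rank(A)=n-\Tr_n\otimes\tau(p_{\ker A})$; this is slightly more elementary and avoids a detour through adjoints at that step (though you still need $\rank(X)=\rank(X^*)$ elsewhere). More substantively, for the upper-triangular axiom (Item (4)) the paper simply asserts that it ``follows directly from the definition,'' whereas you give a real argument: restrict the coordinate projection $\pi$ to $\ker M$, observe $\pi(\ker M)\subseteq\ker B$ and $\ker(\pi|_{\ker M})\cong\ker A$, and invoke rank--nullity for the Murray--von Neumann dimension of finite $\M$-modules. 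That is the correct way to justify the assertion, and it is the only step in the lemma that genuinely needs an argument. One small streamlining: for regularity you need not pass through $A^*$; once $\rank(A)=n$, the identity $\rank(A)=n-\Tr_n\otimes\tau(p_{\ker A})$ and faithfulness of the trace already give $\ker A=\{0\}$ directly.
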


\begin{proof}
Clearly, Item \eqref{it:rank of constant}, \eqref{it:rank of direct sum}  and \eqref{it:rank of triangle} of Definition \ref{def:Sylvester rank} follows directly from the definition of the von Neumann rank.
Suppose that $A,B\in M_n(L^0(\M))$ and let us prove Item \eqref{it:rank of product}.
First, we see that $\rank(AB)\leq \rank(A)$ since $\im(AB)\subset\im(A)$.
Similarly, we have $\rank((AB)^\ast)=\rank(B^\ast A^\ast)\leq\rank(B^\ast)$.
Recall that $\rank(X)=\rank(X^\ast)$ since $\tau$ is tracial, we infer that $\rank(AB)\leq\rank(B)$.
So desired inequality $\rank(AB)\leq\min(\rank(A),\rank(B))$ follows.

Moreover, $\rank$ is clearly faithful.
And the regularity amounts to say that $A\in M_n(L^0(\M))$ is invertible if and only if $\ker(A)\neq\{0\}$, which is a well-known fact (see, for example, \cite[Lemma 5.5]{MSY19}).
\end{proof}

\begin{lemma}\label{lemma:approximation}
Let $X=(X_1,\ldots,X_d)$ be a tuple of $*$-free normal variables in some tracial $W^*$-probability space $(\M,\tau)$. Then, there exists a constant $C_X>0$ such that, for all $N\geq 1$ (enlarging the $W^*$-probability space if necessary), there exists a tuple of $*$-free normal variables  $X'=(X_1',\ldots,X_d')$ such that, for all $1\leq i \leq d$:
\begin{itemize}
    \item the weights of the atoms of $\mu_{X_i'}$ are multiples of $1/N$ which are smaller than the corresponding weights of the atoms of $\mu_{X_i}$;
    \item $\rank(X_i-X_i')\leq C_X/N$.
\end{itemize}
\end{lemma}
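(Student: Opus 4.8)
The plan is to build $X_i'$ from $X_i$ by cutting each atomic eigenspace of $X_i$ down to the largest sub-eigenspace whose trace is a multiple of $1/N$, filling the vacated part with a fresh non-atomic normal operator, and running the construction inside a free product of enlarged algebras so that $*$-freeness is not disturbed. Concretely, for each $i$ I would use the spectral theorem in $W^*(X_i)$ to write $1=e_i^{c}+\sum_{\lambda\in\sigma_p(X_i)}e_i^{\lambda}$, where $e_i^{\lambda}=p_{\ker(\lambda-X_i)}$ has $\tau(e_i^{\lambda})=c^{(i)}_\lambda:=\mu^p_{X_i}(\{\lambda\})$ and $e_i^{c}$ is the spectral projection of the non-atomic part, so that $X_i=\sum_\lambda\lambda\,e_i^{\lambda}+X_i^{c}$ with $X_i^{c}:=X_ie_i^{c}$ normal with non-atomic distribution. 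Put $c^{(i),N}_\lambda:=\lfloor Nc^{(i)}_\lambda\rfloor/N\le c^{(i)}_\lambda$ and $\eta_i(N):=\sum_\lambda\bigl(c^{(i)}_\lambda-c^{(i),N}_\lambda\bigr)$. Now enlarge the space: let $\cP_i$ be a copy of the hyperfinite $\mathrm{II}_1$ factor, $\B_i:=W^*(X_i)*\cP_i$ and $\B:=\B_1*\cdots*\B_d$; each $\B_i$ is then a $\mathrm{II}_1$ factor, the $\B_i$ are free in $\B$, and since $X_1,\dots,X_d$ are $*$-free the subalgebras $W^*(X_i)$ remain free in $\B$, so the tuple $(X_1,\dots,X_d)$ may be regarded as a $*$-free tuple in $\B$ with the marginal distributions $\mu_{X_i}$ — indeed the joint $*$-distribution — unchanged. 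Inside the $\mathrm{II}_1$ factor $e_i^\lambda\B_ie_i^\lambda$ choose a subprojection $f_i^\lambda\le e_i^\lambda$ with $\tau(f_i^\lambda)=c^{(i),N}_\lambda$, put $h_i:=\sum_\lambda(e_i^\lambda-f_i^\lambda)$ (so $\tau(h_i)=\eta_i(N)$), and inside the $\mathrm{II}_1$ factor $h_i\B_ih_i$ choose a normal $Z_i$ with non-atomic distribution supported off the countable set $\sigma_p(X_i)$. Finally set
\[
X_i':=\sum_{\lambda}\lambda\,f_i^{\lambda}+X_i^{c}+Z_i\ \in\ \B_i .
\]
The three summands live on the mutually orthogonal, pairwise commuting projections $\sum_\lambda f_i^\lambda$, $e_i^{c}$, $h_i$, whose sum is $1$; hence $X_i'$ is normal with $\mu^p_{X_i'}=\sum_\lambda c^{(i),N}_\lambda\delta_\lambda$, so its atoms sit among those of $\mu_{X_i}$ with weights that are multiples of $1/N$, each no larger than the original.

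Two points remain. First, $X_i'\in\B_i$ and the $\B_i$ are free in $\B$, so $(X_1',\dots,X_d')$ is $*$-free. Second, put $g_i:=e_i^{c}+\sum_\lambda f_i^\lambda=1-h_i$; since $f_i^\lambda\le e_i^\lambda$ and $X_ie_i^\lambda=\lambda e_i^\lambda$ one gets $X_ig_i=X_i^{c}+\sum_\lambda\lambda\,f_i^\lambda=X_i'g_i$, hence $(X_i-X_i')g_i=0$, hence $p_{\ker(X_i-X_i')}\ge g_i$, and therefore, using $\rank(A)=1-\tau(p_{\ker(A)})$,
\[
\rank(X_i-X_i')\ \le\ 1-\tau(g_i)\ =\ \eta_i(N)\ \le\ \tfrac1N\,\#\sigma_p(X_i).
\]
Taking $C_X:=\max_{1\le i\le d}\#\sigma_p(X_i)$ completes the argument. (The last inequality uses that $\mu_{X_i}$ has only finitely many atoms, which is the case in the applications of this lemma.)

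The construction is essentially a diagonalization against eigenspaces, so the only place that will need genuine care is the enlargement: it must be arranged so that the new variables $X_i'$ come out jointly $*$-free while the input tuple $(X_1,\dots,X_d)$ keeps its distribution. This is exactly what $\B=\B_1*\cdots*\B_d$ with $\B_i=W^*(X_i)*\cP_i$ achieves — the auxiliary factors $\cP_i$ supply precisely the room to shrink the eigenprojections $e_i^\lambda$ and to house $Z_i$, while the free-product structure keeps the $X_i'$ (each manufactured inside a single free factor $\B_i$) free from one another. Everything else — normality and the precise point spectrum of $X_i'$, and the inequality $\rank(X_i-X_i')\le\eta_i(N)$ — is routine once one knows that the corners $e_i^\lambda\B_ie_i^\lambda$ and $h_i\B_ih_i$ are diffuse, which holds because each $\B_i$ is a $\mathrm{II}_1$ factor.
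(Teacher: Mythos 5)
Your proof is correct and follows the same overall strategy as the paper --- decompose each $X_i$ into an atomic plus a diffuse part, shrink each eigenprojection until its trace is a multiple of $1/N$, fill the vacated subspace with a fresh non-atomic normal operator, and bound the rank by the trace of the projection on which $X_i$ and $X_i'$ still agree --- but the implementation differs in a way worth recording. The paper tensors each $W^*(X_i)$ with a commutative algebra $L^\infty(\Omega,\mathcal{F},\mathbb{P})$ and uses Bernoulli indicator functions $\beta_\lambda$ of sizes $\lfloor N\tau(q_\lambda)\rfloor/(N\tau(q_\lambda))$ to shrink the eigenspaces, obtaining $\rank(X_i-X_i')\le \frac1N\sum_\lambda 1/\mu_{X_i}(\{\lambda\})$; you instead pass to a free product $\B_i:=W^*(X_i)*\cP_i$ with a hyperfinite $\mathrm{II}_1$ factor so that each corner $e_i^\lambda\B_ie_i^\lambda$ is itself a $\mathrm{II}_1$ factor containing subprojections of every trace. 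Your version is somewhat cleaner in that the three summands of $X_i'$ visibly sit on mutually orthogonal reducing projections, so normality of $X_i'$ and the identification of $\mu^p_{X_i'}$ are immediate, and it yields the slightly better constant $\#\sigma_p(X_i)$ in place of $\sum_\lambda 1/\mu_{X_i}(\{\lambda\})$. Both approaches carry out the enlargement variable by variable inside $\B_i$ and then invoke freeness of the $\B_i$ in the ambient free product to get $*$-freeness of the $X_i'$ while preserving the joint law of $(X_1,\dots,X_d)$. Both also implicitly require each $\mu_{X_i}$ to have finitely many atoms in order for $C_X$ to be finite; you are right to flag this, and for infinitely many atoms one would only obtain $\rank(X_i-X_i')\to 0$ without a fixed $1/N$ rate, which is still sufficient for the paper's later applications. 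One small superfluity: requiring $Z_i$ to be supported off $\sigma_p(X_i)$ is unnecessary, since any non-atomic distribution already contributes no atoms to $\mu^p_{X_i'}$.
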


\begin{proof}
Let us prove the result for one variable (the general case can be deduced by taking the free product of $d$ $*$-algebras).

By the spectral theorem, we write $X=\int_{\sigma(X)}\lambda dE_{\lambda}$, where $E_{\lambda}$ is the spectral resolution of $X$.
Suppose that $\lambda_1,\dots,\lambda_{l}$ are the distinct atoms of $X$ and $q_i:=E_{\lambda_i}$ is the orthogonal projection onto the eigenspace $\ker(\lambda_i-X)$ for each $i=1,\dots,l$.
Moreover, we set $q_0:=1-\bigvee_{i=1}^l q_i$, then $\tau(q_0)=1-\sum_{i=1}^l\tau(q_i)$ since $q_{i}$'s are orthogonal with each other.
Therefore, we could consider $X$ in the following form
\[
X=q_0Xq_0+\sum_{i=1}^l\lambda_i q_i. 
\]
We consider a variable with no atoms $S\in \M$ (let say a semicircular variable), and $l$ Bernoulli variables $\beta_i$ of respective sizes $\lfloor N\tau(q_i)\rfloor/N\tau(q_i)$ in some probability space $L^{\infty}(\Omega,\mathcal{F},\mathbb{P})$. Then we define
$$X':=q_0Xq_0\otimes 1+\sum_{i=1}^l(\lambda_i q_i\otimes \beta_i+S\otimes (1-\beta_i))\in \M\otimes L^{\infty}(\Omega,\mathcal{F},\mathbb{P}),$$
whose atoms are $\lambda_1,\dots,\lambda_{l}$ with respective weights $\lfloor N\tau(q_i)\rfloor/N$, where $\lfloor c\rfloor$ stands for the largest integer which is smaller than or equal to the number $c\in\R$.
Identifying $\M$ as a subalgebra of the finite $W^*$-probability space $\M\otimes L^{\infty}(\Omega,\mathcal{F},\mathbb{P})$, we can compute the rank
\begin{equation*}
    \rank(X-X')
    \leq \sum_{i=1}^l \rank(1-\beta_i) 
    \leq \sum_{i=1}^l \left(1- \frac{\lfloor N\tau(q_i)\rfloor}{N\tau(q_i)}\right)
    \leq \frac{1}{N}\left(\sum_{i=1}^l \frac{1}{\tau(q_i)}\right).
\end{equation*}
\end{proof}

Let $\mu_1,\mu_2$ be two probability measure on $\R$.
Their \emph{Kolmogorov distance} is defined as
\[
d_{Kol}(\mu_1,\mu_2):=\sup_{t\in\R}|\cF_{\mu_1}(t)-\cF_{\mu_2}(t)|,
\]
where $\cF_\mu$ is the cumulative distribution function for a probability measure $\mu$.
Let $X,Y\in L^0(\M)$ be selfadjoint random variables for some $W^\ast$-probability space $(\M,\tau)$.
Then the Kolmogorov distance of their analytic distributions $\mu_X,\mu_Y$ is controlled by the rank of their difference, as follows.

\begin{lemma}{\cite[Lemma 25]{CMMPY2021}}\label{lem:Kol distance}
Let $X,Y\in L^0(\M)$ be selfadjoint random variables, where $(\M,\tau)$ is some $W^\ast$-probability space.
Then
$$d_{Kol}(\mu_{X},\mu_{Y})\leq \rank(X-Y).$$
\end{lemma}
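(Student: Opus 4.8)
The plan is to fix $t\in\R$, prove the two inequalities $\cF_{\mu_X}(t)-\cF_{\mu_Y}(t)\leq\rank(X-Y)$ and $\cF_{\mu_Y}(t)-\cF_{\mu_X}(t)\leq\rank(X-Y)$, and then take the supremum over $t$. Since these two inequalities are exchanged by swapping the roles of $X$ and $Y$ (using $\rank(X-Y)=\rank(Y-X)$), it suffices to establish $\cF_{\mu_X}(t)\leq\cF_{\mu_Y}(t)+\rank(X-Y)$ for every $t\in\R$, which amounts to $\tau(\mathbf 1_{(-\infty,t]}(X))\leq\tau(\mathbf 1_{(-\infty,t]}(Y))+\rank(X-Y)$.

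First I would prepare three ingredients. Let $p:=\mathbf 1_{(-\infty,t]}(X)\in\M$ be the spectral projection of $X$; then every $\xi\in\im(p)$ lies in $\dom(X)$ and satisfies $\langle X\xi,\xi\rangle\leq t\|\xi\|^2$, since the spectral measure $\langle E_X(\cdot)\xi,\xi\rangle$ is supported in $(-\infty,t]$. Let $Z:=X-Y\in L^0(\M)$, which is selfadjoint, and let $q:=p_{\ker(Z)}\in\M$; then $Z\xi=0$ for $\xi\in\im(q)$, and by the equality $\rank(Z)=1-\tau(p_{\ker(Z)})$ recorded after Definition~\ref{def:rank} we have $\tau(q)=1-\rank(X-Y)$. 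The third, and only genuinely nontrivial, ingredient is a min--max statement: \emph{if $e\in\M$ is a projection with $\im(e)\subseteq\dom(Y)$ and $\langle Y\xi,\xi\rangle\leq t\|\xi\|^2$ for all $\xi\in\im(e)$, then $\tau(e)\leq\tau(\mathbf 1_{(-\infty,t]}(Y))=\cF_{\mu_Y}(t)$}. I would prove this by contradiction: if $\tau(e)>\tau(\mathbf 1_{(-\infty,t]}(Y))$, then $\tau(e)+\tau(\mathbf 1_{(t,\infty)}(Y))>1$, and by normality of $\tau$ also $\tau(e)+\tau(\mathbf 1_{(t,n]}(Y))>1$ for $n$ large enough; by the parallelogram inequality $\tau(a\wedge b)\geq\tau(a)+\tau(b)-1$, valid in the finite von Neumann algebra $\M$, the projection $e\wedge\mathbf 1_{(t,n]}(Y)$ is then nonzero. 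On its range $Y$ is bounded, and a unit vector $\xi$ there satisfies $\langle Y\xi,\xi\rangle\leq t$ because $\xi\in\im(e)$, while simultaneously $\langle Y\xi,\xi\rangle=\int_{(t,n]}\lambda\,d\langle E_Y(\cdot)\xi,\xi\rangle>t$ because $\langle E_Y(\cdot)\xi,\xi\rangle$ is a probability measure supported in $(t,n]$ --- a contradiction.

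Next I would apply this with $e:=p\wedge q$, so that $\im(e)=\im(p)\cap\im(q)$. For $\xi\in\im(e)$ we have $\xi\in\im(p)\subseteq\dom(X)$ with $\langle X\xi,\xi\rangle\leq t\|\xi\|^2$, and $\xi\in\im(q)$ with $Z\xi=0$; since $Y=X-Z$ in $L^0(\M)$, it follows that $\xi\in\dom(Y)$ and $Y\xi=X\xi-Z\xi=X\xi$, hence $\langle Y\xi,\xi\rangle=\langle X\xi,\xi\rangle\leq t\|\xi\|^2$. Thus $e$ satisfies the hypothesis of the min--max statement, giving $\tau(p\wedge q)\leq\cF_{\mu_Y}(t)$. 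On the other hand, the parallelogram inequality together with $\tau(q)=1-\rank(X-Y)$ gives
\[
\tau(p\wedge q)\geq\tau(p)+\tau(q)-1=\cF_{\mu_X}(t)-\rank(X-Y).
\]
Combining the last two estimates yields $\cF_{\mu_X}(t)\leq\cF_{\mu_Y}(t)+\rank(X-Y)$, and taking the supremum over $t$ completes the proof.

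I expect the main obstacle to be making the min--max statement rigorous in the possibly unbounded case, since the range of $\mathbf 1_{(t,\infty)}(Y)$ need not lie in $\dom(Y)$; this is exactly why one truncates to $\mathbf 1_{(t,n]}(Y)$ and passes to the limit. A reader wishing to avoid unbounded operators altogether could instead first reduce to bounded $X$ and $Y$ by a truncation with controlled rank perturbation, in the spirit of Lemma~\ref{lemma:approximation}. The remaining points --- the spectral estimate for vectors in $\im(p)$, the identity $\tau(p_{\ker Z})=1-\rank(Z)$, the relation $Y=X-Z$ on $\im(e)$, and the parallelogram inequality for traces of projections in a finite von Neumann algebra --- are all routine.
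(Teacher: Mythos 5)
The paper does not supply a proof of this lemma; it is cited directly from \cite[Lemma 25]{CMMPY2021}, so there is no in-text argument to compare against. That said, your Courant--Fischer min--max approach --- intersecting the spectral projection $\mathbf{1}_{(-\infty,t]}(X)$ with $p_{\ker(X-Y)}$, bounding the trace of the intersection from below by the parallelogram law and from above by a min--max characterization of $\cF_{\mu_Y}(t)$ --- is the standard route to such Weyl--Kolmogorov stability estimates and is essentially correct.

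There is, however, one step which fails as written for genuinely unbounded $X$: your claim that every $\xi\in\im(p)$ with $p=\mathbf{1}_{(-\infty,t]}(X)$ lies in $\dom(X)$. The spectral measure $\langle E_X(\cdot)\xi,\xi\rangle$ being supported in $(-\infty,t]$ controls the tail at $+\infty$ but not at $-\infty$; it does not force $\int\lambda^2\,d\langle E_X(\cdot)\xi,\xi\rangle<\infty$, so $\xi$ need not lie in $\dom(X)$ and $\langle X\xi,\xi\rangle$ need not be well-defined. You notice precisely this issue for $\mathbf{1}_{(t,\infty)}(Y)$ inside your min--max lemma and fix it by the cutoff $\mathbf{1}_{(t,n]}(Y)$, but $p$ suffers the mirror problem at $-\infty$. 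The repair is of the same flavour: replace $p$ throughout by $p_m:=\mathbf{1}_{[-m,t]}(X)$, on whose range $X$ is bounded so every step becomes legitimate; running the argument with $p_m$ yields $\tau(p_m)\leq\cF_{\mu_Y}(t)+\rank(X-Y)$ for each $m$, and letting $m\to\infty$ with the normality of $\tau$ gives $\tau(p_m)\uparrow\cF_{\mu_X}(t)$ and hence the desired inequality. With this repair the proof is complete and correct.
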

Therefore, if $R$ is a rational expression such that $X,Y\in\dom_{\A}(R)$, and such that $R(X)$ and $R(Y)$ are selfadjoint, then
$$d_{Kol}(\mu_{R(X)},\mu_{R(Y)})\leq \rank(R(X)-R(Y)),$$ and it allows to state the following approximation theorem.

\begin{theorem} \label{BVrational}
Let $X=(X_1,\ldots,X_d)$ and $Y=(Y_1,\ldots,Y_d)$ be two tuples of free selfadjoint random variables in some $W^\ast$-probability space.
\begin{itemize}
\item  Let $A\in M_n(\C\left<x_1,\dots,x_d\right>)$. There exists a constant $c:=c(A)$ such that
$$|\rank(A(X)-A(Y))|\leq c \max_{i=1,\dots,d}\{d_{Kol}(\mu_{X_i},\mu_{Y_i})\}.$$
\item Let $R$ be a self-adjoint rational expression in $d$ noncommuting variables such that $X,Y\in\dom_{\A}(R)$. There exists a constant $c:=c(R)$ such that
$$d_{Kol}(\mu_{R(X)},\mu_{R(Y)})\leq c \max_{i=1,\dots,d}\{d_{Kol}(\mu_{X_i},\mu_{Y_i})\}.$$\end{itemize}
\end{theorem}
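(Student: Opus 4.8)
The plan is to reduce both items to a single quantitative statement about the marginals, and then to locate the genuine analytic input. Since $X$ is $*$-free and $R$ (resp.\ $A$) is selfadjoint, the analytic distribution of $R(X)$ — and, for the first item, the von Neumann rank $\rank(A(X))=1-\tfrac1n(\Tr_n\otimes\tau)\bigl(p_{\ker(A(X)^*A(X))}\bigr)$ — depends only on the marginals $\mu_{X_1},\dots,\mu_{X_d}$, and likewise for $Y$. We may therefore replace $X$ and $Y$ by any realizations in a common tracial $W^*$-probability space carrying the prescribed marginals, with $(X_1,\dots,X_d)$ $*$-free and $(Y_1,\dots,Y_d)$ $*$-free; in particular, we are free to pick the realization that makes each $\rank(X_i-Y_i)$ as small as possible.

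For the second item, fix a formal linear representation $\rho=(u,Q,v)$ of $R$ of dimension $m$ (Definition~\ref{def:formal linear representation}). Since $X,Y\in\dom_\A(R)$, the matrices $Q(X)$ and $Q(Y)$ are invertible, and the computation in the proof of Lemma~\ref{lem:comparison of Sylvester rank}(2) gives
\[
R(X)-R(Y)=uQ(Y)^{-1}\bigl(Q(X)-Q(Y)\bigr)Q(X)^{-1}v,\qquad \rank(R(X)-R(Y))\le\rank(Q(X)-Q(Y)).
\]
As $Q$ is linear, Lemma~\ref{lem:comparison of Sylvester rank}(1) bounds the right-hand side by $m\sum_{i=1}^d\rank(X_i-Y_i)$, and Lemma~\ref{lem:Kol distance} (applicable because $R(X),R(Y)$ are selfadjoint) turns this into $d_{Kol}(\mu_{R(X)},\mu_{R(Y)})\le m\sum_i\rank(X_i-Y_i)$. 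The first item produces the same right-hand side directly via Lemma~\ref{lem:comparison of Sylvester rank}(1). Hence both items follow once one can arrange, for each $i$, the bound $\rank(X_i-Y_i)\le c\,d_{Kol}(\mu_{X_i},\mu_{Y_i})$.

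This last inequality is the crux, and it is not purely combinatorial: over all couplings of $\mu_{X_i}$ and $\mu_{Y_i}$, the infimum of $\rank(X_i-Y_i)$ is their total variation distance, which can be much larger than their Kolmogorov distance, so the chain above only proves a weaker statement with $d_{TV}$ in place of $d_{Kol}$. Upgrading $d_{TV}$ to $d_{Kol}$ is exactly the Bercovici--Voiculescu continuity of polynomial free convolution established in \cite{CMMPY2021} via subordination functions: $d_{Kol}(\mu_{P(X)},\mu_{P(Y)})\le c(P)\max_i d_{Kol}(\mu_{X_i},\mu_{Y_i})$ for every selfadjoint polynomial $P$. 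I would deduce the first item from the $M_n(\C)$-valued version of this applied to $P=A^*A$, reading off the jump at $0$ through $|\mu(\{0\})-\nu(\{0\})|\le 2\,d_{Kol}(\mu,\nu)$, and then the second item from the linearization reduction above. A self-contained substitute would truncate the $\delta$-tails of each marginal (cost $O(\delta)$ in rank), push the bulk onto a common grid of $M$ points of mesh $h$ — two atomic measures carried by a common $M$-point grid with $d_{Kol}\le\varepsilon$ have $d_{TV}\le M\varepsilon$, a cost $O(M\varepsilon)$ in rank — and then control the residual $O(h)$ operator-norm perturbation of $R(X)$ in Kolmogorov distance, optimizing $\delta,M,h$. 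That last control is the real difficulty, since operator-norm closeness forces Kolmogorov closeness only through a bound on the concentration function of $\mu_{R(X)}$, i.e.\ on the atoms of $R(X)$ — the very object the rest of the paper computes. I expect this concentration/atom estimate, rather than the bookkeeping, to be the main obstacle.
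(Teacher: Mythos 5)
You have the right reduction: via Lemma~\ref{lem:comparison of Sylvester rank} and Lemma~\ref{lem:Kol distance}, both items follow once one exhibits, for each $i$, a realization achieving $\rank(X_i-Y_i)\le c\,d_{Kol}(\mu_{X_i},\mu_{Y_i})$, and you correctly flag this as the crux. However, your claim that ``over all couplings of $\mu_{X_i}$ and $\mu_{Y_i}$, the infimum of $\rank(X_i-Y_i)$ is their total variation distance'' is false, and it is exactly where your argument collapses. That identity is only correct for \emph{commuting} realizations of $X_i$ and $Y_i$, in which case $1-\rank(X_i-Y_i)=\mathbb{P}(X_i=Y_i)$ is indeed a total-variation quantity. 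In a $II_1$ factor nothing forces $X_i$ and $Y_i$ to commute, and non-commuting realizations do strictly better: this is precisely why Bercovici and Voiculescu obtain continuity of $\boxplus$ with respect to $d_{Kol}$ rather than merely with respect to $d_{TV}$.

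The missing ingredient is the construction behind \cite[Theorem~4.11]{BV93}, invoked in the paper's very first line: setting $\delta=\max_i d_{Kol}(\mu_{X_i},\mu_{Y_i})$, one can realize all $X_i,Y_i$ in a common tracial $W^*$-probability space $(\M,\tau)$ so that $\{X_i,Y_i,p_i\}_i$ is a free family and there exist projections $p_i\in\M$ with $\tau(p_i)\ge 1-\delta$ and $p_iX_ip_i=p_iY_ip_i$. Writing
$$X_i-Y_i=(1-p_i)(X_i-Y_i)p_i+p_i(X_i-Y_i)(1-p_i)+(1-p_i)(X_i-Y_i)(1-p_i)$$
(the $p_i\cdot p_i$ block vanishes), each of the three summands has rank at most $\rank(1-p_i)\le\delta$, hence $\rank(X_i-Y_i)\le 3\delta$. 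This one line closes the gap, and the rest of your reduction (linear matrices first, then formal linear representations for rational $R$, then Lemma~\ref{lem:Kol distance}) is exactly the paper's argument. Neither of your substitute routes repairs the gap: the appeal to \cite{CMMPY2021} is circular, since the relevant statement there (its Lemma~25) is the \emph{converse} bound $d_{Kol}\le\rank$, already quoted as Lemma~\ref{lem:Kol distance}; and your grid-and-truncation scheme hinges, as you yourself observe, on a concentration-function bound for $\mu_{R(X)}$, i.e.\ on the atom estimates that this theorem is supposed to feed into rather than follow from.
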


\begin{proof}
Let $\delta=\max\{d_{Kol}(\mu_{X_i},\mu_{Y_i}):i=1,\ldots,d\}$. As in the proof of \cite[Theorem 4.11]{BV93} we may  suppose $\{X_i,Y_i\}$ live in a $W$*-probability space $(\mathcal{M},\tau)$ so that there exist projections $\{p_i\}^n_{i=1}\in \mathcal{M}$ such that $\tau(p_i)\geq 1-\delta$, $p_iX_ip_i=p_iY_ip_i$ and $\{X_i,Y_i,p_i\}_i$ is a free family. Decomposing $X_i$ as $p_iX_ip_i+(1-p_i)X_ip_i+p_iX_i(1-p_i)+(1-p_i)X_i(1-p_i)$, we  can compute
\begin{align*}
    \rank(X_i-Y_i)&\leq \rank((1-p_i)(X_i-Y_i)p_i)\\
    &+\rank(p_i(X_i-Y_i)(1-p_i))+\rank((1-p_i)(X_i-Y_i)(1-p_i))\\
    &\leq 3\cdot\rank(1-p_i)\leq 3\delta.
\end{align*}
As a consequence of Lemma \ref{lem:comparison of Sylvester rank}, we have
\begin{align*}
|\rank(A(X)-A(Y))| &\leq K\max_{i=1,\dots,d}\{\rank(X_i-Y_i)\}
\leq 3  K\delta
\end{align*}
for a certain constant $K$.
In particular, for any formal linear representation $(u,Q,v)$ of $R$ of dimension $n$, we have
$$\rank(R(X)-R(Y))\leq n \cdot(\rank(X_1-Y_1)+\ldots+\rank(X_d-Y_d))\leq 3 nd\delta,$$
thanks to Lemma \ref{lem:comparison of Sylvester rank} again.
Then the desired the result follows thanks to Lemma \ref{lem:Kol distance}.
\end{proof}

\subsection{The quantity $\Delta$}The free field $\C\plangle x_1,\dots,x_d\prangle$ can seem quite abstract. However, it is possible to represent it as a concrete algebra of operators. In \cite{MSY19}, a link was discovered between the free field and the rational closure of tuples of operators maximizing a certain quantity $\Delta$. Let us introduce this quantity and recall some results of \cite{MSY19}.

Let us denote by $\mathcal{F}(L^2(\M,\tau))$ the $M-M$ Hilbert bimodule of finite-rank operators on $L^2(\M,\tau)$, and by $J$ Tomita’s conjugation operator, i.e., the conjugate-linear map $J:L^2(\M,\tau)\to L^2(\M,\tau)$ such that $J(x)=x^*$ for $x\in \M$. Considering a tuple $X=(X_1,\ldots,X_d)$ of noncommutative random variables in $\M$, the quantity $\Delta(X)$ was introduced in \cite{CS05} as
$$
    \Delta(X)=d-\overline{\dim \left\{(T_1,\ldots,T_d)\in \mathcal{F}(L^2(\M,\tau))^d:\sum_{i=1}^d[T_i,JX_iJ]=0\right\}}^{HS}
$$
where the closure is  taken with respect to the Hilbert-Schmidt norm. 
The quantity $\Delta(X)$ is maximal whenever $
    \Delta(X)=d$, or equivalently whenever there do not exist a $d$-tuple $(T_1,\ldots,T_d)\in \mathcal{F}(L^2(\M,\tau))^d$ such that $\sum_{i=1}^d[T_i,X_i]=0$.

\begin{theorem}{\cite[Theorem 1.1]{MSY19}}\label{th:Delta_maximal}
Let $X=(X_1,\dots,X_d)$ be a $d$-tuple of operators in a tracial $W^*$-probability space $(\M,\tau)$. The following are equivalent:
\begin{enumerate}
    \item $\Delta(X)=d$;
    \item the evaluation map sending $P\in\Cx$ to $P(X)$ extends to an isomorphism from $\C\plangle x_1,\dots,x_d\prangle$ to the rational closure $L^0(\M)_X$ of $X$;
    \item for any $n\in\mathbb{N}$ and any matrix $A\in M_n(\C\left<x_1,\dots,x_d\right>)$, we have
    $$\rank(A(X))=\rho(A).$$
\end{enumerate}
\end{theorem}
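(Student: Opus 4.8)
The plan is to handle $(2)\Leftrightarrow(3)$ entirely with the Sylvester-rank and rational-closure formalism set up above, and to isolate the equivalence with $(1)$ as the genuinely analytic statement, which is the heart of \cite{MSY19}. It is worth recording at the outset that, by Lemma~\ref{lem:vN rank is Sylvester}, $\rank$ is a faithful and regular Sylvester rank function on $L^0(\M)$, and that a factorization $A=PQ$ over $\Cx$ realizing the inner rank (so $P$ has $\rho(A)$ columns) gives, by item~\eqref{it:rank of product} of Definition~\ref{def:Sylvester rank} and Lemma~\ref{lem:properties of Sylvester rank}, the bound $\rank(A(X))\leq\rank(P(X))\leq\rho(A)$; hence $\rank(A(X))\leq\rho(A)$ always holds, so in $(3)$ only the reverse inequality is ever in question.

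For $(3)\Rightarrow(2)$ I would apply Corollary~\ref{cor:isomorphic rational closure} to $(L^0(\M),\rank)$ and $(\C\plangle x_1,\dots,x_d\prangle,\rho)$ — both faithful and regular — with the tuple $X$ on one side and the tuple of formal generators on the other: hypothesis $(3)$ is exactly the rank-matching condition of that corollary, since $\rho$ evaluated at the formal generators returns $\rho$ of the polynomial matrix. The corollary then furnishes a $\Cx$-isomorphism of the two rational closures; regularity of $\rho$ makes every full matrix over the free field invertible, so the rational closure of the formal generators is all of $\C\plangle x_1,\dots,x_d\prangle$, and the isomorphism is the asserted extension of the evaluation map. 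For $(2)\Rightarrow(3)$ the key point is that a skew field admits only one Sylvester rank function: over a skew field any matrix is $UDV$ with $U,V$ invertible and $D$ a partial identity of ordinary rank $r$, and Lemma~\ref{lem:properties of Sylvester rank} together with item~\eqref{it:rank of direct sum} of Definition~\ref{def:Sylvester rank} forces its rank to equal $r$. Restricting $\rank$ to the subalgebra $L^0(\M)_X$ and transporting it through the isomorphism of $(2)$ onto the skew field $\C\plangle x_1,\dots,x_d\prangle$, it must therefore be the inner rank $\rho$, and evaluating at $A(X)$ gives $\rank(A(X))=\rho(A)$.

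For the equivalence with $(1)$ I would use as a pivot the condition
\[(\star)\quad\text{every full square linear matrix }A=A_0\otimes 1+\textstyle\sum_{i=1}^dA_i\otimes x_i\text{ over }\Cx\text{ has }A(X)\text{ invertible in }M_n(L^0(\M)).\]
That $(\star)$ implies $(2)$ is Cohn's description of the free field as the universal localization of $\Cx$ inverting all full linear matrices: under $(\star)$ the evaluation map factors through $\C\plangle x_1,\dots,x_d\prangle\to L^0(\M)$, its image is a homomorphic image of a skew field inside $L^0(\M)$ hence injective, and a short argument identifies that image with $L^0(\M)_X$. Conversely $(3)$ trivially implies $(\star)$, since a full $n\times n$ linear matrix has $\rho=n$, so $\rank(A(X))=n$ and, by regularity of $\rank$, $A(X)$ is invertible. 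So it remains to prove $(1)\Leftrightarrow(\star)$, which, by regularity of $\rank$ again, may be phrased as: no full square linear matrix has $\ker A(X)\neq 0$ on $L^2(\M)^n$.

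This last equivalence is a Murray--von Neumann dimension count in the spirit of \cite{CS05}. Identifying $\mathcal{F}(L^2(\M,\tau))$ with the Hilbert--Schmidt bimodule $L^2(\M,\tau)\otimes\overline{L^2(\M,\tau)}$, a nonzero $\xi$ in $\ker A(X)$ (or in $\ker(A(X)^*)$), together with the linear relation $A_0\xi+\sum_i A_i X_i\xi=0$ that it satisfies, should be converted — after a finite-rank approximation — into a nonzero tuple $(T_1,\dots,T_d)\in\mathcal{F}(L^2(\M,\tau))^d$ with $\sum_i[T_i,JX_iJ]=0$, so that $\Delta(X)<d$; conversely, expanding such a tuple into rank-one pieces and collecting their left and right legs into rectangular linear matrices over $\Cx$ repackages the relation $\sum_i[T_i,JX_iJ]=0$ as a full linear matrix whose evaluation at $X$ is singular, negating $(\star)$. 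Making this dictionary precise — in particular controlling the finite-rank approximations and the Murray--von Neumann dimension of the bimodule of ``coefficient tuples'' — is the step I expect to be the real obstacle; it is the technical core of \cite{CS05} and \cite{MSY19}, and in a write-up I would invoke their analysis there rather than reprove it. Everything else above is bookkeeping with machinery already in place.
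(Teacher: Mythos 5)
The paper does not prove this theorem; it is stated purely as a citation to \cite{MSY19}, so there is no in-paper proof to compare against. Your reconstruction of $(2)\Leftrightarrow(3)$ from the paper's own Sylvester-rank machinery is correct and not circular: Corollary~\ref{cor:isomorphic rational closure} rests only on Propositions~\ref{prop:comparison rational closure} and~\ref{prop:isomorphism}, neither of which invokes Theorem~\ref{th:Delta_maximal}, so using it for $(3)\Rightarrow(2)$ is legitimate (and the rational closure of the formal generators inside $\C\plangle x_1,\dots,x_d\prangle$ is indeed all of it, since full matrices over the free field are invertible and the free field is an epic $\Cx$-field). Your observation for $(2)\Rightarrow(3)$ — that a skew field carries a unique Sylvester rank function, forced by the $UDV$-factorization into a partial identity — is exactly the right elementary input. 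The pivot $(\star)\Leftrightarrow(2)$ is, as you say, the Cohn--Higman description of the free field as the universal localization of $\Cx$ at full (linear) square matrices, and $(3)\Rightarrow(\star)$ is immediate from regularity of $\rank$. All of that is bookkeeping with machinery already in the paper, and it is not how \cite{MSY19} itself organizes the proof, so this is a genuinely different (and cleaner) packaging of the algebraic implications.

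The one genuine gap is the one you flag yourself, $(1)\Leftrightarrow(\star)$, and I would caution that your "dictionary" is looser than the phrasing suggests. A nonzero $\xi\in\ker A(X)$ satisfies the $n$-component identity $A_0\xi+\sum_i A_i(I_n\otimes X_i)\xi=0$ in $L^2(\M)^n$, and turning this into a nonzero tuple $(T_1,\dots,T_d)\in\mathcal{F}(L^2(\M,\tau))^d$ with $\sum_i[T_i,JX_iJ]=0$ is not a direct reshuffling: one has to pair left and right kernel projections correctly and pass through the zero-divisor analysis of \cite{CS05,MSY19}. In the converse direction, expanding a nontrivial tuple $T$ into rank-one pieces produces a linear matrix with singular evaluation, but establishing that this matrix is, or can be replaced by, a \emph{full} one is exactly the nontrivial content; nothing in your sketch guarantees fullness, and this is where \cite{MSY19} has to work hardest. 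So the deferral to the reference is not a matter of polish but a structural necessity: that implication cannot be closed from the ingredients available in the present paper, and it is correct of you to say so explicitly.
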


Note that the rank equality in Item (3) of the above theorem actually holds over the free field provided that one of the equivalent properties holds for a given tuple $X$.
That is, Item (3) can be replaced by Item (3'): for any $M_n(\C\plangle x_1,\dots,x_d\prangle)$, we have $\rank(A(X))=\rho(A)$.
This is not explicitly stated in \cite[Theorem 1.1]{MSY19} but it can be easily read out from the proof of \cite[Theorem 5.6]{MSY19}.
We summarize it as a remark that will needed in Section \ref{sec:calculation}.
\begin{remark}\label{rem:Delta_maximal}
For a $d$-tuple $X$ over some $W^\ast$-probability space with $\Delta(X)=d$, the evaluation map is a rank-preserving homomorphism from $\C\plangle x_1,\dots,x_d\prangle$ to $L^0(\M)$.
\end{remark}

We also have the following additivity of the quantity $\Delta$.
\begin{prop}[Theorem 3.3 of \cite{CS05}]\label{prop:delta_free}
Let $X_1,\ldots,X_d,X_{d+1},\ldots X_{d'}$ be in a tracial $W^*$-probability space $(\M,\tau)$.
Whenever $X_1,\ldots,X_d$ is $*$-free from $X_{d+1},\ldots X_{d'}$, we have
$$\Delta(X_1,\ldots,X_{d'})=\Delta(X_1,\ldots,X_{d})+\Delta(X_{d+1},\ldots,X_{d'}).$$
\end{prop}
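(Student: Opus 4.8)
The plan is to unwind the definition of $\Delta$ into a dimension-theoretic statement and to reduce the additivity to a single vanishing, which is the only place where $*$-freeness is used. Write $L^2:=L^2(\M,\tau)$, $\cF:=\cF(L^2)$, and for $Z\in\M$ put $\rho_Z:=JZJ\in\M'$. With $X=(X_1,\dots,X_{d'})$, $X_A=(X_1,\dots,X_d)$, $X_B=(X_{d+1},\dots,X_{d'})$, introduce the $\M\bar\otimes\M^{\op}$-module maps
\[
\delta_A(T_1,\dots,T_d)=\sum_{i=1}^{d}[T_i,\rho_{X_i}]\colon\cF^{d}\to\cF,\qquad
\delta_B(T_{d+1},\dots,T_{d'})=\sum_{i=d+1}^{d'}[T_i,\rho_{X_i}]\colon\cF^{d'-d}\to\cF,
\]
and $\delta_X\colon\cF^{d'}\to\cF$ the sum of the two. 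Using the Lück extension of the von Neumann dimension to algebraic $\M\bar\otimes\M^{\op}$-modules (which agrees with $\overline{\dim}$ of the Hilbert--Schmidt closure for submodules of $\cF$, normalized by $\dim\cF=1$), the defining formula of $\Delta$ reads $\Delta(X)=d'-\dim\ker\delta_X$, $\Delta(X_A)=d-\dim\ker\delta_A$, $\Delta(X_B)=(d'-d)-\dim\ker\delta_B$.

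The key algebraic observation is the short exact sequence of $\M\bar\otimes\M^{\op}$-modules
\[
0\longrightarrow\ker\delta_A\oplus\ker\delta_B\longrightarrow\ker\delta_X\xrightarrow{\ \Phi\ }\im\delta_A\cap\im\delta_B\longrightarrow0,\qquad
\Phi(T_1,\dots,T_{d'}):=\sum_{i=1}^{d}[T_i,\rho_{X_i}],
\]
where $\cF^{d}$, $\cF^{d'-d}$ are viewed as the complementary coordinate blocks of $\cF^{d'}$. Exactness is elementary: on $\ker\delta_X$ one has $\sum_{i\le d}[T_i,\rho_{X_i}]=-\sum_{i>d}[T_i,\rho_{X_i}]$, so $\Phi$ lands in $\im\delta_A\cap\im\delta_B$; given $S=\delta_A(R)=\delta_B(R')$ one has $(R,-R')\in\ker\delta_X$ with $\Phi(R,-R')=S$, so $\Phi$ is onto; and on $\ker\delta_X$ the summand $\delta_A(T_{\le d})$ vanishes iff $\delta_B(T_{>d})$ does, so $\ker\Phi=\ker\delta_A\oplus\ker\delta_B$. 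Additivity of $\dim$ on short exact sequences gives $\dim\ker\delta_X=\dim\ker\delta_A+\dim\ker\delta_B+\dim(\im\delta_A\cap\im\delta_B)$, hence
\[
\Delta(X)=\Delta(X_A)+\Delta(X_B)-\dim_{\M\bar\otimes\M^{\op}}\big(\im\delta_A\cap\im\delta_B\big).
\]
In particular the inequality $\Delta(X)\le\Delta(X_A)+\Delta(X_B)$ holds for \emph{every} tuple (no freeness used), and the proposition is \emph{equivalent} to the vanishing $\dim(\im\delta_A\cap\im\delta_B)=0$.

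Proving this vanishing, using that $W^*(X_A)$ and $W^*(X_B)$ are $*$-free, is the heart of the argument and I expect it to be the main obstacle. The picture is that $\im\delta_A$ consists of finite-rank ``$W^*(X_A)$-coboundaries'' and $\im\delta_B$ of finite-rank ``$W^*(X_B)$-coboundaries'', and the claim is that an operator which is simultaneously of both kinds spans a module of dimension $0$ --- even though each of $\overline{\im\delta_A}$, $\overline{\im\delta_B}$ can be \emph{all} of the Hilbert--Schmidt completion of $\cF$, so this is genuinely a statement about the non-closed ranges. This is the Hilbert-module reflection of the absence of ``mixed relations'' in a reduced free product: the relation ideal of $W^*(X_A)*W^*(X_B)$ inside $\C\langle x_1,\dots,x_{d'}\rangle$ is generated by those of the two factors separately, and the Bass--Serre tree of a free product carries no $1$-cycles. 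Concretely I would prove it by decomposing $L^2\supseteq L^2\!\big(W^*(X_A)*W^*(X_B)\big)$ into its orthogonal free-product decomposition into alternating reduced words, computing how left multiplication by $W^*(X_A)$, $W^*(X_B)$ and the right actions $\rho_{X_i}$ act on this decomposition, and checking directly that an operator lying in both ranges must vanish off a dimension-zero module. It is exactly this step that fails for merely commuting, or general, tuples, which is why $\Delta$ is additive only under $*$-freeness.

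An alternative packaging of the same argument identifies $\Delta$ of a generating tuple with $1-\beta_0^{(2)}+\beta_1^{(2)}$ of the generated tracial von Neumann algebra and invokes additivity of the $L^2$-Euler characteristic for reduced free products, $\chi^{(2)}(\M_1*\M_2)=\chi^{(2)}(\M_1)+\chi^{(2)}(\M_2)-1$, itself coming from assembling a Mayer--Vietoris type resolution of $\M_1*\M_2$ out of those of $\M_1$, $\M_2$ and $\C$; the bookkeeping is the same as above, with the vanishing statement replaced by the vanishing of the relevant relative $L^2$-Betti number.
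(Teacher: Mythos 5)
Your short exact sequence
\[
0\longrightarrow\ker\delta_A\oplus\ker\delta_B\longrightarrow\ker\delta_X\xrightarrow{\ \Phi\ }\im\delta_A\cap\im\delta_B\longrightarrow 0
\]
is correct as a sequence of $\M\bar\otimes\M^{\op}$-modules, and the bookkeeping that follows --- giving the unconditional subadditivity $\Delta(X)\le\Delta(X_A)+\Delta(X_B)$ and reducing the proposition to $\dim_{\M\bar\otimes\M^{\op}}(\im\delta_A\cap\im\delta_B)=0$ --- is a clean way to isolate where $*$-freeness must enter. (You should also justify, rather than assert, that $\Delta(X)=d'-\dim\ker\delta_X$ for the L\"uck dimension of the \emph{algebraic} kernel, i.e.\ that L\"uck dimension of a submodule of $\cF^{d'}$ agrees with the von Neumann dimension of its Hilbert--Schmidt closure; this is true but it is a nontrivial input.)

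The problem is that the reduction is all you do. The vanishing $\dim(\im\delta_A\cap\im\delta_B)=0$ is not a corollary of general module theory --- it is the entire analytic content of the theorem, and your proposal openly stops there with ``I would prove it by decomposing $L^2$ into alternating reduced words and checking directly.'' That computation --- how $\rho_{X_i}$ and finite-rank bimodule elements interact with the free-product word decomposition of $L^2(\M)$, and why an element lying simultaneously in both ranges spans a dimension-zero module even though each closure can be everything --- is precisely the body of Connes--Shlyakhtenko's proof. You have reorganized the surrounding bookkeeping but not supplied the step that proves the result. The ``alternative packaging'' via $\chi^{(2)}(\M_1*\M_2)=\chi^{(2)}(\M_1)+\chi^{(2)}(\M_2)-1$ has the same defect: that additivity is essentially equivalent to the statement to be proved, and the identification $\Delta=1-\beta_0^{(2)}+\beta_1^{(2)}$ is itself a theorem of the same paper.

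For context: the paper's own ``proof'' of this proposition is a one-line citation to \cite[Theorem 3.3]{CS05}, together with the remark (credited to \cite{CMMPY2021}) that the argument there, written for self-adjoint variables, extends to not necessarily self-adjoint $*$-free tuples. Your proposal neither reproves \cite{CS05} nor addresses whether anything changes in the non-self-adjoint setting; it only repackages the part of the argument that is already elementary and leaves the hard part as a plan.
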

\begin{proof}As remarked in \cite[Theorem 16]{CMMPY2021}, the proof of \cite[Theorem 3.3]{CS05}, which is written for self-adjoint variables, is valid for not necessarily self-adjoint $*$-free variables.\end{proof}

\section{Matrix models of random variables}
\label{sect:matrix models}

\subsection{Matrix model of normal random variables}

The goal of this subsection is to bring normal random variables to a matrix form of which the information of atoms can be easily read out.
This matrix form will play an important role in Section \ref{sec:calculation} and \ref{sec:comparison}.

Clearly, if $Y_1,\dots,Y_d$ are random variables in the $W^\ast$-probability space $(M_n(\C),\tr_n)$ for some $n\in\bN$, then they are already in a matrix form (with atoms of rational weights).
So in the following we focus on the random variables that have non-trivial non-atomic parts.
We will present two constructions that give us a matrix model for a given random variable.

In the first one, we consider normal random variable whose atoms are of rational weights.
For such a variable, we will construct a diagonal matrix such that they have the same distribution.
If we have multiple such random variables, the diagonal matrices produced by this procedure will lose the information on the relation between the given random variables.
However, with the help of freely independent Haar unitaries, we can further build a freely independent tuple of matrices which copy the distribution of each of the given random variables.

In the second one, we will show that for a normal variable $Y$, its matrix amplification $I_n\otimes Y$ can be decomposed into a form of $UDU^\ast$, where $U$ is unitary and $D$ is a diagonal matrix containing the information of atoms.
Comparing to the first construction, we can keep the information of the relation between the given random variables.

The first construction is given as the following lemma.

\begin{lemma}\label{Matrix_form:free_case}
Let $Y$ be a normal random variable in some $W^\ast$-probability space $(\M,\tau)$ such that there exists $n\in \mathbb{N}$ with $\mu_Y(\{\lambda\})\in \frac{1}{n}\mathbb{N}$ for each $\lambda\in \mathbb{C}$.
Then there is some $W^\ast$-probability space $(\cN,\varphi)$ such that
the distribution of $Y$ is the one of
\[
D:=
\begin{pmatrix}\alpha_1&\cdots & 0&&&\\
\vdots &\ddots &\vdots&&\0&\\
0 &\cdots &\alpha_{m}&&&\\
&& &\gamma_1 &\cdots & 0\\
&\0& &\vdots &\ddots &\vdots\\
&& &0 &\cdots &\gamma_{m_0}
\end{pmatrix}\in (M_n(\cN),\tr_n\circ\varphi),
\]
where $\gamma_1,\dots,\gamma_{m_0}$ are $*$-free normal random variables without atoms,  and $\{\alpha_1,\dots,\alpha_{m}\}\in \mathbb{C}$ is the (multi)-set of atoms of $Y$, in such a way that the point spectrum distribution $\mu^p_{Y}$ of $Y$ is
$\frac{1}{n}(\delta_{\alpha_1}+\ldots+\delta_{\alpha_m})$.
\end{lemma}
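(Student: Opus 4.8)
The plan is to build $D$ by placing the atomic part of $\mu_Y$ on the diagonal as scalars (each atom repeated according to its weight) and filling the remaining diagonal entries with $*$-free normal operators whose distributions average out to the renormalized non-atomic part of $\mu_Y$.

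First I would split $\mu_Y=\mu_Y^p+\mu_Y^c$ into its atomic and non-atomic parts. By assumption $\mu_Y^p=\sum_{j=1}^{\ell}\frac{k_j}{n}\delta_{\lambda_j}$ with the $\lambda_j$ pairwise distinct and $k_j\in\bN$; since each atom that occurs has weight at least $1/n$ and $\mu_Y$ is a probability measure, only finitely many occur and $m:=\sum_{j=1}^{\ell}k_j\leq n$. List the atoms with multiplicity as the multiset $\{\alpha_1,\dots,\alpha_m\}$ (so $\lambda_j$ appears $k_j$ times; this already makes $\mu_Y^p=\frac1n(\delta_{\alpha_1}+\cdots+\delta_{\alpha_m})$), and put $m_0:=n-m$. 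Then $\mu_Y^c$ has no atoms, compact support, and total mass $1-\frac{m}{n}=\frac{m_0}{n}$.

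If $m_0=0$, take $(\cN,\varphi)=(\C,\id)$ and $D=\diag(\alpha_1,\dots,\alpha_n)\in M_n(\C)$. Otherwise set $\nu:=\frac{n}{m_0}\mu_Y^c$, a compactly supported non-atomic probability measure on $\C$, and choose a $W^\ast$-probability space $(\cN,\varphi)$ containing $*$-free normal elements $\gamma_1,\dots,\gamma_{m_0}$, each with analytic distribution $\nu$ (a suitable free product). Define $D\in M_n(\cN)$ to be the displayed diagonal matrix, equipped with the normalized trace $\tr_n\circ\varphi$; it is normal, being diagonal with normal entries. Two things then remain. For the first, the analytic distribution of a diagonal matrix is the average of those of its entries, so $\mu_D=\frac1n\sum_{k=1}^{m}\delta_{\alpha_k}+\frac{m_0}{n}\nu=\mu_Y^p+\mu_Y^c=\mu_Y$; as $Y$ and $D$ are both normal with the same analytic distribution, they have the same $*$-distribution, which is the assertion that $Y$ has the distribution of $D$. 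For the second, for each $\lambda\in\C$ the kernel of $\lambda I_n-D$ in $\C^n\otimes L^2(\cN)$ decomposes as $\bigoplus_{k=1}^{n}\ker(\lambda-d_k)$, where $d_1,\dots,d_n$ are the diagonal entries; since each $\gamma_i$ is normal without atoms, $\ker(\lambda-\gamma_i)=\{0\}$, while $\ker(\lambda-\alpha_k)$ is all of $L^2(\cN)$ when $\alpha_k=\lambda$ and $\{0\}$ otherwise, so $(\tr_n\circ\varphi)(p_{\ker(\lambda I_n-D)})=\frac1n\,\#\{k\leq m:\alpha_k=\lambda\}$, giving $\mu_D^p=\mu_Y^p=\frac1n(\delta_{\alpha_1}+\cdots+\delta_{\alpha_m})$.

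I do not expect a genuine obstacle here: the construction is explicit and the verifications are short. The only points that need a little care are the bookkeeping in the first step — using the hypothesis that atom weights lie in $\frac1n\bN$ to conclude $m\leq n$ and that the continuous mass is exactly $\frac{m_0}{n}$, so that $\nu$ is a bona fide probability measure — and the routine identification of the kernel of a diagonal operator matrix, in order to read off its point spectrum via the "atom $\Leftrightarrow$ nontrivial kernel" principle for normal operators recalled in the preliminaries.
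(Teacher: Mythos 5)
Your proof is correct and follows essentially the same route as the paper's: decompose $\mu_Y$ into atomic and non‑atomic parts, place the atoms with multiplicity as scalar diagonal entries, renormalize the non‑atomic remainder to a probability measure, and fill the remaining diagonal slots with $*$-free normal operators having that distribution. The only cosmetic difference is that the paper builds the $\gamma_i$ as free copies of the compression $q_0 Y q_0$ in $(q_0\M q_0,\tau(\cdot)/\tau(q_0))$, whereas you invoke the abstract existence of a normal operator with prescribed non‑atomic distribution; the verification of the $*$-moments and the kernel computation are the same.
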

\begin{proof}
First, recall that we can write $Y=q_0Yq_0+\sum_{i=1}^l\lambda_i q_i$, where $\lambda_1,\dots,\lambda_l\in\sigma_p(Y)$, $q_1,\dots,q_l$ are orthogonal projections onto the eigenspaces of $Y$ and $q_0:=1-\sum_{i=1}^lq_i$ (see the proof of Lemma \ref{lemma:approximation}).
We denote $\M_0:=q_0\M q_0$ and $\tau_0(\cdot):=\frac{1}{\tau(q_0)}\tau(\cdot)$, which form a compressed $W^\ast$-probability space of $(\M,\tau)$.
We consider $\gamma:=q_0Yq_0$ as a random variable in $\M_0$.
Since $Y$ commutes with $q_0$, we see that $\gamma$ is normal as $Y$ is normal.
Moreover, $\lambda-\gamma$ is injective on $\im (q_0)$ for any $\lambda\in\C$ otherwise there is some subspace of $\im (q_0)$ which is an eigenspace of $Y$.
So the analytic distribution of $\gamma\in(\M_0,\tau_0)$ has no atom.

Next, let $n$ be an integer such that $\tau(q_1),\dots,\tau(q_l)$ can be written as fractions with their denominators to be $n$.
We denote by $m_i:=n\tau(q_i)$ for each $i=1,\dots,l$, $m:=\sum_{i=1}^lm_i$ and $m_0:=n-m$.
Now we define $(\cN,\varphi):=(\M_0,\tau_0)^{\ast m_0}$ to be the free product of $m_0$ copies of $(\M_0,\tau_0)$.
Moreover, let $\gamma_1,\dots,\gamma_{m_0}\in\cN$ to be $m_0$ copies of $\gamma\in\M_0$ such that $\gamma_1,\dots,\gamma_{m_0}$ are freely independent (and of course, they are normal random variables with non-atomic analytic distributions).
Let us set $D\in M_n(\cN)$ to be a diagonal matrix whose diagonal entries consists of
\begin{itemize}
\item $m$ constants $\alpha_1,\ldots,\alpha_m\in \mathbb{C}$, which are given by $m_i$ copies of the constant $\lambda_i$ for each $1\leq i \leq \ell$;
\item $\gamma_1,\dots,\gamma_{m_0}$.
\end{itemize}
Then we see that $D$ and $Y$ have the same $\ast$-distribution since for any polynomial $p\in\C\left<x,x^\ast\right>$,
\allowdisplaybreaks
\begin{align*}
\varphi(p(D,D^\ast))&=\sum_{i=1}^m\frac{1}{n}p(\alpha_i,\overline{\alpha_i})+\frac{1}{n}\sum_{j=1}^{m_0}\varphi(p(\gamma_j,\gamma^\ast_j))\\
&=\sum_{i=1}^l\frac{m_i}{n}p(\lambda_i,\overline{\lambda_i})+\frac{1}{n}\sum_{j=1}^{m_0}\varphi(p(\gamma_j,\gamma^\ast_j))\\
&=\sum_{i=1}^l\frac{m_i}{n}(p(\lambda_i,\overline{\lambda_i}))+\frac{m_0}{n}\tau_0(p(\gamma,\gamma^\ast))\\
&=\sum_{i=1}^l\tau(q_i)p(\lambda_i,\overline{\lambda_i})+\tau(q_0)\tau_0(p(\gamma,\gamma^\ast))\\
&=\sum_{i=1}^l\tau(q_i)p(\lambda_i,\overline{\lambda_i})+\tau(q_0p(Y,Y^\ast)q_0)\\
&=\tau(p(Y,Y^\ast)).
\end{align*}
\end{proof}

Since we consider random variables with non-trivial non-atomic part, we will assume the von Neumann algebra to be type $II_1$.
Then we can embed it, with also preserving the trace, into a type $II_1$ \emph{factor}.
Thanks to the factoriality, we can construct a unitary matrix by partial isometries that come from equivalent projections.
Then we use this unitary matrix to digonalize our random variables. The details of these statements are given in the following proposition and its proof. 

\begin{prop}
\label{Matrix_form:general_case}
Let $Y_1,\dots,Y_d$ be normal random variables in some tracial $W^\ast$-probability space.
Then they can be embedded into a tracial $W^\ast$-probability space $(\M,\tau)$ such that $\M$ is a factor.
Let $n\in\bN$ be given.
Then for each $Y_k$, there exist a unitary matrix $U_k\in M_n(\M)$ and a diagonal matrix $D_k\in M_n(\M)$ such that
\begin{itemize}
\item $U_kD_kU_k^\ast=I_n\otimes Y_k$ for each $k=1,\dots,d$;
\item for every atom $\lambda$ of $Y$ with weight $w$, there are $\lfloor nw\rfloor$ diagonal elements of $D_k$ appear as $\lambda$.
\end{itemize}
\end{prop}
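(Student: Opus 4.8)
The plan is to reduce everything to a statement about moving projections around in a $II_1$ factor. First I would invoke the standard fact that any tracial $W^*$-probability space embeds trace-preservingly into a $II_1$ factor (for instance by passing to its free product with the hyperfinite $II_1$ factor $R$); this changes neither the normal operators $Y_k$ nor the weights of their atoms, so from now on $\M$ may be assumed to be a $II_1$ factor. Fix $k$, write $Y=Y_k$, let $q_1,\dots,q_l$ be the eigenprojections of $Y$ at its atoms $\lambda_1,\dots,\lambda_l$ with weights $w_i=\tau(q_i)$, set $q_0=1-\sum_i q_i$ and $\gamma=q_0Yq_0$ (normal, non-atomic distribution). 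I would then work with $T:=I_n\otimes Y$ inside the $II_1$ factor $M_n(\M)$ with normalized trace $\tr_n\otimes\tau$; its distribution is $\mu_Y$, its eigenprojection at $\lambda_i$ is $e_i:=I_n\otimes q_i$ of trace $w_i$, and on $e^{\mathrm{na}}:=I_n\otimes q_0$ it acts as $I_n\otimes\gamma$. The goal is to exhibit pairwise orthogonal projections $g_1,\dots,g_n$, each of trace $1/n$, summing to $1$, all lying in the relative commutant $B:=W^*(T)'\cap M_n(\M)$, and arranged so that exactly $\lfloor nw_i\rfloor$ of them sit below $e_i$. Granting this, all the $g_j$ are mutually Murray--von Neumann equivalent and jointly conjugate to the matrix units $e_{jj}\otimes 1$ by one unitary $U$ (a sum of intertwining partial isometries); because each $g_j$ commutes with $T$, the element $D_k:=UTU^*$ commutes with every $e_{jj}\otimes 1$, hence lies in $\bigoplus_j e_{jj}\otimes\M$, i.e.\ is diagonal, and if $g_j\le e_i$ then $Tg_j=\lambda_i g_j$ forces its $j$-th diagonal entry to be the scalar $\lambda_i$. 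Taking $U_k:=U^*$ then yields $U_kD_kU_k^*=I_n\otimes Y_k$ with the asserted diagonal form.

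The construction of the $g_j$ proceeds atom by atom. For each $i$, the corner $e_iM_n(\M)e_i$ is a $II_1$ factor on which $T$ is the scalar $\lambda_i$, hence is contained in $B$; since $\lfloor nw_i\rfloor/n\le w_i$, I would pick inside it $\lfloor nw_i\rfloor$ orthogonal projections of trace $1/n$, which become the $g_j$ for $j$ in some label set $S_i$ (the $S_i$ chosen disjoint, possible as $\sum_i\lfloor nw_i\rfloor\le n$), leaving a residual projection $e_i^0\le e_i$ of trace $\{nw_i\}/n<1/n$. Writing $m_0:=n-\sum_i\lfloor nw_i\rfloor$ and $f:=e^{\mathrm{na}}+\sum_i e_i^0$, one checks $f\in B$ and $\tau(f)=m_0/n$. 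Inside $fBf$, which is the relative commutant of $fTf$ in the $II_1$ factor $fM_n(\M)f$, there still sit the $II_1$ factor corners $e_i^0M_n(\M)e_i^0$, on which $fTf$ is the scalar $\lambda_i$, together with the diffuse abelian algebra generated by $I_n\otimes\gamma$ on $e^{\mathrm{na}}$; these are mutually orthogonal, sum to $f$, and each contains projections of every trace up to its own. Since their traces add up to exactly $m_0/n$, I would split them into $m_0$ further orthogonal projections of trace $1/n$ summing to $f$ (a continuous ``bin-packing'' argument), all still inside $B$; these are the remaining $g_j$. By construction $g_1,\dots,g_n$ are orthogonal, sum to $1$, and each has trace $1/n$.

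Once the $g_j$ are in hand the rest is bookkeeping: conjugate by the unitary $U$ above, read off $D_k=\diag(d_1,\dots,d_n)$, note $d_j=\lambda_i$ for $j\in S_i$ with $|S_i|=\lfloor nw_i\rfloor$, and observe that the degenerate cases cause no trouble ($m_0=0$ happens precisely when $Y$ is purely atomic with weights in $\frac1n\mathbb{N}$, and then $D_k$ is a genuine scalar diagonal; $\gamma=0$ means there is no non-atomic part to distribute, and $f$ reduces to $\sum_i e_i^0$).

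The step I expect to be the crux is the splitting of the residual projection $f$ into trace-$1/n$ pieces \emph{within} the relative commutant $B$ rather than in all of $M_n(\M)$: this is exactly where both hypotheses enter, the factoriality of $\M$ to realize projections of prescribed traces and move them about freely, and the diffuseness of the non-atomic part $\gamma$ to top up the fractional leftovers $\{nw_i\}/n$ into complete diagonal slots. Everything else --- the embedding into a factor, the equivalence of equal-trace projections, and the passage from ``commutes with $T$'' to ``diagonal'' --- is standard.
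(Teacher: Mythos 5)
Your proof is correct and follows the same overall strategy as the paper's: embed into a $II_1$ factor, produce an orthogonal resolution of the identity into $n$ trace-$1/n$ projections compatible with the atoms of $Y_k$, and conjugate by a unitary built from intertwining partial isometries to bring $I_n\otimes Y_k$ into diagonal form. The difference is where you set things up. The paper finds its projections $p_1,\dots,p_n$ inside $\M$, assembles the unitary as the full $n\times n$ matrix $(u_{ij})$ of intertwiners between the $p_i$, and verifies $UDU=I_n\otimes Y_k$ by a matrix computation; you find your $g_1,\dots,g_n$ in $M_n(\M)$, require them to lie in the relative commutant $W^*(I_n\otimes Y_k)'\cap M_n(\M)$, conjugate them jointly to $e_{jj}\otimes 1$ by a single sum of intertwiners, and read off diagonality of $UTU^*$ from the commutant condition rather than by computation. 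That organizational choice makes explicit a requirement which the paper's computation needs but states only in passing: the $p_j$'s must commute with $Y_k$, which is automatic for pieces below an eigenprojection but for pieces meeting $q_0$ forces them to be drawn from the spectral algebra of the non-atomic part $\gamma=q_0Y_kq_0$ (just as you do by taking the residual slices from $W^*(I_n\otimes\gamma)$). You have also correctly identified the continuous bin-packing step as the place where factoriality and diffuseness both act, and that step goes through as you describe.
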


\begin{proof}
Thanks to \cite[Proof of Theorem 2.6]{Haagerup2000} or \cite[Theorem A.1.5]{O2012}, a type $II_1$ von Neumann algebra with a tracial state can be embedded into a type $II_1$ factor such that the embedding preserves also the trace. (Note that the normal trace on a factor is unique.)
Therefore, let us assume that $(\M,\tau)$ is a tracial $W^\ast$-probability space such that $\M$ is a factor.
It's enough to prove the statement for one normal random variable $Y\in\M$.
First, we write
\[
Y=q_0Yq_0+\sum_{i=1}^l\lambda_i q_i. 
\]
as in the proof of Lemma \ref{lemma:approximation}, where $\lambda_1,\dots,\lambda_l\in\sigma_p(Y)$, $q_1,\dots,q_l$ are orthogonal projections onto the eigenspaces of $Y$ and $q_0:=1-\sum_{i=1}^lq_i$.

Next, for each $i=0,1,\dots,l$, let $m_i:=\lfloor n\tau(q_i)\rfloor$.
We expect to find $m_i$ subprojections of $q_i$ with trace equal to $\frac{1}{n}$.
This can be done as follows.
We know that there exists a projection $p_{\frac{1}{n}}\in\M$ with $\tau(p_{\frac{1}{n}})=\frac{1}{n}$ since $\M$ is a type $II_1$ factor (see, for example, \cite[Proposition 8.5.3]{KR86}).
If $m_i\geq1$, i.e., $\frac{1}{n}\leq\tau(q_i)$, then by the comparison theory of the factor we infer from $\tau(p_{\frac{1}{n}})\leq\tau(q_i)$ that $p_{\frac{1}{n}}\precsim q_i$ (see, for example, \cite[Section 8.5]{KR86}).
This implies that there exists a projection $p^i_1\leq q_i$ with $\tau(p^i_1)=\tau(p_{\frac{1}{n}})=\frac{1}{n}$.
Similarly we can find a projection $p^i_2\leq q_i-p^i_1$ with $\tau(p^i_2)=\frac{1}{n}$ if $m_i\geq2$.
We repeat this procedure until there are $m_i$ orthogonal subprojections $p^i_1,\dots,p^i_{m_i}$ of $q_i$ with equal trace $\frac{1}{n}$.
Therefore, we write $q_i=\sum_{j=1}^{m_i}p^i_{j}+p^i_0$, where $p^i_0:=q_i-\sum_{j=1}^{m_i}p^i_j$ stands for the remaining part.
Note that $p^i_1,\dots,p^i_{m_l},p^i_0$ commute with $Y$, we see that
\[
q_iYq_i=(\sum_{j=1}^{m_i}p^i_j+p^i_0)Y(\sum_{j=1}^{m_0}p^i_j+p^i_0)=\sum_{j=1}^{m_0}p^i_jYp^i_j+p^i_0Yp^i_0.
\]
Therefore, we could write
\[
Y=\sum_{i=0}^l\sum_{j=0}^{m_i}p^i_jYp^i_j.
\]
Let us set $p^{l+1}:=\sum_{i=0}^l p_0^i$.
Then we have 
\[
\tau(p^{l+1})=1-\sum_{i=0}^l\frac{m_i}{n}=\frac{n-\sum_{i=0}^lm_i}{n}.
\]
We denote $m_{l+1}:=n-\sum_{i=0}^l m_i$.
Furthermore, we divide also $p^{l+1}$ into $m_{l+1}$ subprojections with trace $\frac{1}{n}$ and denote them by $p_1^{l+1},\dots,p_{m_{l+1}}^{l+1}$.
So we finally write $Y$ as
\[
Y=\sum_{i=0}^{l+1}\sum_{j=1}^{m_i}p^i_jYp^i_j.
\]
For simplicity's sake, we rewrite the indices of $p^i_j$ as $1,\dots,n$ and thus
\[
Y=\sum_{j=1}^{n}p_jYp_j.
\]

Now let us consider the $W^\ast$-probability space $(M_n(\M),\tr_n\circ\tau)$.
For each pair of projections $(p_i,p_j)$ ($i<j$) we can find an partial isometry $u_{ij}\in\M$ such that $u_{ij}u_{ij}^\ast=p_j$ and $u_{ij}^\ast u_{ij}=p_i$ since $p_i$ and $p_j$ are equivalent relative to $\M$ (recall that the equivalence of projections is equivalent to the equality of traces of projections, see, for example \cite[Section 8.5]{KR86}).
Let $u_{ii}:=p_i$ and $u_{ij}:=u_{ji}^\ast$ if $i>j$, then we have $u_{ij}u_{ji}=p_j$ for $i,j=1,\dots,n$.
We define
\[
U:=\big(u_{ij}\big)_{i,j=1}^n,
\]
which is a selfadjoint unitary operator in $M_n(\M)$.
Moreover, let us define
\[
D:=
\begin{pmatrix}
\alpha_1 & 0 & \cdots & 0\\
0 & \alpha_2 & \cdots & 0\\
\vdots & \vdots & \ddots & \vdots\\
0 & 0 & \cdots & \alpha_n
\end{pmatrix}\in M_n(\M),
\]
where $\alpha_j:=\sum_{i=1}^nu_{ji}Yu_{ij}$.
Note that if $p_j$ satisfies $Yp_j=\lambda p_j$ for $\lambda\in\sigma_p(Y)$, then
\[
\alpha_j=\lambda\sum_{i=1}^nu_{ji}u_{ij}=\lambda\sum_{i=1}^np_i=\lambda.
\]
Moreover, if $\lambda$ has weight $w$, then there are $\lfloor nw\rfloor$ $p_i$'s that are subprojections of the projection $p_{\ker(\lambda-Y)}$.
So $\lambda$ appears $\lfloor nw\rfloor$ many times in $\alpha_j$'s.

Finally, from a direct verification 
\begin{align*}
UDU&=\big(\sum_{j=1}^nu_{kj}\alpha_ju_{jl}\big)_{k,l=1}^n\\
&=\big(\sum_{i,j=1}^nu_{kj}u_{ji}Yu_{ij}u_{jl}\big)_{k,l=1}^n\\
&=\big(\sum_{j=1}^nu_{kj}u_{jl}Yu_{lj}u_{jl}\big)_{k,l=1}^n\\
&=\big(\delta_{kl}\sum_{j=1}^np_jYp_j\big)_{k,l=1}^n\\
&=I_n\otimes Y,
\end{align*}
we see that $UDU\in M_n(\M)$ is indeed a matrix model for random variable $Y$.
\end{proof}

\subsection{Non-commutative derivations and $\Delta$}
Let $(\mathcal{M},\tau)$ be a tracial $W^*$-probability space. A non-commutative derivation
$\delta$ on $\mathcal{M}$ with domain $\mathcal{A}\subset \mathcal{M}$ is a (possibly unbounded) linear map
$$\delta:L^2(\mathcal{M},\tau)\supseteq \mathcal{A} \to L^2(\mathcal{M},\tau)\otimes L^2(\mathcal{M},\tau),$$
such that $\mathcal{A}$ is a $*$-algebra which is weakly dense in $\mathcal{M}$, and such that, for all $X_1,X_2\in \mathcal{A}$, we have the Leibniz rule
$$\delta(X_1X_2)=\delta(X_1)\cdot X_2+X_1\cdot \delta(X_2).$$
For all $\eta\in L^2(\mathcal{M},\tau)\otimes L^2(\mathcal{M},\tau)$, we say that $\delta^*(\eta)$ exists (or $\eta$ is in the domain of $\delta^*$) if $\delta^*(\eta)$ is a vector of $L^2(\mathcal{M})$ such that, for all $X\in\mathcal{A}$, we have
$$\langle \eta,\delta X\rangle=\langle \delta^*(\eta),X\rangle .$$
For all $\eta\in \mathcal{A}\otimes \mathcal{A}$, the map $A\mapsto A\sharp \eta$, given by $(A_1\otimes A_2)\sharp (x\otimes y)=(A_1x\otimes y A_2)$ on tensors, extends continuously to the space  $L^2(\mathcal{M},\tau)\otimes L^2(\mathcal{M},\tau)$. Because we have
$\langle \delta(X)\sharp \eta,1\otimes 1\rangle =\langle \delta(X),\eta^*\rangle,$
the existence of $\delta^*(\eta^*)$ is equivalent to the existence of $(\delta^\eta)^*(1\otimes 1)$ for the non-commutative derivation $\delta^\eta:X\mapsto \delta(X)\sharp \eta$. We have the following consequence.

\begin{lemma}{\cite[Lemma 4.3]{Mai2015}}\label{lemma:Mai}Let $\delta$ be a non-commutative derivation on $\mathcal{M}$ with domain $\mathcal{A}\subset \mathcal{M}$. The following are equivalent:
\begin{itemize}
    \item $\delta^*(1\otimes 1)$ exists;
    \item $\delta^*(\eta)$ exists for all $\eta\in \mathcal{A}\otimes \mathcal{A}$;
    \item $(\delta^\eta)^*(1\otimes 1)$ exists for all $\eta\in \mathcal{A}\otimes \mathcal{A}$ (where $\delta^\eta$ is given by $X\mapsto \delta(X)\sharp \eta$).
\end{itemize}
\end{lemma}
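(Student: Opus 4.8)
The plan is to run the implications $(2)\Rightarrow(1)\Rightarrow(2)$ and then dispose of the equivalence $(2)\Leftrightarrow(3)$, the last one being essentially the remark recorded just before the statement.

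First, $(2)\Rightarrow(1)$ is immediate, since $1\otimes 1$ lies in $\mathcal{A}\otimes\mathcal{A}$. For $(2)\Leftrightarrow(3)$ I would simply unwind the identity $\langle \delta(X)\sharp \eta,1\otimes 1\rangle =\langle \delta(X),\eta^*\rangle$ already noted in the text: for a fixed $\eta\in\mathcal{A}\otimes\mathcal{A}$ the functional $X\mapsto \langle 1\otimes1,\delta^\eta(X)\rangle = \langle \eta^*,\delta(X)\rangle$ is $\|\cdot\|_2$-bounded on $\mathcal{A}$ (which is $\|\cdot\|_2$-dense in $L^2(\mathcal{M},\tau)$) if and only if $\delta^*(\eta^*)$ exists, in which case $(\delta^\eta)^*(1\otimes1)=\delta^*(\eta^*)$. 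Since $\eta\mapsto\eta^*$ is a bijection of $\mathcal{A}\otimes\mathcal{A}$ onto itself, the assertion ``$(\delta^\eta)^*(1\otimes1)$ exists for all $\eta$'' is exactly ``$\delta^*(\eta)$ exists for all $\eta$''.

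The real work is in $(1)\Rightarrow(2)$. Since $\dom(\delta^*)$ is a linear subspace and the algebraic tensor product $\mathcal{A}\otimes\mathcal{A}$ is spanned by elementary tensors, it is enough to show $a\otimes b\in\dom(\delta^*)$ for all $a,b\in\mathcal{A}$, together with an explicit formula for $\delta^*(a\otimes b)$. The approach is an ``integration by parts'': for $X\in\mathcal{A}$ one uses traciality of $\tau$ to rewrite
\[
\langle a\otimes b,\delta(X)\rangle=\langle 1\otimes 1,\;a^*\cdot\delta(X)\cdot b^*\rangle ,
\]
where $\cdot$ denotes the natural left/right action of $\mathcal{M}$ on the two legs of $L^2(\mathcal{M},\tau)\otimes L^2(\mathcal{M},\tau)$, and then expands by the Leibniz rule
\[
a^*\cdot\delta(X)\cdot b^*=\delta(a^*Xb^*)-\delta(a^*)\cdot(Xb^*)-(a^*X)\cdot\delta(b^*).
\]
Pairing the three terms with $1\otimes1$ and using traciality once more, the first becomes $\langle a\,\delta^*(1\otimes1)\,b,\,X\rangle$, a bounded functional of $X\in(\mathcal{A},\|\cdot\|_2)$ precisely because $\delta^*(1\otimes1)$ exists in $L^2$ by hypothesis and $a,b\in\mathcal{M}$ act boundedly on $L^2$; the second and third become $\langle \big((\tau\otimes\id)\delta(a^*)\big)^{*}\,b,\,X\rangle$ and $\langle a\,\big((\id\otimes\tau)\delta(b^*)\big)^{*},\,X\rangle$, which are bounded for the same reasons. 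This yields $a\otimes b\in\dom(\delta^*)$ with
\[
\delta^*(a\otimes b)=a\,\delta^*(1\otimes1)\,b-\big((\tau\otimes\id)\delta(a^*)\big)^{*}b-a\,\big((\id\otimes\tau)\delta(b^*)\big)^{*},
\]
and one extends the formula to all of $\mathcal{A}\otimes\mathcal{A}$ by linearity.

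The step I expect to be the main obstacle is this last computation: one must check that the partial expectations $\tau\otimes\id$ and $\id\otimes\tau$ genuinely send $L^2(\mathcal{M},\tau)\otimes L^2(\mathcal{M},\tau)$ into $L^2(\mathcal{M},\tau)$ — so that the correction terms above are honest $L^2$-vectors even though $\delta(a^*)$ and $\delta(b^*)$ need not lie in $\mathcal{A}\otimes\mathcal{A}$ — and one must keep careful track of adjoints and of which leg of the tensor each of $a$, $b$ acts on when moving them across the trace. All of this rests on the $\|\cdot\|_2$-continuity of $\tau$ and on the boundedness of left and right multiplication by elements of $\mathcal{M}$; once those are in place the rest of the argument is purely formal.
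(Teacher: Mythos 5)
The paper does \emph{not} prove this lemma -- it is imported verbatim from \cite[Lemma~4.3]{Mai2015}, so there is no in-paper argument to compare with. Taken on its own merits, your proof is correct, and it reconstructs the standard argument behind Mai's lemma.

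A few remarks to confirm the details. Your treatment of $(2)\Rightarrow(1)$ and of $(2)\Leftrightarrow(3)$ is exactly the remark the paper records just before the statement: from $\langle \delta(X)\sharp\eta,1\otimes 1\rangle=\langle\delta(X),\eta^*\rangle$ one reads off that $(\delta^\eta)^*(1\otimes 1)$ exists if and only if $\delta^*(\eta^*)$ does, and $(2)\Leftrightarrow(3)$ follows because $\eta\mapsto\eta^*$ (with $(a\otimes b)^*=b^*\otimes a^*$) is a bijection of $\mathcal{A}\otimes\mathcal{A}$, $\mathcal{A}$ being a $*$-algebra. For $(1)\Rightarrow(2)$ your Leibniz-rule decomposition of $a^*\cdot\delta(X)\cdot b^*$ is right, and each of the three resulting functionals of $X$ is indeed $\|\cdot\|_2$-bounded: the first because $\delta^*(1\otimes 1)\in L^2$ and the left/right module actions of $a,b\in\mathcal{M}$ on $L^2(\mathcal{M},\tau)$ are bounded with norms $\leq\|a\|,\|b\|$; the second and third because $\tau\otimes\id$ and $\id\otimes\tau$ are the contractions $L^2\otimes L^2\to L^2$ obtained by pairing one leg with the unit vector $1$ (so they take $\delta(a^*)$ and $\delta(b^*)$, which need not lie in $\mathcal{A}\otimes\mathcal{A}$, into honest $L^2$-vectors), after which one applies the isometric conjugation $J$ and again a bounded module action. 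I checked your resulting formula
\[
\delta^*(a\otimes b)=a\,\delta^*(1\otimes 1)\,b
-\big((\tau\otimes\id)\delta(a^*)\big)^{*}b
-a\,\big((\id\otimes\tau)\delta(b^*)\big)^{*}
\]
term by term against $\langle\delta^*(a\otimes b),X\rangle=\tau\big(X^*\,\delta^*(a\otimes b)\big)$ and it is correct. The only point you could make more explicit, since you flagged it yourself, is that term~1 requires $a^*Xb^*\in\mathcal{A}$ so that the adjoint relation $\langle 1\otimes 1,\delta(\,\cdot\,)\rangle=\langle\delta^*(1\otimes 1),\,\cdot\,\rangle$ is applicable; this holds because $\mathcal{A}$ is a $*$-algebra. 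With that, linearity extends the domain statement from elementary tensors to all of $\mathcal{A}\otimes\mathcal{A}$.
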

Let $P_\Omega:L^2(\mathcal{M},\tau)\to L^2(\mathcal{M},\tau)$ be the orthogonal projection onto the trace vector $1$. The Hilbert space $L^2(\mathcal{M},\tau)\otimes L^2(\mathcal{M},\tau)$ is isometrically isomorphic to the space of Hilbert-Schmidt operators on $L^2(\mathcal{M},\tau)$ via the map $A\sharp P_\Omega$, given by $(A_1\otimes A_2) \sharp P_\Omega=A_1P_\Omega A_2$ for all $A_1\otimes A_2\in \mathcal{A}\otimes \mathcal{A}$. The existence of $\delta^*(1\otimes 1)$ allows to define some unbounded operators on $L^2(\mathcal{M},\tau)$ with very interesting properties, thanks to the following theorem.

\begin{theorem}{\cite[Theorem 1]{S2006}}\label{theorem:derivation}
Let $\partial$ be a non-commutative derivation on $\mathcal{M}$ with domain $\mathcal{A}\subset \mathcal{M}$. If $\partial^*(1\otimes 1)$ exists, there exists a closable unbounded operator  $D: L^2(\mathcal{M},\tau)\to L^2(\mathcal{M},\tau)$ such that $[D,X]=(\partial X)\sharp P_\Omega$ for all $X\in \mathcal{A}$.
\end{theorem}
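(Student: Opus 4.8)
The plan is to construct $D$ by hand on the dense domain $\mathcal{D}:=\mathcal{A}\Omega\subset L^2(\mathcal{M},\tau)$, where $\Omega$ denotes the trace vector $\widehat{1}$ and $X\Omega$ the canonical image of $X\in\mathcal{M}$ in $L^2(\mathcal{M},\tau)$; the hypothesis that $\partial^*(1\otimes 1)$ exists will be used only at the very end, to establish closability. First I would pass from $\partial$ to the associated map into Hilbert--Schmidt operators: for $X\in\mathcal{A}$ put $T_X:=(\partial X)\sharp P_\Omega$. Via the isometry $L^2(\mathcal{M},\tau)\otimes L^2(\mathcal{M},\tau)\cong HS(L^2(\mathcal{M},\tau))$ this is a Hilbert--Schmidt, hence bounded, operator with $\|T_X\|\le\|\partial X\|_{L^2\otimes L^2}$. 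The Leibniz rule for $\partial$, combined with the compatibility identities $(X\cdot\eta)\sharp P_\Omega=X\,(\eta\sharp P_\Omega)$ and $(\eta\cdot Y)\sharp P_\Omega=(\eta\sharp P_\Omega)\,Y$ for $\eta\in\mathcal{A}\otimes\mathcal{A}$ (checked on simple tensors $\eta=A_1\otimes A_2$), shows that $X\mapsto T_X$ is a derivation of $\mathcal{A}$ into the $\mathcal{M}$-$\mathcal{M}$-bimodule $HS(L^2(\mathcal{M},\tau))$:
\[
T_{XY}=T_X\,Y+X\,T_Y,\qquad X,Y\in\mathcal{A}.
\]

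Next I would simply \emph{define} $D$ on $\mathcal{D}$ by $D(X\Omega):=T_X\Omega$. This is well defined and linear because $X\mapsto X\Omega$ is a linear bijection of $\mathcal{A}$ onto $\mathcal{D}$ (faithfulness of $\tau$), and $\|D(X\Omega)\|\le\|\partial X\|_{L^2\otimes L^2}$; note $D\Omega=T_1\Omega=0$ since $\partial 1=0$ (if one does not wish to assume $1\in\mathcal{A}$, enlarge $\mathcal{D}$ to $\mathcal{A}\Omega+\mathbb{C}\Omega$ and set $D\Omega=0$). The commutation relation is then immediate from the bimodule Leibniz rule: for $X\in\mathcal{A}$ and $Y\Omega\in\mathcal{D}$,
\[
[D,X](Y\Omega)=D(XY\,\Omega)-X\,D(Y\Omega)=T_{XY}\Omega-X\,T_Y\Omega=T_X(Y\Omega),
\]
so $[D,X]$ coincides with the bounded operator $T_X=(\partial X)\sharp P_\Omega$ on the dense domain $\mathcal{D}$, which is the asserted identity.

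The real content is closability. Here I would compute, for $X,Y\in\mathcal{A}$, directly from the definitions of $\sharp$ and of the GNS inner product (on simple tensors this is a one-line check, modulo the standard conjugation conventions),
\[
\langle D(X\Omega),\,Y\Omega\rangle=\langle(\partial X)\sharp P_\Omega\,\Omega,\,Y\Omega\rangle=\langle\partial X,\;Y\otimes 1\rangle.
\]
Since $Y\otimes 1\in\mathcal{A}\otimes\mathcal{A}$ and $\partial^*(1\otimes1)$ exists, Lemma~\ref{lemma:Mai} guarantees that $\partial^*(Y\otimes 1)$ exists, and then $\langle\partial X,\,Y\otimes1\rangle=\langle X\Omega,\,\partial^*(Y\otimes1)\rangle$. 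Hence every vector $Y\Omega\in\mathcal{D}$ lies in $\operatorname{dom}(D^*)$ with $D^*(Y\Omega)=\partial^*(Y\otimes1)$; in particular $\operatorname{dom}(D^*)$ is dense, so $D$ is closable, and its closure is the operator in the statement.

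I expect the only genuinely nontrivial point to be this last step: Steps 1 and 2 are essentially bookkeeping with the Leibniz rule and the $\sharp P_\Omega$-identification, and the crux is to recognize that the pairing $\langle D(X\Omega),Y\Omega\rangle$ is \emph{literally} $\langle\partial X,Y\otimes 1\rangle$, so that ``$D^*$ densely defined'' is nothing other than ``$\partial^*$ defined on all of $\mathcal{A}\otimes\mathcal{A}$'', which is exactly what Lemma~\ref{lemma:Mai} extracts from the hypothesis. The remaining care is purely clerical: tracking the conjugate-linearity conventions in the GNS inner product and in the definition of $\sharp$, and handling the harmless question of whether $1\in\mathcal{A}$.
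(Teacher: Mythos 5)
Your proof is correct and constructs the same operator $D$ as the paper does, via $D(X\Omega)=((\partial X)\sharp P_\Omega)\Omega=((\id\otimes\tau)\partial X)\Omega$; your derivation of the commutation relation $[D,X]=(\partial X)\sharp P_\Omega$ from the bimodule Leibniz identity fills in precisely the step the paper leaves unstated. Where you diverge is in the closability argument: the paper writes down an explicit Voiculescu-style formula $D^*(A)=A\cdot\partial^*(1\otimes 1)-\tau\otimes\id[(\partial(A^*))^*]$ and defers its verification to the references, whereas you compute the pairing $\langle D(X\Omega),Y\Omega\rangle=\langle\partial X,\,Y\otimes1\rangle$ and invoke Lemma~\ref{lemma:Mai} to deduce that $\partial^*(Y\otimes1)$ exists, so that $\mathcal{A}\Omega\subset\operatorname{dom}(D^*)$. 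Both routes establish that $D^*$ is densely defined; yours is self-contained given the surrounding text, since Lemma~\ref{lemma:Mai} has already been set up for exactly this purpose, while the paper's explicit formula for $D^*$ is a bit more informative but puts off the actual adjointness check. Your identification of $\langle D(X\Omega),Y\Omega\rangle=\langle\partial X,\,Y\otimes1\rangle$ as the crux is exactly right, and the computation is consistent with the paper's convention $(A_1\otimes A_2)\sharp P_\Omega=A_1P_\Omega A_2$.
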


\begin{proof}
We recall briefly here the construction (it is written for self-adjoint variables in \cite{S2006}, however the proof is valid also in our context, since $\mathcal{A}$ is a dense $*$-algebra in $\mathcal{M}$). Let us consider the densily defined operator
$$D:=(\id\otimes \tau)\circ \partial:L^2(\mathcal{M},\tau)\supseteq \mathcal{A} \to L^2(\mathcal{M},\tau)$$
which is such that, for all $A\in \mathcal{A}$, we have
$[D,A]=\partial A\sharp P_{\Omega}.$
Now, the existence of $\partial^*(1\otimes 1)$ allows us to define an adjoint
$$D^*:L^2(\mathcal{M},\tau)\supseteq \mathcal{A} \to L^2(\mathcal{M},\tau)$$
by the formula
$D^*(A):=A\cdot \partial^*(1\otimes 1)-(\partial(A^*)\sharp P_\Omega)^*(1)=A\cdot\partial^*(1\otimes 1)-\tau\otimes \id[(\partial(A^*))^*].$
As a consequence, $D$ is closable (see also \cite[Theorem 8 and Exercise 3]{MS17}).
\end{proof}

We use the previous theorem in order to give a criterion for the maximality of $\Delta$.
\begin{prop}\label{prop:Deltamaximal}Let $X_1,\ldots,X_d$ be in a tracial $W^*$-probability space $\mathcal{M}:=W^*(X_1,\ldots,X_d)$ endowed with the state $\tau$. Assume that there exist non-commutative derivations $\partial_i$ ($1\leq i\leq d$) on $\mathcal{M}$ with domain $\mathbb{C}\langle X_1,\ldots,X_d,X_1^*,\ldots,X_d^*\rangle,$
such that $\partial_i(X_j)=\delta_{i,j}1\otimes 1$ and such that $\partial_i^*(1\otimes 1)$ exists. Then $\Delta(X_1,\ldots,X_d)=d$.
\end{prop}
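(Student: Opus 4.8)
The plan is to turn the hypothesised derivations into a \emph{dual system} of (unbounded) operators and then to use that system to exclude non‑trivial finite‑rank tuples annihilated by $\mathrm{ad}$ of the $X_i$; recall that the latter is exactly the characterisation of $\Delta=d$ stated just before Theorem~\ref{th:Delta_maximal}.

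\emph{Step 1 (building the dual system).} For each $1\le i\le d$, since $\partial_i^*(1\otimes 1)$ exists, I would apply Theorem~\ref{theorem:derivation} to get a closable operator $D_i$ on $L^2(\mathcal{M},\tau)$, densely defined on $\mathcal{A}:=\mathbb{C}\langle X_1,\ldots,X_d,X_1^*,\ldots,X_d^*\rangle$ (viewed inside $L^2(\mathcal{M},\tau)$), with $[D_i,a]=(\partial_i a)\sharp P_\Omega$ for all $a\in\mathcal{A}$. Feeding in $a=X_j$ and using $\partial_i X_j=\delta_{ij}\,1\otimes 1$ together with $(1\otimes 1)\sharp P_\Omega=P_\Omega$ gives the key relation
$$[D_i,X_j]=\delta_{ij}\,P_\Omega\qquad(1\le i,j\le d),$$
where $P_\Omega$ is the rank‑one projection onto the trace vector $1$. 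Since a derivation kills $1$, one also records $D_i 1=(\mathrm{id}\otimes\tau)(\partial_i 1)=0$, and that each $[D_i,X_j^*]=(\partial_i X_j^*)\sharp P_\Omega$ is the Hilbert--Schmidt operator attached to $\partial_i X_j^*$.

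\emph{Step 2 (reformulation and linear independence).} By the stated characterisation, proving $\Delta(X_1,\ldots,X_d)=d$ amounts to showing that the only tuple $(T_1,\ldots,T_d)$ of finite‑rank operators on $L^2(\mathcal{M},\tau)$ with $\sum_{i=1}^d[T_i,X_i]=0$ is the zero tuple. First, applying $\partial_i$ to a hypothetical relation $\lambda_0 1+\sum_j\lambda_j X_j=0$ forces $\lambda_i\,1\otimes 1=0$, hence $\lambda_1=\cdots=\lambda_d=0$ and then $\lambda_0=0$, so $1,X_1,\ldots,X_d$ are linearly independent in $\mathcal{M}$. Next, using that $\partial_i^*$ is available on all of $\mathcal{A}\otimes\mathcal{A}$ (Lemma~\ref{lemma:Mai}) together with a density/smoothing argument, I would reduce to solutions $(T_i)$ whose building blocks are rank‑one operators $a\,P_\Omega\,b^*$ with $a,b\in\mathcal{A}$, i.e. finite‑rank operators with range and source inside the dense subspace $\mathcal{A}\cdot 1$.

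\emph{Step 3 (running the dual system).} Let $(T_i)$ be such a solution, so $\sum_i(X_iT_i-T_iX_i)=0$. Because $D_k$ is defined on $\mathcal{A}\cdot 1$, it is defined on the ranges of $X_iT_i$ and $T_iX_i$, and on $\mathcal{A}\cdot 1$ one has $D_kX_i=X_iD_k+\delta_{ki}P_\Omega$; composing the identity with $D_k$ on the left then yields
$$\sum_{i=1}^d[D_kT_i,X_i]=P_\Omega T_k .$$
Here $(D_kT_i)_i$ is again of the same kind, and since $D_k(a\,P_\Omega\,b^*)=\big((\mathrm{id}\otimes\tau)\partial_k a\big)\,P_\Omega\,b^*$ while $\partial_k$ strictly lowers the degree of $a$, the ``left degree'' of $(D_kT_i)_i$ drops; a symmetric computation composing on the right gives $\sum_i[T_iD_k,X_i]=T_kP_\Omega$, lowering the ``right degree''. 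The boundary terms $P_\Omega T_k$ and $T_kP_\Omega$ are rank‑one operators whose range, resp. source, is $\mathbb{C}\cdot 1$, on which every $D_\ell$ acts by $0$ because $D_\ell 1=0$; tracking these terms through the iteration, one induces on the (left and right) degree down to the base case where each $T_i=\lambda_iP_\Omega$ with $\lambda_i\in\mathbb{C}$. Evaluating $\sum_i\lambda_i[P_\Omega,X_i]=0$ at the vector $1$ gives $\big(\sum_i\lambda_iX_i\big)1\in\mathbb{C}\cdot 1$, hence $\sum_i\lambda_iX_i\in\mathbb{C}\cdot 1$, and the linear independence of $1,X_1,\ldots,X_d$ forces all $\lambda_i=0$. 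Therefore $(T_i)=0$ and $\Delta(X_1,\ldots,X_d)=d$.

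\emph{Main obstacle.} The formal manipulations are routine once $[D_i,X_j]=\delta_{ij}P_\Omega$ is in hand; the real work is the unbounded‑operator bookkeeping — verifying that the iterated compositions $D_kT_i$ and $T_iD_k$ remain densely defined with ranges and sources inside $\mathcal{A}\cdot 1$ (the delicate point being that $\partial_k^*(1\otimes 1)$ is a priori only an $L^2$‑vector, which tends to push the source of $T_iD_k$ out of $\mathcal{A}\cdot 1$), making rigorous the reduction in Step~2 from the Hilbert--Schmidt kernel of $(T_i)\mapsto\sum_i[T_i,X_i]$ to its ``polynomial'' rank‑one elements, and organising the double induction on degree so that the rank‑one boundary terms are genuinely absorbed. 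Alternatively, once the $D_i$ are constructed one may invoke the corresponding ``dual system implies maximal $\Delta$'' statement from \cite{CS05}.
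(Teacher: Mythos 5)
Your Step~1 --- applying Theorem~\ref{theorem:derivation} to each $\partial_i$ to produce densely defined operators $D_i$ on $L^2(\mathcal{M},\tau)$ with $[D_i,X_j]=\delta_{ij}P_\Omega$ --- is exactly the paper's proof, and your closing remark that one may then invoke a ``dual system implies maximal $\Delta$'' result is essentially all the paper does from that point: it cites \cite[Proposition 6.2]{MSY19} and notes that the argument there goes through \emph{mutatis mutandis} when the dual system consists of unbounded (affiliated) operators. Had you stopped there, you would have reproduced the proof.

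The bulk of your proposal (Steps~2 and~3) is an attempt to reprove that cited implication from scratch, and this is where there are genuine gaps. First, the reduction in Step~2 from arbitrary finite-rank tuples $(T_i)$ to tuples built from rank-one pieces $a\,P_\Omega\,b^*$ with $a,b\in\mathcal{A}$ is asserted via a ``density/smoothing argument'' that is not spelled out and is not obviously available: the kernel of $(T_i)\mapsto\sum_i[T_i,JX_iJ]$ (which is what actually appears in the definition of $\Delta$) has no reason to be stable under any natural smoothing, and $\Delta$ is measured by the $\mathcal{M}\bar\otimes\mathcal{M}^{\mathrm{op}}$-von~Neumann dimension of the HS-closure of that kernel, so a ``reduce to the polynomial part'' step would need a real argument compatible with that module structure. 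Second, the double induction in Step~3 is only a plan: after composing with $D_k$ the tuple $(D_kT_i)_i$ satisfies the \emph{inhomogeneous} relation $\sum_i[D_kT_i,X_i]=P_\Omega T_k$, so it is not ``again of the same kind'' and the boundary terms must be tracked precisely at every stage; the claim that the ``left degree'' strictly drops presupposes a notion of degree for non-monomial $a$ together with a verification; and the domain bookkeeping for the iterated compositions $D_kT_i$, $T_iD_k$ (with $\partial_k^*(1\otimes 1)$ a priori only an $L^2$-vector) is, as you yourself flag, the real difficulty rather than a technicality. In short, Step~1 is correct and coincides with the paper; Steps~2--3 are an honest but incomplete sketch of the external ingredient that the paper imports by reference.
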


In our setting, $\partial_i$ is defined on $\mathbb{C}\langle X_1,\ldots,X_d,X_1^*,\ldots,X_d^*\rangle$ and not only on $\mathbb{C}\langle X_1,\ldots,X_d\rangle$ (as it is usually the case for self-adjoint variables). The condition in the Lemma above is exactly the existence of a conjugate system if $X_i=X_i^*$. 
\begin{proof}
Thanks to Theorem~\ref{theorem:derivation}, the existence of the derivation $\partial_i$ allows to define an (unbounded) dual system $(D_1,\ldots,D_d)$ in the following sense. For each $1\leq i \leq d$, the densely defined operator
$D_i:L^2(\mathcal{M},\tau)\to L^2(\mathcal{M},\tau)$
is such that, for all $A\in \mathbb{C}\langle X_1,\ldots,X_d,X_1^*,\ldots,X_d^*\rangle$, we have
$[D_i,A]=(\partial_i A)\sharp P_{\Omega}.$
In particular, we have
$$[D_i,X_j]=\delta_{ij}P_{\Omega}.$$
In order to conclude that $\Delta(X_1,\ldots,X_d)=d$, it suffices now to use \cite[Proposition 6.2]{MSY19}, which is also valid if the dual system is in the algebra of affiliated operators (the proof works \it{mutatis mutandis}).
\end{proof}

\begin{lemma}
If a unitary variable $U$ has a $L^3$-density with respect to the Lebesgue measure, then there exists a non-commutative derivation $\partial$ on $W^*(U)$ with domain $\mathbb{C}[ U,U^{-1}]$
such that $\partial(U)=1\otimes 1$ and such that $\partial^*(1\otimes 1)$ exists. Furthermore,  $\Delta(U)=1$.
\end{lemma}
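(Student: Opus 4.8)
The plan is to define the relevant derivation explicitly on the Laurent polynomials $\mathbb{C}[U,U^{-1}]$ and to verify that its adjoint is well-defined precisely because of the $L^3$-density assumption; the conclusion $\Delta(U)=1$ will then follow immediately from Proposition~\ref{prop:Deltamaximal}. First I would set $\partial(U)=1\otimes 1$ and extend by the Leibniz rule. Since $U$ is unitary, $U^{-1}=U^*$, and the Leibniz rule forces $0=\partial(1)=\partial(UU^{-1})=(1\otimes 1)\cdot U^{-1}+U\cdot\partial(U^{-1})$, hence $\partial(U^{-1})=-U^{-1}\otimes U^{-1}$. More generally $\partial(U^n)=\sum_{k=0}^{n-1}U^k\otimes U^{n-1-k}$ for $n\geq 1$ and the analogous (negative) formula for $n\leq -1$; one checks this is a well-defined derivation on $\mathbb{C}[U,U^{-1}]$, which is a weakly dense $*$-subalgebra of $W^*(U)$.

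Next I would identify $L^2(W^*(U),\tau)$ with $L^2(\mathbb{T},\rho\,dm)$, where $\rho\in L^3$ is the density of the distribution of $U$ with respect to normalized Lebesgue measure $m$ on the circle, and $U$ acts as multiplication by the coordinate function $z$. The key computation is then to show that $\partial^*(1\otimes 1)$ exists, i.e. that there is a vector $\xi\in L^2$ with $\langle 1\otimes 1,\partial(p)\rangle=\langle\xi,p\rangle$ for all $p\in\mathbb{C}[U,U^{-1}]$. Applying this to $p=U^n$ and using the formula for $\partial(U^n)$, the left-hand side becomes a sum of terms $\tau(U^j)\overline{\tau(U^{n-1-j}{}^{*})}$-type expressions; more transparently, $\langle 1\otimes 1,\partial(U^n)\rangle=\sum_{k}\tau(U^k)\tau(U^{n-1-k})$, which should match the Fourier coefficients of a function built from $\rho$. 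The natural candidate is essentially the (conjugate) Hilbert-transform-type object, or more concretely $\xi$ will turn out to be a function whose existence in $L^2(\mathbb{T},\rho\,dm)$ is guaranteed when $\rho\in L^3$: writing things out, one is led to an expression of the shape $\xi = c\cdot \rho'/\rho$ or a principal-value integral against $\rho$, and the $L^2(\rho\,dm)$-norm of such an object is controlled by $\int |\text{(something quadratic in a transform of }\rho)|\,\rho\,dm$, which by Hölder with exponents $3/2$ and $3$ reduces to boundedness of a Calderón–Zygmund operator on $L^{3/2}$ — finite exactly because $\rho\in L^3$ (so the transform lands in $L^3$ and the product with the extra factor $\rho$ is integrable).

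The main obstacle I expect is precisely this analytic estimate: pinning down the correct closed form for $\partial^*(1\otimes 1)$ and then showing it lies in $L^2(\mathbb{T},\rho\,dm)$ under the hypothesis $\rho\in L^3$. This is where the $L^3$ exponent enters — it is the threshold that makes the relevant singular-integral expression square-integrable against the weight $\rho\,dm$ via a Hölder splitting $3 = 3/2 + \text{dual}$, combined with $L^p$-boundedness of the Hilbert transform on the circle. Once the existence of $\partial^*(1\otimes 1)$ is established, the rest is immediate: Lemma~\ref{lemma:Mai} upgrades this to $\partial^*(\eta)$ existing for all $\eta\in\mathcal{A}\otimes\mathcal{A}$, and then Proposition~\ref{prop:Deltamaximal} applied with $d=1$, $X_1=U$, $\partial_1=\partial$ yields $\Delta(U)=1$. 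I would be careful to note that $\mathbb{C}[U,U^{-1}]$ is already closed under the adjoint (since $U^*=U^{-1}$), so the derivation is automatically defined on the $*$-algebra generated by $U$, as required by Proposition~\ref{prop:Deltamaximal}.
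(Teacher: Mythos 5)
Your plan correctly identifies the overall structure (build a derivation with $\partial(U)=1\otimes 1$, show $\partial^*(1\otimes 1)$ exists, invoke Lemma~\ref{lemma:Mai} and Proposition~\ref{prop:Deltamaximal}), but it leaves a genuine gap where the actual content of the lemma lies. You explicitly acknowledge it yourself: ``the main obstacle I expect is precisely this analytic estimate.'' That estimate is not a detail to be filled in later --- it \emph{is} the proof, and your sketch of it is not reliable as it stands. In particular, the candidate $\xi = c\cdot\rho'/\rho$ is the \emph{classical} score function (the one that appears in classical Fisher information); the free conjugate variable attached to a density $\rho$ is not a derivative of $\rho$ but a \emph{singular integral} of it (a Hilbert transform, or the circular conjugate function in the unitary setting), and the $L^3$ threshold arises from squaring that transform against the weight. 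Conflating the two would lead the argument astray; and even with the correct ansatz, verifying $\xi\in L^2(\rho\,dm)$ amounts to redoing a nontrivial computation.

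The paper avoids this entirely. It first recalls that for the \emph{standard} derivation $\diff$ on $\mathbb{C}[U,U^{-1}]$ determined by $\diff(U)=1\otimes U$ (not $1\otimes 1$), the existence of $\diff^*(1\otimes 1)$ under an $L^3$-density hypothesis is exactly Voiculescu's Proposition~8.7 in~\cite{Voi99}; that is where the singular-integral/H\"older analysis already lives. It then uses Lemma~\ref{lemma:Mai}, via the $\sharp$-operation, to pass to $\partial := \diff(\cdot)\sharp(1\otimes U^{-1})$, which satisfies $\partial(U)=(1\otimes U)\sharp(1\otimes U^{-1})=1\otimes 1$ and inherits the existence of $\partial^*(1\otimes 1)$. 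If you want to keep your first-principles route you would have to carry out (and correct) the estimate yourself, effectively reproving~\cite[Proposition~8.7]{Voi99}; the cleaner and intended path is to cite that result and transfer with Lemma~\ref{lemma:Mai}.
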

\begin{proof}Thanks to \cite[Proposition 8.7]{Voi99}, $\diff^*(1\otimes 1)$ exists for the unique non-commutative derivation $\diff$ on $W^*(U)$ with domain $\mathbb{C}[ U,U^{-1}]$
such that $\diff(U)=1\otimes U$. Using Lemma \ref{lemma:Mai}, it implies that $\partial^*(1\otimes 1)$ exists for the non-commutative derivation $\partial:X\mapsto \diff(X)\sharp (1\otimes U^{-1})$, which is such that $\partial(U)=(1\otimes U)\sharp (1\otimes U^{-1})=1\otimes 1$. Proposition \ref{prop:Deltamaximal} yields $\Delta(U)=1$.
\end{proof}

An important realization of distributions in matrix form is via the free compression by  matrix units. Precise definitions and more information on this can be found in \cite[Lecture 14]{NS06}. 

\begin{theorem}\label{thm:free-compression}Let $X^{(1)},\ldots,X^{(d)}$ be in a tracial $W^*$-probability space. If $(X_{ij}^{(1)})_{i,j=1}^n$, $\ldots$, $(X_{ij}^{(d)})_{i,j=1}^n$ are given by the free compressions of $X^{(1)},\ldots,X^{(d)}$ by matrix units, and if
there exists non-commutative derivations $\partial^{(\ell)}$ ($1\leq \ell \leq d$) on $W^*(X^{(1)},\ldots,X^{(d)})$ with domain $\mathbb{C}\langle X^{(1)},\ldots,X^{(d)},(X^{(1)})^*,\ldots,(X^{(d)})^*\rangle$
such that $\partial^{(\ell)}(X^{(\ell')})=\delta_{\ell,\ell'}1\otimes 1$ and such that $(\partial^{(\ell)})^*(1\otimes 1)$ exists, then there exists the non-commutative derivations $\partial_{ij}^{(\ell)}$ on $\mathcal{M}:=W^*(X_{ij}^{(\ell)}:i,j,\ell)$ with domain $\mathbb{C}\langle X_{ij}^{(\ell)},(X_{ij}^{(\ell)})^*:i,j,\ell\rangle $
such that $\partial_{ij}^{(\ell)}(X_{kl}^{(\ell')})=\delta_{i,k}\delta_{j,l}\delta_{\ell,\ell'}1\otimes 1$ and such that $(\partial_{ij}^{(\ell)})^*(1\otimes 1)$ exists. Furthermore, $\Delta(X_{ij}^{(\ell)}:i,j,\ell)=dn^2$.
\end{theorem}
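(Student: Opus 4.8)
The plan is to reduce the statement about the free compressions $X_{ij}^{(\ell)}$ to the single-variable calculation already available from Proposition~\ref{prop:Deltamaximal}, by constructing explicit non-commutative derivations on $\mathcal{M}=W^*(X_{ij}^{(\ell)}:i,j,\ell)$ that play the role of partial derivatives with respect to the matrix entries. Recall that the free compression by matrix units means the following: we enlarge $W^*(X^{(1)},\ldots,X^{(d)})$ by a free copy of $M_n(\mathbb{C})$ with its matrix units $(e_{ij})_{i,j=1}^n$, and inside the compressed algebra $e_{11}\widetilde{\mathcal{M}}e_{11}$ (rescaled to be a $W^*$-probability space) the entries $X_{ij}^{(\ell)}$ are obtained so that, up to the canonical isomorphism $M_n(e_{11}\widetilde{\mathcal{M}}e_{11})\cong$ a corner of $\widetilde{\mathcal{M}}$, one has $\sum_{i,j}e_{i1}X_{ij}^{(\ell)}e_{1j}$ reconstructing $X^{(\ell)}$. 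Concretely, $X^{(\ell)}$ viewed in $M_n(e_{11}\widetilde{\mathcal{M}}e_{11})$ has $(i,j)$-entry (a scalar multiple of) $X_{ij}^{(\ell)}$. First I would make this identification completely precise, keeping careful track of the normalizing constant $n$ coming from $\tr_n$ versus the unnormalized trace.

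Next I would transport the derivations $\partial^{(\ell)}$ through the matrix amplification. If $\partial^{(\ell)}$ is a non-commutative derivation on $W^*(X^{(1)},\ldots,X^{(d)})$ with $\partial^{(\ell)}(X^{(\ell')})=\delta_{\ell,\ell'}1\otimes 1$, then its $n\times n$ amplification $\partial^{(\ell)}\otimes\mathrm{id}_{M_n}$ (interpreted via the Hilbert-module identification $L^2(M_n(\mathcal{N}))\cong M_n(\mathbb{C})\otimes L^2(\mathcal{N})$ and the Leibniz rule applied entrywise) is a derivation on the matrix algebra, and since matrix units are constants for it, its adjoint still exists; more precisely $(\partial^{(\ell)}\otimes\mathrm{id})^*(1\otimes 1)$ is controlled by $(\partial^{(\ell)})^*(1\otimes 1)$ together with the $e_{ij}$. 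Composing with the coordinate projections $M_n(\mathcal{N})\ni A\mapsto$ its $(i,j)$-entry, and with the embedding back, produces candidate derivations $\partial_{ij}^{(\ell)}$ on $\mathcal{M}$ satisfying $\partial_{ij}^{(\ell)}(X_{kl}^{(\ell')})=\delta_{i,k}\delta_{j,l}\delta_{\ell,\ell'}1\otimes 1$ on the dense $*$-algebra $\mathbb{C}\langle X_{ij}^{(\ell)},(X_{ij}^{(\ell)})^*:i,j,\ell\rangle$. The key structural point to verify is that the adjoint $(\partial_{ij}^{(\ell)})^*(1\otimes 1)$ exists: this is where Lemma~\ref{lemma:Mai} is useful, since it lets us check the adjointability condition against a convenient single tensor rather than all of $\mathcal{A}\otimes\mathcal{A}$, and the freeness of $M_n(\mathbb{C})$ from $W^*(X^{(1)},\ldots,X^{(d)})$ lets us compute the relevant inner products explicitly in terms of the already-adjointable $\partial^{(\ell)}$ and the (smooth, hence adjointable) action of the matrix units.

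Once the $\partial_{ij}^{(\ell)}$ are in hand with $\partial_{ij}^{(\ell)}(X_{kl}^{(\ell')})=\delta_{(i,j,\ell),(k,l,\ell')}\,1\otimes 1$ and existence of $(\partial_{ij}^{(\ell)})^*(1\otimes 1)$, the conclusion $\Delta(X_{ij}^{(\ell)}:i,j,\ell)=dn^2$ is immediate from Proposition~\ref{prop:Deltamaximal}, since there are exactly $dn^2$ variables and we have exhibited the required dual system. The main obstacle, I expect, is the careful verification of adjointability after compression: the free compression is built on $L^2$ (or higher) integrability hypotheses in the ambient step (note the $L^3$-density assumption appearing in the unitary lemma just above), and one must make sure that passing to the corner $e_{11}\widetilde{\mathcal{M}}e_{11}$ and rescaling does not destroy the integrability needed for $\delta^*(1\otimes 1)$ to be a genuine $L^2$ vector; this is essentially a moment/estimate bookkeeping issue, since freeness of $M_n(\mathbb{C})$ with the rest makes all the relevant Hilbert-Schmidt norms computable, but it requires attention. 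A secondary, purely cosmetic, point is reconciling the constant $n$ in the isomorphism between $M_n$ of the compression and a corner of $\widetilde{\mathcal{M}}$ so that the normalization $\partial_{ij}^{(\ell)}(X_{kl}^{(\ell')})=\delta\cdot 1\otimes 1$ (with no spurious scalar) comes out right; this can be arranged by an appropriate choice of scaling of the $X_{ij}^{(\ell)}$, which is harmless for the value of $\Delta$.
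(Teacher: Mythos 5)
Your high-level architecture is correct — construct the dual derivations on the compressed algebra, then invoke Lemma~\ref{lemma:Mai} and Proposition~\ref{prop:Deltamaximal} — but there is a genuine gap in the central step, and your proposal also imports a spurious worry.

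The gap: you describe the construction as a ``matrix amplification $\partial^{(\ell)}\otimes\mathrm{id}_{M_n}$'' on a matrix algebra, with ``the Leibniz rule applied entrywise.'' This is not how the compression works, and the vague claim that ``since matrix units are constants for it, its adjoint still exists'' is precisely the point that needs proving. The ambient algebra is the \emph{free product} $W^*(X)\ast M_n(\mathbb{C})$, not $M_n(W^*(X))$, and the correct move is to extend $\partial$ to a derivation $\diff$ on $W^*(X)\ast M_n(\mathbb{C})$ by setting $\diff|_{\mathbb{C}\langle X,X^*\rangle}=\partial$ and $\diff|_{M_n(\mathbb{C})}=0$. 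Once one has $\diff$, the crucial step — which your proposal does not identify — is Voiculescu's compression formula for conjugate variables by a freely added algebra (\cite[Proposition 5.14]{Voi99}), namely
\begin{equation*}
(\tau\ast\tr_n)\circ\diff \;=\;(\tau\ast\tr_n)\circ\partial\circ E_{W^*(X)},
\end{equation*}
which yields $\diff^*(1\otimes 1)=\partial^*(1\otimes 1)$. Without this, the existence of the adjoint of the extended derivation is not automatic: adding the freely independent $M_n(\mathbb{C})$ changes both the domain and the $L^2$-bimodule in which the cyclic vector sits, and one cannot simply assert that constants for the derivation preserve adjointability. After this step the paper then composes $\diff_{ij}(A):=\diff(A)\sharp(E_{i1}\otimes E_{1j})$, uses Lemma~\ref{lemma:Mai} to get $\diff_{ij}^*(1\otimes1)$, restricts to the corner $E_{11}(\cdot)E_{11}$, and tracks the normalizing factor $n$ — your last two steps are compatible with this, though stated too loosely to be checked.

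A secondary point: your concern about ``$L^3$-density'' or integrability hypotheses possibly being destroyed by the compression is misplaced. The $L^3$ hypothesis appears only in the separate lemma about Haar unitaries (where it is used to produce $\partial$ in the first place). In the theorem you are proving, adjointability of $\partial^{(\ell)}$ is a \emph{hypothesis}, and it propagates to the compressed derivations purely via the free-product/conditional-expectation argument above; no additional density or integrability assumptions are needed or used.
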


\begin{proof} The general case being a straightforward adjustment of the case $d=1$, let us prove the result for $d=1$.

Without loss of generality, we can assume that
$$X_{ij}=E_{1i}XE_{j1}\in (E_{11}W^*(X)*M_n(\mathbb{C})E_{11},n(\varphi*\tr_n))=(\mathcal{M},\tau),$$
where $W^*(X)$ is a finite $W^*$-probability space with state $\varphi$. 
We extend $\partial$ to a non-commutative derivative
$\diff$ on $W^*(X)*M_n(\mathbb{C})$ with domain $\mathbb{C}\langle X,X^*\rangle * M_n(\mathbb{C})$ by imposing $\diff_{|\mathbb{C}\langle X,X^*\rangle}=\partial$ and $\diff_{|M_n(\mathbb{C})}=0$. Thanks to \cite[Proposition 5.14]{Voi99}, we know that
$$(\tau*\tr_n)\circ \diff =(\tau*\tr_n)\circ \partial \circ E_{W^*(X)},$$
where $E_{W^*(X)}$ is the conditional expectation from $W^*(X)*M_n(\mathbb{C})$ to $E_{W^*(X)}$. As a consequence, we have $\diff^*(1\otimes 1)=\partial^*(1\otimes 1)$.

Now, let us define the non-commutative derivative $\diff_{ij}$ given by $\diff_{ij}(A)=\diff(A)\sharp(E_{i1}\otimes E_{1j})$. Using Lemma \ref{lemma:Mai}, we know that $\diff_{ij}^*(1\otimes 1)$ exists. They are defined on $W^*(X)*M_n(\mathbb{C})$ with domain $\mathbb{C}\langle X,X^*\rangle * M_n(\mathbb{C})$, but restricting them to the domain  $E_{11}\mathbb{C}\langle X,X^*\rangle *M_n(\mathbb{C})E_{11}$, we get the non-commutative derivatives $\partial_{ij}$ on $E_{11}W^*(X)*M_n(\mathbb{C})E_{11}$ such that $\partial_{ij}(X_{kl})=\delta_{i,k}\delta_{j,l}1\otimes 1$. We can compute
\begin{align*}
\langle \partial_{ij}(A),1\otimes 1\rangle_{L^2(\mathcal{M},\tau)\otimes L^2(\mathcal{M},\tau)}&=n^2\langle \diff_{ij}(A),1\otimes 1\rangle_{L^2(W^*(X)*M_n(\mathbb{C}),\varphi*\tr_n)\otimes L^2(W^*(X)*M_n(\mathbb{C}),\varphi*\tr_n)}\\
&=n^2\langle A,\diff_{ij}^*(1\otimes 1)\rangle_{L^2(W^*(X)*M_n(\mathbb{C}),\varphi*\tr_n)}\\
&=\langle A,n\diff_{ij}^*(1\otimes 1)\rangle_{L^2(\mathcal{M},\tau)}
\end{align*}
which shows that $\partial^*_{ij}(1\otimes 1)=n\diff_{ij}^*(1\otimes 1)$ exists.

Proposition \ref{prop:Deltamaximal} yields $\Delta(X_{ij}:i,j)=n^2$.
\end{proof}

The previous lemma and the previous theorem (for $d=1$) yield the following result.
\begin{corollary}\label{cor:free-compressions-Haar}
If $(U_{ij})_{i,j=1}^n$ are given by the free compressions of a unitary variable $U$ by matrix units, and if $U$ has a $L^3$-density with respect to the Lebesgue measure, then $\Delta(U_{ij}:i,j)=n^2$.
\end{corollary}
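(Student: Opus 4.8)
The plan is simply to combine the two results immediately preceding the statement. First I would invoke the lemma just above (the one asserting that a unitary variable $U$ with an $L^3$-density admits a non-commutative derivation $\partial$ on $W^*(U)$ with domain $\mathbb{C}[U,U^{-1}]$ satisfying $\partial(U)=1\otimes 1$ and for which $\partial^*(1\otimes 1)$ exists). Since $U$ is unitary we have $U^*=U^{-1}$, so the polynomial algebra $\mathbb{C}[U,U^{-1}]$ coincides with $\mathbb{C}\langle U,U^*\rangle$, which is exactly the domain required in the hypothesis of Theorem~\ref{thm:free-compression} in the case $d=1$. Thus the analytic input needed to trigger that theorem is precisely what the lemma supplies.

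Next I would apply Theorem~\ref{thm:free-compression} with $d=1$, $X^{(1)}=U$, and $\partial^{(1)}:=\partial$. Its hypotheses are met: $(U_{ij})_{i,j=1}^n$ are by assumption the free compressions of $U$ by matrix units, and the derivation $\partial$ furnished by the lemma satisfies $\partial(U)=1\otimes 1$ with $\partial^*(1\otimes 1)$ existing. The conclusion of Theorem~\ref{thm:free-compression} then yields directly that $\Delta(U_{ij}:i,j)=dn^2=n^2$, which is the assertion. (As a byproduct one also obtains the non-commutative derivations $\partial_{ij}$ on $W^*(U_{ij}:i,j)$ with $\partial_{ij}(U_{kl})=\delta_{i,k}\delta_{j,l}1\otimes 1$ and $\partial_{ij}^*(1\otimes 1)$ existing; this is the structural statement that actually feeds the $\Delta$-computation through Proposition~\ref{prop:Deltamaximal}, but it is already packaged inside Theorem~\ref{thm:free-compression}.)

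Since both ingredients have already been proved, there is no genuine obstacle here; the corollary is a matter of checking that the hypotheses line up. The only points that require a moment's care are the identification $\mathbb{C}[U,U^{-1}]=\mathbb{C}\langle U,U^*\rangle$ so that the domain condition of Theorem~\ref{thm:free-compression} is literally satisfied, and the remark that the $L^3$-density assumption is used only to guarantee the existence of $\partial^*(1\otimes 1)$ — it is the sole analytic hypothesis, and it propagates unchanged from the lemma into the corollary.
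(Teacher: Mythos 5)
Your proposal is correct and is exactly the paper's argument: the corollary is introduced with the line ``The previous lemma and the previous theorem (for $d=1$) yield the following result,'' which is precisely the combination you carry out. Your added remark that $\mathbb{C}[U,U^{-1}]=\mathbb{C}\langle U,U^*\rangle$ for unitary $U$, making the domain hypothesis of Theorem~\ref{thm:free-compression} literally match, is a useful clarification the paper leaves implicit.
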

Note that free compressions of unitary variables correspond to quantum random variables on the unitary dual group, as studied in \cite{CU16}, and Haar unitaries correspond to the particular case of the Haar trace on the unitary dual group. Another particular case of interest is the unitary free Brownian motion $U(t)$, which has a bounded density whenever $t>0$ (see \cite[Proposition 1.6]{Voi99}): the free compressions $(U_{i,j}(t))_{i,j=1}^n$ of the unitary free Brownian motion have maximal $\Delta$  whenever $t>0$.

Our main use of the previous corollary will be to model free variables in matrix forms. For this we use Haar unitaries, written in matrix form $(U_{ij})_{i,j=1}^n$ as above, where the entries are given by free compressions of a Haar unitary $U$. Note that this means that the matrix $(U_{ij})_{i,j=1}^n$ is free from $M_n(\C)$, see \cite[Theorem 14.20]{NS06}. We will use such Haar unitaries to make other matrices free among themselves by conjugating with the Haar unitaries. For this it will be crucial that such $(U_{ij})_{i,j=1}^n$ are free from all other matrices where the entries are free from the $U_{ij}$. This fact is provided by the following proposition.
This can be derived as a direct consequence of Proposition 3.7 of \cite{NSS02}, but we prefer to give a selfcontained proof which is more in line with our setting. We will only need the special case where $\A$ is generated by a Haar unitary; for this case a direct proof can also be found in the recent preprint \cite[Theorem 17]{MPS2021}. 

\begin{prop}\label{prop:Rcyclic}
Let $(\M,\tau)$ be a tracial $W^*$-probability space and $\mathcal{A}$ and $\mathcal{B}$ be two $*$-free subalgebras of $\M$.

If $M_n( \mathcal{A})$ is $*$-free from $M_n(\C)$ in $(M_n(\M),\tr_n\otimes\tau)$, then $M_n( \mathcal{A})$ is also $*$-free from $M_n( \mathcal{B})$ in $(M_n(\M),\tr_n\otimes\tau)$.
\end{prop}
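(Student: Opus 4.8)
The plan is to pass to the canonical conditional expectation onto the scalar matrices and then run the moment criterion for freeness. Write $D:=M_n(\C)$ for the constant matrices inside $M_n(\M)$, put $\phi:=\tr_n\otimes\tau$, and let $E:=\id_{M_n(\C)}\otimes\tau\colon M_n(\M)\to D$ be the $\phi$-preserving conditional expectation, so that $\phi=\tr_n\circ E$ and $E$ is a $D$-bimodule map. I record two preliminary facts. (i) If $X_1,Y_1,X_2,Y_2,\dots$ is an alternating word with $X_j\in M_n(\A)$, $Y_j\in M_n(\B)$ and $E(X_j)=E(Y_j)=0$, then $E(X_1Y_1X_2Y_2\cdots)=0$. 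Indeed, $E(X_j)=0$ says exactly that every entry of $X_j$ is a $\tau$-centered element of $\A$, and likewise for $Y_j$ and $\B$, so each entry of $E(X_1Y_1\cdots)$ is a finite sum of terms $\tau\bigl((X_1)_{p_0q_1}(Y_1)_{q_1p_1}(X_2)_{p_1q_2}\cdots\bigr)$, each of which is the trace of an alternating product of $\tau$-centered elements of the $*$-free algebras $\A$ and $\B$, hence zero. (ii) From the hypothesis that $M_n(\A)$ is $*$-free from $D$ with respect to $\phi$, every word $a_1d_1a_2d_2\cdots a_r$ with $\phi$-centered $a_i\in M_n(\A)$ and trace-zero $d_i\in D$ (in particular, taking $r=1$, every $\phi$-centered $a\in M_n(\A)$) is $E$-centered: for any trace-zero $d'\in D$, the word $d'a_1d_1a_2\cdots a_r$ is an alternating product of $\phi$-centered elements of the $\phi$-free algebras $D$ and $M_n(\A)$, so $\tr_n\bigl(d'E(a_1d_1\cdots a_r)\bigr)=\phi(d'a_1d_1\cdots a_r)=0$; likewise $\tr_n\bigl(E(a_1d_1\cdots a_r)\bigr)=\phi(a_1d_1\cdots a_r)=0$; and the nondegeneracy of the trace pairing on $D$ then forces $E(a_1d_1\cdots a_r)=0$.

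Now let $c_1c_2\cdots c_k$ be an alternating word with $c_l\in M_n(\A)\cup M_n(\B)$, neighbouring factors in different algebras and $\phi(c_l)=0$ for all $l$; the goal is $\phi(c_1\cdots c_k)=\tr_n\bigl(E(c_1\cdots c_k)\bigr)=0$, and the case $k=1$ is just $\phi(c_1)=0$. For $k\ge2$, each $c_l\in M_n(\A)$ is already $E$-centered by (ii), while each $c_l\in M_n(\B)$ is decomposed as $c_l=(c_l-E(c_l))+E(c_l)$, where $c_l-E(c_l)$ is $E$-centered in $M_n(\B)$ and $E(c_l)\in D$ is a \emph{trace-zero} matrix since $\tr_n(E(c_l))=\phi(c_l)=0$. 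Expanding, $c_1\cdots c_k$ becomes a finite sum of words in which $M_n(\A)$-factors alternate with factors that are either $E$-centered elements of $M_n(\B)$ or trace-zero matrices of $D$; grouping, in each such word, every $D$-matrix together with an adjacent $M_n(\A)$-factor rewrites the word as $d_L\,R_1R_2\cdots R_s\,d_R$, where $d_L,d_R\in D$, each $R_i$ is either a lone $E$-centered element of $M_n(\A)$ or a word $a_1d_1\cdots a_r$ as in (ii) (hence $E$-centered), the $R_i$ alternate between $M_n(\A)$ and $M_n(\B)$, and $s\ge1$ since the $M_n(\A)$-slots survive the expansion. Since $E$ is a $D$-bimodule map, this word contributes $d_L\,E(R_1\cdots R_s)\,d_R$, which is $0$ by (i). Hence $E(c_1\cdots c_k)=0$, so $\phi(c_1\cdots c_k)=0$; this is exactly the $*$-freeness of $M_n(\A)$ and $M_n(\B)$.

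The step I expect to be the main obstacle is the regrouping in the second paragraph: one has to check that absorbing each $D$-valued piece $E(c_l)$ into a neighbouring $M_n(\A)$-factor keeps the resulting block $E$-centered and does not destroy the alternation of the reduced word, so that fact (i) genuinely applies. This is precisely where the trace-zero property of the pieces $E(c_l)$ — a consequence of $\phi(c_l)=0$ — is indispensable: without it the merged blocks would satisfy only $\tr_n\circ E=\phi$ on them rather than the full vanishing $E=0$ needed here. The remaining ingredients, namely the $D$-bimodule manipulations and the routine verification that the reduced words are genuinely alternating, present no difficulty.
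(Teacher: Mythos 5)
Your proof is correct, and it takes a genuinely different route from the paper's. The paper argues through the free conjugation trick: without loss of generality it replaces $\mathcal{B}$ by $u\mathcal{B}u^*$ with $u$ a Haar unitary $*$-free from everything, notes that $M_n(\mathcal{A})$ is free from $I_n\otimes u$ with amalgamation over $M_n(\mathbb{C})$ with respect to $E=\mathrm{id}\otimes\tau$, observes that $E$ restricted to each of $M_n(\mathcal{A})$ and $I_n\otimes W^*(u)$ coincides with the scalar trace $\mathrm{tr}_n\otimes\tau$, and then invokes the fact that a Haar unitary $*$-free from $M_n(\mathcal{A})$ makes $(I_n\otimes u)M_n(\mathcal{B})(I_n\otimes u)^*$ $*$-free from $M_n(\mathcal{A})$. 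You instead verify the freeness moment criterion by hand, using the same conditional expectation $E$ and the decomposition $c_l=(c_l-E(c_l))+E(c_l)$ of the $M_n(\mathcal{B})$-factors. Both proofs hinge on the same pivotal observation --- your fact (ii) for $r=1$, and in the paper the identity $E|_{M_n(\mathcal{A})}=(\mathrm{tr}_n\otimes\tau)|_{M_n(\mathcal{A})}$ --- namely that $*$-freeness of $M_n(\mathcal{A})$ from $M_n(\mathbb{C})$ forces $E$ to be scalar-valued on $M_n(\mathcal{A})$. Your version is more elementary and self-contained, at the cost of a longer combinatorial expansion; the paper's is more compact but borrows the free-conjugation and amalgamated-freeness machinery. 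As for the regrouping you flag as the main risk: it does go through. After cutting the expanded word at the $E$-centered $M_n(\mathcal{B})$-factors, each intermediate segment is of the form $a_1 d_1\cdots a_r$ ($r\ge 1$), which lies in $M_n(\mathcal{A})$ because $\mathcal{A}$ is a subalgebra and is $E$-centered by (ii); these blocks genuinely alternate with the $E$-centered $M_n(\mathcal{B})$-factors, so (i) applies to the reduced word (and the degenerate case $s=1$ with a single $\mathcal{A}$-block is dispatched directly by $E(R_1)=0$ rather than by (i)). One cosmetic point: your (i) should be read as applying to alternating words beginning or ending with either algebra, not only those starting with $M_n(\mathcal{A})$, since the reduced word $R_1\cdots R_s$ can start with either type of block.
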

\begin{proof}
Without loss of generality, we can replace $\mathcal{B}$ by $u\mathcal{B}u^*$ with $u$ $*$-free from $\mathcal{A}$ and try to prove that $M_n( u\mathcal{B}u^*)=(I_n\otimes u)M_n( \mathcal{B})(I_n\otimes u)^*$ is $*$-free from $M_n( \mathcal{A})$.

We already know that $M_n( \mathcal{A})$ is $*$-free with amalgamation from $(I_n\otimes u)$ with respect to the conditional expectation $E_{M_n(\mathbb{C})}:=\id\otimes \tau.$ However, $E_{M_n(\mathbb{C})}$ is equal to $\tr_n\otimes \tau$ on $M_n( \mathcal{A})$ (because of the $*$-freeness of $M_n( \mathcal{A})$ from $M_n(\mathbb{C})$) and  $E_{M_n(\mathbb{C})}$ is equal to $\tr_n\otimes \tau$ on $W^*(I_n\otimes u)=I_n\otimes W^*(u)$. As a consequence, the $*$-freeness with amalgamation of $M_n( \mathcal{A})$ from $(I_n\otimes u)$ with respect to $E_{M_n(\mathbb{C})}$ implies the $*$-freeness with respect to $\tr_n\otimes \tau$.

Because we just proved that $(I_n\otimes u)$ is a Haar unitary which is $*$-free from $M_n( \mathcal{A})$, we get the $*$-freeness of $(I_n\otimes u)M_n( \mathcal{B})(I_n\otimes u)^*$ from $M_n( \mathcal{A})$.
\end{proof}

\section{Algebraic model for free variables with atoms}\label{sec:calculation}

In the first subsection, we will build an algebraic model for $*$-free normal random variables that have atoms of rational weights.
The main argument we will use is that we can compare the free situation with matrices over the free skew field, by the use of the compressions from the previous section.
This result is based on, but also largely extends, the main result in \cite{MSY19}, in the sense that now we can deal with variables with atoms of rational weights.
In particular, we will establish a rank equality between the analytic objects and the algebraic objects.
This allows us to describe the point spectrum distribution of rational expressions evaluated at normal variables with prescribed atoms.
Moreover, this rank equality also explains the strong Atiyah conjecture for the free products of finite 
groups as the analytic rank is equal to some normalized inner rank.

As our algebraic model contains no information about the non-atomic parts of the given random variables, a natural consequence is that for a rational expression $R$ and a tuple $X=(X_1,\dots,X_d)$ of $*$-free normal random variables such that $R(X)$ is well-defined, the point spectrum distribution of $R(X)$ does not depend on the non-atomic parts of $X_1,...,X_d$. 
In the second subsection we will show that this is even true if the weights of the atoms of $X_1,\dots,X_d$ are not necessarily rational; this will then give the full proof of Theorem \ref{thm:nonatomic}.
This phenomenon was known for the polynomial $P(x_1,x_2)=x_1+x_2$ according to \cite[Theorem 7.4]{BV98}.
Our theorem pushes such a result to general rational functions.
Moreover, the approximation to the general case actually allows us to calculate the point spectrum distribution.

\subsection{Algebraic model for free variables with atoms of rational weights}

Let $X=(X_1,\dots,X_d)$ be a tuple of $*$-free normal random variables in some $W^\ast$-probability space such that there exists $n\in \mathbb{N}$ with $\mu_{X_k}(\{\lambda\})\in \frac{1}{n}\mathbb{N}$ for each $1\leq k \leq d$ and each $\lambda\in \mathbb{C}$. For each $1\leq k \leq d$, the point spectrum distribution $\mu^p_{X_k}$ of $X_k$ can be written as
$$\sum_{\ell=1}^{\ell_k}\frac{1}{n}\delta_{\alpha_\ell^{(k)}},$$
where the multiset $\{\alpha_\ell^{(k)}\}_\ell\subset\mathbb{C}$ is the set of atoms of $X_k$.

Now, for each $1\leq k\leq d$, we consider indeterminate formal variables $u_{ij}^{(k)}$ ($1\leq i,j\leq n$) and $y^{(k)}_\ell$ $(1\leq \ell\leq n-\ell_k)$.
We denote by $\cP$ the unital complex algebra of noncommutative polynomials formed by these $dn^2+dn-\sum_k \ell_k$ variables and by $\cR$ its universal field of fractions.
For each $k=1,\dots,d$ we define the matrix
\begin{equation}\label{eq:formal unitary}
\bar{U}_k:=(u_{ij}^{(k)})_{i,j=1}^n\in M_n(\cP)
\end{equation}
and the diagonal matrix
\begin{equation}\label{eq:formal diagonal}
\bar{D}_k:=
\begin{pmatrix}\alpha_1^{(k)} &\cdots & 0&&&\\
\vdots &\ddots &\vdots&&\0&\\
0 &\cdots &\alpha_{\ell_k}^{(k)}&&&\\
&& &y^{(k)}_1 &\cdots & 0\\
&\0& &\vdots &\ddots &\vdots\\
&& &0 &\cdots &y_{n-\ell_k}^{(k)}
\end{pmatrix}\in M_n(\cP).
\end{equation}

\begin{theorem}\label{thm:main}
Let $X=(X_1,\dots,X_d)$ be a tuple of $*$-free normal random variables in some $W^\ast$-probability space $(\M,\tau)$
such that the weights of atoms are multiples of $\frac{1}{n}$ for a certain $n\in \mathbb{N}$.
Let $\bar{U}_k$ and $\bar{D}_k$, $k=1,\dots,d$ be defined as in \eqref{eq:formal unitary} and \eqref{eq:formal diagonal}.
We set $\bar{X}_k:=\bar{U}_k \bar{D}_k\bar{U}_k^{-1}\in M_n(\cR)$ and denote $\bar{X}:=(\bar{X}_1,\dots,\bar{X}_d)$.
Then, we have the following:
\begin{itemize}
\item For each integer $m$ and $A\in M_m(\C\left<x_1,\dots,x_d\right>)$, we have
\[
\rank(A(X))=\frac{1}{n}\rho_{\cR}(A(\bar{X}))\in [0,m]\cap\frac{1}{n}\mathbb{N}.
\]
\item For any rational expression $R$ in variables $x_1,\dots,x_d$, $R(X)$ exists in $L^0(\M)$ if and only if $R(\bar{X})$ exists in $M_n(\cR)$.
Moreover, provided well-definedness, we have for any $\lambda\in \mathbb{C}$,
$$\mu^p_{R(X)}(\{\lambda\})=1-\frac{1}{n}\rho_{\cR}(\lambda-R(\bar{X}))\in [0,1]\cap\frac{1}{n}\mathbb{N}.$$
\end{itemize}
\end{theorem}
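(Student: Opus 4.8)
The plan is to reduce the identity to the characterization of the free field via the quantity $\Delta$ (Theorem~\ref{th:Delta_maximal} and Remark~\ref{rem:Delta_maximal}), by exhibiting a concrete analytic model of $X$ as a tuple of matrices $U_kD_kU_k^*$ whose entries jointly maximize $\Delta$. First I would build this model. For each $k$ let $\gamma^{(k)}_1,\dots,\gamma^{(k)}_{n-\ell_k}$ be mutually $*$-free copies of the atom-free normal operator whose distribution is the non-atomic part of $\mu_{X_k}$, renormalized to be a probability measure, and let $U_1,\dots,U_d\in M_n(\M)$ be matrices obtained from free compressions of $*$-free Haar unitaries with $L^3$-density by matrix units, set up so that the blocks $\{\text{entries of }U_1\},\dots,\{\text{entries of }U_d\},\{\gamma^{(k)}_\ell\}_{k,\ell}$ are mutually $*$-free in a common $(\M,\tau)$ and each $M_n(W^*(\text{entries of }U_k))$ is $*$-free from $M_n(\C)$. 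Put $D_k:=\diag(\alpha^{(k)}_1,\dots,\alpha^{(k)}_{\ell_k},\gamma^{(k)}_1,\dots,\gamma^{(k)}_{n-\ell_k})$ and $X^\#_k:=U_kD_kU_k^*$. Proposition~\ref{prop:Rcyclic} gives that $U_k$ is a Haar unitary in $M_n(\M)$ which is $*$-free from $M_n$ of all the remaining data; from this, the identity $\frac1n\sum_\ell\delta_{\alpha^{(k)}_\ell}+\frac1n\sum_\ell\mu_{\gamma^{(k)}_\ell}=\mu_{X_k}$ (using that the non-atomic part of $\mu_{X_k}$ has mass $(n-\ell_k)/n$) shows $\mu_{X^\#_k}=\mu_{X_k}$, and the standard fact that conjugating a family by free Haar unitaries produces a free family shows that $X^\#:=(X^\#_1,\dots,X^\#_d)$ is $*$-free. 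Hence $X^\#$ has the same joint $*$-distribution as $X$, so by Remark~\ref{rem:rational closure by dist} (isomorphic rational closures) it suffices to prove both bullets with $X$ replaced by $X^\#$, the von Neumann rank on $L^0(\M)$ being replaced by the one on $L^0(M_n(\M))$ attached to $\tr_n\otimes\tau$.

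Next I would check that the tuple $Z$ consisting of all entries of $U_1,\dots,U_d$ together with all the $\gamma^{(k)}_\ell$ has maximal $\Delta$. Each block $\{\text{entries of }U_k\}$ contributes $n^2$ by Corollary~\ref{cor:free-compressions-Haar}, each $\gamma^{(k)}_\ell$ contributes $1$ because an atom-free normal operator has $\Delta=1$, and, these blocks being $*$-free, additivity of $\Delta$ (Proposition~\ref{prop:delta_free}) yields $\Delta(Z)=dn^2+dn-\sum_k\ell_k$, which is exactly the number of variables generating $\cR$. By Remark~\ref{rem:Delta_maximal} the evaluation $\mathrm{ev}\colon\cR\to L^0(\M)$ sending $u^{(k)}_{ij}$ to the corresponding entry of $U_k$ and $y^{(k)}_\ell$ to $\gamma^{(k)}_\ell$ is an injective, rank-preserving homomorphism on matrices over $\cR$. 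Since $\bar U_k$ is full, hence invertible, over the free field and $\mathrm{ev}(\bar U_k)=U_k$, $\mathrm{ev}(\bar D_k)=D_k$, we get $\mathrm{ev}(\bar X_k)=U_kD_kU_k^{-1}=X^\#_k$.

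The two bullets now follow. For $A\in M_m(\Cx)$ one has $\mathrm{ev}(A(\bar X))=A(X^\#)\in M_m(M_n(L^0(\M)))=M_{mn}(L^0(\M))$, and rank-preservation of $\mathrm{ev}$ gives $\rho_\cR(A(\bar X))=\rank_{L^0(\M)}(A(X^\#))=n\cdot\rank_{(M_n(\M),\tr_n\otimes\tau)}(A(X^\#))=n\cdot\rank(A(X))$, the last equality because $X^\#$ and $X$ have the same $*$-distribution; as the inner rank of an $mn\times mn$ matrix lies in $\{0,\dots,mn\}$, this yields $\rank(A(X))=\frac1n\rho_\cR(A(\bar X))\in[0,m]\cap\frac1n\bN$. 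Feeding the equality $\rank(\,\cdot\,)=\frac1n\rho_\cR(\,\cdot\,)$ for all polynomial matrices into Corollary~\ref{cor:isomorphic rational closure} — applied to $L^0(M_n(\M))$ with its von Neumann rank and to $M_n(\cR)$ with the faithful regular Sylvester rank function $\frac1n\rho_\cR$, for the tuples $X^\#$ and $\bar X$ — shows that $R(X)$ exists in $L^0(\M)$ if and only if $R(\bar X)$ exists in $M_n(\cR)$, and then $\rank(R(X))=\frac1n\rho_\cR(R(\bar X))$. Applying this to the rational expression $\lambda-R$ and using $\mu^p_Y(\{\lambda\})=1-\rank(\lambda-Y)$ gives $\mu^p_{R(X)}(\{\lambda\})=1-\frac1n\rho_\cR(\lambda-R(\bar X))\in[0,1]\cap\frac1n\bN$.

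The step I expect to be the main obstacle is the construction and $\Delta$-maximality of the analytic model: one must reconcile three competing requirements — keeping $\mu_{X^\#_k}=\mu_{X_k}$ (which forces the non-atomic slots of $D_k$ to be filled with genuine copies of the non-atomic part of $\mu_{X_k}$, rather than with convenient semicircular variables), keeping $X^\#$ a $*$-free family, and keeping the blocks of entries $*$-free so that $\Delta$ adds up to its maximal value — and carrying out the attendant bookkeeping of freeness relations for the $M_n$-amplifications is exactly where Corollary~\ref{cor:free-compressions-Haar} and Proposition~\ref{prop:Rcyclic} are indispensable.
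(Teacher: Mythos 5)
Your proposal is correct and follows essentially the same route as the paper: you model $X$ by $X^\#_k=U_kD_kU_k^*$ with $U_k$ built from free compressions of Haar unitaries (so Corollary~\ref{cor:free-compressions-Haar} applies), fill the non-atomic slots of $D_k$ with $*$-free atom-free copies (so $\Delta=1$ termwise), invoke additivity of $\Delta$ and Remark~\ref{rem:Delta_maximal} for rank-preservation, use fullness of $\bar U_k$ to evaluate $\bar X_k$, and appeal to Proposition~\ref{prop:Rcyclic} to get $*$-freeness of $\{U_k\}$ from $\{D_k\}$ so that $X^\#$ and $X$ share the same $*$-distribution; the passage to the second bullet via Corollary~\ref{cor:isomorphic rational closure} is also exactly the paper's reduction. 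The only differences are cosmetic — you establish the distributional match before the $\Delta$-maximality calculation rather than after, and you take the fullness of $\bar U_k$ as a generic fact where the paper deduces it from $\rank(\hat U_k)=n$ — but these do not change the argument.
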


\begin{proof}The second item is a consequence of the first one, thanks to Corollary~\ref{cor:isomorphic rational closure}. So let us prove the first item.

Let $U_1,\dots,U_d$ be $*$-free Haar unitary random variables in some $W^\ast$-probability space $(\M_1,\tau_1)$.
Let $U_{ij}^{(k)}\in\cN:=E_{11}\M_1\ast M_n(\C)E_{11}$, $1\leq i,j\leq n,1\leq k\leq d$
be their compressions by free matrix units as in Theorem \ref{thm:free-compression}.
According to Corollary \ref{cor:free-compressions-Haar}, $U_{ij}^{(k)}$, $1\leq i,j\leq n,1\leq k\leq d$ maximize their $\Delta$.

Moreover, the matrix
\[
\hat{U}_k:=(U_{ij}^{(k)})_{1\leq i,j\leq n}\in (M_n(\cN),\tr_n\circ\tau)
\]
is a Haar unitary random variable and thus $\rank(\hat{U}_k)=n$.
By Theorem \ref{th:Delta_maximal} we infer that $\rho(\bar{U}_k)=\rank(\hat{U}_k)=n$.
So we see that $\bar{U}_k$ is a full matrix and thus $\bar{U}_k^{-1}$ is matrix over $\cR$.
Therefore the evaluation of $\bar{U}_k^{-1}$ is exactly the operator $\hat{U}_k^\ast$ as $\hat{U}_k^\ast=(\hat{U}_k)^{-1}$.

Next, thanks to Lemma~\ref{Matrix_form:free_case}, we know that the distribution of $X_k$ is the one of
\[
D_k:=
\begin{pmatrix}\alpha_1^{(k)} &\cdots & 0&&&\\
\vdots &\ddots &\vdots&&\0&\\
0 &\cdots &\alpha_{\ell_k}^{(k)}&&&\\
&& &\gamma^{(k)}_1 &\cdots & 0\\
&\0& &\vdots &\ddots &\vdots\\
&& &0 &\cdots &\gamma_{n-\ell_k}^{(k)}
\end{pmatrix}
\]
where $\gamma^{(k)}_1,\dots,\gamma_{n-\ell_k}^{(k)}$ are normal random variables without atoms which are $*$-free in some tracial $W^\ast$-probability space.
Because $\gamma^{(k)}_1,\dots,\gamma_{n-\ell_k}^{(k)}$ are without atoms, we have $\Delta(\gamma^{k}_\ell)=1$, $\ell=1,\dots,n-\ell_k$.
Hence $\Delta(\gamma^{(k)}_1,\dots,\gamma_{n-\ell_k}^{(k)})=n-\ell_k$ by the additivity of $\Delta$ with respect to freeness (Proposition~\ref{prop:delta_free}).
Without loss of generality, we can assume that $(\gamma^{(k)}_1,\dots,\gamma_{n-\ell_k}^{(k)})_k$ are all living in the same tracial $W^\ast$-probability space $(\M_2,\tau_2)$ and are $*$-free, in such a way that
$$\Delta(\gamma_\ell^{(k)}:1\leq k \leq d, 1\leq \ell \leq n-\ell_k)=dn-\sum_{k=1}^d \ell_k.$$
Let us further consider the $W^\ast$-probability space $(\cN\ast\M_2,\tau\ast\tau_2)$, in which we have $$\Delta(U_{ij}^k,\gamma_\ell^{(k)}:1\leq k \leq d, 1\leq i,j\leq n, 1\leq \ell \leq n-\ell_k)=dn^2+dn-\sum_{k=1}^d \ell_k.$$
Thanks to Remark \ref{rem:Delta_maximal}, this maximality of $\Delta$ gives us an isomorphism $u^{(k)}_{ij}\mapsto U^{(k)}_{ij}$, $y^{(k)}_\ell\mapsto\gamma^{(k)}_\ell$ from the free field $\cR$ to its image such that the respective ranks are preserved.
Now let us define
\[
\hat{X}_k:=\hat{U}_kD_k\hat{U}_k^\ast\in(M_n(\cN\ast\M_2),\tr_n\circ(\tau\ast\tau_2))\quad \forall k=1,\dots,d,
\]
and denote $\hat{X}:=(\hat{X}_1,\dots,\hat{X}_d)$.
Then we have for any $A\in M_m(\Cx)$
\[
\rank(A(\hat{X}))=\rho_\cR(A(\bar{X})).
\]
Hence the first part of the desired result follows if $\rank(A(X))=\frac{1}{n}\rank(A(\hat{X}))$.
Such a rank equality naturally holds if $\hat{X}$ has the same distribution as the tuple $X$.

Let us verify that $\hat{X}$ has the same distribution as the original tuple $X$.
Note that for each $k=1,\dots,d$, $\hat{X}_k$ has the same distribution as $X_k$ and $\hat{U}_1,\dots,\hat{U}_d$ are freely independent Haar unitary random variables.
So it suffices to show that $\hat{U}_1,\dots,\hat{U}_d$ are $\ast$-free from $D_1,\dots,D_d$.
Since the family $\{\hat{U}_1\dots,\hat{U}_d\}$ is $\ast$-free from $M_n(\C)$ (which can be seen with the help of the identification $(\M_1,\tau_1)\ast(M_n(\C),\tr_n)\simeq (M_n(\cN),\tr_n\circ\tau)$) and the entries of $\hat{U}_1,\dots,\hat{U}_d$ are $\ast$-free from the entries of $D_1,\dots,D_d$, the desired result is a consequence of the Proposition \ref{prop:Rcyclic}.
\end{proof}

\begin{remark}\label{rem:normality}
Note that the requirement on the normality in the above theorem comes from the requirement on the normality for modelling a tuple of $\ast$-free random variables by their matrix model (see Lemma~\ref{Matrix_form:free_case}).
So actually we can also build an algebraic model mutatis mutandis if
\begin{enumerate}
    \item we put $Y_1,\dots,Y_d\in M_n(\M_2)$ for some integer $n$ and $W^\ast$-probability space $(\M_2,\tau_2)$ such the entries of $Y_1,\dots,Y_d$ are chosen from an algebra $\C\left<\gamma_1,\dots,\gamma_\ell\right>$ where $\Delta(\gamma_1,\dots,\gamma_\ell)=\ell$ (but $\gamma_1,\dots,\gamma_\ell$ and $Y_1,\dots,Y_d$ may not necessarily be normal);
    \item for $k=1,\dots,d$, we set $\hat{X}_k:=\hat{U}_kY_k\hat{U}_k^\ast$, where $\hat{U}_k$ is defined as in Theorem \ref{thm:main}
\end{enumerate}

In particular, we can also set $\ell=0$, i.e.,
\begin{enumerate}
    \item $Y_1,\dots,Y_d\in M_n(\C)$;
    \item for $k=1,\dots,d$, $\hat{X}_k:=\hat{U}_kY_k\hat{U}_k^\ast$, where $\hat{U}_k$ is defined as in Theorem \ref{thm:main}.
\end{enumerate}
Then an algebraic model can be built mutatis mutandis.
\end{remark}

Furthermore, we can actually model the free product of algebras rather than individual variables if these algebras are matrix algebras over some variables that have maximal $\Delta$.
We record this observation as the following proposition.

\begin{prop}\label{prop: free product of matrix algebras}
Let $\A_1,\dots,\A_d$ be subalgebras of $M_n(\C\left<\gamma_1,\dots,\gamma_\ell\right>)$, where $\ell$ is an integer and $\gamma_1,\dots,\gamma_\ell$ are in some $W^\ast$-probability space $(\M_2,\tau_2)$ such that $\Delta(\gamma_1,\dots,\gamma_\ell)=\ell$.
We denote by $\Phi:\C\left<y_1,\dots,y_\ell\right>\rightarrow\C\left<\gamma_1,\dots,\gamma_\ell\right>$ the isomorphism determined by $\Phi(y_i)=\gamma_i$, $i=1,\dots,\ell$, where $y_i$'s are formal variables.
For $k=1,\dots,d$, we set $\bar{\A}_k:=\bar{U}_k\Phi^{-1}(\A_k)\bar{U}_k^{-1}$ which are subalgebras of $M_n(\cR)$, where $\cR$ and $\bar{U}_k$ are defined as in Theorem \ref{thm:main}.
Then $\ast_{k=1}^d\A_k$ is isomorphic to the subalgebra generated by $\bar{\A}_1,\dots,\bar{\A}_d$ with rank preserved.
\end{prop}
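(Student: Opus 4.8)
The plan is to replay, now at the level of algebras rather than of single variables, the analytic realization used in the proof of Theorem~\ref{thm:main}, and then to observe that conjugation by free Haar unitaries turns $\A_1,\dots,\A_d$ into a free family. Concretely, as in that proof: take $\ast$-free Haar unitaries $U_1,\dots,U_d$, pass to their free compressions by matrix units $U_{ij}^{(k)}\in\cN:=E_{11}\M_1\ast M_n(\C)E_{11}$ (which have maximal $\Delta$ by Corollary~\ref{cor:free-compressions-Haar}), and set $\hat{U}_k:=(U_{ij}^{(k)})_{i,j=1}^n\in M_n(\cN)$, a Haar unitary with $\rank(\hat{U}_k)=n$; by Theorem~\ref{th:Delta_maximal} this forces $\rho_\cR(\bar{U}_k)=n$, so $\bar{U}_k$ is full, hence invertible over the free field, with $\bar{U}_k^{-1}$ corresponding to $\hat{U}_k^{*}$ under evaluation. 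Placing $\gamma_1,\dots,\gamma_\ell$ in $(\M_2,\tau_2)$ and passing to $\cN\ast\M_2$, additivity of $\Delta$ (Proposition~\ref{prop:delta_free}) gives $\Delta(U_{ij}^{(k)},\gamma_m:\text{all }k,i,j,m)=dn^2+\ell$; so, by Remark~\ref{rem:Delta_maximal}, the evaluation $\Psi\colon u_{ij}^{(k)}\mapsto U_{ij}^{(k)}$, $y_m\mapsto\gamma_m$ is a rank-preserving isomorphism of $\cR$ onto its image in $L^0(\cN\ast\M_2)$, and its amplification $\Psi^{(n)}\colon M_n(\cR)\to M_n(L^0(\cN\ast\M_2))$ is an injective homomorphism with $\tfrac1n\rho_\cR=\rank\circ\Psi^{(n)}$ (the von Neumann rank taken in $(M_n(\cN\ast\M_2),\tr_n\circ(\tau\ast\tau_2))$).

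Since $\Psi$ restricts to $\Phi$ on $\C\langle y_1,\dots,y_\ell\rangle$, we get $\Psi^{(n)}(\Phi^{-1}(\A_k))=\A_k$ and $\Psi^{(n)}(\bar{U}_k^{\pm1})=\hat{U}_k^{\pm1}$, hence $\Psi^{(n)}(\bar{\A}_k)=\hat{U}_k\A_k\hat{U}_k^{*}$; thus $\Psi^{(n)}$ restricts to a rank-preserving isomorphism from the subalgebra of $M_n(\cR)$ generated by $\bar{\A}_1,\dots,\bar{\A}_d$ onto the subalgebra of $M_n(\cN\ast\M_2)$ generated by $\hat{U}_1\A_1\hat{U}_1^{*},\dots,\hat{U}_d\A_d\hat{U}_d^{*}$. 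It remains to recognize the latter as the free product $\ast_{k=1}^d\A_k$: each map $a\mapsto\hat{U}_k a\hat{U}_k^{*}$ is a trace-preserving $\ast$-isomorphism of $\A_k$ onto $\hat{U}_k\A_k\hat{U}_k^{*}$ (as $\hat{U}_k$ is a unitary $\ast$-free from $\A_k$), so it suffices to show $\hat{U}_1\A_1\hat{U}_1^{*},\dots,\hat{U}_d\A_d\hat{U}_d^{*}$ are $\ast$-free. The argument from the proof of Theorem~\ref{thm:main} shows, through Proposition~\ref{prop:Rcyclic}, that $M_n(\cA)$ — with $\cA$ the algebra generated by the entries $U_{ij}^{(k)}$ — is $\ast$-free from $M_n(\C\langle\gamma_1,\dots,\gamma_\ell\rangle)$, so the free Haar unitaries $\hat{U}_1,\dots,\hat{U}_d$ are jointly $\ast$-free from $M_n(\C\langle\gamma_1,\dots,\gamma_\ell\rangle)\supseteq\A_1\cup\dots\cup\A_d$. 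By associativity of freeness $\hat{U}_d$ is then a Haar unitary $\ast$-free from $W^*(\hat{U}_1,\dots,\hat{U}_{d-1},\A_1,\dots,\A_d)$, and since conjugating two subalgebras of a common algebra by a Haar unitary free from it makes them free (the case $n=1$ of the computation closing the proof of Proposition~\ref{prop:Rcyclic}), $\hat{U}_d\A_d\hat{U}_d^{*}$ is $\ast$-free from $W^*(\hat{U}_j\A_j\hat{U}_j^{*}:j<d)$; an induction on $d$ completes the verification. Combining this with the previous paragraph yields the claimed rank-preserving isomorphism $\ast_{k=1}^d\A_k\cong\langle\bar{\A}_1,\dots,\bar{\A}_d\rangle\subseteq M_n(\cR)$.

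The only substantive ingredient beyond bookkeeping is the $\ast$-freeness of $\hat{U}_1\A_1\hat{U}_1^{*},\dots,\hat{U}_d\A_d\hat{U}_d^{*}$, which is precisely the mechanism of Proposition~\ref{prop:Rcyclic} together with the standard fact that free Haar unitaries create freeness by conjugation; the genuine care lies in tracking the normalization $\tfrac1n\rho_\cR=\rank$ and the fullness of $\bar{U}_k$ over the free field, and in setting up $\ast_{k=1}^d\A_k$ as the reduced free product of the $\A_k$ equipped with the restricted trace $\tr_n\circ\tau_2$, so that the inherited von Neumann rank is the one intended in the statement.
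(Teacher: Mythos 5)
Your proposal is correct and follows essentially the same route as the paper's proof: conjugate $\A_k$ by the analytic Haar unitaries $\hat U_k$ built from free compressions, invoke Proposition \ref{prop:Rcyclic} for the $\ast$-freeness of $\hat U_1\A_1\hat U_1^*,\dots,\hat U_d\A_d\hat U_d^*$, and then transfer to $M_n(\cR)$ by the rank-preserving $\Delta$-maximality isomorphism from Theorem \ref{thm:main}. The paper simply compresses these steps into ``clearly'' and ``by a similar argument as in Theorem \ref{thm:main}''; you have filled in exactly the details the authors left implicit, including the normalization $\frac{1}{n}\rho_\cR=\rank$ and the interpretation of $\ast_{k=1}^d\A_k$ via the inherited trace.
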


\begin{proof}
For $k=1,\dots,d$, we set $\hat{\A}_k:=\hat{U}_k\A_k\hat{U}_k^\ast$ which are subalgebras of $M_n(\cN\ast\M_2)$, where $\cN$ and $\hat{U}_k$ are defined as in Theorem \ref{thm:main}.
For each $k=1,\dots,d$, let $\Psi_k:\A_k\rightarrow M_n(\cN\ast\M_2)$ be the homomorphism determined by
\[
\Psi_k(A):=\hat{U}_kA\hat{U}_k^\ast,\quad,\forall A\in\A_k.
\]
Clearly the images $\Psi_1(\A_1),\dots,\Psi_d(\A_d)$ are $\ast$-free in $(M_n(\cN\ast\M_2),\tr_n\circ(\tau\ast\tau_2))$.
Thus $\ast_{k=1}^d\A_k$ is isomorphic to the subalgebra generated by $\Psi_1(\A_1),\dots,\Psi_d(\A_d)$ with preserved ranks.
Finally, the isomorphism with preserved ranks between the subalgebra generated by $\Psi_1(\A_1),\dots,\Psi_d(\A_d)$ and the subalgebra generated by $\bar{\A}_1,\dots,\bar{\A}_d$ can be seen by a similar argument as in Theorem \ref{thm:main}.
\end{proof}

\subsection{Atiyah conjecture and division closure}\label{subsect:Atiyah}

In this subsection, let us briefly discuss the link between our results obtained so far and the Atiyah conjecture.

First, let us explain the consequence of the our results for the Atiyah conjecture of discrete groups.
For a group $G$, we denote by $\lcm(G)$ the least common multiple of the orders of finite subgroups of $G$, if it exists (with a convention that $\lcm(G)=1$ if $G$ is torsion-free).
For a group $G$ with $\lcm(G)<\infty$, we say that $G$ satisfies the \emph{strong Atiyah conjecture} (see, for example, \cite[Conjecture 10.2]{Luc02}) if for every $m\in\bN$ and every matrix $A\in M_m(\C G)$ we have
\[
\rank A\in\frac{1}{\lcm(G)}\Z.
\]

Let us consider finite groups $G_1,\dots,G_d$ with orders $n_1,\dots,n_d$ respectively.
For each $k=1,\dots,d$, we regard the group algebra $\C G_k$ as the subalgebra of $M_{n_k}(\C)$ via the left-regular representation.
In order to apply Proposition \ref{prop: free product of matrix algebras}, we need to put $\C G_1,\dots,\C G_d$ (considered as matrix subalgebras) into a matrix algebra $M_n(\C)$ for some integer $n$.
Naturally, the smallest convenient choice for this $n$ is $\lcm(n_1,\dots,n_d)$.
Hence we can have the following corollary, thanks to Proposition \ref{prop: free product of matrix algebras}.

\begin{corollary}
Let $G_1,\dots,G_d$ be finite groups with orders $n_1,\dots,n_d$. Then their free product $G:=G_1\ast G_2\ast\dots\ast G_d$ satisfies the strong Atiyah conjecture.
Moreover, for any matrix $A\in M_m(\C G)$, $\rank A$ is equal to the normalized inner rank of some $nm\times nm$ matrix over some free field $\cR$ (as constructed in Proposition \ref{prop: free product of matrix algebras}).
\end{corollary}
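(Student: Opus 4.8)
The plan is to obtain the corollary as a direct application of Proposition~\ref{prop: free product of matrix algebras} with $\ell=0$. First I would record two standard facts. The group algebra of a free product is the (algebraic, amalgamated over $\C$) free product of the group algebras, $\C G=\C G_1\ast\cdots\ast\C G_d$, and with respect to the canonical traces (evaluation at the neutral element on each factor) the weak closure of this free product is the group von Neumann algebra $L(G)$; hence the von Neumann rank on $M_m(\C G)$ is just the restriction of the one on $M_m(L(G))$, and this is the rank appearing in the strong Atiyah conjecture. Second, setting $n:=\lcm(n_1,\dots,n_d)$, each $\C G_k$ embeds as a unital $\ast$-subalgebra of $M_n(\C)$: combine the left-regular representation $\C G_k\hookrightarrow M_{n_k}(\C)$ with the amplification $A\mapsto A\otimes I_{q_k}$, where $n=n_k q_k$ and $M_n(\C)=M_{n_k}(\C)\otimes M_{q_k}(\C)$; this is unital and preserves the normalized traces, so it is compatible with the free-product structure.

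Now Proposition~\ref{prop: free product of matrix algebras} applied to $\A_k=\C G_k\subset M_n(\C)$ (with $\ell=0$, so there are no $\gamma$-variables) produces a free field $\cR$ and a rank-preserving isomorphism of $\C G=\ast_{k=1}^d\C G_k$ onto a subalgebra of $M_n(\cR)$. Concretely, for $A\in M_m(\C G)$ the image is an $m\times m$ matrix over that subalgebra, i.e.\ an $nm\times nm$ matrix $\bar A$ over $\cR$, and ``rank preserved'' means here, exactly as in Theorem~\ref{thm:main}, that $\rank(A)=\tfrac1n\,\rho_{\cR}(\bar A)$. Since the inner rank is by definition integer-valued, this already gives $\rank(A)\in\tfrac1n\Z$, and it identifies $\rank(A)$ with the normalized inner rank of an $nm\times nm$ matrix over $\cR$, which is the ``moreover'' part of the statement.

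It remains to check $n=\lcm(G)$. By the Kurosh subgroup theorem every finite subgroup of $G=G_1\ast\cdots\ast G_d$ is conjugate into some $G_k$, so the orders of finite subgroups of $G$ are precisely the orders of subgroups of the $G_k$, whose least common multiple is $\lcm(n_1,\dots,n_d)=n$; thus $\lcm(G)=n$ and $\rank(A)\in\tfrac1{\lcm(G)}\Z$, i.e.\ $G$ satisfies the strong Atiyah conjecture. There is no substantial obstacle here: the entire analytic content sits in Theorem~\ref{thm:main} and Proposition~\ref{prop: free product of matrix algebras}, and the only genuinely external ingredient is the Kurosh description of finite subgroups, which is needed only to obtain the \emph{optimal} denominator $\lcm(G)$ --- for the common multiple $n$ used above the strong Atiyah conjecture already follows with no group theory at all. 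The points requiring a little care are purely bookkeeping: that the free-product trace on $\ast_k\C G_k$ is indeed the canonical trace of $L(G)$, and that the embeddings $M_{n_k}(\C)\hookrightarrow M_n(\C)$ respect the normalized traces.
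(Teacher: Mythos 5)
Your proof is correct and follows essentially the same route the paper does: embed each $\C G_k$ into $M_{n_k}(\C)$ via the left regular representation, amplify into $M_n(\C)$ with $n=\lcm(n_1,\dots,n_d)$, apply Proposition~\ref{prop: free product of matrix algebras} with $\ell=0$ to identify $\C G=\ast_k\C G_k$ rank-preservingly with a subalgebra of $M_n(\cR)$, and read off $\rank(A)=\tfrac1n\rho_{\cR}(\bar A)\in\tfrac1n\Z$ for $\bar A\in M_{nm}(\cR)$. You are more explicit than the paper about two bookkeeping points (the compatibility of the free-product trace with $\tau_G$, and $\lcm(G)=\lcm(n_1,\dots,n_d)$ via Kurosh), which is worth having. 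One small caveat on your closing remark: the statement of the strong Atiyah conjecture presupposes $\lcm(G)<\infty$, and that finiteness itself already requires the Kurosh description of finite subgroups of a free product; so ``no group theory at all'' applies only to the weaker conclusion $\rank(A)\in\tfrac1n\Z$, not to the strong Atiyah conjecture as literally stated.
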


Actually, it is known that the strong Atiyah conjecture is stable with respect to finite free products (see, for example, \cite[Theorem 7.7]{Rei98}).
Therefore our corollary can be understood as a free probabilistic proof or explanation of the strong Atiyah conjecture for free products of finite groups.

Let us make a further remark on the rational/division closure of $\C G$.
We actually have the isomorphism between the rational/division closure of $\C G$ in the algebra of affiliated operators with respect to the group von Neumann algbera $\cL(G)$ and the rational/division closure of the subalgebra generated by $\bar{\C G_k}:=\bar{U}_k\C G_k\bar{U}^{-1}_k$ (as defined in Proposition \ref{prop: free product of matrix algebras}) due to the rank equality.
This can be seen similarly with the help of proof of Corollary \ref{cor:isomorphic rational closure}.
So we see that the division closure of $\C G$ is a subalgebra of a matrix algebra over some free field.
This observation seems to be related to the known result that if a group $G$ satisfies the strong Atiyah conjecture, then its division closure is some direct sum of finite matrix algebras over skew fields (see, for example, \cite{KLS17}).

Finally, let us discuss a bit the general case that may not come from groups.
In \cite{SS15}, a similar notion was introduced under the name of Atiyah property in order to include random variables that may not come from groups.
Let us introduce this property with some slight modification.

\begin{definition}
Let $\A$ be a $\ast$-subalgebra of a tracial $W^\ast$-probability space $(\M,\tau)$ and let $\Gamma$ be an additive subgroup of $\R$ containing $\Z$.
We say that $\A$ satisfies the \emph{Atiyah property with $\Gamma$} if for every $m\in\bN$ and every matrix $A\in M_m(\A)$ we have that
$
\rank A\in\Gamma$.
\end{definition}

Then we have the following as a corollary of Theorem \ref{thm:main}.
\begin{corollary}\label{cor:Atiyah}
Let $X=(X_1, \ldots,X_d)$ be a tuple of $*$-free normal random variables with compactly supported probability measures $\mu_k$ as distribution, respectively.
Suppose that for each $k=1,\dots,d$, there exists $n_k\in \mathbb{N}$ such that all the atoms of $\mu_k$ have weights contained in $\frac{1}{n_k}\mathbb{N}$.
Then $\C\langle X\rangle$ has the Atiyah property with group $\frac{1}{n}\bN$, where $n:=\lcm(n_1,\dots,n_d)$.
\end{corollary}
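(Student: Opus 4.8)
The plan is to deduce this directly from the rank formula in Theorem~\ref{thm:main}. First I would set $n:=\lcm(n_1,\dots,n_d)$ and observe that, since $n_k\mid n$ for every $k$, we have $\frac1{n_k}\bN\subseteq\frac1n\bN$; hence every atom of $\mu_k=\mu_{X_k}$ has weight which is a multiple of $\frac1n$. Thus the tuple $X=(X_1,\dots,X_d)$ of $*$-free normal variables satisfies precisely the hypothesis of Theorem~\ref{thm:main} with this common value of $n$.

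Next I would reduce an arbitrary matrix $A\in M_m(\C\langle X\rangle)$ to the evaluation of a matrix of polynomials. By definition of $\C\langle X\rangle$, each entry of $A$ is a non-commutative polynomial in $X_1,\dots,X_d$, so there is a matrix $P\in M_m(\C\langle x_1,\dots,x_d\rangle)$ with $A=P(X)$. Applying the first item of Theorem~\ref{thm:main} to $P$ gives
$$\rank(A)=\rank(P(X))=\frac1n\,\rho_{\cR}(P(\bar X)).$$
Since $\rho_{\cR}$ is the inner rank over the free field $\cR$, it is $\bN$-valued, so $\rank(A)\in\frac1n\bN$. As $m\in\bN$ and $A\in M_m(\C\langle X\rangle)$ were arbitrary, this shows that $\C\langle X\rangle$ has the Atiyah property with $\frac1n\bN$.

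I do not expect any genuine obstacle here: the entire content is carried by Theorem~\ref{thm:main}, and the corollary amounts to passing to the common denominator $n=\lcm(n_1,\dots,n_d)$ and then unwinding the definition of the Atiyah property. The only minor point worth making explicit is that the inner rank $\rho_{\cR}$ takes integer values, which guarantees that the normalized rank lands in $\frac1n\bN$ rather than merely in $\frac1n\R$; this is immediate from the definition of inner rank.
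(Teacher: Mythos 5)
Your proof is correct and follows exactly the route the paper intends (the paper itself simply states the corollary as an immediate consequence of Theorem~\ref{thm:main} without spelling out details). The only redundancy is that you re-derive the integrality of $\rho_{\cR}$ at the end, whereas the displayed formula in the first item of Theorem~\ref{thm:main} already asserts $\rank(A(X))\in[0,m]\cap\frac{1}{n}\mathbb{N}$, so that membership can simply be cited directly.
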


Recall that the result in \cite[Theorem 3.4]{SS15} says that $\C\langle X\rangle$ has the Atiyah property with group $\frac{1}{n}\bN$, where $n:=n_1\cdots n_d$.
We improve the denominator for the weight of the atoms from the product $n_1\cdots n_d$ to their least common multiple $\lcm(n_1,\dots,n_d)$.
Furthermore, we also improve their result in the sense that we can extract precise information on the positions and the weights of atoms via the calculation of the inner rank of matrices in formal variables.

For a pair of self-adjoint free variables $X=(X_1,X_2)$, Corollary~\ref{cor:Atiyah} has already been established in \cite{BBL2021}, as the following result shows. Interestingly, this result is also valid for free variables with atoms of irrational weights. In this respect, it is much more general than \cite[Theorem 3.4]{SS15} or Corollary~\ref{cor:Atiyah}. We are not able to attain this level of generality with our approach because we did not succeed to obtain the Atiyah property via an approximation procedure. 

\begin{theorem}[Corollary 3.3 of \cite{BBL2021}]
Let $X=(X_1, X_2)$ be a tuple of free self-adjoint random variables with compactly supported probability measures $\mu_1,\mu_2$ as distribution, respectively.
Suppose there exists an additive subgroup $\Gamma$ of $\mathbb{R}$ containing $\mathbb{Z}$ such that all the atoms of $\mu_1$ and $\mu_2$ have weights contained in $\Gamma$.
Then $\C\langle X\rangle$ has the Atiyah property with group $\Gamma$.
\end{theorem}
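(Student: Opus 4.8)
The plan is to reduce the claim, by linearization, to a statement about the kernel of a single self-adjoint linear pencil, and then to feed it into the operator-valued free probability machinery of \cite{BBL2021}. First, for any $A\in M_m(\C\langle x_1,x_2\rangle)$ one has $\ker(A(X))=\ker\big((A^*A)(X)\big)$, and since $X_1,X_2$ are self-adjoint the entries of $A^*A$ again lie in $\C\langle x_1,x_2\rangle$; hence $\rank(A(X))=\rank\big((A^*A)(X)\big)$ and we may assume $A=A^*$. Applying the matricial linearization (Lemma~\ref{lem:inner rank of display} together with \cite[Lemma 3.4]{BBL2021}, with a self-adjoint formal linear representation), $\rank(A(X))$ differs by an integer from $\rank(L(X))$, where $L=b_0\otimes 1+b_1\otimes X_1+b_2\otimes X_2\in M_N(\C)\otimes\mathcal{M}$ is a self-adjoint linear pencil with scalar-matrix coefficients. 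Since $\Gamma$ is an additive group containing $\mathbb{Z}$, it is enough to show $\rank(L(X))\in\Gamma$, i.e.\ $\Tr_N\otimes\tau\big(p_{\ker L(X)}\big)\in\Gamma$.

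Next I would write $L=C_1+C_2$ with $C_1=\tfrac12 b_0\otimes 1+b_1\otimes X_1$ and $C_2=\tfrac12 b_0\otimes 1+b_2\otimes X_2$, both self-adjoint. Freeness of $X_1$ and $X_2$ in $\mathcal{M}$ makes $C_1$ and $C_2$ free with amalgamation over $M_N(\C)$ with respect to $\id\otimes\tau$. Decomposing each $X_k$ through its eigenprojections (of weights in $\Gamma$) and its non-atomic part shows that the atomic part of the $M_N(\C)$-valued distribution of $C_k$ is governed only by how the scalar matrices $\tfrac12 b_0+\lambda b_k$ degenerate on the eigenspaces $\ker(\lambda-X_k)$, i.e.\ by ranks of explicit scalar matrices in $b_0,b_k$ together with the atom weights of $X_k$. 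One then invokes the description of atoms of operator-valued free convolutions from \cite{BBL2021} --- the operator-valued subordination functions together with the resulting formula for the size of the kernel of a free sum --- to express $\Tr_N\otimes\tau\big(p_{\ker(C_1+C_2)}\big)$ as an integer combination of $1$ and the atom weights of $X_1$ and $X_2$ (the integer coefficients being ranks of scalar matrices over $M_N(\C)$), hence as an element of $\Gamma$.

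The delicate point --- and the main obstacle --- is precisely that last assertion: one must verify that only \emph{integer} multiples of the scalar atom weights of $X_1,X_2$ appear, so that no spurious denominator $1/N$ survives the passage to $M_N(\mathcal{M})$. The mechanism is that the unnormalised trace $\Tr_N$ turns the multiplicity of an atom of $X_k$ inside the pencil into an integer, while the remaining bookkeeping (ranks and interlacing-type inequalities of scalar matrices over $M_N(\C)$) is integer-valued linear algebra, and --- by Theorem~\ref{thm:nonatomic} --- the non-atomic parts of $X_1,X_2$ contribute nothing. Finally, I would stress that the approximation strategy used elsewhere in this paper cannot reach this generality: replacing $(X_1,X_2)$ by free variables with rational atom weights via Lemma~\ref{lemma:approximation} and applying Corollary~\ref{cor:Atiyah} only yields that $\rank(A(X))$ is a limit of numbers in $\tfrac1N\mathbb{Z}$, which is vacuous as soon as $\Gamma$ is dense in $\mathbb{R}$; this is why the genuinely operator-valued argument of \cite{BBL2021} is required.
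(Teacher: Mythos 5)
Your proposal is correct and follows essentially the same route as the paper's proof: reduce via a formal linear representation to a linear pencil and then invoke \cite[Corollary 3.3]{BBL2021} for linear pencils in two free self-adjoint variables. The extra steps you add — replacing $A$ by $A^*A$ to achieve self-adjointness, and sketching the operator-valued free-convolution mechanism behind the cited corollary — are not wrong but are unnecessary, since the paper's linearization handles arbitrary $A$ and the citation to \cite{BBL2021} already carries the integrality of the coefficients.
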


\begin{proof}Because  \cite[Corollary 3.3]{BBL2021} is not stated in this terms, let us give some details. Let $A\in M_m(\C\langle X\rangle)$. The matrix $A$ can be written as a polynomial expression in $I_m\otimes X_1$, $I_m\otimes X_2$ and some matrices $M\subset M_m(\C)$. That is, let $R$ be a rational expression of formal variables $x_1,x_2$ as well as some variables corresponding to the finite set $M$ such that $R(I_m\otimes X_1,I_m\otimes X_2,M)=A$.
Now let $(u,Q,v)$ be a formal linear representation of $R$, of dimension $n$ and with display $L$ in the sense of Definition \ref{def:formal linear representation}. Thanks to Lemma \ref{lem:inner rank of display}, we have
$$\rank(A)=\rank(R(I_m\otimes X_1,I_m\otimes X_2,M))=\rank(L(I_m\otimes X_1,I_m\otimes X_2,M))-mn.
$$
The matrix $L(I_m\otimes X_1,I_m\otimes X_2,M))\in M_{mn}(\C\langle X\rangle)$ being linear, \cite[Corollary 3.3]{BBL2021} allows to say that its rank is an affine combination (with integer coefficients) of the weights of the atoms of $\mu_1$ and $\mu_2$. As a consequence, the rank of $L(I_m\otimes X_1,I_m\otimes X_2,M))\in M_{mn}(\C\langle X\rangle)$ belongs to $\Gamma$, and so does the rank of $A$.
\end{proof}

\subsection{Approximation for general free normal variables}\label{subsect:approximation of free variables}
From Theorem \ref{thm:main}, we see that the position and the size of an atom of $R(X_1,\dots,X_d)$ do not depend on the non-atomic parts of the marginal distributions, if we assume that the variables $X_i$'s have rational weights for their atoms. 
Let us remove in the following this hypothesis on rational weights.
Theorem \ref{thm:nonatomic} is the second item in the following theorem.

\begin{theorem}
Let $X=(X_1,\dots,X_d)$ and $Y=(Y_1,\dots,Y_d)$ be two tuples of $\ast$-free normal variables in tracial $W^\ast$-probability spaces such that, for each $1\leq i\leq d$, we have $\mu^p_{X_i}=\mu^p_{Y_i}$. 
Then we have the following statements.
\begin{itemize}
\item For each integer $m$ and each $A\in M_m(\Cx)$, we have
\[\rank(A(X))=\rank(A(Y)).\]
\item For each rational expression $R$, $R(X)$ is well-defined if and only if $R(Y)$ is well-defined, and in this case,
\[
\mu^p_{R(X)}=\mu^p_{R(Y)}.
\]
\end{itemize}
\end{theorem}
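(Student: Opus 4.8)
The plan is to establish the polynomial-matrix rank equality first, by an approximation argument that reduces it to the rational-weight case treated in Theorem~\ref{thm:main}, and then to obtain the statement about rational expressions and point spectra as a formal consequence via Corollary~\ref{cor:isomorphic rational closure}. The hard part will be the bookkeeping in the approximation step: one must arrange that the two approximating tuples have \emph{identical} atomic parts, so that Theorem~\ref{thm:main} yields an equality (and not merely a comparison) of inner ranks over \emph{one and the same} free field. This is the only place where the hypothesis $\mu^p_{X_i}=\mu^p_{Y_i}$ is genuinely used; everything else is a routine assembly of the Sylvester-rank estimates already in place.

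Fix $m\in\bN$ and $A\in M_m(\Cx)$. First I would apply Lemma~\ref{lemma:approximation} to $X$ and to $Y$ separately: for each $N\geq1$ this produces $*$-free normal tuples $X'=X'(N)$ and $Y'=Y'(N)$, in possibly enlarged $W^*$-probability spaces, all of whose atoms have weights in $\frac1N\bN$, with $\rank(X_i-X_i')\leq C_X/N$ and $\rank(Y_i-Y_i')\leq C_Y/N$. Here I would invoke the explicit construction in the proof of Lemma~\ref{lemma:approximation}: since $\mu^p_{X_i}=\mu^p_{Y_i}$, an atom $\lambda$ of common weight $w$ for $X_i$ and $Y_i$ is assigned the weight $\lfloor Nw\rfloor/N$ for both $X_i'$ and $Y_i'$, so that $\mu^p_{X_i'}=\mu^p_{Y_i'}$ for every $i$. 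Consequently the data entering the construction of Theorem~\ref{thm:main} at level $n=N$ (the integer $d$, the value $N$, and the multisets of atoms) coincide for $X'$ and $Y'$; hence the formal matrices $\bar{X}'$ and $\bar{Y}'$ live over the same free field $\cR$ and are literally equal, so $\rho_{\cR}(A(\bar{X}'))=\rho_{\cR}(A(\bar{Y}'))$ and therefore
\[
\rank(A(X'))=\tfrac1N\rho_{\cR}(A(\bar{X}'))=\tfrac1N\rho_{\cR}(A(\bar{Y}'))=\rank(A(Y')).
\]

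Next I would control the errors. The von Neumann rank is a Sylvester rank function on $L^0(\M)$ (Lemma~\ref{lem:vN rank is Sylvester}), so by the subadditivity of Sylvester rank functions (Lemma~\ref{lem:properties of Sylvester rank}) we have $|\rank(A(X))-\rank(A(X'))|\leq\rank(A(X)-A(X'))$, and Lemma~\ref{lem:comparison of Sylvester rank}(1) bounds the right-hand side by $c(A)\max_i\rank(X_i-X_i')\leq c(A)C_X/N$; the same holds with $Y$ in place of $X$. Combining these bounds with the equality $\rank(A(X'))=\rank(A(Y'))$ of the previous paragraph yields
\[
|\rank(A(X))-\rank(A(Y))|\leq c(A)(C_X+C_Y)/N,
\]
and letting $N\to\infty$ gives $\rank(A(X))=\rank(A(Y))$. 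This is the first assertion.

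Finally, for the second assertion I would apply Corollary~\ref{cor:isomorphic rational closure} with $\A=L^0(\M_1)$ and $\B=L^0(\M_2)$, where $X\in\M_1^d$ and $Y\in\M_2^d$, each equipped with its von Neumann rank (a faithful and regular Sylvester rank function by Lemma~\ref{lem:vN rank is Sylvester}): the hypothesis $\rank(A(X))=\rank(A(Y))$ for all $A\in M_m(\Cx)$ is exactly what was just proved. The corollary then tells us that, for every rational expression $R$, $R(X)$ is well-defined if and only if $R(Y)$ is, and in that case $\rank(R(X))=\rank(R(Y))$. Applying this to the rational expression $\lambda-R$ for each $\lambda\in\C$ and using the identity $\mu^p_Z(\{\lambda\})=1-\rank(\lambda-Z)$ recorded just after Definition~\ref{def:rank}, we conclude $\mu^p_{R(X)}(\{\lambda\})=\mu^p_{R(Y)}(\{\lambda\})$ for all $\lambda\in\C$, that is, $\mu^p_{R(X)}=\mu^p_{R(Y)}$.
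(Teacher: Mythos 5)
Your proposal is correct and follows essentially the same approach as the paper: approximate $X$ and $Y$ by tuples with rational atom weights via Lemma~\ref{lemma:approximation}, apply Theorem~\ref{thm:main} to the approximants (you simply spell out more explicitly why the free-field models $\bar X'$ and $\bar Y'$ coincide), control the error with Lemmas~\ref{lem:properties of Sylvester rank} and~\ref{lem:comparison of Sylvester rank}, take $N\to\infty$, and deduce the second item from Corollary~\ref{cor:isomorphic rational closure}. No discrepancies to report.
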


\begin{proof}The second item is a consequence of the first one, thanks to Corollary~\ref{cor:isomorphic rational closure}. So let us prove the first item.

Thanks to Lemma~\ref{lemma:approximation}, we can consider, for each $N$, two tuples of $*$-free normal random variables $X^{(N)}=(X_1^{(N)},...,X_d^{(N)})$ and $(Y_1^{(N)},...,Y_d^{(N)})$  such that:
\begin{itemize}
\item 
$\rank(X_i^{(N)}-X_i)\leq C_X/N$ and $\rank(Y_i^{(N)}-Y_i)\leq C_Y/N$, for all $i$;
\item  for all $\lambda\in \mathbb{C}$, we have  $\mu_{X_k^{(N)}}(\{\lambda\})=\mu_{Y_k^{(N)}}(\{\lambda\})$;
\item the weights  of the atoms of $X^{(N)}_k$ and $Y^{(N)}_k$ are  rational.
\end{itemize}
Lemma~\ref{lem:comparison of Sylvester rank} says that there exists some constant $c>0$ such that
\[
|\rank(A(X)-A(X^{(N)}))|\leq c/N\quad \text{and} \quad|\rank(A(Y)-A(Y^{(N)}))|\leq c/N.
\]
Due to the subadditivity of $\rank$, we have
\[
|\rank(A(X))-\rank(A(X^{(N)}))|\leq c/N\quad\text{and}\quad|\rank(A(Y))-\rank(A(Y^{(N)}))|\leq c/N.
\]
Moreover, according to Theorem \ref{thm:main}, we have
\[
\rank(A(X^{(N)}))=\rank(A(Y^{(N)})).
\]
Hence we conclude that
\[
|\rank(A(X))-\rank(A(Y))|\leq 2c/N.
\]
Letting $N$ tend to $\infty$, we get that
\[
\rank(A(X))=\rank(A(Y)).
\]
\end{proof}

Recall that Remark \ref{rem:rational closure by dist} says that the rational closure of a tuple $X=(X_1,\dots,X_d)$ of random variables only depends on the $\ast$-distribution of $X$.
In the case that $X$ is a tuple of free random variables, then this can be further improved in the sense that the rational closure of $X$ only depends on the point spectrum distribution of each variable $X_i$.
That is, with the help of Corollary~\ref{cor:isomorphic rational closure}, we get the following corollary.

\begin{corollary}
Let $X=(X_1,\dots,X_d)$ and $Y=(Y_1,\dots,Y_d)$  be two $d$-tuples of $*$-free normal variables in tracial $W^*$-probability spaces such that, for each $1\leq i \leq d$, we have
$\mu^p_{X_i}=\mu^p_{Y_i}.$
Then the rational closure of $X$ is isomorphic to the rational closure of $Y$.
\end{corollary}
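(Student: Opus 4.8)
The plan is to deduce this immediately from the rank equality in the theorem just proved, combined with the abstract comparison result Corollary~\ref{cor:isomorphic rational closure}. First I would fix tracial $W^*$-probability spaces $(\M_1,\tau_1)$ and $(\M_2,\tau_2)$ containing the tuples $X$ and $Y$ respectively, and read ``rational closure of $X$'' as $L^0(\M_1)_X$ and ``rational closure of $Y$'' as $L^0(\M_2)_Y$, in the sense of Remark~\ref{rem:rational closure by dist}.

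The key input is the first item of the theorem preceding this corollary: since $X$ and $Y$ are tuples of $*$-free normal variables with $\mu^p_{X_i}=\mu^p_{Y_i}$ for every $i$, one has $\rank(A(X))=\rank(A(Y))$ for every integer $m$ and every $A\in M_m(\Cx)$. On the other hand, by Lemma~\ref{lem:vN rank is Sylvester} the von Neumann rank $\rank$ is a faithful and regular Sylvester rank function on each of $L^0(\M_1)$ and $L^0(\M_2)$. Thus the hypotheses of Corollary~\ref{cor:isomorphic rational closure} are met with $\A=L^0(\M_1)$, $\B=L^0(\M_2)$, and $\rk_\A=\rk_\B=\rank$, and that corollary then yields directly the desired isomorphism $L^0(\M_1)_X\cong L^0(\M_2)_Y$.

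There is no genuine obstacle at this stage: all the work has already been carried out, first in establishing the rank equality via the approximation argument built on Lemma~\ref{lemma:approximation} and Theorem~\ref{thm:main}, and second in the purely algebraic Corollary~\ref{cor:isomorphic rational closure}, whose proof in turn rests on Proposition~\ref{prop:isomorphism} and Proposition~\ref{prop:comparison rational closure}. The only thing one has to check is that the two hypotheses of Corollary~\ref{cor:isomorphic rational closure} hold, which is exactly the content of the two facts recalled in the previous paragraph. If desired, one could also note in passing that the same argument, together with the last sentence of Corollary~\ref{cor:isomorphic rational closure}, re-derives the second item of the preceding theorem, namely that $R(X)$ is well-defined iff $R(Y)$ is and then $\mu^p_{R(X)}=\mu^p_{R(Y)}$.
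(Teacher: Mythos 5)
Your proof is correct and follows essentially the same route as the paper: the corollary is obtained by feeding the rank equality $\rank(A(X))=\rank(A(Y))$ from the preceding theorem into Corollary~\ref{cor:isomorphic rational closure}, with Lemma~\ref{lem:vN rank is Sylvester} supplying the Sylvester-rank-function hypothesis. The additional observation that the second item of the preceding theorem is recoverable the same way is also in line with how the paper organizes these deductions.
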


We know that for the polynomial $P(x_1,x_2)=x_1+x_2$, \cite[Theorem 7.4]{BV98} gave a complete description of the point spectrum distribution of $P(X_1,X_2)$ for any freely independent self-adjoint random variables with prescribed atoms. This result was recently promoted to a matrix-version in \cite{BBL2021} -- this gives, via linearization, a possible analytic route for dealing with the the point spectrum distribution for arbitrary polynomials in free variables.

In contrast to this analytic approach, we are able to provide here an algebraic method for the calculation of the point spectrum distribution.
Of course, since we need to start with random variables with atoms of rational weights, we need to control the approximation process.
First, notice that in our approximation scheme (Lemma \ref{lemma:approximation}) we have $\mu^p_{X_i^{(N)}}\leq\mu^p_{X_i}$, $i=1,\dots,d$. This means that for a rational expression $R$ for which $R(X)$ is well-defined, all the $R(X^{(N)})$ are actually also well-defined, by Theorem \ref{thm:comparison-rational functions}.
Furthermore, we have seen that  $\rank(R(X^{N}))$ converges to $R(X)$.
Let us record this observation as a remark.

\begin{remark}\label{rem:approximation}
Let $X=(X_1,\dots,X_d)$ be a tuple of $\ast$-free normal variables in some tracial $W^\ast$-probability space.
Then we can construct a sequence $X^{(N)}=(X^{N}_1,\dots,X^{(N)}_d)$ of tuples of $\ast$-free normal variables such that:
\begin{itemize}
\item each atom of $\mu^p_{X^{(N)}_k}$ has a rational weight and $\mu^p_{X^{N}_i}\leq\mu^p_{X_i}$;
\item for each matrix $A$ over $\Cx$ we have $$\rank(A(X))=\lim_{N\rightarrow\infty}\rank(A(X^{(N)})),$$ where $\rank(A(X^{(N)}))$ can be calculated algebraically as in Theorem \ref{thm:main};
\item for each rational expression $R$ such that $R(X)$ is well-defined, we have
$$\rank(R(X))=\lim_{N\rightarrow\infty}\rank(R(X^{(N)})),$$ where $R(X^{(N)})$ is well-defined and $\rank(R(X^{(N)}))$ can be calculated algebraically as in Theorem \ref{thm:main}.
\end{itemize}
\end{remark}

As we will show in Section \ref{sect:application} and \ref{sect:commutator}, this allows us to extend our results to arbitrary weights. Thus, in principle, we can calculate atoms of any polynomials (even rational functions) by doing algebraic inner rank calculations in the free field, according to
Theorem \ref{thm:main}.
This calculation of the inner rank for a given polynomial can, for example, be done by a noncommutative Gauss elimination (that is, a diagonalization by row and column operators).
Alternatively, we can also relate the calculation of the inner rank of a given polynomial to the calculation of the inner rank of its display (thanks to Lemma \ref{lem:inner rank of display}).
The calculation of this inner rank might then be carried out with the help of various other means, see, for example, the notions and tools in \cite{GGOW16}.
However, whereas all those algebraic tools for the calculation of inner ranks as well as the analytic approach from \cite{BBL2021} have great conceptual power and can be used to develop algorithmic or numerical approaches for doing calculations in concrete examples, it is very hard to get general explicit results from there for the size of atoms for bigger classes of polynomials. 

In our approach, on the other side, we can make this additional step, since our representation of freely independent random variables via elements in the free field  allows us the compare the free situation with other situations (see Section \ref{sec:comparison}), resulting in, often quite sharp, 
upper and lower bounds for the weights of atoms for general classes of polynomials.

\section{Comparison of atoms for rational functions: free variables versus general variables}\label{sec:comparison}

The main goal of this section is to prove the following theorem.

\begin{theorem}\label{thm:comparison-rational functions}
Let $Y=(Y_1,\dots,Y_d)$ be a tuple of normal random variables in some tracial $W^\ast$-probability space.
Let $X=(X_1,\dots,X_d)$ be a tuple of $*$-free normal random variables in some tracial $W^\ast$-probability space such that, for each $k=1,\dots,d$, we have  $$\mu_{X_k}^p\leq\mu_{Y_k}^p.$$
Then we have the following.
\begin{enumerate}
\item For any $A\in M_n(\C\left<x_1,\dots,x_d\right>)$, we have
\[\rank(A(Y))\leq\rank(A(X)).\]
\item
For any rational expression $R$ in variables $x_1,\dots,x_d$ such that $R(Y)$ is well-defined as an unbounded operator, $R(X)$ is also well-defined and
$$\mu^p_{R(X)}\leq\mu^p_{R(Y)}.$$
\end{enumerate}
\end{theorem}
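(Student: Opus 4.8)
The strategy is to establish part (1) — the rank inequality $\rank(A(Y))\le\rank(A(X))$ for all matrices $A$ over $\C\langle x_1,\dots,x_d\rangle$ — and then to obtain part (2), including the well-definedness statement, from it. Granting (1), Proposition~\ref{prop:comparison rational closure} applied with $\A=L^0(\M_X)$, $\B=L^0(\M_Y)$ and $\rk_\A=\rk_\B=\rank$ (these are faithful and regular Sylvester rank functions by Lemma~\ref{lem:vN rank is Sylvester}) shows that $R(X)$ is well-defined whenever $R(Y)$ is and that $\rank(R(Y))\le\rank(R(X))$ for every rational expression $R$; specializing this to the rational expression $\lambda-R$ for each $\lambda\in\C$ and using $\mu^p_Z(\{\lambda\})=1-\rank(\lambda-Z)$ yields $\mu^p_{R(X)}\le\mu^p_{R(Y)}$.

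For part (1), I would first reduce to the case in which every $X_k$ has atoms of rational weight. By Lemma~\ref{lemma:approximation} there is, for each $N$, a tuple $X^{(N)}$ of $*$-free normal variables with $\mu^p_{X^{(N)}_k}\le\mu^p_{X_k}\le\mu^p_{Y_k}$, with all atoms of rational weight, and with $\rank(X^{(N)}_k-X_k)\le C_X/N$. By Lemma~\ref{lem:comparison of Sylvester rank} and subadditivity of $\rank$ one gets $\rank(A(X^{(N)}))\to\rank(A(X))$, so it is enough to prove $\rank(A(Y))\le\rank(A(X^{(N)}))$ for each fixed $N$; that is, we may assume the atoms of each $X_k$ have weights in $\tfrac1n\bN$ for a fixed $n\in\bN$.

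The core step is then to compare the operator model of $Y$ with the free-field model of $X$, both built with this same $n$. Embedding $Y_1,\dots,Y_d$ trace-preservingly into a factor, Proposition~\ref{Matrix_form:general_case} gives for each $k$ a unitary $U_k\in M_n(\M)$ and a diagonal $D_k\in M_n(\M)$ with $U_kD_kU_k^*=I_n\otimes Y_k$, the scalar $\lambda$ occurring $\lfloor n\mu_{Y_k}(\{\lambda\})\rfloor$ times on the diagonal of $D_k$ for every atom $\lambda$ of $Y_k$ and the remaining diagonal entries lying in $\M$. On the algebraic side, let $\bar X_k=\bar U_k\bar D_k\bar U_k^{-1}\in M_n(\cR)$ be the free-field model of $X$ from Theorem~\ref{thm:main}, with $\bar D_k$ carrying $n\mu_{X_k}(\{\lambda\})$ copies of each atom $\lambda$ of $X_k$ and $\ell_k$ fresh formal variables on its remaining diagonal slots, so that $\rank(A(X))=\tfrac1n\rho_\cR(A(\bar X))$. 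Since $\mu^p_{X_k}\le\mu^p_{Y_k}$ forces every atom of $X_k$ to be an atom of $Y_k$ with $n\mu_{X_k}(\{\lambda\})\le\lfloor n\mu_{Y_k}(\{\lambda\})\rfloor$, a direct count shows that, after permuting the diagonal, $D_k$ is obtained from $\bar D_k$ by substituting the $\ell_k$ formal variables by scalars (the surplus copies of common atoms, together with the atoms of $Y_k$ that are not atoms of $X_k$) and by elements of $\M$ (accounting for the non-atomic part of $Y_k$). Writing $R$ for the matrix-valued rational expression $A(\bar U_1\bar D_1\bar U_1^{-1},\dots,\bar U_d\bar D_d\bar U_d^{-1})$ in the formal variables, this substitution together with $u^{(k)}_{ij}\mapsto(U_k)_{ij}$ defines an evaluation of the formal variables in $L^0(\M)$ under which $\bar X_k\mapsto U_kD_kU_k^*=I_n\otimes Y_k$; thus $R$ takes the value $A(\bar X)$ over $\cR$ and the value $A(I_n\otimes Y)$ under this evaluation over $L^0(\M)$.

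To finish, pick a formal linear representation $(u,Q,v)$ of $R$ with display $L$, noting that $Q$ and $L$ are linear (hence polynomial) matrices in the formal variables and that $I_n\otimes Y$ lies in the $L^0(\M)$-domain of $R$ because each $U_k$ is invertible. Lemma~\ref{lem:inner rank of display} (whose argument, as noted there, applies to general, not necessarily linear, matrices and to arbitrary tuples), used over $\cR$ and over $L^0(\M)$, gives $\rho_\cR(A(\bar X))+c=\rho_\cR(L(\bar U,\bar D))$ and $\rank(A(I_n\otimes Y))+c=\rank(L(U,D))$ with the same constant $c$; since $L(\bar U,\bar D)$ is a matrix over the polynomial algebra in the formal variables, $\rho_\cR(L(\bar U,\bar D))$ equals its inner rank over that Sylvester domain, and the rank of any evaluation of a polynomial matrix is at most its inner rank over the polynomial algebra (an immediate consequence of the factorization definition of the inner rank together with the Sylvester axioms). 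Hence $\rank(A(I_n\otimes Y))\le\rho_\cR(A(\bar X))=n\,\rank(A(X))$, and dividing by $n$, using $\rank(A(I_n\otimes Y))=n\,\rank(A(Y))$, gives $\rank(A(Y))\le\rank(A(X))$, proving part (1). The step I expect to demand the most care is the third paragraph — turning $\mu^p_{X_k}\le\mu^p_{Y_k}$ into the precise statement that the diagonal $D_k$ of the operator model of $Y$ is literally a scalar-and-operator substitution of the diagonal $\bar D_k$ of the algebraic model of $X$, so that the extra atoms and the entire non-atomic part of $Y$ can be absorbed into the formal variables of the $X$-model; a secondary nuisance is that the whole comparison must be run at one fixed level $n$, the passage to arbitrary (irrational) weights of $X$ being dealt with beforehand by the approximation argument.
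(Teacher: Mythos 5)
Your proposal matches the paper's proof in every material respect: the same reduction to rational atom weights via Lemma~\ref{lemma:approximation} and Lemma~\ref{lem:comparison of Sylvester rank}, followed by the same core comparison between the free-field model $\bar X_k=\bar U_k\bar D_k\bar U_k^{-1}$ of Theorem~\ref{thm:main} and the operator model $U_kD_kU_k^*$ of $I_n\otimes Y_k$ from Proposition~\ref{Matrix_form:general_case}, the same passage through a formal linear representation and Lemma~\ref{lem:inner rank of display}, and the same final inequality $\rank(\text{evaluation})\le\rho_{\mathcal{P}}=\rho_{\cR}$, with part~(2) then deduced from part~(1) via Proposition~\ref{prop:comparison rational closure}. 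The one spot where you are more explicit than the paper -- unwinding $\mu^p_{X_k}\le\mu^p_{Y_k}$ against the floor in Proposition~\ref{Matrix_form:general_case} to show that $D_k$ really is a scalar-and-operator substitution instance of $\bar D_k$ -- is exactly what the paper needs and leaves implicit, and your treatment of it is correct.
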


In Subsection \ref{subsect:universality} we apply this theorem to deduce the universality property of the rational closures given by $*$-free normal random variables and in Section \ref{subsect:matrix approximation} we consider matrix approximations.

Theorem~\ref{thm:comparison-rational functions} shows that $*$-free variables realize an extreme situation for tuples of random variables with prescribed marginals: atoms of polynomials can not be smaller than the unavoidable sizes at  the unavoidable  locations of the $*$-free case. It is tempting to wonder how generic is this situation: in other words, how often do the inequalities in Theorem~\ref{thm:comparison-rational functions} hold to be equalities?
Following the work of \cite{BCDLT2010}, where this genericity aspects are under consideration, one could attack this question by rotating each variable by unitary conjugation, and asks about the property of the set of unitaries realizing equalities in Theorem~\ref{thm:comparison-rational functions}: is it norm-dense, is it a $G_\delta$-set? We leave those questions for further investigations.
\subsection{Proof of Theorem~\ref{thm:comparison-rational functions}} 
First we prove an intermediate step for Theorem \ref{thm:comparison-rational functions}.
We start with a slightly different setting, where the analytic distribution of each random variable has finitely many atoms of rational weights.

\begin{lemma}\label{lem:comparison-rational functions}
Let $Y=(Y_1,\dots,Y_d)$ be a tuple of normal random variables in some $W^\ast$-probability space $(\M,\tau)$.
Let $X=(X_1,\dots,X_d)$ be a tuple of $*$-free normal random variables satisfying the following conditions.
\begin{itemize}
\item For each $k=1,\dots,d$, $X_k$ has finitely many atoms and each atom is of rational weight.
\item For each $k=1,\dots,d$, we have $\mu_{X_k}^p\leq\mu_{Y_k}^p$.
\end{itemize}
Then we have the following.
\begin{enumerate}
\item For any $A\in M_n(\C\left<x_1,\dots,x_d\right>)$, we have
\[\rank(A(Y))\leq\rank(A(X)).\]
\item
For any rational expression $R$ in variables $x_1,\dots,x_d$ such that $R(Y)$ is well-defined as an unbounded operator, $R(X)$ is also well-defined and
$$\mu^p_{R(X)}\leq\mu^p_{R(Y)}.$$
\end{enumerate}
\end{lemma}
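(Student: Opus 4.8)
I would first dispatch the second item as a consequence of the first. Equipping the algebras of affiliated operators of the ambient $W^\ast$-spaces of $X$ and of $Y$ with their von Neumann ranks — faithful and regular Sylvester rank functions by Lemma~\ref{lem:vN rank is Sylvester} — the first item is exactly the hypothesis of Proposition~\ref{prop:comparison rational closure}, whose conclusion then states that whenever $R(Y)$ is well-defined so is $R(X)$, with $\rank(R(Y))\leq\rank(R(X))$. Applying this to the rational expression $\lambda-R$ for each $\lambda\in\C$ and using $\rank(\lambda-Z)=1-\mu^p_Z(\{\lambda\})$ yields $\mu^p_{R(X)}\leq\mu^p_{R(Y)}$. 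So the real task is the rank inequality $\rank(A(Y))\leq\rank(A(X))$ for every square matrix $A$ over $\Cx$.

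The idea I would use is that, although $Y$ has no algebraic model of its own (its non-atomic part need not maximize $\Delta$), the amplification $I_n\otimes Y$ is an honest \emph{evaluation} of the free-field model of $X$ given by Theorem~\ref{thm:main}, with the non-atomic part of $Y$ and its surplus atom-multiplicities hidden inside the formal variables. Concretely, I would pick $n\in\bN$ clearing the denominators of the (rational) weights of the atoms of $X_1,\dots,X_d$, and take the model $\bar X_k=\bar U_k\bar D_k\bar U_k^{-1}\in M_n(\cR)$ over the free field $\cR$ from Theorem~\ref{thm:main}, where $\bar D_k=\hat\Lambda_k\oplus\diag(y^{(k)}_1,\dots,y^{(k)}_{n-\ell_k})$, $\hat\Lambda_k\in M_{\ell_k}(\C)$ carries the atoms of $X_k$ with their $\mu^p_{X_k}$-multiplicities, and $\rank(A(X))=\tfrac1n\rho_{\cR}(A(\bar X))$. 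On the analytic side I would embed $Y_1,\dots,Y_d$ into a factor and use Proposition~\ref{Matrix_form:general_case} to write $I_n\otimes Y_k=V_kD_kV_k^{*}$ with $V_k$ unitary and $D_k$ diagonal, each atom $\lambda$ of $Y_k$ of weight $w$ appearing $\lfloor nw\rfloor$ times on $D_k$. Here the hypothesis enters: since $\mu^p_{X_k}\leq\mu^p_{Y_k}$, for each atom $\lambda$ of $X_k$ one has $\lfloor n\mu^p_{Y_k}(\{\lambda\})\rfloor\geq n\mu^p_{X_k}(\{\lambda\})\in\bN$, so after conjugating $D_k$ by a permutation matrix I may assume $D_k=\hat\Lambda_k\oplus F_k$ with the \emph{same} constant block $\hat\Lambda_k$ as in $\bar D_k$ and $F_k$ an $(n-\ell_k)\times(n-\ell_k)$ diagonal matrix over $\M$.

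Next I would introduce the tuple $Z$ in $\M\subseteq L^0(\M)$ consisting of all entries of the $V_k$ together with all diagonal entries of the $F_k$, in bijection with the free generators $(u^{(k)}_{ij})\cup(y^{(k)}_j)$ of $\cR$; let $\bar Z$ denote the corresponding tuple of generators of $\cR$. Because $\C\langle\bar Z\rangle$ is a Sylvester domain, a factorization $B=PQ$ of any matrix over it gives $\rank(B(Z))\leq\rho(B)=\rho_{\cR}(B(\bar Z))$, so the hypothesis of Proposition~\ref{prop:comparison rational closure} holds for $(\cR,\rho_\cR)$, $(L^0(\M),\rank)$ and the tuples $\bar Z$, $Z$; its conclusion yields a specialization $f\colon\cR\to L^0(\M)_Z$ with $f(\bar Z)=Z$ such that $\rank(R'(Z))\leq\rho_{\cR}(R'(\bar Z))$ for every (matrix-valued) rational expression $R'$ with $R'(Z)$ well-defined. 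From $\rank(V_k)=n$ and $\rank(V_k)=\rank(B(Z))\leq\rho_\cR(B(\bar Z))=\rho_\cR(\bar U_k)\leq n$ (for $B$ the matrix of the relevant variables) I get that $\bar U_k$ is full over $\cR$, so $\bar U_k^{-1}$ exists there and, by the computation in Proposition~\ref{prop:isomorphism}, $f(\bar U_k^{-1})=V_k^{-1}=V_k^{*}$; hence $f(\bar X_k)=V_kD_kV_k^{*}=I_n\otimes Y_k$. Viewing $A(\bar X)$ and $A(I_n\otimes Y)=f(A(\bar X))$ as the values at $\bar Z$ and at $Z$ of one common matrix-valued rational expression (build the $V_k$ and $F_k$ from their entry-variables, form $\hat\Lambda_k\oplus F_k$, invert $V_k$, conjugate, apply $A$), the displayed inequality together with Theorem~\ref{thm:main} gives
$$\rank(A(Y))=\tfrac1n\rank(A(I_n\otimes Y))\leq\tfrac1n\rho_{\cR}(A(\bar X))=\rank(A(X)).$$

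The hard part is the gluing step in the third paragraph: checking that the diagonal block $\hat\Lambda_k$ coming out of Proposition~\ref{Matrix_form:general_case} can be taken to be literally the atomic block of $\bar D_k$, so that $I_n\otimes Y$ genuinely becomes an evaluation of the free-field tuple $\bar X$ — this is where $\mu^p_{X_k}\leq\mu^p_{Y_k}$ and the floor estimate are essential. The rest is routine bookkeeping: justifying $\bar U_k^{-1}$ over $\cR$ via fullness and identifying $f(\bar U_k^{-1})$ as in Proposition~\ref{prop:isomorphism}, keeping track of the $\tfrac1n$ normalizations from the $n\times n$ amplification, and observing that Proposition~\ref{prop:comparison rational closure} and Lemma~\ref{lem:inner rank of display} apply to matrix-valued (not merely scalar) rational expressions, as already remarked after Lemma~\ref{lem:inner rank of display}.
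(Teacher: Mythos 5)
Your proposal follows essentially the same route as the paper's proof: model $X$ by $\bar X_k=\bar U_k\bar D_k\bar U_k^{-1}$ over the free field via Theorem~\ref{thm:main}, diagonalize the amplification $I_n\otimes Y_k=V_kD_kV_k^\ast$ via Proposition~\ref{Matrix_form:general_case}, match the constant atomic block of $D_k$ with that of $\bar D_k$ using the hypothesis $\mu^p_{X_k}\leq\mu^p_{Y_k}$, and then compare ranks by pushing polynomial matrices in the scalar entry-variables $(\bar Z,Z)$ from $\cP=\C\langle\bar Z\rangle$ into $\M$. Your gluing step via the floor estimate $\lfloor n\mu^p_{Y_k}(\{\lambda\})\rfloor\geq n\mu^p_{X_k}(\{\lambda\})$ is in fact written out more carefully than in the paper, which leaves this point implicit in its choice of notation $\alpha^{(k)}_1,\ldots,\alpha^{(k)}_{\ell_k}$ for the diagonal entries of $D_k$.

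The one place where you are more cavalier is the final inequality $\rank(A(I_n\otimes Y))\leq\rho_\cR(A(\bar X))$. You claim that Proposition~\ref{prop:comparison rational closure} ``applies to matrix-valued rational expressions,'' citing the parenthetical remark inside its proof. That remark only says the Schur-complement identity behind Lemma~\ref{lem:inner rank of display} works for non-linear matrices; it does not assert an extension of the proposition to matrix-valued rational functions of a scalar tuple. The specialization $f\colon\cR_f\to L^0(\M)_Z$ you obtain is a ring homomorphism defined on a proper subalgebra $\cR_f\subsetneq\cR$, so a factorization of $A(\bar X)$ over $\cR$ that realizes $\rho_\cR(A(\bar X))$ need not lie over $\cR_f$ and therefore need not descend through $f$. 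The fix is exactly what the paper does explicitly: regard $A(\bar X)$ as a rational expression $R$ in the \emph{matrix} variables $I_n\otimes\bar U_k$, $I_n\otimes\bar D_k$ and constant matrices, take a formal linear representation whose display $L$ is then a matrix over $\cP$, apply the bare inequality $\rank(\Phi(L))\leq\rho_\M(\Phi(L))\leq\rho_\cR(L)$ for the ordinary ring homomorphism $\Phi\colon\cP\to\M$, and recover the rank comparison for $A(\bar X)$ from Lemma~\ref{lem:inner rank of display}. This is the ``routine bookkeeping'' you flag; the gap is small and fillable but should be closed by this explicit linearization rather than by the remark you cite, which says something else.
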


\begin{proof}The second item is a consequence of the first one, thanks to Proposition
\ref{prop:comparison rational closure}. Let us prove the first item.

First, let us replace $X$ in this equality by $\bar{X}$, which is an algebraic model for $X$.
According to the assumption there exists $r\in \mathbb{N}$ such that $\mu_{X_k}(\{\lambda\})\in \frac{1}{r}\mathbb{N}$ for each $1\leq k \leq d$ and $\lambda\in \mathbb{C}$.
That is, for each $1\leq k \leq d$, the point spectrum distribution $\mu^p_{X_k}$ of $X_k$ can be written as
$$\mu^p_{X_k}=\sum_{\ell=1}^{\ell_k}\frac{1}{r}\delta_{\alpha_\ell^{(k)}},$$
where the multiset $\{\alpha_\ell^{(k)}\}_\ell\subset\mathbb{C}$ is the set of atoms of $X_k$.
For each $1\leq k\leq d$, we consider variables $u_{ij}^{(k)}$ ($1\leq i,j\leq r$) and $y^{(k)}_\ell$ $(1\leq \ell\leq r-\ell_k)$.
We denote by $\cP$ the polynomial ring formed by these $dr^2+dr-\sum_k \ell_k$ variables and by $\cR$ its universal field of fractions.
For each $k=1,\dots,d$ we define the matrix
\[
\bar{U}_k:=(u_{ij}^{(k)})_{i,j=1}^r\quad\text{and}\quad\bar{D}_k:=\diag(\alpha^{(k)}_1,\dots,\alpha^{(k)}_{\ell_k},y^{(k)}_1,\dots,y^{(k)}_{r-\ell_k})\quad\in M_r(\cP).
\]
as in \eqref{eq:formal unitary} and \eqref{eq:formal diagonal}, and $\bar{X}_k:=\bar{U}_k\bar{D}_K\bar{U}_k^{-1}$as in Theorem~\ref{thm:main}.
For simplicity, let us denote $\bar{U}=(\bar{U}_1,\dots,\bar{U}_d)$, $\bar{D}=(\bar{D}_1,\dots,\bar{D}_d)$ and $\bar{X}=(\bar{X}_1,\dots,\bar{X}_d)$.

Thanks to Theorem \ref{thm:main}, we have
$$\rank(A(X))=\frac{1}{r}\rho_{\mathcal{R}}(A(\bar{X})).$$
Next, we regard $A(\bar{X})$ as a rational expression in $I_n\otimes \bar{U_1},\dots,I_n\otimes \bar{U}_d$ and $I_n\otimes \bar{D}_1,\dots,I_n\otimes \bar{D}_d$ and some matrices $M\subset M_n(\mathbb{C})\otimes I_r$ (since $A$ is a polynomial in $I_n\otimes x_1,\dots,I_n\otimes x_d$ and some matrices in $M_n(\C)\otimes I_r$).
That is, let $R$ be a rational expression of formal variables $u_1,\dots,u_d$ and $z_1,\dots,z_d$ as well as some variables corresponding to the finite set $M$ such that $R(I_n\otimes \bar{U},I_n\otimes \bar{D},M)=A(\bar{X})$.
Now let $(u,Q,v)$ be a formal linear representation of $R$, of dimension $m$ and with display $L$ in the sense of Definition \ref{def:formal linear representation}. Thanks to Lemma \ref{lem:inner rank of display}, we have $(I_n\otimes \bar{U},I_n\otimes \bar{D},M)\in\dom_{M_{nr}(\cR)}(R)$ as $\bar{X}\in\dom_{M_{nr}(\cR)}(A)$ (or equivalently, $X\in\dom_{L^0(\M)}(A)$), and we have
$$\rho_{\mathcal{R}}(A(\bar{X}))=\rho_{\mathcal{R}}(R(I_n\otimes \bar{U},I_n\otimes \bar{D},M))=\rho_{\mathcal{R}}(L(I_n\otimes \bar{U},I_n\otimes \bar{D},M))-mnr.
$$

Now, thanks to Proposition~\ref{Matrix_form:general_case}, for each $1\leq k \leq d$, we can write $I_r\otimes Y_k \in M_r(\mathcal{M})$ as $I_r\otimes Y_k=U_kD_kU_k^*$ where $U_k\in M_r(\mathcal{M})$ is a unitary matrix and 
$$D_k=\diag(\alpha^{(k)}_1,\dots,\alpha^{(k)}_{\ell_k},\gamma^{(k)}_1,\dots,\gamma^{(k)}_{r-\ell_k})\in M_r(\M).$$
We have
$$\rank(A(Y))=\frac{1}{r}\rank(A(I_r\otimes Y)).$$
We denote $U=(U_1,\dots,U_d)$ and $D=(D_1,\dots,D_d)$.
Similar, with the help of Lemma \ref{lem:inner rank of display}, if $A(I_r\otimes Y)=R(U,D,M)$ is well-defined, then 
\[
\rank(A(I_r\otimes Y))=\rank(R(I_n\otimes U,I_n\otimes D,M))=\rank(L(I_n\otimes U,I_n\otimes D,M))-mnr.
\]
and
so the desired claim follows if
$$\rank(L(I_n\otimes U,I_n\otimes D,M))\leq \rho_{\mathcal{R}}(L(I_n\otimes \bar{U},I_n\otimes \bar{D},M)).$$
In order to see this let us denote by $\Phi:\cP\rightarrow\M$ the unital ring homomorphism given by extending $\Phi(u_{ij}^{(k)})=U_{ij}^{(k)}$ and $\Phi(y_\ell^{(k)})=\gamma_\ell^{(k)}$.
So $\Phi(\bar{U}_k)=U_k$, $\Phi(\bar{D}_k)=D_k$ and thus it boils down to prove that
$$\rank(\Phi(L(I_n\otimes \bar{U},I_n\otimes \bar{D},M)))\leq \rho_{\mathcal{R}}(L(I_n\otimes \bar{U},I_n\otimes \bar{D},M)).$$
Note that $\Phi$ extends to a homomorphism from $M_{nmr}(\cP)$ to $M_{nmr}(\M)$ and thus for any matrix $A$ over $\cP$,
\[
\rho_\M(\Phi(A)))\leq\rho_{\mathcal{R}}(A)).
\]
Therefore, finally, the proof is completed by the fact
\[
\rank(\Phi(A))\leq\rho_{\M}(\Phi(A))
\]
(for a proof of this fact, see the last paragraph of the proof of \cite[Theorem 5.6]{MSY19}).
\end{proof}

\begin{proof}[Proof of Theorem \ref{thm:comparison-rational functions}]The second item is a consequence of the first one, thanks to Proposition
\ref{prop:comparison rational closure}.

Using Lemma~\ref{lemma:approximation}, we can consider, for each integer $N$, a tuple of $*$-free normal random variables $X^{(N)}=(X_1^{(N)},\dots,X_d^{(N)})$  such that for $k=1,\dots,d$
\begin{itemize}
\item 
$\rank(X_k^{(N)}-X_k)\leq C_X/N$;
\item  for all $\lambda\in \mathbb{C}$, we have  $\mu_{X_k^{(N)}}(\{\lambda\})\leq\mu_{X_k}(\{\lambda\})\leq\mu_{Y_k}(\{\lambda\})$;
\item the atoms of $X^{(N)}_k$ are finite and have rational weights.
\end{itemize}
For any matrix $A$ over polynomials $\C\left<x_1,\dots,x_d\right>$, Lemma~\ref{lem:comparison-rational functions} yields
\[\rank(A(Y))\leq\rank(A(X^{(N)})),\]
and Lemma~\ref{lem:comparison of Sylvester rank} says that
\[|\rank(A(X))-\rank(A(X^{(N)}))|\leq c/N,\]
for some constant $c>0$.
This means that
\[\rank(A(Y))\leq\rank(A(X))+c/N.\]
Letting $N$ tend to $\infty$, we get that
$\rank(A(Y))\leq\rank(A(X))$.
\end{proof}

\subsection{Universality of the free case}\label{subsect:universality}

Here, we want to show that the rational closure of a free tuple of random variables with prescribed point spectrum distributions is the most general one in the sense that it satisfies a universal property.

In Subsection \ref{sec:category of Cx-algebras}, we defined the category of $\Cx$-algebras whose arrows are given by specializations. Let $m_1,\dots,m_d$ be $d$ sub-probability measures.
We consider the sub-category $\mathcal{C}_m$ of $\Cx$-algebras given by rational closures of $d$-tuples $X=(X_1,\ldots,X_d)$ of normal random variables in tracial $W^*$-probability spaces, such that, for all $1\leq k\leq d$, we have
$$m_k\leq \mu_{X_k}^p.$$
The rational closure of a free-tuple is universal in the sense that it is an initial object in the category $\mathcal{C}_m$, as shown in the following corollary.

\begin{corollary}
Let $X=(X_1,\dots,X_d)$ be a tuple of $*$-free normal random variables such that, for each $k=1,\dots,d$, $m_k= \mu_{X_k}^p$. Let $Y=(Y_1,\dots,Y_d)$ be a tuple of normal random variables in some $W^\ast$-probability space $(\M,\tau)$ such that, for all $1\leq k\leq d$, we have
$$m_k\leq \mu_{Y_k}^p.$$
Then, there exists a specialization from the rational closure of $X$ to the rational closure of $Y$, and in particular, the rational closure of $Y$ is a quotient of a subalgebra of the rational closure of $X$.
\end{corollary}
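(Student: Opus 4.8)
The plan is to deduce this corollary directly from Theorem~\ref{thm:comparison-rational functions} and Proposition~\ref{prop:comparison rational closure}: essentially all the substantive work has already been done, and what remains is to record that the two rational closures in question live inside algebras carrying faithful and regular Sylvester rank functions, so that the abstract comparison result applies verbatim.

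First I would fix the ambient algebras. Say $X=(X_1,\dots,X_d)$ lives in $L^0(\M_1)$ for some tracial $W^*$-probability space $(\M_1,\tau_1)$, and $Y=(Y_1,\dots,Y_d)$ lives in $L^0(\M)$. By Lemma~\ref{lem:vN rank is Sylvester}, the von Neumann rank $\rank$ is a faithful and regular Sylvester rank function on each of $L^0(\M_1)$ and $L^0(\M)$. Since $m_k=\mu^p_{X_k}\leq \mu^p_{Y_k}$ for every $1\leq k\leq d$, part~(1) of Theorem~\ref{thm:comparison-rational functions} gives, for every $n$ and every $A\in M_n(\Cx)$,
$$\rank(A(Y))\leq \rank(A(X)).$$
This is exactly the hypothesis of Proposition~\ref{prop:comparison rational closure} applied with $\A=L^0(\M_1)$, $\B=L^0(\M)$ and $\rk_\A=\rk_\B=\rank$. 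That proposition then yields a (unique) specialization from the rational closure $L^0(\M_1)_X$ of $X$ to the rational closure $L^0(\M)_Y$ of $Y$, which is the first assertion.

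For the ``in particular'' part I would unwind the definition of a specialization (Definition~\ref{def:subhomomorphism}): it is represented by a subhomomorphism $f\colon \A_f\to L^0(\M)_Y$, where $\A_f$ is a $\Cx$-subalgebra of the rational closure $L^0(\M_1)_X$ and $\Sigma_{Y}\subset\Sigma_{\phi_f}$. As observed in the paragraph preceding Proposition~\ref{prop:isomorphism}, the inclusion $\Sigma_Y\subset\Sigma_{\phi_f}$ forces $f$ to be surjective onto $L^0(\M)_Y$: every generator of $L^0(\M)_Y$ is an entry of the inverse of an evaluation at $Y$ of some matrix in $\Sigma_Y$, hence of a matrix in $\Sigma_{\phi_f}$, so it lies in the image of $f$. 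Consequently $f$ induces an isomorphism $\A_f/\ker(f)\cong L^0(\M)_Y$, exhibiting the rational closure of $Y$ as a quotient of the $\Cx$-subalgebra $\A_f$ of the rational closure of $X$.

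There is no genuine obstacle at this stage; the only point requiring any care is checking that the hypotheses of Proposition~\ref{prop:comparison rational closure} are literally satisfied, in particular the regularity of $\rank$ (so that a full evaluated matrix over $X$ is invertible), and this is precisely Lemma~\ref{lem:vN rank is Sylvester}. The real content is carried by Theorem~\ref{thm:comparison-rational functions}, whose proof already reduces the comparison to the algebraic model constructed in Theorem~\ref{thm:main} together with the approximation scheme of Lemma~\ref{lemma:approximation}.
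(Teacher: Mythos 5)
Your proposal is correct and follows exactly the route the paper intends: the corollary is stated as a direct consequence of Proposition~\ref{prop:comparison rational closure} and Theorem~\ref{thm:comparison-rational functions}, and you verify the hypotheses (faithfulness and regularity of the von Neumann rank via Lemma~\ref{lem:vN rank is Sylvester}) and unwind the definition of specialization to get the subalgebra/quotient statement just as the paper does in the discussion preceding Proposition~\ref{prop:isomorphism}. No gap.
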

\begin{proof}
It is a direct consequence of Proposition \ref{prop:comparison rational closure} and Theorem \ref{thm:comparison-rational functions}.
\end{proof}

As pointed out in Remark \ref{rem:normality}, the normality required in the above Corollary is due to the way how we model the random variables $X_1,\dots,X_d$.
So in the simple case that $X_1,\dots,X_d$ are free matrices in $\ast_{i=1}^d M_n(\C)$, we can remove the normality requirement on $X_1,\dots,X_d$ (and thus the normality of $Y_1,\dots,Y_d$ is also not necessary).

\subsection{Approximation with matrices}\label{subsect:matrix approximation}

Let $X=(X_1,\dots,X_d)$ be a tuple of $*$-free normal random variables.
(Again, as pointed out in Remark \ref{rem:normality}, normality can be removed if $X_1,\dots,X_d$ are already in the matrix form, for example, if they are free matrices in $\ast_{i=1}^d M_n(\C)$.)
We define for each $\varepsilon>0$
\[
\mathcal{X}_{m,\varepsilon}:=\{Y=(Y_1,\dots,Y_d)\in M_m(\C)^d:\mu_{X_k}(\lambda)-\varepsilon\leq\mu_{Y_k}(\lambda),\ \forall\lambda\in\C\}
\]
and $\mathcal{X}_{\varepsilon}:=\coprod_{m=1}^\infty\mathcal{X}_{m,\varepsilon}$.
Then, thanks to Theorem \ref{thm:comparison-rational functions}, for each polynomial $P\in\Cx$
\[
\mu^p_{P(X_\varepsilon)}\leq\inf_{Y\in\mathcal{X}_\varepsilon}\mu^p_{P(Y)},
\]
where $X_\varepsilon$ is a copy of $X$ with moving a $\varepsilon$ weight from each atom of $X_k$, $k=1,\dots,d$ (as in Lemma \ref{lemma:approximation}).
Then by an application of Theorem \ref{BVrational} we can see that
\[
\mu^p_{P(X)}\leq\lim_{\varepsilon\rightarrow 0}\inf_{Y\in\mathcal{X}_\varepsilon}\mu^p_{P(Y)},
\]

Now, we prove that this inequality actually is an equality.

\begin{prop}\label{bound from matrices}
Let $X=(X_1,\dots,X_d)$ be a tuple of $*$-free normal random variables.
Then for any $P\in\Cx$
\[
\mu^p_{P(X)}=\lim_{\varepsilon\rightarrow 0}\inf_{Y\in\mathcal{X}_\varepsilon}\mu^p_{P(Y)}
\]
\end{prop}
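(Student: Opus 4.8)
The plan is to prove the inequality reverse to the one derived just before the statement, namely that for every fixed $\lambda\in\C$,
\[
\lim_{\varepsilon\to 0}\ \inf_{Y\in\mathcal{X}_\varepsilon}\ \mu^p_{P(Y)}(\{\lambda\})\ \le\ \mu^p_{P(X)}(\{\lambda\}).
\]
Since all the measures involved are discrete sub-probability measures, equality as measures follows from equality for each $\lambda$, so it is enough to argue pointwise; combined with $\mu^p_{P(X)}\le\lim_{\varepsilon\to0}\inf_{Y\in\mathcal{X}_\varepsilon}\mu^p_{P(Y)}$ this gives the proposition. The strategy has two steps: first reduce to free tuples whose atoms have rational weights, and then realize the algebraic model of Theorem~\ref{thm:main} by a generic finite-dimensional matrix tuple.

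For the first step we would invoke Lemma~\ref{lemma:approximation} to produce, for each $N$, a tuple $X^{(N)}=(X^{(N)}_1,\dots,X^{(N)}_d)$ of $*$-free normal variables whose atom weights lie in $\tfrac1{n_N}\Z$, with $\mu^p_{X^{(N)}_k}\le\mu^p_{X_k}$, each atom weight of $X^{(N)}_k$ dropping by strictly less than $1/N$ from the corresponding weight of $X_k$ (so that a "lost" atom of $X_k$ had weight $<1/N$), and $\rank(X_k-X^{(N)}_k)\le C_X/N$. By Lemma~\ref{lem:comparison of Sylvester rank}(1) together with subadditivity of $\rank$ (Lemma~\ref{lem:properties of Sylvester rank}(3)), there is a constant $c>0$ with $|\mu^p_{P(X)}(\{\lambda\})-\mu^p_{P(X^{(N)})}(\{\lambda\})|\le c/N$. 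Hence it suffices to exhibit, for each $N$, a matrix tuple $Y^{(N)}\in\mathcal{X}_{1/N}$ with $\mu^p_{P(Y^{(N)})}(\{\lambda\})=\mu^p_{P(X^{(N)})}(\{\lambda\})$; letting $N\to\infty$ then gives the desired inequality, since $\inf_{Y\in\mathcal{X}_\varepsilon}$ is monotone in $\varepsilon$.

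For the second step we would apply Theorem~\ref{thm:main} to $X^{(N)}$, obtaining the algebraic model $\bar X^{(N)}_k=\bar U_k\bar D_k\bar U_k^{-1}\in M_{n_N}(\cR)$ over the free field $\cR$ on the formal variables $u^{(k)}_{ij},y^{(k)}_\ell$, with $\mu^p_{P(X^{(N)})}(\{\lambda\})=1-\tfrac1{n_N}\rho_\cR(\lambda-P(\bar X^{(N)}))$. Then we would specialize the generators of $\cR$ at a Zariski-generic tuple of $r\times r$ complex matrices with $r$ large. The key input is the matrix description of the inner rank over the free field: passing to the linear display (Lemma~\ref{lem:inner rank of display}), which reduces matters to linear matrices over $\C\langle\cdot\rangle$, and using that the non-commutative rank is realized, after normalization by the matrix size, by a Zariski-generic finite-dimensional evaluation once the size is large enough (see e.g.\ \cite{GGOW16}), one gets, for $r$ large and for a Zariski-dense set of specializations: all the rational functions occurring in $\bar U_k,\bar U_k^{-1},\bar D_k,P(\bar X^{(N)})$ are defined, $\bar U_k^{\mathrm{eval}}$ is invertible, and $\rank\big((\lambda-P(\bar X^{(N)}))^{\mathrm{eval}}\big)=r\,\rho_\cR(\lambda-P(\bar X^{(N)}))$. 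Setting $Y^{(N)}_k:=\bar X^{(N),\mathrm{eval}}_k=\bar U_k^{\mathrm{eval}}\bar D_k^{\mathrm{eval}}(\bar U_k^{\mathrm{eval}})^{-1}\in M_{n_N r}(\C)$ one has $P(Y^{(N)})=P(\bar X^{(N)})^{\mathrm{eval}}$, hence
\[
\mu^p_{P(Y^{(N)})}(\{\lambda\})=1-\frac{1}{n_Nr}\rank\big((\lambda-P(\bar X^{(N)}))^{\mathrm{eval}}\big)=1-\frac1{n_N}\rho_\cR(\lambda-P(\bar X^{(N)}))=\mu^p_{P(X^{(N)})}(\{\lambda\}).
\]
Moreover $Y^{(N)}_k$ is conjugate to the block-diagonal matrix $\bar D_k^{\mathrm{eval}}$, which carries $n_N\mu^p_{X^{(N)}_k}(\{\beta\})$ blocks $\beta I_r$ for each atom $\beta$ of $X^{(N)}_k$ and otherwise generic $r\times r$ blocks; imposing the additional (Zariski-dense) constraint that these generic blocks have distinct eigenvalues, none equal to an atom of any $X^{(N)}_k$, we get $\mu^p_{Y^{(N)}_k}=\mu^p_{X^{(N)}_k}$, and hence $\mu^p_{Y^{(N)}_k}(\{\lambda\})\ge\mu^p_{X_k}(\{\lambda\})-1/N$ for all $\lambda$. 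Thus $Y^{(N)}\in\mathcal{X}_{1/N}$, which finishes the argument.

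The main obstacle is exactly the rank-preservation input invoked in the second step: that the inner rank over the free field of the finitely many matrices occurring is attained, up to normalization by the matrix size, by a Zariski-generic finite-dimensional specialization for large enough size. This is an external ingredient (the blow-up/matrix characterization of the non-commutative rank for linear matrices, transported to our rational matrices via linearization); everything else is bookkeeping plus Theorem~\ref{thm:main} and the paper's approximation lemmas. A minor but necessary point is to arrange that a single generic specialization meets simultaneously the finitely many genericity conditions (definedness and invertibility of the displays, the rank equality, and the eigenvalue separation of the diagonal blocks), which is harmless since a finite intersection of Zariski-dense sets is Zariski-dense.
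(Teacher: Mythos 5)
Your argument is correct but takes a genuinely different route from the paper. The paper's proof is analytic and short: it picks diagonal matrix tuples $D^{(m)}\in\mathcal{X}_{m,\varepsilon}$ whose empirical distributions converge weakly to the $\mu_{X_k}$, conjugates by independent Haar random unitaries to get $Y^{(m)}$, invokes asymptotic freeness to conclude that $Y^{(m)}$ converges in $*$-distribution to $X$, rewrites $1-\mu^p_Z(\{\lambda\})=\tfrac12\rank A_\lambda(Z)$ via a hermitization $A_\lambda=\begin{pmatrix}0&\lambda-P\\\bar\lambda-P^*&0\end{pmatrix}$ to reduce to a self-adjoint spectral statement, and then applies the Portmanteau theorem at the singleton $\{0\}$ to get $\limsup_m\mu_{A_\lambda(Y^{(m)})}(\{0\})\le\mu_{A_\lambda(X)}(\{0\})$. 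You instead first reduce to rational atom weights via Lemma~\ref{lemma:approximation}, rewrite the target quantity via the free-field model of Theorem~\ref{thm:main}, and then realize $\rho_\cR$ by Zariski-generic finite-dimensional specializations of the formal variables, using the linearization trick of Lemma~\ref{lem:inner rank of display} and the blow-up/generic-matrix characterization of the non-commutative rank as the key external ingredient. Both approaches land the same inequality; yours requires an additional citation (and the external claim should more precisely be stated for $r$ ranging over a sublattice, e.g.\ multiples of the relevant Derksen--Makam bound, rather than ``all $r$ large enough''), while the paper replaces that algebraic input by asymptotic freeness of randomly rotated matrices plus Portmanteau, which is closer to free-probabilistic folklore and avoids the algebraic rank-realization machinery. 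Your approach has the advantage of being more in line with the algebraic philosophy set up in Section~\ref{sec:calculation}, and it produces a concrete deterministic matrix witness rather than an almost-sure one; the paper's approach has the advantage of brevity and of not restricting to rational approximations.
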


\begin{proof}
We only need to prove that for every $\varepsilon>0$, $\inf_{Y\in\mathcal{X}_\varepsilon}\mu^p_{P(Y)}\leq \mu^p_{P(X)}$.
For any operator $Z$ in a tracial $W^\ast$-probability space, we know that
\begin{align*}
1-\mu^p_Z(\{\lambda\})&=\rank(\lambda-Z)
=\frac{1}{2}\rank\begin{pmatrix}\lambda-Z & 0\\0 &\overline{\lambda}-Z^\ast\end{pmatrix}=\frac{1}{2}\rank\begin{pmatrix}0 & \lambda-Z\\\overline{\lambda}-Z^\ast &0 \end{pmatrix}.
\end{align*}
Let us denote
\[
A_\lambda:=\begin{pmatrix}0 & \lambda-P\\\overline{\lambda}-P^\ast & 0\end{pmatrix}\in M_2(\C\langle x_1,\dots,x_d,x^\ast_1,\dots,x^\ast_d\rangle).
\]
Then it suffices to prove that $\inf_{Y\in\X_\varepsilon}\mu_{A_\lambda(Y)}(\{0\})\leq\mu_{A_\lambda(X)}(\{0\})$.

For that purpose, first for each $k=1,\dots,d$ we can take a sequence $D^{(m)}_k$ of diagonal matrices such that $D^{(m)}=(D_1^{(m)},\dots,D_d^{(m)})\in\X_{m,\varepsilon}$ and $\mu^p_{D^{(m)}_k}$ converge weakly towards $\mu_{X_k}$.
Then we define $Y^{(m)}:=(Y^{(m)}_1,\dots,Y^{(m)}_d)\in\X_{m,\varepsilon}$ with $Y^{(m)}:=U^{(m)}_kD^{(m)}_kU^{(m)*}_k$, where $U^{(m)}_1,\dots,U^{(m)}_d$ are a sample of independent Haar unitary random matrices of dimension $m$.
Clearly we have that $Y^{(m)}$ converges in $\ast$-distribution towards $X$ and thus
$\mu_{A_\lambda(Y^{(m)})}$ converges weakly towards $\mu_{A_\lambda(X)}$.
Hence
\begin{align*}
\liminf_m \mu_{A_\lambda(Y^{(m)})}(\{0\}) &\leq\limsup_m\mu_{A_\lambda(Y^{(m)})}(\{0\})\\
&\leq\mu_{A_\lambda(X)}(\{0\}),
\end{align*}
where we used Portmanteau Theorem in the last inequality.
Hence the desired inequality follows as 
$\inf_{Y\in\X_\varepsilon}\mu_{A_\lambda(Y)}(\{0\})\leq\mu_{A_\lambda(Y^{(m)})}(\{0\})$
\end{proof}

The main spirit of Proposition \ref{bound from matrices} is that if we prove a bound from below for the size of an atom for $P(Y_1,.....,Y_d)$ when considering  \textbf{all} tuples of matrices in $Y_1,.....,Y_d  \in \mathcal{X}$, this bound will hold for $P(X_1,...,X_d)$. On the contrary if we have \textbf{one} example of $Y_1,.....,Y_d \in \mathcal{X}$ for which we can calculate the size of the atom, then this size is a bound from above for the size of the atom for $P(X_1,...,X_d)$. In the next section we will use this observation repeatedly.

\section{Applications to Polynomials}\label{sect:application}

The purpose of this section is to apply the our general results in the particular but important case of polynomials. In this and the following section all our random variables and polynomials will be selfadjoint\footnote{A polynomial $P\in\mathbb{C}\left<x_1,\dots,x_n\right>$ is selfadjoint if $P=P^*$. In our case this means that for any monomial $\lambda x_{i_1}x_{i_2}...x_{i_n}.$ appearing in $P$, the monomial $\overline{\lambda} x_{i_n}x_{i_{n-1}}...x_{i_1}$ also appears in $P$}  thus all analytic distributions will be probability measures on $\R$.  Also, we will exclude trivial cases and then $P(x_1,\ldots,x_n)$ will contain all the variables $x_1,\ldots,x_n$.

We use Theorem \ref{thm:atomic} to bound from above the size of atoms for some polynomials. The proofs of our estimates will use operators with atomic distributions with rational sizes. However, all the bounds in this section remain true for general unbounded operators by suitable approximation.

More explicitly, we will consider $X=(X_1,\dots,X_d)$, a tuple of $*$-free normal random variables, such that for $k=1,\dots,d$,
\[
\mu_{X_k}=\frac{1}{n}\sum_{i=1}^n\delta_{\lambda^{(k)}_i},
\]
where $\lambda_i^{(k)}\in\R$, $i=1,\dots,n$.

Then we will choose a tuple $Y=(Y_1,\dots,Y_d)$ of matrices in $M_n(\C)$ such that for each $k=1,\dots,d$, $X_k$ has the same distribution as $Y_k$.

Again, thanks to Theorem \ref{thm:comparison-rational functions}, for each selfadjoint  polynomial $P\in\Cx$ and $\lambda\in\mathbb{R}$
\begin{equation}\label{t}
\mu_{P(X)}(\{\lambda\})\leq\mu_{P(Y)}(\{\lambda\}).
\end{equation}
 
Interestingly, in many cases, we will be able to calculate these weights exactly by comparing the bounds from above with general bounds from below that we derive in the next Subsection \ref{subsect:unavoidable atoms}.

\subsection{Some unavoidable atoms} \label{subsect:unavoidable atoms}

Let us derive some lower bounds for the size of atoms for polynomials in any operators. The following propositions are basic lower bounds for the size of atoms. Notice that the proofs do not use any freeness assumptions for the involved operators. These are probably well known to the experts and easy to derive. We prove them for the convenience of the reader.

\begin{prop}\label{unatoms1}
Let $(\mathcal{M},\tau)$ be a tracial $W^*$-probability space and consider a tuple $X=(X_1,...,X_d)$ of selfadjoint operators in $\mathcal{M}$.
Suppose that for each $i$, $\mu_i(\lambda_i)\geq t_i$ for some $\lambda_i\in\R$ and $t_i\in[0,1]$.
If $t_1+\cdots+t_d> d-1$ then for any selfadjoint polynomial $P\in \Cx$, the measure $\mu_{P(X)}$ has an atom at $P(\lambda)$ of size at least $t_1+\cdots+t_d-(d-1)$, where $\lambda:=(\lambda_1,\dots,\lambda_d)$.
\end{prop}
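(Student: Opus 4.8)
The plan is to prove this via a dimension-counting argument on eigenspaces, with no appeal to freeness. First I would translate the hypotheses into statements about the orthogonal projections onto the relevant eigenspaces: for each $i$, let $p_i := p_{\ker(\lambda_i - X_i)}$, so that $\tau(p_i) = \mu_i(\{\lambda_i\}) \geq t_i$. The key elementary fact is the following: in a tracial $W^*$-probability space, if $p$ and $q$ are projections with $\tau(p) + \tau(q) > 1$, then $p \wedge q \neq 0$ and in fact $\tau(p \wedge q) \geq \tau(p) + \tau(q) - 1$. This follows from the identity $\tau(p \vee q) = \tau(p) + \tau(q) - \tau(p \wedge q)$ together with $\tau(p \vee q) \leq 1$.

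Next I would iterate this estimate. Set $e := p_1 \wedge p_2 \wedge \cdots \wedge p_d$. By induction on $k$, using $\tau(p_1 \wedge \cdots \wedge p_k) \geq t_1 + \cdots + t_k - (k-1)$ (which stays positive as long as the partial sums exceed $k-1$; one should be slightly careful that intermediate partial sums might not all exceed the relevant threshold, but since $t_j \leq 1$ for each $j$, one has $t_1 + \cdots + t_k - (k-1) \geq t_1 + \cdots + t_d - (d-1) > 0$ for every $k$, so every intermediate meet is genuinely nonzero and the bound propagates), we obtain $\tau(e) \geq t_1 + \cdots + t_d - (d-1) =: r > 0$.

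The crucial observation is then that $e$ is a projection whose range lies in the common eigenspace of all the $X_i$: for any vector $\xi$ in the range of $e$ we have $X_i \xi = \lambda_i \xi$ for all $i$. Consequently, for any polynomial $P \in \Cx$, applying $P(X_1, \dots, X_d)$ to $\xi$ just substitutes the scalars: $P(X)\xi = P(\lambda_1, \dots, \lambda_d)\xi = P(\lambda)\xi$. Hence the range of $e$ is contained in $\ker(P(\lambda) - P(X))$, which gives $e \leq p_{\ker(P(\lambda) - P(X))}$ and therefore $\mu_{P(X)}(\{P(\lambda)\}) = \tau(p_{\ker(P(\lambda)-P(X))}) \geq \tau(e) \geq r = t_1 + \cdots + t_d - (d-1)$. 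Note that selfadjointness of $P$ and the $X_i$ ensures $P(X)$ is again selfadjoint so that $\mu_{P(X)}$ is a genuine probability measure on $\R$ and $P(\lambda) \in \R$.

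I do not anticipate a serious obstacle here — this is a soft argument. The one point requiring a little care is making the induction for the iterated meet clean: one must check that each partial meet $p_1 \wedge \cdots \wedge p_k$ has trace strictly less than $1$ is not needed, but rather that the running lower bound stays positive so that the two-projection lemma can be applied at each stage (i.e., $\tau(p_1 \wedge \cdots \wedge p_{k-1}) + \tau(p_k) > 1$). As noted above, this is guaranteed because $\tau(p_1 \wedge \cdots \wedge p_{k-1}) + t_k \geq (t_1 + \cdots + t_{k-1} - (k-2)) + t_k = t_1 + \cdots + t_k - (k-2) \geq t_1 + \cdots + t_d - (d-1) + 1 > 1$, using $t_j \leq 1$ for the omitted indices. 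The rest is the spectral-mapping observation on the common eigenspace, which is immediate.
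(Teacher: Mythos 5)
Your proof is correct and takes essentially the same approach as the paper: both define $p_i$ as the projection onto $\ker(\lambda_i-X_i)$, form the meet $p_1\wedge\cdots\wedge p_d$, use the standard trace inequality for meets of projections to bound its trace below by $t_1+\cdots+t_d-(d-1)$, and then observe that the range of the meet consists of common eigenvectors on which $P(X)$ acts as multiplication by $P(\lambda)$. The only difference is that you spell out the induction for the iterated meet estimate, which the paper takes as given.
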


\begin{proof}
For each $i\in\{1,\dots,d\}$, let $p_i$ be the orthogonal projection onto the eigenspace $\ker(\lambda_i-X_i)$.
That is, $X_i p_i=\lambda_ip_i$ for all $i\in\{1,\dots,d\}$ and $\tau(p_i)=t_i$.
Then for $p:=\min(p_1,\dots,p_d)$, we have $P(X)p=P(\lambda)p$.
Since $\tau(p)\geq \tau(p_1)+ \cdots+\tau(p_d)-(d-1)=t_1+ \cdots+t_d+d-1$, we get the conclusion.
\end{proof}

\begin{prop}\label{unatoms3}
Let $X,Y_1,...,Y_d$ be selfadjoint operators in some tracial $W^\ast$-probability space $(\mathcal{M},\tau)$.
Suppose that $\mu_X(\{0\})\geq t$.
    Then for any selfadjoint polynomial $P$ of the form $$P(x,y_1,...,y_d)=\sum^k_{i=1}Q_{i,1}(x,y_1,...,y_d)xQ_{i,2}(x,y_1,...,y_d),$$ the analytic distribution of $P(X,Y_1,...,Y_d)$ has an atom at $0$ whose size is at least\newline $\max(kt-(k-1),0)$.
\end{prop}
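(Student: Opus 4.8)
The plan is to exhibit a large common kernel for $P(X,Y_1,\ldots,Y_d)$, in the spirit of Proposition \ref{unatoms1}, but now using crucially that in each summand of $P$ the distinguished variable $x$ sits to the \emph{left} of $Q_{i,2}$. Write $P(X,Y_1,\dots,Y_d)=\sum_{i=1}^k A_iXB_i$, where $A_i:=Q_{i,1}(X,Y_1,\dots,Y_d)\in\M$ and $B_i:=Q_{i,2}(X,Y_1,\dots,Y_d)\in\M$.

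First I would record that $\rank(X)=1-\tau(p_{\ker X})=1-\mu_X(\{0\})\le 1-t$, using the identity $\rank(Y)=1-\tau(p_{\ker Y})$ recalled in Section \ref{sect:preliminaries} and the fact that $X$ is selfadjoint (hence normal), so $\mu_X(\{0\})=\tau(p_{\ker X})$. Next, for each $i$ set $q_i:=p_{\ker(XB_i)}$. Since $\rank$ is a Sylvester rank function on $L^0(\M)$ (Lemma \ref{lem:vN rank is Sylvester}), item \ref{it:rank of product} of Definition \ref{def:Sylvester rank} gives $\rank(XB_i)\le\rank(X)\le 1-t$, hence $\tau(q_i)=1-\rank(XB_i)\ge t$.

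The key observation is then that $\bigwedge_{i=1}^k q_i\le p_{\ker P(X,Y_1,\dots,Y_d)}$: if $\xi\in\bigcap_{i=1}^k\ker(XB_i)\subseteq L^2(\M,\tau)$, then $XB_i\xi=0$ and therefore $A_iXB_i\xi=0$ for every $i$ (here boundedness of $A_i\in\M$ is used), so $P(X,Y_1,\dots,Y_d)\xi=0$. Combining this with the elementary inequality $\tau(q_1\wedge\cdots\wedge q_k)\ge\tau(q_1)+\cdots+\tau(q_k)-(k-1)$ — which follows by induction from $\tau(a\wedge b)=\tau(a)+\tau(b)-\tau(a\vee b)\ge\tau(a)+\tau(b)-1$ — yields $\tau\big(p_{\ker P(X,Y_1,\dots,Y_d)}\big)\ge kt-(k-1)$. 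Since $P$ is selfadjoint, $P(X,Y_1,\dots,Y_d)$ is normal, so $\mu_{P(X,Y_1,\dots,Y_d)}(\{0\})=\tau\big(p_{\ker P(X,Y_1,\dots,Y_d)}\big)\ge kt-(k-1)$, and as this weight is trivially also nonnegative, the asserted bound $\max(kt-(k-1),0)$ follows.

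I do not anticipate any genuine obstacle: the argument is entirely elementary. The only step needing a little care is the inclusion $\bigcap_i\ker(XB_i)\subseteq\ker P(X,Y_1,\dots,Y_d)$, which works precisely because the prescribed form of $P$ puts $x$ immediately to the left of each $Q_{i,2}$, so that annihilating $XB_i\xi$ is enough to kill the whole $i$-th summand; and one should be slightly careful to phrase the rank inequalities for the (bounded) operators $X,B_i\in\M\subseteq L^0(\M)$ rather than for abstract matrices, which is exactly what Lemma \ref{lem:vN rank is Sylvester} provides.
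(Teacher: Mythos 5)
Your proof is correct, and it takes a genuinely different (though kindred) route from the paper's. The paper argues entirely at the level of rank: it invokes subadditivity of the Sylvester rank function to get $\rank(P)\leq\sum_i\rank(A_iXB_i)$, then $\rank(A_iXB_i)\leq\rank(X)\leq 1-t$, and converts $\rank(P)\leq k(1-t)$ into the atom bound. You instead exhibit an explicit common eigenspace: you set $q_i=p_{\ker(XB_i)}$, note that $\bigwedge_i q_i$ lies under $p_{\ker P}$ (because $P$ has $x$ immediately left of each $Q_{i,2}$), and then use the Kaplansky parallelogram identity $\tau(p\wedge q)+\tau(p\vee q)=\tau(p)+\tau(q)$ iteratively to bound $\tau\bigl(\bigwedge_i q_i\bigr)\geq\sum_i\tau(q_i)-(k-1)\geq kt-(k-1)$. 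The two approaches use the same input estimate ($\rank(XB_i)\leq\rank(X)$) and produce the same bound; the paper's is a bit more compact because rank subadditivity already packages the parallelogram bookkeeping, whereas your version makes the underlying eigenspace visible and is closer in spirit to Proposition \ref{unatoms1}. Both are sound.
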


\begin{proof}
We bound the rank,
\begin{align*}
\rank(P(X,Y_1,\dots,Y_d))&\leq\sum^k_{i=1}\rank(Q_{i,1}(X,Y_1,...,Y_d)XQ_{i,2}(X,Y_1,...,Y_d)) \\
&\leq k\rank(X)\\&=k(1-t),
\end{align*}
from where we obtain
$$\mu_{P(X,Y_1,\dots,Y_d)}(\{0\})=1-\rank(P(X,Y_1,\dots,Y_d))\geq1-k(1-t)=kt-(k-1).$$
\end{proof}

Let us isolate two cases of polynomials of a special form which we will use later.

\begin{corollary} \label{unatoms4}

Let $X,Y_1,...Y_d$ be selfadjoint operators in some tracial $W^\ast$-probability space $(\mathcal{M},\tau)$.
\begin{enumerate}\item Suppose that $\mu_X(\{0\})\geq t>1/2$.
Then for any polynomial  $P$ of the form $P(x,y_1,...,y_d)=xQ(x,y_1,...,y_d)+ Q^*(x,y_1,...,y_d)x$ the analytic distribution of $P(X,Y_1,...,Y_d)$ has an atom at $0$ of size at least $2t-1$. 
\item For any polynomial $P$ of the form $P(x,y_1,...,y_n)=Q_1(x,y_1,...,y_n)xQ_2(x,y_1,...,y_n)$,  we have  $\mu_P(X,Y_1,...,Y_n)(\{0\})\geq\mu_X(\{0\})$  
\end{enumerate}
\end{corollary}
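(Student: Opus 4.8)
The statement to prove is Corollary \ref{unatoms4}, which has two parts. Both follow from Proposition \ref{unatoms3} by specializing the form of the polynomial, so the proof should be short.

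\medskip

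\textbf{Plan.} The plan is to deduce both items directly from Proposition \ref{unatoms3}. For item (1), I would observe that a polynomial of the form $P(x,y_1,\dots,y_d)=xQ(x,y_1,\dots,y_d)+Q^*(x,y_1,\dots,y_d)x$ is exactly of the shape $\sum_{i=1}^k Q_{i,1}\, x\, Q_{i,2}$ treated in Proposition \ref{unatoms3}, with $k=2$: take $Q_{1,1}=1$, $Q_{1,2}=Q$, $Q_{2,1}=Q^*$, $Q_{2,2}=1$. Proposition \ref{unatoms3} then gives that the atom at $0$ of $\mu_{P(X,Y_1,\dots,Y_d)}$ has size at least $\max(2t-1,0)$; since we assume $t>1/2$, this is $2t-1>0$, which is the claim. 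For item (2), a polynomial $P(x,y_1,\dots,y_n)=Q_1(x,y_1,\dots,y_n)\,x\,Q_2(x,y_1,\dots,y_n)$ is the case $k=1$ of Proposition \ref{unatoms3}, so $\mu_{P(X,Y_1,\dots,Y_n)}(\{0\})\geq \max(t-0,0)=t=\mu_X(\{0\})$, using $\mu_X(\{0\})\geq t$ and that the bound is monotone in $t$; alternatively one can invoke the rank bound $\rank(Q_1(X,\dots)XQ_2(X,\dots))\leq \rank(X)$ directly.

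\medskip

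The only mild subtlety is to make sure the adjoints of the $Q$'s are handled correctly so that $P$ is genuinely selfadjoint in item (1) — but this is automatic from the given form $xQ + Q^*x$, and Proposition \ref{unatoms3} does not actually require selfadjointness of the summands, only that the total polynomial be selfadjoint, which holds here. There is no real obstacle; the work is purely a matter of pattern-matching the hypotheses of Proposition \ref{unatoms3}.

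\medskip

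Here is the proof.

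\begin{proof}
Both statements are immediate consequences of Proposition \ref{unatoms3}.

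For item (1), write $P(x,y_1,\dots,y_d)=xQ(x,y_1,\dots,y_d)+Q^*(x,y_1,\dots,y_d)x$ in the form
$$P=\sum_{i=1}^2 Q_{i,1}(x,y_1,\dots,y_d)\,x\,Q_{i,2}(x,y_1,\dots,y_d),$$
with $Q_{1,1}=1$, $Q_{1,2}=Q$, $Q_{2,1}=Q^*$ and $Q_{2,2}=1$. Since $\mu_X(\{0\})\geq t$, Proposition \ref{unatoms3} applied with $k=2$ gives that $\mu_{P(X,Y_1,\dots,Y_d)}$ has an atom at $0$ of size at least $\max(2t-1,0)$. As $t>1/2$, this size is $2t-1$, as claimed.

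For item (2), the polynomial $P(x,y_1,\dots,y_n)=Q_1(x,y_1,\dots,y_n)\,x\,Q_2(x,y_1,\dots,y_n)$ is of the form considered in Proposition \ref{unatoms3} with $k=1$. Hence $\mu_{P(X,Y_1,\dots,Y_n)}(\{0\})\geq \max(t-0,0)=t$ for every $t\leq \mu_X(\{0\})$, and therefore $\mu_{P(X,Y_1,\dots,Y_n)}(\{0\})\geq \mu_X(\{0\})$.
\end{proof}
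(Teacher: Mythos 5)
Your proof is correct and is exactly the intended derivation: the paper states Corollary \ref{unatoms4} without a separate proof because both parts are direct specializations of Proposition \ref{unatoms3}, and your decompositions ($k=2$ with $Q_{1,1}=1,\,Q_{1,2}=Q,\,Q_{2,1}=Q^*,\,Q_{2,2}=1$ for item (1), and $k=1$ for item (2)) are precisely how the paper means for it to be read.
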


\begin{remark}
All the bounds above may also be derived, by proving the analog theorems for the specific case of matrices.

Indeed, if we prove such bound for free random variables $X,Y_1,\ldots,Y_d$, then the conclusion will also hold for operators in a finite von Neumann algebra, by our Theorem \ref{thm:comparison-rational functions}. 
Now, by Proposition \ref{bound from matrices} and Theorem \ref{BVrational}, we only need to prove this for $X,Y_1,...Y_d\in M_n(\mathbb{C})$.
\end{remark}

\subsection{Using diagonal matrices}

Let $X_1,X_2$ be two freely independent selfadjoint random variables with
\[
\mu_{X_1}=\frac{1}{n}\sum_{i=1}^n\delta_{\lambda_i}\quad\text{and}\quad\mu_{X_2}=\frac{1}{n}\sum_{i=1}^n\delta_{\mu_i}
\]
where $\lambda_i,\mu_i\in\R$, $i=1,\dots,n$.
Let $Y_1$ and $Y_2$ be diagonal matrices with $\lambda_1,\dots,\lambda_n$ and $\mu_1,\dots,\mu_n$ on their diagonals respectively.
Thanks to Theorem \ref{thm:comparison-rational functions}, for any $P\in\C\left<x_1,x_2\right>$ we have
\[
\sigma_p(P(X_1,X_2))\subset\sigma_p(P(Y_1,Y_2)=\{P(\lambda_i,\mu_i)\mid i=1,\dots,n\}
\]
as
\[
P(Y_1,Y_2)=\begin{pmatrix}P(\lambda_1,\mu_1) & 0 & \cdots & 0)\\0 & P(\lambda_2,\mu_2) & \cdots & 0\\\vdots & \vdots & & \vdots\\0 & 0  &\cdots & P(\lambda_n,\mu_n)\end{pmatrix}.
\]
Moreover, since we can reorder $\lambda_i$'s and $\mu_j$'s while $\mu_{X_1}$ and $\mu_{X_2}$ stay invariant, we see that
\[
\sigma_p(P(X_1,X_2))\subset\bigcap_{\sigma\in S_n}\{P(\lambda_i,\mu_{\sigma(i)})\mid i=1,\dots,n\}.
\]

Let us define $$\Gamma_\sigma(\lambda):=\{i\in\{1,...n\}\mid p(\lambda_i,\mu_{\sigma(i)})=\lambda\},\quad \forall\lambda\in\mathbb{C}, \forall\sigma\in S_n$$
and $$k(\lambda):=\min_{\sigma\in S_n}|\Gamma_\sigma(\lambda)|.$$
Then we have $\mu_{P(X_1,X_2)}(\{\lambda\})\leq \frac{1}{n}k(\lambda).$
This can be done similarly for any tuple of operators $X_1,\dots,X_d$. 

\begin{theorem}  \label{diagonalbound1}
Let $X_1\dots,X_d$ be freely independent selfadjoint random variables and 
let $P\in\mathbb{C}\left< x_1,\dots,x_d \right>$ be a selfadjoint polynomial.
The possible atoms of $P(X_1,...,X_d)$ are contained in the set $$\{P(\rho_1,...,\rho_d)\mid\rho_i\text{ is atom of }X_i, ~\forall i=1,\dots, d\}.$$
Furthermore, if $X_1,\dots,X_d\in M_n(\C)$ and $\lambda\in\R$, then  $\mu_{P(X_1,...,X_d)}(\{\lambda\})$ is smaller than $k(\lambda)/n$, where
$$k(\lambda):=\min_{\sigma_1,...,\sigma_d\in S_n} |\{j\in\{1,...n\}:P(\lambda^{(1)}_{\sigma_1(j)},...,\lambda^{(d)}_{\sigma_d(j)})=\lambda \}|,$$
where, for $i=1,\ldots,d$, $\{\lambda^{(i)}_{j}\}^n_{j=1}$ denotes the eigenvalues of $X_i$.
\end{theorem}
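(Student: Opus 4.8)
The plan is to combine the comparison principle (Theorem~\ref{thm:comparison-rational functions}) with the invariance of the marginal distributions under reordering of eigenvalues, exactly as in the discussion preceding Theorem~\ref{diagonalbound1}. For the first claim I would fix any $d$-tuple $Y=(Y_1,\dots,Y_d)$ of diagonal matrices whose diagonal entries exhaust (with the correct multiplicities) the atoms of $X_1,\dots,X_d$ respectively; more precisely, since the atoms of each $X_i$ have weights in some $\frac1n\mathbb{N}$ (reducing to this case first, and then removing it by the approximation in Lemma~\ref{lemma:approximation} together with Theorem~\ref{BVrational}), we may pick $Y_i=\diag(\lambda^{(i)}_1,\dots,\lambda^{(i)}_n)$ with $\mu_{Y_i}=\mu_{X_i}$. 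Then $P(Y_1,\dots,Y_d)$ is again diagonal, with diagonal entries $P(\lambda^{(1)}_j,\dots,\lambda^{(d)}_j)$, so $\sigma_p(P(Y))\subseteq\{P(\rho_1,\dots,\rho_d):\rho_i\text{ an atom of }X_i\}$. Theorem~\ref{thm:comparison-rational functions} gives $\mu^p_{P(X)}\le\mu^p_{P(Y)}$, hence $\sigma_p(P(X))$ is contained in that finite set as well.

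For the quantitative bound, the key observation is that for any permutations $\sigma_1,\dots,\sigma_d\in S_n$ the matrices $Y_i^{\sigma_i}:=\diag(\lambda^{(i)}_{\sigma_i(1)},\dots,\lambda^{(i)}_{\sigma_i(n)})$ still have $\mu_{Y_i^{\sigma_i}}=\mu_{X_i}=\mu_{Y_i}$, so $Y^\sigma:=(Y_1^{\sigma_1},\dots,Y_d^{\sigma_d})$ is another admissible realization of the prescribed marginals. Applying Theorem~\ref{thm:comparison-rational functions} again,
\[
\mu_{P(X)}(\{\lambda\})\le \mu_{P(Y^\sigma)}(\{\lambda\})=\frac1n\bigl|\{j:P(\lambda^{(1)}_{\sigma_1(j)},\dots,\lambda^{(d)}_{\sigma_d(j)})=\lambda\}\bigr|=\frac1n|\Gamma_{\sigma}(\lambda)|
\]
for every choice of $\sigma=(\sigma_1,\dots,\sigma_d)$; taking the infimum over all such tuples yields $\mu_{P(X)}(\{\lambda\})\le k(\lambda)/n$ with $k(\lambda)$ as in the statement. (Note that composing all $\sigma_i$ with a fixed common permutation does not change $\Gamma_\sigma$ as a multiset of values attained, so one may normalize $\sigma_1=\id$ if desired, but this is not needed.)

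The only point requiring a little care is passing from the case of rational atom weights (where the diagonal-matrix model is literally available) to arbitrary weights, and I expect this to be the main, though routine, obstacle. Here one uses Lemma~\ref{lemma:approximation} to approximate each $X_i$ by $X_i^{(N)}$ with atoms of rational weight and $\rank(X_i-X_i^{(N)})\le C_X/N$; Theorem~\ref{BVrational} (or directly Lemma~\ref{lem:comparison of Sylvester rank} combined with Lemma~\ref{lem:Kol distance}) then gives $d_{Kol}(\mu_{P(X^{(N)})},\mu_{P(X)})\to 0$, and since the atoms of $P(X^{(N)})$ live in the shrinking finite set $\{P(\rho_1,\dots,\rho_d):\rho_i\text{ an atom of }X_i^{(N)}\}$ which converges to $\{P(\rho_1,\dots,\rho_d):\rho_i\text{ an atom of }X_i\}$, the first assertion survives the limit. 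For the second assertion the matrix case $X_i\in M_n(\mathbb{C})$ is already of rational-weight type, so no approximation is needed there and the argument of the previous paragraph applies directly.
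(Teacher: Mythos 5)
Your core argument for the quantitative ``Furthermore'' clause --- realizing the marginal distributions by diagonal matrices $Y_i^{\sigma_i}$, invoking Theorem~\ref{thm:comparison-rational functions}, and minimizing over the permutations $(\sigma_1,\dots,\sigma_d)$ --- is exactly the route taken in the paragraph preceding the theorem in the paper (the paper offers no separate proof; that discussion \emph{is} the proof). When the $X_i$ have the distribution of $M_n(\mathbb{C})$-matrices, your derivation of $\mu_{P(X)}(\{\lambda\})\leq k(\lambda)/n$ is correct and is the intended argument.

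The approximation you sketch for the first, qualitative claim does not quite close, though the paper glosses over the same point. The approximants $X_i^{(N)}$ produced by Lemma~\ref{lemma:approximation} carry a nontrivial atomless part (see the $S\otimes(1-\beta_i)$ summand in its proof), so $\mu_{X_i^{(N)}}$ cannot be realized by a diagonal matrix in $M_n(\mathbb{C})$; the comparison tuple must rather be diagonal over a type $\mathrm{II}_1$ algebra with some atomless operators on the diagonal, as in Lemma~\ref{Matrix_form:free_case}. For a non-injective polynomial those atomless slots \emph{can} produce atoms in $P(Y)$, so the statement that ``the atoms of $P(X^{(N)})$ live in the shrinking finite set $\{P(\rho_1,\dots,\rho_d):\rho_i\text{ atom of }X_i^{(N)}\}$'' is not a consequence of the diagonal argument. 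In fact, taking $P(x,y)=xyx$, $X_1$ atomless and $Y:=X_2$ with $\mu_{X_2}(\{0\})=s>0$, one has $\mu^p_{X_1X_2X_1}(\{0\})=s$ while $X_1$ has no atoms at all, so the first inclusion as you (and the theorem) state it is only transparently justified when every $\mu_{X_i}$ is purely atomic (which is what the diagonal comparison actually delivers, and what the paper's preceding paragraph implicitly assumes). It would be worth flagging this restriction, or formulating the first assertion accordingly.
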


One natural question is whether $k(\lambda)$ gives the precise size of the atom at $\lambda$. Certainly if $k(\lambda)=0$ this is the case. A positive result is well-known for the special case  $P(x_1,x_2)=x_1+x_2$.
Actually, Theorem 7.4 in \cite{BV98} implies that $\alpha=\lambda+\mu$ is an atom of $X_1+X_2$ if and only if the sum of the multiplicity of $\lambda$ and the multiplicity of $\mu$ is larger than $n$. Such a condition is sufficient for arbitrary polynomials $P$ in $\C\left<x_1,x_2\right>$. That is, $\alpha=P(\lambda,\mu)$ is an atom of $P(X_1,X_2)$ if the sum of the multiplicity of $\lambda$ and the multiplicity of $\mu$ is larger than $n$. Proposition \ref{unatoms1} actually corresponds to this situation for any number of variables.

However, this description is not sufficient to give all atoms of $P(X_1,X_2)$. For example, if
$P(x_1,x_2) = x_1x_2-x_2x_1$, then $\{\lambda\in \mathbb{C}\mid k(\lambda)>0\}=\{0\}$ and  $k(0)=n$.  But in Section 7 we will see in general  for free random variables that $X_1X_2- X_2X_1$ may not have atoms even when $X_1$ and $X_2$ are purely atomic. Nevertheless, the Theorem \ref{diagonalbound2} shows that this condition could still be necessary under the assumption of certain injectivity properties of the polynomial.

Before proving this result, let us give some examples. First, Theorem~\ref{diagonalbound2} allows to recover the well-known results about atoms of the free additive \cite{BV98} and to generalize to the result in \cite{Bel03} for multiplicative convolutions: indeed, we readily see that $P(X,Y)=X+Y$ satisfies the above condition for all $a$, and so does $P(X,Y)=XYX$ for $X>0$ and $a\neq0$.   Another new example is $P(X,Y)=X^2YX+XYX^2$ for $a\neq0$, and it is easy to exhibit other examples as Example \ref{examples} shows.

Let us state this
\begin{corollary}
Let $\mu,\nu\in\mathcal{P}$ be probability measures on $\mathbb{R}$.
\begin{enumerate}
    \item $\mu\boxplus\nu$ has an atom at $a\in\mathbb{R}$ if and only if there exist $\lambda,\rho\in \mathbb{R}$ such that $\rho+\lambda=a$ and $\mu(\lambda)+\nu(\rho)>1$. Moreover, if $\mu(\lambda)+\nu(\rho)>1$, we have $\mu\boxplus\nu({a})=\mu(\lambda)+\nu(\rho)-1$.

\item If $\mu\in \mathcal{P}(\mathbb{R}^+)$ then $\mu\boxtimes\nu$ has an atom at $a\in\mathbb{R}\setminus\{0\}$ if and only if there exist $\lambda,\rho\in \mathbb{R}$ such that $\rho\lambda=a$ and  $\mu(\lambda)+\nu(\rho)>1$. Moreover, if $\mu(\lambda)+\nu(\rho)>1$, we have $\mu\boxtimes\nu({a})=\mu(\lambda)+\nu(\rho)-1.$ The atom at $0$ is given by $\mu\boxtimes\nu\{0\}=\max(\mu\{0\},\nu\{0\})$.
\item If $P(X,Y)=X^2YX+XYX^2$, then $P^{\Box}(\mu,\nu)$ has an atom at $a\in\mathbb{R}\setminus\{0\}$ if and only if there exist $\lambda,\rho\in \mathbb{R}$ such that $P(\rho,\lambda)=a$ and  $\mu(\lambda)+\nu(\rho)>1$. Moreover, if $\mu(\lambda)+\nu(\rho)>1$, we have $P^{\Box}(\mu,\nu)({a})=\mu(\lambda)+\nu(\rho)-1$.

\end{enumerate}
\end{corollary}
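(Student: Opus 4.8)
The plan is to deduce all three items directly from Theorem~\ref{diagonalbound2}, once the injectivity hypothesis of that theorem has been verified for the relevant polynomial; the only piece needing a separate argument is the mass of the atom at $0$ in item (2). Throughout I would fix free selfadjoint $X\sim\mu$ and $Y\sim\nu$ affiliated to a tracial $W^*$-probability space, and use that the distribution of $X+Y$ is $\mu\boxplus\nu$, that of $X^{1/2}YX^{1/2}$ is $\mu\boxtimes\nu$ when $\mu$ is supported on $\mathbb{R}^+$ (with $X^{1/2}\geq 0$ still free from $Y$, since $W^*(X^{1/2})=W^*(X)$), and that of $P(X,Y)$ is $P^\square(\mu,\nu)$. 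Since all of these analytic distributions are probability measures on $\mathbb{R}$, the condition ``$s+t-1>0$'' of Theorem~\ref{diagonalbound2} for atoms of $X$ at $\lambda$ of mass $s$ and of $Y$ at $\rho$ of mass $t$ is exactly ``$\mu(\{\lambda\})+\nu(\{\rho\})>1$''.

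For the hypothesis, I would use that on scalars $P$ acts as $(\lambda,\rho)\mapsto\lambda+\rho$ for $P(x,y)=x+y$, as $(\zeta,\rho)\mapsto\zeta^2\rho$ for $P(z,y)=zyz$ (with $\zeta$ running over atoms of $X^{1/2}$, hence over $[0,\infty)$), and as $(\lambda,\rho)\mapsto 2\lambda^3\rho$ for $P(x,y)=x^2yx+xyx^2=x(xy+yx)x$. For $x+y$ the equation $P(\lambda,\rho)=a$ determines each of $\lambda,\rho$ from the other, for every $a$. For $zyz$ with $a\neq0$ it forces $\zeta\neq0\neq\rho$, and then $\rho=a/\zeta^2$ is determined by $\zeta$ and $\zeta=\sqrt{a/\rho}$ is determined by $\rho$ (using $\zeta\geq0$). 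For $x^2yx+xyx^2$ with $a\neq0$ it forces $\lambda\neq0\neq\rho$, and then $\rho=a/(2\lambda^3)$ is determined by $\lambda$ and $\lambda=(a/(2\rho))^{1/3}$ is determined by $\rho$ (real cube roots being unique). This is precisely the injectivity hypothesis of Theorem~\ref{diagonalbound2}, and since the proof of that theorem only ever evaluates $P$ at atoms of the two variables it suffices to have it on the joint support. Applying Theorem~\ref{diagonalbound2} then yields the stated equivalences and the mass formula $\mu(\{\lambda\})+\nu(\{\rho\})-1$: for all $a\in\mathbb{R}$ in item (1), for $a\neq0$ in item (2), and for $a\neq0$ in item (3).

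It remains to treat the atom at $0$ in item (2). For the upper bound I would invoke the diagonal-matrix estimate of Theorem~\ref{diagonalbound1} together with the matrix approximation of Proposition~\ref{bound from matrices}: modelling $X^{1/2}$ by $\diag(\zeta_i)$ with $\zeta_i\geq0$ and $Y$ by $\diag(\rho_i)$, one has $k(0)=\min_\sigma|\{i:\zeta_i^2\rho_{\sigma(i)}=0\}|$, which is minimized by matching the smaller of the two sets of zero eigenvalues inside the larger, giving $k(0)/n=\max(\mu(\{0\}),\nu(\{0\}))$ and hence $\mu\boxtimes\nu(\{0\})\leq\max(\mu(\{0\}),\nu(\{0\}))$. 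For the matching lower bound I would apply Corollary~\ref{unatoms4}(2) twice: from $X^{1/2}YX^{1/2}=1\cdot X^{1/2}\cdot(YX^{1/2})$ we get $\mu\boxtimes\nu(\{0\})\geq\mu(\{0\})$, and from $X^{1/2}YX^{1/2}=X^{1/2}\cdot Y\cdot X^{1/2}$ (now viewing $Y$ as the distinguished variable) we get $\mu\boxtimes\nu(\{0\})\geq\nu(\{0\})$. Combining the two bounds gives the equality $\mu\boxtimes\nu(\{0\})=\max(\mu(\{0\}),\nu(\{0\}))$.

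The step I expect to require the most care is not a computation but the point just flagged: for $P=zyz$ the map $\zeta\mapsto\zeta^2\rho$ is genuinely two-to-one on $\mathbb{R}$, so the injectivity hypothesis of Theorem~\ref{diagonalbound2} holds only on the support $[0,\infty)\times\mathbb{R}$ of $(X^{1/2},Y)$, and one must confirm that this restricted form is all that the proof of Theorem~\ref{diagonalbound2} uses. Apart from that, the proof is bookkeeping: rephrasing the atom-and-size conclusion of Theorem~\ref{diagonalbound2} in the measure language $\mu(\{\lambda\})+\nu(\{\rho\})>1$, and keeping track of the identifications of the distribution of $X+Y$ with $\mu\boxplus\nu$, of $X^{1/2}YX^{1/2}$ with $\mu\boxtimes\nu$ (for $\mu$ on $\mathbb{R}^+$), and of $P(X,Y)$ with $P^\square(\mu,\nu)$.
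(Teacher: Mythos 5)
Your proposal is correct and takes essentially the same route as the paper: item (1), item (2) for $a\neq 0$, and item (3) all drop out of Theorem~\ref{diagonalbound2} once one checks the injectivity condition for $x+y$, for $zyz$ (with $z\geq 0$), and for $x^2yx+xyx^2$, and the atom at $0$ in item (2) is handled separately by combining the lower bound from Corollary~\ref{unatoms4}(2) with an upper bound coming from a diagonal-matrix realization. You are slightly more explicit than the paper's one-line citation of Theorem~\ref{diagonalbound2} in flagging that for $P(z,y)=zyz$ the map $\zeta\mapsto\zeta^2\rho$ is only injective on $[0,\infty)$, but that this is enough because the proof of Theorem~\ref{diagonalbound2} evaluates $P$ only at atoms of the two operators --- this is a genuine (if minor) gap the paper elides by simply saying ``for $X>0$''. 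Your upper bound for $\mu\boxtimes\nu(\{0\})$ via Theorem~\ref{diagonalbound1} and Proposition~\ref{bound from matrices} is equivalent to the paper's explicit diagonal construction; both amount to matching the smaller zero-block inside the larger.
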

\begin{proof} 

Given Theorem \ref{diagonalbound2}, the only thing to analyze is the atom at $0$ in part (2).
Corollary \ref{unatoms4} already shows that $\mu\boxtimes\nu\{0\}\geq\max(\mu\{0\},\nu\{0\})$. To show that $\mu\boxtimes\nu\{0\}\leq\max(\mu\{0\},\nu\{0\})$, we may assume that $\mu\sim X_n^2$ and $\nu\sim Y_n$. 

Consider the specific realization of $X_n$ and $Y_n$,
\begin{equation*}
X_n=\left(
    \begin{matrix}
      \lambda_1&      &       &       &       &               \\
      &     \ddots  &     &       &       &               \\
       &      &  \lambda_m     &     &       &                \\
       &       & & 0 & &               \\
       &       &       &     &  \ddots &               \\
      &       &       &       &      &   0        \\
    \end{matrix}
    \right)
    \qquad
\text{and}
\qquad
Y_n=\left(
    \begin{matrix}
      \rho_1&      &       &       &       &               \\
      &     \ddots  &     &       &       &               \\
       &      &  \rho_l  &     &       &                \\
       &       & & 0& &               \\
       &       &       &     &  \ddots &               \\
      &       &       &       &      &   0      \\
    \end{matrix}
    \right)
  \end{equation*}
with $\lambda_i \neq 0$  for $0\leq i\leq m$ and $\rho_j\neq0$, for $0\leq j\leq l$.  Then, if $r=\min(l,m)$, we have
$$X_nY_nX_n=\left(
    \begin{matrix}
      \lambda_1\rho_1 \lambda_1&      &       &       &       &               \\
      &     \ddots  &     &       &       &               \\
       &      &  \lambda_{r}\rho_r \lambda_{r} &     &       &                \\
       &       & & 0& &               \\
       &       &       &     &  \ddots &               \\
      &       &       &       &      &   0      \\
    \end{matrix}
    \right).$$
The result follows since $\Null(X_nY_nX_n)=max\{\Null(X_n),\Null(X_n)\}$.
\end{proof}

Now we prove Theorem~\ref{diagonalbound2}, which is one of the main results of this section.

\begin{proof}[Proof of Theorem~\ref{diagonalbound2}]

The atom is at least $\max(t+s-1, 0)$ at $a$, by Proposition \ref{unatoms1}. We want to prove that the atom is at most $\max(t+s-1, 0)$ at $a$. 

\textbf{Case 1. $s+t-1>0$.} Consider the diagonal matrices, 
\begin{equation*}
X_n=\left(
    \begin{matrix}
      \lambda&      &       &       &       &               \\
      &     \ddots  &     &       &       &               \\
       &      &  \lambda     &     &       &                \\
       &       & & \lambda_{i+1} & &               \\
       &       &       &     &  \ddots &               \\
      &       &       &       &      &   \lambda_n        \\
    \end{matrix}
    \right)
\qquad\text{and}\qquad
Y_n=\left(
    \begin{matrix}
     \rho_1&      &       &       &       &               \\
      &     \ddots  &     &       &       &               \\
       &      &  \rho_j  &     &       &                \\
       &       & & \rho& &               \\
       &       &       &     &  \ddots &               \\
      &       &       &       &      &   \rho       \\
    \end{matrix}
    \right)
  \end{equation*}
where $i=sn\geq j=n-tn$.

Then $$P(X_n,Y_n)=\left(
    \begin{matrix}
      A_1&    0  &  0                 \\
     0 &    A_2&   0                \\
      0 &  0    &  A_3  
    \end{matrix}
    \right)$$
where
 \begin{equation*}
A_1=\left(
    \begin{matrix}
      P(\lambda,\rho_1)&      &                   \\
      &     \ddots  &                    \\
       &      &  P(\lambda ,\rho_i)  
    \end{matrix}
    \right),
    \qquad
A_2=\left(
    \begin{matrix}
     P(\lambda,\rho)&      &                    \\
      &     \ddots  &          \\
       &      &  P(\lambda,\rho)                    \end{matrix}
    \right),  \end{equation*}
and
\begin{equation*}A_3=\left(
    \begin{matrix}
       P(\lambda_{j+1},\rho)& &               \\
       &  \ddots &               \\
       &      &   P(\lambda_n,\rho)       
    \end{matrix}
    \right) . 
  \end{equation*}
By hypothesis, $P(\lambda,\rho_{l})\neq a$ for $l\leq i$ and  $P(\lambda_{l},\rho)\neq a$ for $l>j$. We see that the size of the eigenspace associated to $a$ is $\dim(A_2)=i-j=(s+t-1)n$.

\textbf{Case 2. $s+t-1\leq 0$.} We claim that we can reorder the eigenvalues in such a way that $P(\lambda_i,\rho_i)\neq a$. Then taking the matrices $X_n=\diag(\lambda_1,\dots,\lambda_n)$ and $Y_n=(\rho_1,\dots,\rho_n)$, we see that $P(X_n,Y_n)=\diag(P(\lambda_1,\rho_1),\dots,P(\lambda_n,\rho_n)$ which has no eigenvalue equal to $a$, as desired. 

\textbf{Claim.} There is an ordering of the eigenvalues of $X_n$ and $Y_n$ such that $P(\lambda_i,\rho_i)\neq a$ for all $i=1,\cdots,n$.

We prove this by induction on $n$. For $n=1$ and $n=2$, it is clear.
Consider a reordering of $\{\lambda_i,\rho_i\}$ such $P(\lambda_i,\rho_i)\neq a$ for $i\leq n-1$ which is possible  by induction. Now consider $\lambda_n,\rho_n$. If $P(\lambda_n,\rho_n)\neq a$ we are done. Otherwise, if $P(\lambda_n,\rho_n)=a$, then let 
$$S=\{j\in[n-1]\mid \text{such that $\lambda_j=\lambda_n$ or  $\rho_j=\rho_n$}\}.$$

If $|S|=n-1$, by choosing for each $j$, $\rho_j$ or $\lambda_j$,  together with $\lambda_n,\rho_n$ we have that $sn+tn\geq n+1$, which yields a contradiction. Finally, if $|S|\leq n-2$, there exists $j$ such that $\lambda_j\neq\lambda_n$ and $,\rho_j\neq\rho_n$. By the injectivity of $P$ for $a$,  $P(\lambda_j,\rho_n)\neq a$ and $P(\lambda_n,\rho_j)\neq a$, from where we get the desired reordering.
\end{proof}

\begin{remark}Note that the conditions in the previous theorem only depend on the evaluation of the polynomial in the atoms of the marginals and thus,  under the above hypothesis, two injective non-commutative polynomials in two variables with the same values when evaluated in the atomic parts will have the same atoms when evaluated in free random variables. For instance, $X+Y$ and $X+Y+i(XY-YX)$ have exactly the same atoms, for any $X$ and $Y$ which are free.
\end{remark}

One of the implications of Theorem \ref{diagonalbound2} is available for more variables, namely the analog of the bound from Proposition \ref{unatoms1} is also achieved.
For that purpose, first we generalize that condition on polynomials, as follows.
\begin{definition}
Let $P\in\Cx$ be given.
For $a\in\mathbb{R}$, if for all $\rho_1,\ldots, \rho_i,\ldots,\rho_d\in\mathbb{C}$, such that $P(\rho_1,\ldots, \rho_i,\ldots,\rho_d)=a$ we have that $$P(\rho_1,\ldots, \rho_i,\ldots,\rho_d)\neq P(\rho_1,\ldots,\tilde\rho_i, \ldots,\rho_d)$$
 whenever $\rho_i\neq\tilde\rho_i$ for any given scalar values $\rho_1,\dots,\rho_{i-1},\rho_{i+1},\dots,\rho_d$, then we say $P$ is \emph{injective} for $a$. If $P$ is injective for all $a\in\mathbb{R}$, then we simply say that $P$ is injective.
\end{definition}

\begin{eg}\label{examples}
For random variables supported on $\mathbb{R}^+$, a polynomial $P$ with positive coefficients such that any monomial has odd order in any variable is injective. Some examples in three variables are:
\begin{enumerate}
\item $X+Y+Z$;
\item $YXZ+ZXY$;
\item $X^3+XZX^4+X^4ZX+Y. $
\end{enumerate}
\end{eg}

\begin{theorem} \label{thm:diagonal} 
Let $P\in\Cx$ be injective for $a\in\R$ and let $a=P(\lambda_1,\ldots,\lambda_d)$.
Suppose that $X_1,\dots,X_d$ are free selfadjoint random variables and $X_i\sim\mu_i$  with $\mu_i(\{\lambda_i\})=t_i$.
If $t_1+\ldots+t_d\geq d-1$ then the distribution of $P(X_1,\dots,X_d)$ has an atom at $a$ of size \emph{exactly} $t_1+\ldots+t_d-(d-1)$.

\end{theorem}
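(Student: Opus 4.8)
The plan is to combine the lower bound that already exists for free variables with a matching upper bound obtained from a well-chosen diagonal matrix model, in the spirit of Theorem~\ref{diagonalbound2} but for $d$ variables. The lower bound is immediate: by Proposition~\ref{unatoms1}, applied to any realization of the $X_i$ with the prescribed atoms, $\mu_{P(X_1,\dots,X_d)}$ has an atom at $a=P(\lambda_1,\dots,\lambda_d)$ of size at least $t_1+\cdots+t_d-(d-1)$. So the entire content is to show that this number is also an \emph{upper} bound for the size of the atom in the free case. For this, I would first reduce to the case of rational weights $t_i$, using the approximation machinery of Remark~\ref{rem:approximation} together with Theorem~\ref{BVrational} (as is done repeatedly in this section): it suffices to produce, for each common denominator $n$ with $t_i = k_i/n$, a tuple $Y=(Y_1,\dots,Y_d)$ of $n\times n$ matrices with $\mu_{Y_i}=\mu_{X_i}$ for which $\mu_{P(Y)}(\{a\}) \le t_1+\cdots+t_d-(d-1)$; then \eqref{t} gives the bound for the free tuple.

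Next I would construct the model matrices explicitly. Since only the atom at $a$ matters and its value depends only on the atomic data, I may assume (enlarging $n$ if needed) that each $X_i$ is purely atomic with eigenvalues $\lambda^{(i)}_1,\dots,\lambda^{(i)}_n \in \R$, among which $\lambda_i$ appears exactly $k_i = t_i n$ times, and take $Y_i$ to be a diagonal matrix with these eigenvalues in some order. By Theorem~\ref{diagonalbound1}, the size of the atom of $P(Y)$ at $a$ is then bounded above by $k(a)/n$, where $k(a) = \min_{\sigma_1,\dots,\sigma_d\in S_n} |\{ j : P(\lambda^{(1)}_{\sigma_1(j)},\dots,\lambda^{(d)}_{\sigma_d(j)}) = a\}|$. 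Hence the theorem reduces to the purely combinatorial claim: if $P$ is injective for $a$ and $\sum_i t_i \ge d-1$, then the eigenvalues can be reordered so that $P(\lambda^{(1)}_{\sigma_1(j)},\dots,\lambda^{(d)}_{\sigma_d(j)}) = a$ holds for at most $\big(\sum_i t_i - (d-1)\big)n = \big(\sum_i k_i\big) - (d-1)n$ values of $j$.

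The combinatorial claim is where the real work lies, and it is the natural $d$-variable generalization of the Claim in the proof of Theorem~\ref{diagonalbound2}. I would fix, for each $i$, the $k_i$ columns whose $i$-th entry equals $\lambda_i$ (i.e. choose $\sigma_i$ so that $\lambda^{(i)}_{\sigma_i(j)} = \lambda_i$ precisely for $j$ in a fixed set $T_i$ of size $k_i$); a column $j$ can only contribute a value $a$ that persists if it lies in a position where a change of some coordinate does not move the value, and injectivity for $a$ forces that any column with $P(\cdots)=a$ must have \emph{all} coordinates ``pinned'' to the specific values $\lambda_1,\dots,\lambda_d$ for which $P(\lambda_1,\dots,\lambda_d)=a$ — indeed, if in column $j$ some coordinate $i$ takes a value $\rho_i\neq \lambda_i$, injectivity for $a$ lets us swap that entry with one from a column where the $i$-th coordinate is $\lambda_i$, strictly decreasing the number of columns evaluating to $a$ unless no such swap is available. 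Running this exchange argument (by induction on $n$, exactly as in Case~2 of Theorem~\ref{diagonalbound2}, now with $d$ coordinates) shows the minimum number of columns evaluating to $a$ is the number of columns forced to have all $d$ coordinates equal to $(\lambda_1,\dots,\lambda_d)$ simultaneously, which by inclusion–exclusion on the sets $T_i$ is at least $\sum_i k_i - (d-1)n$, and can be made equal to it when $\sum_i k_i \ge (d-1)n$. The main obstacle I anticipate is handling the case of several distinct tuples $(\lambda_1,\dots,\lambda_d)$ with $P$-value $a$ and the bookkeeping of the simultaneous-permutation exchange argument for $d>2$ coordinates; once that inductive exchange lemma is established, dividing by $n$ and passing to the limit via Theorem~\ref{BVrational} finishes the proof, and the matching lower bound from Proposition~\ref{unatoms1} yields the exact value $t_1+\cdots+t_d-(d-1)$.
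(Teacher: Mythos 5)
Your framing is right: the lower bound is Proposition~\ref{unatoms1}, and the upper bound reduces (via Remark~\ref{rem:approximation}, Theorem~\ref{BVrational}, and the diagonal model together with Theorem~\ref{diagonalbound1}) to a combinatorial statement about reordering the eigenvalues so that $P$ hits $a$ in at most $\big(\sum_i k_i\big)-(d-1)n$ columns. That is exactly the target the paper also aims at; the lower bound and the inclusion--exclusion count $\lvert\bigcap_i T_i\rvert \ge \sum_i k_i - (d-1)n$ are correct and match the paper.

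The gap is in how you propose to hit the matching upper bound. The ``exchange lemma'' you generalize from Case~2 of Theorem~\ref{diagonalbound2} does not close for $d\ge 3$. In the $d=2$ proof, after swapping a single coordinate between a column that evaluates to $a$ and one whose two coordinates both differ, \emph{both} resulting columns differ from the known $a$-tuple in exactly one coordinate, so injectivity for $a$ kills both. For $d\ge 3$, after swapping coordinate $i$ between column $n$ (which evaluates to $a$) and a generic column $j$, the modified column $j$ differs from column $n$'s original $a$-tuple in potentially $d-1\ge 2$ coordinates. Injectivity for $a$ only controls single-coordinate perturbations of an $a$-tuple, so you cannot conclude that the modified column $j$ does not evaluate to $a$; and since the original column $j$ did not evaluate to $a$, there is no $a$-tuple to anchor an injectivity argument for it either. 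So the proposed exchange can in principle create new columns evaluating to $a$, and the induction on $n$ does not go through.

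The paper sidesteps the exchange argument entirely. It constructs the diagonal matrices by a direct induction on $d$: because $\sum_i t_i \ge d-1$ is equivalent to $\sum_i (1-t_i)\le 1$, the ``bad'' index sets $T_i^c$ (where $Y_i$'s entry is not $\lambda_i$), each of size $n(1-t_i)$, can be taken \emph{pairwise disjoint} inside $\{1,\dots,n\}$. Then every index outside $\bigcap_i T_i$ has \emph{exactly one} coordinate off, and the single-coordinate case is precisely what injectivity for $a$ controls: $P(\lambda_1,\dots,\rho_i,\dots,\lambda_d)\neq a$ whenever $\rho_i\neq\lambda_i$. The induction step adds $Y_{d+1}$ by placing its $n(1-t_{d+1})$ bad entries in positions that were all-good for the first $d$ variables (which is possible since $n t_{d+1} > l_d$), so the ``exactly one bad coordinate'' property is preserved. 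That is the key structural observation missing from your sketch: the hypothesis $\sum_i t_i\ge d-1$ is exactly what permits this disjoint arrangement (the paper's Case~1 of Theorem~\ref{diagonalbound2}, not its Case~2), and once it is in place you never need to argue about columns with more than one off coordinate at all.
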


\begin{proof}
Again, the lower bound is obtained by Proposition \ref{unatoms1}.
We proceed by induction on the number of variables to get the upper bound.
We assume that we can make a construction as in the proof of Theorem \ref{diagonalbound2} for $d$ variables and show that we can make the same construction for $d+1$. Suppose that $X_1,\dots,X_{d+1}$ are free selfadjoint random variables, $X_i\sim\mu_i$  with $\mu_i(\{\lambda_i\})=t_i$ and  $t_1+\ldots+t_{d+1}\geq d$. First note that, since $t_i\leq1$, the condition $t_1+\ldots+t_{d+1}\geq d$ implies that $t_1+\ldots+t_{d}\geq(d-1)$.

Using the induction hypothesis, we assume that for the first $d$ variables we have diagonal matrices $Y_1,\dots,Y_d$ such that:
\begin{enumerate}
\item There are $n(d-(t_1+\ldots+t_d))=l_d$ indices $m$ such that the following holds: There exists $i\in \{1,\dots,d\}$ such that $(Y_{j})_{{mm}}=\lambda_j$ for $j\neq i$ and $(Y_{i})_{{mm}}\neq\lambda_i$. 
\item The remaining $n-l_d$ indices, $m$, satisfy $(Y_j)_{mm}=\lambda_j$ for all $j$.
\end{enumerate}
The base case $d=2$ is done in Theorem \ref{diagonalbound2}.  Without  loss  of  generality,  by  a  simple  reordering,  we  may  assume  that  case  (1)  is achieved in the indices $1,....,l_d$.

We claim that if (1) and (2) are satisfied then the eigenspace of $a$ has dimension 
$$ n-l_d=n-n(d-t_1+\ldots+t_{d})=n(t_1+\ldots+t_{d}-d-1)$$ as desired. Indeed, for $m<l_{d}$, since  $(Y_j)_{{mm}}=\lambda_j$ for $j\neq i$ and $Y_{i}\neq{\lambda_{d}}$, then $P((Y_1)_{mm},\ldots,(Y_{d+1})_{mm})\neq a$. Conversely, for $m\geq l_d$ we have $(Y_j)_{{mm}}=\lambda_j$ and thus  $P((Y_1)_{mm},\ldots,(Y_{d+1})_{mm})=a$ in this range.

To prove (1) and (2) for $d+1$, we consider the diagonal matrix $(Y_{d+1})_{mm}=\lambda_{d+1}$ for $m\leq n(t_{d+1})$ and $(Y_{d+1})_{mm}\neq\lambda_{d+1}$ for $m\leq n(t_d)$.

Firstly, since $t_1+\ldots+t_{d+1}>d$ then $n(t_{d+1})>n(d-t_1+\ldots +t_d)=l_d$. Thus, the indices $m\in\{1,...,l_d\}$ satisfy the property in (1) since by induction  $(Y_{j})_{mm}=\lambda_{j}$ for $j\neq i$ and $1\leq j \leq d$, $(Y_{i})_{mm}=\lambda_{j}$ and we have $(Y_{d+1})_{mm}=\lambda_{d+1}$.

Secondly, for $m>n(t_{d+1})$, the indices $m$, also satisfy property (2),  since $m>l_d$, by induction that  $(Y_j)_{{mm}}=\lambda_j$ for $j\leq d$ and $Y_{d+1}\neq{\lambda_{d+1}}$.

Finally, the indices in  $m\in\{l_{d},...,n(t_{d+1})\}$ satisfy the property in (2), since again by induction  $(Y_{j})_{mm}=\lambda_{j}$ for all $1\leq j \leq d$,  and in this interval we have $(Y_{d+1})_{mm}=\lambda_{d+1}$.  This proves  that the number of indices satisfying (2) is $n(t_{d+1})-l_d=n-l_{d+1}$, which finishes the induction.

\end{proof}

On the other hand, similar as for the commutator, for some polynomials we may exclude atoms outside of  $0$ (or any $\lambda\in\mathbb{R}$, by a simple shifting).  A selfadjoint polynomial $P=P(x,y)$ in the variables $x$ and $y$, is called \emph{mixed} if it consists only of monomials containing both $x$ \emph{and} $y$ (this is equivalent to $P(0,y)=0$ and $P(x,0)=0$). 

\begin{prop}\label{diagonalatoms1}
Let $X$ and $Y$ be free random variables and suppose that $X\sim\mu$  and $Y\sim\nu$ with $\mu=t\delta_0+(1-t)\tilde\mu$ and  $\nu=s\delta_0+(1-s)\tilde\nu$. If $t+s\geq1$ then for all mixed polynomials $P$,  the distribution of $P(X,Y)$ has no atoms outside of $0$.
\end{prop}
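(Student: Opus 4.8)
The plan is to produce one explicit realization of the two prescribed atomic distributions on which $P$ evaluates identically to $0$, and then to apply the comparison theorem. Put $a:=\mu(\{0\})\ge t$ and $b:=\nu(\{0\})\ge s$, so that $a+b\ge 1$. I would work inside the commutative tracial $W^*$-probability space $\bigl(L^\infty([0,1],dx),\int_0^1\cdot\,dx\bigr)$ and construct two commuting normal elements $Y_1,Y_2$ affiliated to it with $\mu^p_{Y_1}=\mu^p_{X}$ and $\mu^p_{Y_2}=\mu^p_{Y}$, arranged so that $\{Y_1=0\}=[0,a)$ and $\{Y_2=0\}=[1-b,1)$: the non-zero atoms of $\mu$ (those of $\tilde\mu$, if any) are placed on pairwise disjoint subintervals of $[a,1)$ and the non-atomic part of $\mu$ is carried by a Borel function on the remaining subset of $[a,1)$ whose pushforward has no atoms and which avoids the value $0$ and the atom locations; symmetrically for $Y_2$ over $[0,1-b)$.

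The key point is purely set-theoretic: since $a+b\ge 1$ we have $[a,1)\subseteq[1-b,1)$ and $[0,1-b)\subseteq[0,a)$, hence $\{Y_1\neq0\}\subseteq\{Y_2=0\}$ and $\{Y_2\neq0\}\subseteq\{Y_1=0\}$, so at almost every point of $[0,1)$ at least one of $Y_1,Y_2$ vanishes. Because $Y_1$ and $Y_2$ commute, $P(Y_1,Y_2)$ is the pointwise function $x\mapsto P(Y_1(x),Y_2(x))$, and this is $0$ almost everywhere since $P$ is mixed, i.e. $P(0,y)=P(x,0)=0$. Therefore $P(Y_1,Y_2)=0$ and $\mu^p_{P(Y_1,Y_2)}=\delta_0$.

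To conclude, I would invoke Theorem~\ref{thm:comparison-rational functions}(2) with the rational expression $R=P$: the tuple $(X,Y)$ consists of $*$-free normal variables with $\mu^p_X=\mu^p_{Y_1}$ and $\mu^p_Y=\mu^p_{Y_2}$, so $P(X,Y)$ is well defined and $\mu^p_{P(X,Y)}\le\mu^p_{P(Y_1,Y_2)}=\delta_0$, which says exactly that $\sigma_p(P(X,Y))\subseteq\{0\}$, i.e. the distribution of $P(X,Y)$ has no atoms outside $0$.

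I do not expect any genuine obstacle here: no approximation by rational weights is needed, since Theorem~\ref{thm:comparison-rational functions} applies verbatim to the arbitrary normal comparison tuple $(Y_1,Y_2)$; the only care required is bookkeeping, namely choosing the auxiliary non-atomic pieces of $Y_1,Y_2$ so that they create no spurious atoms, and including in $Y_1$ (resp. $Y_2$) any atoms of $\tilde\mu$ (resp. $\tilde\nu$) so that $\mu^p_{Y_1}=\mu^p_X$ (resp. $\mu^p_{Y_2}=\mu^p_Y$) holds — though mere domination would already suffice for the inequality. Alternatively, one can run the same argument with finite diagonal matrices in the rational-weight case (this amounts to $k(\lambda)=0$ for $\lambda\neq0$ in Theorem~\ref{diagonalbound1}, obtained by matching the zero eigenvalues of the two diagonals so as to cover all $n$ coordinates) and then pass to the general case via the rank approximation of Lemma~\ref{lemma:approximation} and Lemma~\ref{lem:comparison of Sylvester rank}, using $|\rank(\lambda-A)-\rank(\lambda-B)|\le\rank(A-B)$ to transport the vanishing of the atom to the limit.
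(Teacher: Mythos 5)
Your argument is correct, and the central mechanism matches the paper's: realize the prescribed atomic profiles by a commuting pair $(Y_1,Y_2)$ whose zero sets together cover everything (possible precisely because $t+s\ge 1$), observe that every mixed monomial then vanishes pointwise so $P(Y_1,Y_2)=0$, and feed this into the comparison result to dominate $\mu^p_{P(X,Y)}$ by $\delta_0$. The implementation differs slightly: the paper first reduces to purely atomic marginals with rational weights (so WLOG $t+s=1$), builds complementary diagonal matrices in $M_n(\C)$ with $X_nY_n=Y_nX_n=0$, and invokes Proposition~\ref{bound from matrices}; you instead exhibit the degenerate realization directly in $L^\infty([0,1],dx)$ and invoke Theorem~\ref{thm:comparison-rational functions}(2) verbatim, which obviates the rational-weight approximation entirely. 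This is a cleaner route to the same end, and you correctly note that only domination $\mu^p_X\le\mu^p_{Y_1}$, $\mu^p_Y\le\mu^p_{Y_2}$ is needed, so the auxiliary non-atomic pieces can be chosen arbitrarily (even constant) as long as they avoid $0$. The alternative you sketch at the end (diagonal matrices plus rank approximation via Lemmas~\ref{lemma:approximation} and~\ref{lem:comparison of Sylvester rank}) reproduces the paper's own argument.
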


\begin{proof}
After suitable approximation, we may assume $$\mu=\frac{1}{n}\sum^m_{i=1} \delta_{\lambda_i}+\frac{n-m}{n}\delta_0\qquad\text{and}\qquad
\nu=\frac{m}{n}\delta_0+\frac{1}{n}\sum^{n-m}_{i=1} \delta_{\mu_i}$$
for some $\lambda_i,\rho_j\in\mathbb{R}$ .

By Proposition \ref{bound from matrices} it is enough to find matrices $X_n,Y_n$ with distributions $\mu$ and $\nu$ such that $P(X_n,Y_n)=0$. Consider the diagonal matrices, 

\begin{equation*}
X_n=\left(
    \begin{matrix}
      \lambda_1&      &       &       &       &               \\
      &     \ddots  &     &       &       &               \\
       &      &  \lambda_m     &     &       &                \\
       &       & & 0 & &               \\
       &       &       &     &  \ddots &               \\
      &       &       &       &      &   0        \\
    \end{matrix}
    \right)
    \qquad
\text{and}
\qquad
Y_n=\left(
    \begin{matrix}
     0&      &       &       &       &               \\
      &     \ddots  &     &       &       &               \\
       &      &  0  &     &       &                \\
       &       & & \mu_1& &               \\
       &       &       &     &  \ddots &               \\
      &       &       &       &      &   \mu_{n-m}       \\
    \end{matrix}
    \right)
  \end{equation*}
Notice that $X_nY_n=X_nY_n=0$ and thus for any mixed monomial $m$ we have that  $m(X_n,Y_n)=0$. By linearity, we have for any mixed polynomial that $P(X_n,Y_n)=0$.
\end{proof}

\subsection{The algebra of two non-commuting projections}

The idea of this section is to improve the bounds from the previous one by considering a more general situation where we allow blocks of size $2\times2$ on the diagonal.

So let  $X_n,Y_n\in M_{2n}(\C)$ be matrices consisting of diagonal blocks of size $2\times2$, $A_1,\dots,A_n$ and $B_1,\dots,B_n$ on their diagonals, respectively. Similar as before
\[
p(X_n,Y_n)=\begin{pmatrix}p(A_1,B_1) & 0 & \cdots & 0\\0 & p(A_2,B_2) & \cdots & 0\\\vdots & \vdots & & \vdots\\0 & 0  &\cdots & p(A_n,B_n)\end{pmatrix}.
\]

We are interested in the eigenvalues of $p(A_i,B_i)$ with $A_i,B_i\in M_{2}(\C)$. We may now write $A_i$ and $B_i$ in terms of the one rank one projections $$R=\left(
    \begin{matrix}
    1& 0   \\
      0&   0  
    \end{matrix}    \right)
  \qquad\text{and}\qquad
  T= \left(
       \begin{matrix}
    t& \sqrt{(1-t)t}   \\
      \sqrt{(1-t)t}&    1-t 
    \end{matrix}    \right).$$
That is, we may assume that
 $$A_i=\begin{pmatrix}\lambda_1 & 0 \\
0&\lambda_2\end{pmatrix}=(\lambda_1-\lambda_2)\begin{pmatrix}1 & 0 \\
0&0\end{pmatrix}+\lambda_2\begin{pmatrix}1 & 0 \\
0&1\end{pmatrix}$$
and  
\begin{align*}
B_i&=
       \begin{pmatrix}
    t\mu_1+\mu_2(1-t)& (\mu_1-\mu_2) \sqrt{(1-t)t}  \\
   (\mu_1-\mu_2)   \sqrt{(1-t)t}&    \mu_1(1-t)+t\mu_2
    \end{pmatrix}\\[1em]
    &=
(\mu_1-\mu_2)\left(
       \begin{matrix}
    t& \sqrt{(1-t)t}   \\
      \sqrt{(1-t)t}&    1-t 
    \end{matrix}    \right) 
+\mu_2\begin{pmatrix}1 & 0 \\
0&1\end{pmatrix}
\end{align*}

In this way, $A=(\lambda_1-\lambda_2)R+\lambda_2I$ and  $B=(\mu_1-\mu_2)T+\mu_2I$. So $P(A_i,B_i)=Q_i(R,T)$ for some polynomial $Q_i$.

In this way the invertibility of $P(A_i,B_i)$ is translated into the invertibility of $Q_i(R,T)$. The set of polynomials in $R$ and $T$ is well understood and rather easy  to handle.  Moreover, when considering $T$ as a function of $t$, for all $t\in(0,1)$, the algebra generated by $R$ and $T$ is known to be isomorphic to the $C^*$-algebra generated by two non-commuting projections, see \cite{RS89}.

To understand any polynomial in $R,T$ it is enough to consider certain quadratic polynomials. Indeed, since we have the relations $RTR=tR$, $TRT=tT$, $R^2=R$ and $T^2=T$, any polynomial in $R$ and $T$ may be written as a linear combination 
$$aR+bT+cRT+dTR+eI,$$
which actually has the matrix form 
$$\begin{pmatrix} a + e + (b + c  + d )t & (b+c)\sqrt{(1 - t) t} + c \\ (b+d)\sqrt{(1 - t) t} & e + b (1 - t)\end{pmatrix}.$$
The determinant of this matrix is given by $ a (b (-t) + b + e) + b e + c d t^2 - c d t + c e t + d e t + e^2$ which for $e=0$ (that we may always assume), reduces to $(t - 1) (c d t - a b)$.

We aim to turn this calculation into a criterion to avoid situations as in Corollary  \ref{unatoms3}, or even to characterize all the sizes of the atoms of some polynomials. To do this we introduce some notation.

For $\lambda,\rho\in\mathbb{C}$, let us define define $\Subs_{\lambda,\rho}:{C}\left<x,y\right>\to\mathbb{C}[t]$ which sends $P\mapsto \Subs_{\lambda,\rho}[P]$ as follows: For a monomial $m=m(x,y)$, $\Subs_{\lambda,\rho}
[m]\in\mathbb{C}[t]$, is given by the formula
$$\Subs_{\lambda,\rho} [m] (t)=m(\lambda,\rho)t^{k-1},$$ whenever $m(x,y)$ is of any of the following forms: $m(x,y)=x^{i_1}y^{j_1}\cdots x^{i_{k}}, m(x,y)=y^{i_1}x^{j_1}\cdots y^{i_{k}}, m(x,y)=x^{i_1}y^{j_1}\cdots y^{i_k}, m(x,y)=y^{i_1}x^{j_1}\cdots x^{i_{k}}.$

We extend $Subs_{\lambda,\rho}$ by linearity to any polynomial $P\in\mathbb{C}\left<x,y\right>$ with no linear term.

\begin{definition}Let $P\in\mathbb{C}\left<x,y\right>$, be a polynomial with no constant term and write $P=P_1+P_2+P_3+P_4$.
where
$$P_1=xQ_1y, \quad P_2=yQ_2x, \quad P_3=xQ_3x+ax\text{ and}\quad P_4=yQ_4y+by$$ for some
polynomials $Q_i\in\mathbb{C}\left<x,y\right>$ and $a,b\in\mathbb{C}$.
\begin{enumerate} 

\item  We say that $P$ satisfies the determinant condition on $(\lambda,\mu)$ if  \emph{for some} $ t\in[0,1],$
\begin{equation}\label{eq:det-condition}
t{\Subs}_{\lambda,\rho}[P_1](t) \Subs_{\lambda,\rho} [P_2](t) \neq  \Subs_{\lambda,\rho} [P_3](t) \Subs_{\lambda,\rho} [P_4](t).
\end{equation}

\item We say that $P$ satisfies the \emph{strong determinant condition} if it satisfies the determinant condition for all pairs $(\lambda,\rho)$ with  $\lambda,\rho\neq0$.
\end{enumerate} 

\end{definition}

\begin{remark}
While condition \eqref{eq:det-condition} seems to be hard to check for specific $\lambda$ and $\mu$, there are actually many polynomials such that the strong determinant condition is satisfied and easy to check. Here we list some useful conditions which imply the strong determinant condition.

1) $P_1=0$ or $P_2=0$ and $P_3,P_4\neq0$. 

2) $P_3=0$ or $P_4=0$ and $P_1,P_2\neq0$.

3) $deg(P_1P_2)\neq deg(P_3P_4)+1.$

4) $P_3$ and $P_4$ have constant term.

\end{remark}

\begin{prop}\label{criteria 1}
Let $X$ and $Y$ be free random variables and let $t\geq s$ be such that $t+s\geq1$. Suppose that $X\sim\mu$  and $Y\sim\nu$ with $\mu=t\delta_0+(1-t)\tilde\mu$ and  $\nu=s\delta_0+(1-s)\tilde\nu$, with $\tilde\mu=\sum_{i=1}^{l} \frac{1}{n}\delta_{\lambda_i}$ and $\tilde\nu=\frac{1}{n}\sum_{i=1}^{m} \delta_{\rho_i}$ with $\lambda_i\neq0$ and $\rho_i\neq0$. If there exist an injective function $\sigma:[l]\to [m]$ such that  $\lambda_i,\rho_{\sigma(i)}$ satisfies the determinant condition \eqref{eq:det-condition} for $P(X,Y)$ then we have the following.

\begin{enumerate}
\item The atom of $P(X,Y)$ at $0$ has size at most $2t-1$.
\item If for all $\rho$, $P(0,\rho)\neq0$  then the atom at $0$ has exactly size $s+t-1$. 
\item If P(x,y) is \text{mixed}, then $P(X,Y)$ has no more atoms.
\end{enumerate}
\end{prop}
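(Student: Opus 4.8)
The plan is to prove the two upper bounds in (1)--(2) by exhibiting one explicit block-diagonal matrix realization of $(X,Y)$ and computing the kernel of $P$ on it, feeding the result into Theorem~\ref{thm:comparison-rational functions}; the lower bound in (2) is Proposition~\ref{unatoms1}, and (3) is Proposition~\ref{diagonalatoms1} applied verbatim.

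Since $\tilde\mu,\tilde\nu$ consist of atoms of mass $1/n$, the hypotheses read $t=1-l/n$ and $s=1-m/n$, and $t\geq s$, $t+s\geq1$ translate into $l\leq m$ and $l+m\leq n$; in particular $2l\leq n$. I would build selfadjoint matrices $X_N,Y_N\in M_n(\C)$ with $\mu_{X_N}=\mu$ and $\mu_{Y_N}=\nu$, block-diagonal as follows. For each $i\in\{1,\dots,l\}$, place a $2\times2$ block $\lambda_iR$ on the diagonal of $X_N$ and $\rho_{\sigma(i)}T_i$ on that of $Y_N$, where $R=\diag(1,0)$ and $T_i$ is the rank-one projection with $RT_iR=t_iR$ for a parameter $t_i\in(0,1)$ still to be chosen; the spectra of these blocks are $\{0,\lambda_i\}$ and $\{0,\rho_{\sigma(i)}\}$. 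This uses all nonzero eigenvalues of $X$, the $\rho_j$ with $j\in\sigma([l])$, and $l$ zeros from each variable. In the remaining $n-2l$ diagonal slots I take $1\times1$ blocks, pairing $n-2l$ zero eigenvalues of $X$ with the $m-l$ values $\rho_j$, $j\notin\sigma([l])$, and the $n-m-l$ remaining zeros of $Y$ (the counts match by the above inequalities). Counting multiplicities shows $X_N$ has $0$ with multiplicity $n-l$ and $Y_N$ has $0$ with multiplicity $n-m$, so indeed $\mu_{X_N}=\mu$, $\mu_{Y_N}=\nu$.

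Next I would compute $P(X_N,Y_N)$ block by block. As recalled in the discussion preceding the proposition, evaluating a polynomial without constant term at a pair $(\lambda R,\rho T)$ and using $R^2=R$, $T^2=T$, $RTR=tR$, $TRT=tT$ collapses the corresponding $2\times2$ block to
\[
\Subs_{\lambda,\rho}[P_1](t)\,RT+\Subs_{\lambda,\rho}[P_2](t)\,TR+\Subs_{\lambda,\rho}[P_3](t)\,R+\Subs_{\lambda,\rho}[P_4](t)\,T,
\]
whose determinant equals $(t-1)\bigl(t\,\Subs_{\lambda,\rho}[P_1](t)\Subs_{\lambda,\rho}[P_2](t)-\Subs_{\lambda,\rho}[P_3](t)\Subs_{\lambda,\rho}[P_4](t)\bigr)$. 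For the $i$-th block the determinant condition at $(\lambda_i,\rho_{\sigma(i)})$ says precisely that the polynomial $g_i$ obtained from the determinant by dropping the factor $(t-1)$ is not the zero polynomial, hence has finitely many zeros in $[0,1]$; so I can choose $t_i\in(0,1)$ with $g_i(t_i)\neq0$, and then that $2\times2$ block is invertible and contributes nothing to $\ker P(X_N,Y_N)$. A $1\times1$ block contributes $1$ to the kernel exactly when its value vanishes: blocks with $Y$-entry $0$ always do (there are $n-m-l$ of them, as $P(0,0)=0$), while a block with $Y$-entry $\rho_j\neq0$ does iff $P(0,\rho_j)=0$. Hence $\Null(P(X_N,Y_N))\leq(n-m-l)+(m-l)=n-2l$, and Theorem~\ref{thm:comparison-rational functions} gives $\mu_{P(X,Y)}(\{0\})\leq(n-2l)/n=2t-1$, which is (1). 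Under the hypothesis of (2) the blocks with nonzero $Y$-entry contribute nothing, so $\Null(P(X_N,Y_N))=n-m-l$ and $\mu_{P(X,Y)}(\{0\})\leq(n-m-l)/n=t+s-1$; combined with the reverse bound $\mu_{P(X,Y)}(\{0\})\geq t+s-1$ from Proposition~\ref{unatoms1} (valid when $t+s>1$; when $t+s=1$ both sides are $0$) this gives the exact value $s+t-1$. Finally, the hypotheses of (3) --- $X,Y$ free with $\mu=t\delta_0+(1-t)\tilde\mu$, $\nu=s\delta_0+(1-s)\tilde\nu$, $t+s\geq1$, and $P$ mixed --- are exactly those of Proposition~\ref{diagonalatoms1}, so $P(X,Y)$ has no atom away from $0$.

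The step requiring the most care is the $2\times2$ block reduction and the identification of its determinant with the quantity appearing in the determinant condition \eqref{eq:det-condition} --- in particular, checking that $g_i$ is genuinely nonzero so that an admissible angle $t_i\in(0,1)$ exists; much of this algebra is already done in the paragraphs preceding the proposition. Everything else --- the eigenvalue bookkeeping that makes $X_N,Y_N$ realize $\mu,\nu$ on the nose (so that no approximation is needed in the stated rational-weight generality), and the appeals to Theorem~\ref{thm:comparison-rational functions} and Propositions~\ref{unatoms1} and~\ref{diagonalatoms1} --- is routine.
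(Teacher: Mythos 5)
Your proof is correct and takes essentially the same approach as the paper: a block-diagonal matrix realization with $2\times 2$ blocks $(\lambda R,\rho T)$ made invertible via the determinant condition, fed into Theorem~\ref{thm:comparison-rational functions}, with the lower bound from Proposition~\ref{unatoms1} and part~(3) from Proposition~\ref{diagonalatoms1}. The minor cosmetic differences (working in $M_n$ with mixed $2\times2$/$1\times1$ blocks rather than $M_{2n}$ with zero padding, and choosing a per-block parameter $t_i$ rather than a common $t$ for all pairs $(\lambda_i,\rho_{\sigma(i)})$) do not affect the argument, since in both cases the determinant condition guarantees each $g_i$ is not identically zero.
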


\begin{eg}\begin{enumerate}
 
 \item Let $X_1,X_2$ have distributions of the form $1/2\delta_0+1/2 \mu_1$ and $1/2\delta_0+ 1/2\mu_2$  with $\mu_i$ having no atoms at $0$. Then any polynomial of the form $P(X,Y)=XQ_1Y+YQ_2X$ has no atoms. In particular, this is the case for $XY+XY$ and $i(XY-YX)$.  
\item Consider the polynomial $P(x,y)=yxyxy+xyx+xyxy+xyxy$, which is mixed and does not satisfy the determinant condition. One sees that $$P(x,y)=(y+1)(xyx)(y+1)$$ and thus $P(X,Y)$ has an atom at 0 if $Y$ has an atom at 0 too (this is just free multiplicative convolution). Thus the determinant condition is not superfluous. Simpler examples like $Q(x,y)=xyx$, $R(x,y)=xyx+xyxyx$ also work.  \end{enumerate}
\end{eg}

\begin{proof}[Proof of Proposition~\ref{criteria 1}]

 Without loss of generality we may assume that $\sigma$ in the hypothesis is the inclusion, i.e. $\sigma(i)=i$.

Now, consider the block-diagonal matrices in $M_{2n}$ given by 
\begin{equation*}
X_n=\left(
    \begin{matrix}
     A_1&      &     &&  &           \\
      &     \ddots  & && &              \\
       &      &   A_{2l}   &&&\\
       &&&0&&\\
     &&&&\ddots&\\ 
     &&&&&0\\     
    \end{matrix}    \right)
  \qquad\text{and}\qquad  Y_n=\left(
   \begin{matrix}
     B_1&      &     &&  &              \\
      &     \ddots   & && &                \\
       &      &   B_{2m}        &&&\\         &&&0&&\\
     &&&&\ddots&\\ 
     &&&&&0\\     
    \end{matrix}    \right)
\end{equation*}
where $A_i=\lambda_iR$ for $i=1,\dots,2l$, $B_i=\rho_i T$ for $i=1,\dots,2m$, where
$$R=\left(
    \begin{matrix}
    1& 0   \\
      0&   0  
    \end{matrix}    \right)
  \qquad\text{and}\qquad T= \left(
       \begin{matrix}
    t& \sqrt{(1-t)t}   \\
      \sqrt{(1-t)t}&    1-t 
    \end{matrix}    \right) $$

We see that 
\begin{equation*}
P(X_n,Y_n)=\left(
    \begin{matrix}
     P(A_1,B_1)&      &       &&&  &&& \\
      &     \ddots  &   &&&         &&&    \\
       &      &   P(A_{2l},B_{2l})    &&& &&&\\
        &&&  P(0,B_{2l+1})&      &     &&& \\
       &&&   &     \ddots  &           &&& \\
         &&&  &      &   P(0,B_{2m})   &&& \\
                &&& &&&  0      &      \\
    &&&   &&&   &     \ddots  &            \\
  &&&       &&&  &      & 0  
    \end{matrix}  
    \right)
\end{equation*}
which means that if $P(A_i,B_i)$ is invertible, for all $1\leq i\leq 2l$, then  $P(X_n,Y_n)$ has nullity at most $2n-4l=2n(1-2(1-t))=2n(2t-1)$. 

Let us fix $1\leq i\leq 2l$ and notice that $A_i$ and $B_i$ satisfy the relations 
$$A_iB_iA_i=\lambda^2_i \rho_i tA_i~~, ~~B_iA_iB_i=\lambda_i \rho_i^2 tB_i~~,~~A_i^k=\lambda_i^kA_i~~ and ~~B_i^k=\rho_i^kB_i.$$ 
Using these relations iteratively we arrive at the following:
\begin{eqnarray*}P(A_i,B_i)&=&\Subs[P_1(t,\lambda_i,\rho_i)] RT+ \Subs[P_2(t,\lambda_i,\rho_i)] TR\\ 
&+& \Subs[P_3(t,\lambda_i,\rho_i)]R +\Subs[P_4(t,\lambda_i,\rho_i)T.
\end{eqnarray*}
Finally, for any $a_1,a_2,a_3,a_4\in\mathbb{C}$, the determinant of  $a_1RT+ a_2 TR + a_3R +a_4T$ is given by $(t-1)(ta_1a_2-a_3a_4)$. Thus $\det [P(A_i,B_i)]\neq0 $ whenever the determinant condition holds for $\lambda_i$ and $\rho_i$.

For part (2), if $P(0,\rho)\neq0$ then $P(0,B_i)$ for $2l<i\leq 2m$ has at least one non-zero eigenvalue and then the nullity decreases in at least $2(m-l)$. Thus the nullity is at most 
$$2n-4l-2(m-l)=2n-2m+2l=2n(1-(1-s)-(1-t))=2n(s+t-1).$$  
However, by Proposition \ref{unatoms1} the size of the nullity is at least $2n(s+t-1)$.

Finally, part (3) follows by Proposition \ref{diagonalatoms1}.
\end{proof}

\subsection{A rational function}

To end this section let us consider two different rational expressions in $x$ and $y$,
$R_1(x,y)=xy^{-2}x+yx^{-2}y$ and $R_2(x,y)=xy^{-2}x+x^{-1}y^2x^{-1}$.

We consider $X$ and $Y$  free random variables which are strictly positive. i.e. with support on $(0,\infty)$. We are interested in the atoms of  $R_1(X,Y)$ and $R_2(X,Y)$.
In such a case, $R_1(X,Y)$ and $R_2(X,Y)$ are well-defined due to Theorem \ref{thm:comparison-rational functions} (for example, we can compare $X,Y$ to their classically independent counterparts, which are clearly well-defined).

By considering the case where $X$ and $Y$ are commuting we see that similar as for polynomials, the atomic parts of $R_1(X,Y)$ and $R_2(X,Y)$ are both contained in the set \begin{equation} \label{spectra example rational}
\{\rho^2/\lambda^2+\lambda^2/\rho^2\mid \rho \text{ is an atom of }X,~ \lambda\text{ is an atom of } Y\}.\end{equation}

Moreover, for given $\rho$ and $\lambda$ the functions $f,g:\mathbb{R}^+\to\mathbb{R}$, defined by $f(x)=\rho^2/x^2+x^2/\rho^2$, and $g(x)=x^2/\lambda^2+\lambda^2/x^2$, are both injective, Thus, arguing in the same way as in the proof of Proposition  \ref{diagonalbound2} one can prove that  $R_i(X,Y)$ has an atom at $a$ if and only if there exist $\lambda$ and $\rho$ such that $\mu_X(\{\lambda\})+\mu_Y(\{\rho\})>1$ and $a=\rho^2/\lambda^2+\lambda^2/\rho^2$ and the mass at $a$ is given by $\mu_X(\{\lambda\})+\mu_X(\{\rho\})-1$. In particular, their atoms are inside $[2,\infty]$.

We see then that $R_1(X,Y)$ and $R_2(X,Y)$ have exactly the same atoms with the same weights.  However, the full distributions $\mu_{R_1(X,Y)}$ and $\mu_{R_2(X,Y)}$ may differ, as one can see by calculating their second moments.

\section{Examples}
\label{sect:commutator}
In this final section we show how to use the above results in two specific but important examples, the selfajoint polynomials $P_1(x,y)=i[x,y]=i(xy-yx)$ and $P_2(x,y)=\{x,y\}=xy+yx$. For $X$ and $Y$ free selfadjoint variables  we completely characterize the atoms of $P_1(X,Y)$ and $P_2(X,Y)$ in terms of the atoms of $X$ and $Y$.

\subsection{Commutator}

The following theorem describes exactly the size of the atoms of the commutator, $P_1(x,y)=i[x,y]=i(xy-yx)$ of free random variables. 

\begin{theorem}Let $X$ and $Y$ be free random variables. Let $t$ and s be the size of the largest atom of $X$ and $Y$, respectively, i.e., $$t=\max\{\mu_X(\{a\})\mid a\in\mathbb{R}\}\qquad\text{and\qquad}s=\max\{\mu_Y(\{b\})\mid b\in\mathbb{R}\}.$$
Then 
\begin{enumerate}\item  $i(XY-YX)$ has no atoms outside of $0$.
\item $i(XY-YX)$ has an atom at $0$ of size given by $max(2t-1,2s-1,0)$.
\end{enumerate}
\end{theorem}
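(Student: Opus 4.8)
The plan is to handle the two parts separately, using the diagonal/block-diagonal matrix comparison machinery of Section~\ref{sect:application}. For part (1), I would apply Proposition~\ref{diagonalatoms1}: the commutator $P_1(x,y)=i(xy-yx)$ is a mixed polynomial (every monomial contains both $x$ and $y$), so if $X$ has an atom at some $\lambda$ of weight $t$ and $Y$ an atom at some $\rho$ of weight $s$ with $t+s\geq 1$, then $P_1(X,Y)$ has no atoms outside $0$. The subtlety is that we want this conclusion unconditionally, not just when $X$ and $Y$ happen to have atoms summing to more than $1$. But if $X$ has no atom of weight $\geq 1/2$ and similarly for $Y$, then by Theorem~\ref{diagonalbound1} the only possible atoms of $P_1(X,Y)$ lie in $\{P_1(\lambda,\rho): \lambda \text{ atom of }X,\ \rho\text{ atom of }Y\}$, and for each such pair we can use a diagonal-matrix realization in which $P_1$ evaluates to $0$ on that block — more precisely, since $P_1(\lambda,\rho)=0$ for commuting scalars, every point $P_1(\lambda,\rho)$ equals $0$, so there are no candidate atoms away from $0$ to begin with. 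Thus part (1) is essentially immediate from $P_1(\lambda,\rho)=0$ for all scalars $\lambda,\rho$ together with Theorem~\ref{diagonalbound1}.

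For part (2), the lower bound $\mu_{P_1(X,Y)}(\{0\})\geq \max(2t-1,2s-1,0)$ comes from Corollary~\ref{unatoms4}(1): writing $i(xy-yx) = x\cdot(iy) + (iy)^*\cdot x$ — wait, more carefully, $i(xy-yx) = (ix)y - y(ix)$, hmm; the cleanest route is to observe $i(xy-yx)=x(iy)-(iy)x = x\cdot Q + Q^*\cdot x$ is not quite of the required shape. Instead I would use that $i(xy-yx)$ can be written with $x$ factored out on one side in each term: $i(xy-yx) = (ix)y + y(-ix) = x\cdot(iy) + (iy)\cdot(-x)$; the robust statement is that $\rank(i(XY-YX))\leq 2\rank(X)$ and $\leq 2\rank(Y)$ directly, since $i(XY-YX)=iXY-iYX$ is a sum of two terms each of rank at most $\rank(X)$ (respectively $\rank(Y)$). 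Hence $\mu_{P_1(X,Y)}(\{0\}) = 1-\rank(P_1(X,Y)) \geq 1 - 2(1-t) = 2t-1$ and symmetrically $\geq 2s-1$, and of course $\geq 0$; this uses no freeness. The upper bound is the harder half.

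For the upper bound, I would use Proposition~\ref{bound from matrices}: it suffices to exhibit, after approximating $\mu_X,\mu_Y$ by atomic measures with rational weights, explicit matrices $X_n,Y_n\in M_N(\mathbb{C})$ with the prescribed distributions for which $\Null(P_1(X_n,Y_n))\leq \max(2t-1,2s-1,0)\cdot N$. The natural construction is block-diagonal with $2\times 2$ blocks built from the rank-one projections $R$ and $T$ of the "algebra of two non-commuting projections" subsection: on a block we put $A=\lambda R + \lambda' I$ and $B = \rho T + \rho' I$, and compute $i(AB-BA)$. Since $[I,\cdot]=0$, the commutator only sees $i\lambda\rho\,(RT-TR)$, and one checks $RT-TR$ is invertible on the $2\times 2$ block whenever $t\in(0,1)$ (its determinant is a nonzero multiple of $t(1-t)$, or one computes directly that $RT-TR = \sqrt{t(1-t)}\begin{pmatrix} \ast & \ast \\ \ast & \ast\end{pmatrix}$ with nonzero determinant). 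So as long as $\lambda\neq 0$ and $\rho\neq 0$, the block contributes nothing to the kernel. The bookkeeping then mirrors the proof of Proposition~\ref{criteria 1}: with WLOG $0$ being the largest atom of both $X$ and $Y$ of weights $t$ and $s$, pair up the non-zero eigenvalues of $X$ with non-zero eigenvalues of $Y$ inside $2\times 2$ blocks (using $R,T$), which kills a $2\times$(number of such pairs) chunk of potential kernel; the leftover rows, where either $X_n$ or $Y_n$ is zero, force $P_1$ to vanish there. Counting: if $t\geq s$, then $X$ has $N(1-t)$ nonzero eigenvalues and $Y$ has $N(1-s)\geq N(1-t)$ of them, so we can form $N(1-t)$ invertible blocks, leaving a kernel of dimension at most $N - 2N(1-t) = N(2t-1)$ when $2t-1\geq 0$, and dimension $0$ otherwise (after further reordering as in the claim inside the proof of Theorem~\ref{diagonalbound2}). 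Combining with the lower bound gives equality.

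\textbf{Main obstacle.} The genuinely delicate point is the combinatorial reordering argument showing that when $2t-1<0$ and $2s-1<0$ one can arrange the $2\times 2$ blocks so that \emph{every} block is invertible (so the commutator has no kernel at all), analogous to the induction/claim embedded in the proof of Theorem~\ref{diagonalbound2}; one must be careful that pairing nonzero eigenvalues of $X$ against nonzero eigenvalues of $Y$ is possible without any eigenvalue being forced to sit in a block with a zero, which is exactly where the hypothesis $t,s<1/2$ (equivalently $(1-t)+(1-s)>1$) enters. A secondary technical point is justifying the passage from rational-weight atomic approximants back to arbitrary $X,Y$: this is handled by Theorem~\ref{BVrational} (continuity of atoms in Kolmogorov distance for the selfadjoint polynomial $P_1$) exactly as in the remark following Proposition~\ref{bound from matrices}, but it should be spelled out that $2t-1$ and $2s-1$ vary continuously with the approximation so no atom is lost or gained in the limit.
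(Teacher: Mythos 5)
Your proposal follows essentially the same path as the paper's proof: the comparison principle from Proposition \ref{bound from matrices} combined with an explicit $2\times 2$ block-diagonal matrix model for the upper bound, together with a direct rank bound for the lower bound. Two points deserve to be made more carefully.

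First, the translation invariance $i[X,Y]=i[X+\lambda,Y+\rho]$ is not a cosmetic remark; it is what makes the identification $\operatorname{rank}(X)=1-t$ legitimate. In the theorem's statement $t$ is the weight of the \emph{largest} atom of $X$, not of the atom at $0$, so your estimate $\operatorname{rank}(i[X,Y])\leq 2\operatorname{rank}(X)$ only yields $\mu^p_{i[X,Y]}(\{0\})\geq 2t-1$ after first replacing $X$ by $X-\lambda$ and $Y$ by $Y-\rho$ with $\lambda,\rho$ chosen at the largest atoms. You invoke the WLOG in the upper bound; it must also be invoked in the lower bound, where otherwise the inequality can be vacuous. (Once stated, your direct argument is fine and is in fact the content of the paper's Corollary \ref{unatoms4}, via Proposition \ref{unatoms3}.)

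Second, the combinatorial step you flag as the "main obstacle" is not really the claim inside the proof of Theorem \ref{diagonalbound2}; what is needed here is the simpler pigeonhole statement that if no eigenvalue of $X_n$ (resp. $Y_n$) has multiplicity exceeding $N/2$ then the eigenvalues can be ordered so that every adjacent pair is distinct, which is exactly what makes each $2\times 2$ block of the form $\lambda_i R+\lambda_i' I$ (resp. $\rho_i T+\rho_i'I$) have $\lambda_i\neq 0$ (resp. $\rho_i\neq 0$). The paper proves the theorem by an explicit three-way case split ($t+s\geq 1$ handled via the determinant-condition Proposition \ref{criteria 1}; $t\leq 1/2$; and $t>1/2$ with $t+s<1$), whereas you fold everything into one counting argument. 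Your counting does work in all three regimes, because in the excess rows where $X_n=0$ the commutator vanishes regardless of $Y_n$, but as written the unified statement "pair up the non-zero eigenvalues of $X$ with non-zero eigenvalues of $Y$" is ambiguous: a block must contain two eigenvalues of each of $X_n$ and $Y_n$, and the actual requirement is only that the pair be distinct for each of $X_n$ and $Y_n$ independently. Making the case split explicit, as the paper does, is the cleanest way to see that the construction is always feasible. With these two clarifications the proposal is a correct proof.
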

\begin{proof}

Part 1) is easy. By taking $X_n$ and $Y_n$ diagonal matrices, we see that, since $X_nY_n-Y_nX_n=0$, the only possible atom of $i[X,Y]$ is at 0. 

We will show that the size of this atom is given by $r=\max(2t-1,2s-1,0)$, where $t$ and $s$ are the sizes of the largest atoms of $X$ and $Y$, respectively.

Now, already Corollary \ref{unatoms4} tells us that  $\mu(\{0\})\geq r$.  Hence we only need to show that $\mu(\{0\})\leq \max(2t-1,2s-1,0).$ For this we will use Proposition \ref{bound from matrices} on block diagonal matrices.

Notice that, for any $\lambda,\rho\in\mathbb{R}$, we have that $i[X,Y]=i[X+\lambda,Y+\rho]$, so we may assume without loss of generality that the largest of the atoms of $X$ and $Y$ are at $0$. Also, since $X$ and $Y$ play the same role we may assume that $s\leq t$. Thus we want to show that the atom at zero is smaller than $2t-1$ if $t>1/2$ and $0$ otherwise.

We divide the proof in three cases according to the possible values of $t$ and $s$.

\textbf{Case 1. $t+s\geq 1$.}  This follows from Theorem \ref{criteria 1} since $P_1$ satisfies the strong determinant condition. 

\textbf{Case 2. $t\leq1/2$.} Our aim is to show that $i(XY-YX)$ has no atom at 0.
We assume that $X_n,Y_n\in M_{2n}$. Let $\{\lambda_i\}^{2n}_{i=1}$ be the multiset of eigenvalues of $X$ (counted with multiplicity). Since $s<1/2$, there are no eigenvalues with multiplicity larger than $n$, and thus we may reorder the eigenvalues so that $\lambda_{2i-1}\neq\lambda_{2i}$. Similarly, if $\{\rho_i\}^{2n}_{i=1}$ is the multiset of eigenvalues of $Y_n$ we may assume that $\rho_{2i-1}\neq\rho_{2i}$

Now, consider the block-diagonal matrices \begin{equation*}
X_n=\left(
    \begin{matrix}
     A_1&      &                  \\
      &     \ddots  &                \\
       &      &   A_n                
    \end{matrix}    \right), \qquad
   Y_n=\left(
   \begin{matrix}
     B_1&      &                  \\
      &     \ddots  &                \\
       &      &   B_n                
    \end{matrix}    \right)
\end{equation*}
where $$A_i=\left(
    \begin{matrix}
    \lambda_{2i-1}& 0   \\
      0&   \lambda_{2i} 
    \end{matrix}    \right)
  \qquad\text{and}\qquad  B_i=\frac{1}{2}\left(
       \begin{matrix}
    \rho_{2i-1}+\rho_{2i}
 & \rho_{2i-1}-\rho_{2i}
 \\
      \rho_{2i-1}-\rho_{2i}
&    \rho_{2i-1}+\rho_{2i}
    \end{matrix}    \right) .$$

As above
\begin{equation*}
[X_n,Y_n]=\left(
    \begin{matrix}
     [A_1,B_1]&      &                  \\
      &     \ddots  &                \\
       &      &   [A_n,B_n]                
    \end{matrix}    \right)
\end{equation*}
and thus it is enough to prove that $[A_i,B_i]$ is invertible, for all $i$.  

This follows from  \begin{equation*}
[A_i,B_i]=\frac{1}{2}\left(
    \begin{matrix}
     0&  (\lambda_{2i}-\lambda_{2i-1})(\rho_{2i}-\rho_{2i-1})        \\
    -(\lambda_{2i}-\lambda_{2i-1})(\rho_{2i}-\rho_{2i-1})   &     0  &            
    \end{matrix}    \right),
\end{equation*}
whose determinant is $(\lambda_{2i}-\lambda_{2i-1})^2(\rho_{2i}-\rho_{2i-1})^2\neq0$. 

\textbf{Case 3. $t>1/2, t+s<1$.} Let $m=t(2n)$, which we may assume to be an integer. 
Our aim is to show that $i(X_nY_n-Y_nX_m)$ has nullity $2m-2n=(2t-1)2n$, for some $X_n$ and $Y_n$ assuming that $X_n$ has nullity $m$ and $Y_n$ has all eigenspaces of size at most $n$, since $s<1/2$. Again if $\{\rho_i\}^{2n}_{i=1}$ is the multiset of eigenvalues of $Y_n$ we may assume that $\rho_{2i-1}\neq\rho_{2i}$

On the other hand, for the eigenvalues of $X_n$, since $t>1/2$ and thus $m>n$, we may assume that $\lambda_{2i}=0$, for $i=1,\dots,n$, and that $\lambda_{2i+1}\neq0$ for $i=1,\dots,2n-m$ and  $\lambda_{2i+1}=0$ for $i=2n-m+1,\dots,n$.

Now, consider again the block-diagonal matrices \begin{equation*}
X_n=\left(
    \begin{matrix}
     A_1&      &                  \\
      &     \ddots  &                \\
       &      &   A_n                
    \end{matrix}    \right),\qquad
   Y_n=\left(
   \begin{matrix}
     B_1&      &                  \\
      &     \ddots  &                \\
       &      &   B_n                
    \end{matrix}    \right)
\end{equation*}
where $$A_i=\left(
    \begin{matrix}
    \lambda_{2i-1}& 0   \\
      0&   \lambda_{2i} 
    \end{matrix}    \right)
  \qquad\text{and}\qquad  B_i=\frac{1}{2}\left(
       \begin{matrix}
    \rho_{2i-1}+\rho_{2i}
 & \rho_{2i-1}-\rho_{2i}
 \\
      \rho_{2i-1}-\rho_{2i}
&    \rho_{2i-1}+\rho_{2i}
    \end{matrix}    \right) .$$

Now, on one hand, from the calculation above $[A_i,B_i]$ is invertible for $i=1,...,2n-m$.  On the other hand, $[A_i,B_i]=0$ for $i=2n-m+1,\dots,n$, because $A_i=0$. Thus, the nullity of $[X_n,Y_n]$ is $2(m-n)$, as desired.

\end{proof}

\begin{remark}
\begin{enumerate}
    \item  When the variables $X_1,\ldots X_n$ are symmetric, the result may be deduced from Equation (1.29) in \cite{NS98} and the description of atoms for the free additive \cite{BV98} and  multiplicative \cite{Bel03} convolutions.
\item    
In \cite{LW21} the authors study  the closure properties of the class of freely infinitely divisible laws, $ID(\boxplus)$. They show that all the quadratic forms $Q(x_1, \ldots,x_d)$ such that $Q(x_1, \ldots,x_d)\in ID(\boxplus)$ whenever $X_1,\ldots,X_d\in ID(\boxplus)$ coincide with the linear span of commutators of free random variables. They also show that their distribution is symmetric. Since measures in $ID(\boxplus)$ have at most one atom, this tells that only possible atoms of $Q(X_1,\ldots,X_d)$ are at $0$ whenever $X_1,\ldots,X_d$ belong to $ID(\boxplus)$. Our results show that this is true for any free variables $X_1,\ldots,X_d$.
\end{enumerate}

\end{remark}

\subsection{Anticommutator}

Now we consider the atoms of the anticommutator $P_2(x,y)=\{x,y\}=xy+yx$, for free random variable.

\begin{theorem} \label{thm:anticommutator} Let $X$ and $Y$ be free random variables and let $Z=XY+YX$.

\noindent i) 
The size of the atom at $0$ of $Z$  is given by 
$$l:=\max\{2t-1,2s-1,s+u-1,t+r-1,0\},$$ where
\begin{enumerate}\item $t$ is the size of the atom at $0$ of $X$;
\item $s$ is the size of the atom at $0$ of $Y$;
\item  $u$ is the size of the largest atom outside of  $0$ of $X$;
\item $r$ is the size of the largest atom outside of  $0$ of $Y$.
\end{enumerate}

\noindent  ii) For any $a\neq0$, $Z$ has an atom at $a$ if and only if there exist weights $s(a)$ and $t(a)$ such that $t(a)+s(a)-1>0$, $X$ has an atom at $\lambda$ of size $s(a)$ and $Y$ has an atom at $\rho$ of size $t(a)$ and  $2 \lambda\rho=a$. 

Furthermore, the size of the atom of $Z$ at $a\neq0$ is given by $\max\{s(a)+t(a)-1,0\}$.
\end{theorem}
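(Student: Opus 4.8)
The plan is to prove the two assertions separately, in each case matching a lower bound valid for arbitrary operators against an upper bound extracted from an explicit matrix model, and to reduce at the outset to finitely atomic marginals of rational weight via Lemma~\ref{lemma:approximation}, Theorem~\ref{BVrational} and Proposition~\ref{bound from matrices}, exactly as is done throughout Section~\ref{sect:application}. Since $P(x,y)=xy+yx$ is symmetric in its variables, I would assume $t:=\mu_X(\{0\})\geq s:=\mu_Y(\{0\})$; then $2s-1\leq 2t-1$, and since the weight $u$ of the largest atom of $X$ away from $0$ satisfies $u\leq 1-t$ we get $s+u-1\leq s-t\leq 0$, so the five-term maximum collapses to $l=\max\{2t-1,\,t+r-1,\,0\}$, with $r$ the weight of the largest atom of $Y$ away from $0$.

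For assertion (ii), note that for $a\neq 0$ the equation $P(\lambda,\rho)=2\lambda\rho=a$ forces $\lambda,\rho\neq0$, and then $2\lambda\tilde\rho=a$ forces $\tilde\rho=\rho$ while $2\tilde\lambda\rho=a$ forces $\tilde\lambda=\lambda$; hence $P$ meets the injectivity hypothesis of Theorem~\ref{diagonalbound2} at every $a\neq0$, and I would simply invoke that theorem, which yields the claimed equivalence and the mass $\max\{s(a)+t(a)-1,0\}$ (the maximum with $0$ handling the case with no admissible pair). For the lower bound in assertion (i): writing $Z=1\cdot X\cdot Y+Y\cdot X\cdot 1$ and applying Proposition~\ref{unatoms3} with $k=2$ gives $\mu_Z(\{0\})\geq\max\{2t-1,0\}$, and symmetrically $\geq\max\{2s-1,0\}$; while if $\lambda_0\neq0$ is an atom of $X$ of weight $u$, then on $\ker Y\cap\ker(\lambda_0-X)$ one has $Z\xi=XY\xi+YX\xi=0+\lambda_0Y\xi=0$, so Proposition~\ref{unatoms1} at $P(\lambda_0,0)=0$ gives $\mu_Z(\{0\})\geq\max\{u+s-1,0\}$, and symmetrically $\geq\max\{t+r-1,0\}$. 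Together these give $\mu_Z(\{0\})\geq l$.

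For the upper bound in assertion (i), Proposition~\ref{bound from matrices} reduces matters to building, for the approximating finitely atomic marginals, block–diagonal models $X_N=\bigoplus_{i=1}^N\diag(\lambda^{(i)}_1,\lambda^{(i)}_2)$ and $Y_N=\bigoplus_{i=1}^N O_iD_iO_i^{T}$ with $D_i=\diag(\rho^{(i)}_1,\rho^{(i)}_2)$ and $O_i$ a rotation, having the prescribed eigenvalue multisets and total nullity at most $2Nl+o(N)$. The computational input I would record first is the single-block identity
\[
\det\!\big(A_iB_i+B_iA_i\big)=4\lambda^{(i)}_1\lambda^{(i)}_2\rho^{(i)}_1\rho^{(i)}_2-(\lambda^{(i)}_1-\lambda^{(i)}_2)^2(\rho^{(i)}_1-\rho^{(i)}_2)^2\cos^2\theta_i\sin^2\theta_i,
\]
where $B_i=O_iD_iO_i^{T}$; reading it as a function of $\theta_i$ one sees that a block is singular for \emph{every} rotation precisely in the degenerate cases $A_i=0$ or $D_i=0$ (nullity $2$), or $A_i=\lambda I$ with $\lambda\neq0$ and $D_i$ singular, or $D_i=\rho I$ with $\rho\neq0$ and $A_i$ singular (nullity $1$), and that in all other cases all but finitely many $\theta_i$ make the block invertible, so one generic choice serves all $N$ blocks at once. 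It then remains to partition the $2N$ eigenvalues of $X$ and of $Y$ among the blocks so as to minimise the number of degenerate blocks. When $t+s\geq1$ one has $l=2t-1$, and I would quote Proposition~\ref{criteria 1}(1) directly: $P=xy+yx$ satisfies the strong determinant condition ($P_1=xy$, $P_2=yx$, $P_3=P_4=0$, so the left side of \eqref{eq:det-condition} is $t\lambda^2\rho^2\neq0$ for $t\in(0,1]$, $\lambda,\rho\neq0$), every pair of nonzero atoms is admissible, and since $P$ is mixed Proposition~\ref{criteria 1}(3) also recovers the absence of atoms off $0$ in this regime.

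When $t+s<1$ I would build the pairing explicitly, and this is the step I expect to be the main obstacle. The idea is to pair zeros of $X$ against nonzero eigenvalues of $X$, assigning to each such block a $Y$-pair $(\rho_0,\rho')$ with $\rho'\neq\rho_0$ while non-$\rho_0$ eigenvalues of $Y$ remain; once these are exhausted the residual $Y$-eigenvalues are all $\rho_0$, so each remaining block has $D_i=\rho_0I$ and contributes $\Null(A_i)$, whence a last pairing of leftover zeros of $X$ with leftover nonzeros (absorbing the genuine excess of zeros of $X$ into $A_i=0$ blocks when $t>\tfrac12$) leaves a kernel of dimension $2N\max\{2t-1,\,t+r-1,\,0\}+o(N)$, which matches the lower bound. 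Making this precise requires a Hall-type counting argument ensuring that no \emph{extra} degenerate blocks appear — in particular that the only forced ``$D_i=\rho I$, $\rho\neq0$, $A_i$ singular'' blocks are the ones accounted for by $t+r-1$, and none arise from other repeated atoms of $Y$ (all of weight $\le r$) or from an unlucky clash with the zeros of $X$ — and it splits into the same cases as in the commutator proof, now tracked by $r$ as well, namely $t\le\tfrac12$ versus $t>\tfrac12$ and $t+r\le1$ versus $t+r>1$. Verifying the $2\times2$ determinant identity and the genericity of the rotations is routine once stated, and assertion (ii) follows directly from Theorem~\ref{diagonalbound2}.
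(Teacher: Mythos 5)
Your overall strategy matches the paper's: lower bounds from Proposition~\ref{unatoms1} and Corollary~\ref{unatoms4}, upper bounds from explicit $2\times 2$ block--diagonal models, and Theorem~\ref{diagonalbound2} for part~(ii). Those parts of your proposal are correct. In particular, the single--block determinant identity
$\det(AB+BA)=4\lambda_1\lambda_2\rho_1\rho_2-(\lambda_1-\lambda_2)^2(\rho_1-\rho_2)^2\cos^2\theta\sin^2\theta$
is right, and your list of ``forced--singular'' blocks ($A=0$; $D=0$; $A=\lambda I$, $\lambda\neq0$, $D$ singular; $D=\rho I$, $\rho\neq0$, $A$ singular) agrees with what the paper encodes via its matrices $B_1$ and $B_2$. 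The reduction, after assuming $t\geq s$, to $l=\max\{2t-1,\,t+r-1,\,0\}$ is also correct, and quoting Proposition~\ref{criteria 1}(1) in the regime $t+s\geq1$ is legitimate ($P_3=P_4=0$, $\Subs[P_1]=\Subs[P_2]=\lambda\rho$, so the strong determinant condition holds, and $t\geq s$ gives the required injection $[l]\to[m]$). The paper doesn't call Proposition~\ref{criteria 1} here, but your shortcut is fine.

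The genuine gap is exactly where you flag it: you have not proved the upper bound when $t+s<1$, which (given $t\geq s$) covers all of $t\leq\tfrac12$ and, when $t>\tfrac12$, everything with $s<1-t$ --- this is where $t+r-1$ can be the binding term, i.e.\ the interesting part of the theorem. You sketch a pairing and defer to ``a Hall-type counting argument ensuring that no extra degenerate blocks appear,'' but that argument is not supplied, and it is precisely the substance of the proof: one must exhibit a partition of the eigenvalues into $2\times2$ blocks whose forced--singular blocks contribute exactly $2N\max\{2t-1,\,t+r-1,\,0\}+o(N)$ to the kernel, and in particular verify that the other repeated nonzero atoms of $Y$ (each of weight $\leq r$) and zero atoms of $Y$ don't force additional ``$D=\rho I$, $A$ singular'' or ``$D=0$'' blocks beyond those charged to $2t-1$ and $t+r-1$. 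The paper does not use a Hall-type argument; it instead builds $X_n,Y_n$ explicitly in four subcases keyed on $t\lessgtr\tfrac12$ and $r+t\lessgtr1$ (resp.\ $r\lessgtr t$), using pieces $A(\lambda,0)$ paired against $B_2(\rho,\rho')$ plus diagonal residuals, and checks the nullity count in each. Carrying out your sketch would essentially reproduce that casework; as written, the claim that the minimal number of degenerate blocks is achieved is asserted rather than proved.
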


\begin{proof}

The size of atoms outside of $0$ may be calculated from the commutative case. Indeed for $\lambda,\rho\neq0$ we see that $P(\lambda,\rho)=2\lambda\rho$ satisfies the hypothesis of Theorem \ref{diagonalbound2}.

The atom at $0$ is more complicated and we need to use $2\times2$ matrices.  We will prove that the atom at $0$ cannot be bigger than $l$. The other inequality follows from Proposition \ref{unatoms1} and Corollary \ref{unatoms4}.

Before continuing we notice the following calculations for $2\times2$ matrices which we will use repeatedly.
For complex numbers $\lambda_i,\lambda_j$ and $\rho_i,\rho_j$, consider the matrices
\begin{equation*}
A(\lambda_i,\lambda_j)=\left(
    \begin{matrix}
      \lambda_i &  0          \\
    0  &     \lambda_j                 
    \end{matrix}
    \right)
\end{equation*}
and
\begin{equation*}
B_1(\rho_i,\rho_j)=\left(
    \begin{matrix}
      \rho_i&   0           \\
    0   &  \rho_j     
    \end{matrix}
    \right),\quad
B_2(\rho_i,\rho_j)=\frac{1}{2}\left(
    \begin{matrix}
      \rho_i+\rho_j&        \rho_i-\rho_j              \\
           \rho_i-\rho_j  &  \rho_i+\rho_j 
    \end{matrix}
    \right).
  \end{equation*}
A simple calculation shows that $\{A,B_1\}$ is invertible if $\lambda_i,\lambda_j,\rho_i,\rho_j$ are non-zero; $\{A,B_2\}$ is invertible if $\lambda_i\neq0$, $\lambda_j=0$ and $\rho_i\neq\rho_j$.

We take $X_n,Y_n\in M_{n}(\mathbb{C})$,
where the non-zero eigenvalues of $X_n$ are denoted by $\lambda_1,\dots,\lambda_{(1-t)n}$, the non-zero atom of $Y_n$ with the largest weight $r$ is denoted by $\rho$ and the remaining eigenvalues of $Y_n$ are denoted by $\rho_1,\dots,\rho_{(1-r)n}$.
We consider the different cases depending on $t$ and $r$.
Without loss of generality we assume that $t\geq s$.

\noindent \textbf{Case 1. } $t\leq 1/2$, $r+t\leq1$.
We divide $X_n$ into two parts, that is,
$$X_n=
\begin{pmatrix}
X_1 &0\\
0 &X_2
\end{pmatrix},\quad
Y_n=
\begin{pmatrix}
Y_1 &0\\
0 &Y_2\\
\end{pmatrix},$$
where
$X_1,Y_1\in M_{2tn}$ and $X_2,Y_2\in M_{(1-2t)n}$ are defined as follows:
$$
X_1=\begin{pmatrix}
A(\lambda_1,0)\\
& \ddots\\
& &A(\lambda_{tn},0)
\end{pmatrix},
$$
$X_2$ is a diagonal matrix that contains the remaining non-zero eigenvalues.
If $r\geq t$, then we define
$$
Y_1=\begin{pmatrix}
B_2(\rho,\rho_1)\\
& \ddots\\
& & B_2(\rho,\rho_{tn})
\end{pmatrix},
$$
where we may assume $Y_1$ contains all zero eigenvalues of $Y_n$ as $s\leq t$.
Then we have $\Null(\{X_1,Y_1\})=0$.
If $r<t$, then we have $r<1/2$.
In this case, we can order the eigenvalues of $Y_n$ such the any pair of adjacent eigenvalues is distinct and thus in particular $\text{Nullity}(\{X_1,Y_1\})=0$.
In both case, we let $Y_2$ be a diagonal matrix consisting of the remaining non-zero eigenvalues.
Therefore we have $\Null(\{X_2,Y_2\})=0$.

\noindent \textbf{Case 2.} $t\leq1/2$, $r+t>1$.
We divide $X_n$ and $Y_n$ into three parts, that is,
$$X_n=
\begin{pmatrix}
X_1 &0 &0\\
0 &X_2 &0\\
0 &0 &X_3 
\end{pmatrix},\quad
Y_n=
\begin{pmatrix}
Y_1 &0 &0\\
0 &Y_2 &0\\
0 &0 &Y_3 
\end{pmatrix}$$
where $X_1,Y_1\in M_{(r+t-1)n}$, $X_2,Y_2\in M_{(2-2r)n}$ and $X_3,Y_3\in M_{(r-t)n}$ are defined as follows:
$$
X_2=\begin{pmatrix}
A(\lambda_1,0)\\
& \ddots\\
& &A(\lambda_{(1-r)n},0)
\end{pmatrix},
$$
$X_1=\0$, $X_3$ is a diagonal matrix that contains the remaining non-zero eigenvalues and
$$
Y_2=\begin{pmatrix}
B_2(\rho,\rho_1)\\
& \ddots\\
& & B_2(\rho,\rho_{(1-r)n})
\end{pmatrix}
$$
(which is possible since $1-r<r$),
$Y_1$, $Y_3$ are diagonal matrices such that $Y_3$ contains no zero eigenvalues (which is possible since we can let $Y_1$ and $Y_2$ together has $tn$ many zero eigenvalues while we know $s\leq t$).
Hence we see that $\Null(\{X_2,Y_2\})=\Null(\{X_3,Y_3\})=0$ and thus $\Null(\{X_n,Y_n\})=(r+t-1)n$.

\noindent \textbf{Case 3.} $t\geq1/2$, $r\geq t$.
We write $t=(2t-1)+(r-t)+(1-r)$ and divide $X_n$ into three parts that contain correspondingly many zero eigenvalues.
Consider the matrices
$$X_n=\left(
    \begin{matrix}
      X_1&      0 &0         \\
   0    &  X_2  &0 \\
    0    & 0 & X_3 
    \end{matrix}
    \right),
    \qquad
    Y_n=\left(
    \begin{matrix}
      Y_1&      0 &0         \\
   0    &  Y_2  &0 \\
    0    & 0 &Y_3  
    \end{matrix}
    \right) $$
where $X_1,Y_1\in M_{(2t-1)n}$, $X_2,Y_2\in M_{(2r-2t)n}$ and $X_3,Y_3\in M_{(2-2r)n}$ are defined as follows: $X_1=\0$,
$$
X_2=\left(
    \begin{matrix}
      A(\lambda_1,0)&  & \\
      & \ddots &\\
 & &A( \lambda _{(r-t)n},0)
    \end{matrix}
    \right),\quad
X_3=\left(
    \begin{matrix}
      A(\lambda_{(r-t)n+1},0)&  & \\
      & \ddots &\\
 &  &A( \lambda _{(1-t)n},0)
    \end{matrix}
    \right)
$$
and
$$  Y_1=Y_2=\left(
    \begin{matrix}
     \rho &    &                \\
    & \ddots &  \\
   &      & \rho
    \end{matrix}
    \right),\quad
Y_3=\left(
    \begin{matrix}
    B_2 (\rho,\rho_1) &    &                \\
    & \ddots &  \\
   &      & B_2(\rho,\rho_{(1-r)n})
    \end{matrix}
    \right).
$$
Thus one sees that $$\text{Nullity}(\{X_1,Y_1\})=(2t-1)n,\quad \text{Nullity}(\{X_2,Y_2\})=(r-t)n,\quad \text{Nullity}(\{X_3,Y_3\})=0,$$ 
and so $\text{Nullity}(\{X_n,Y_n\})=(r+t-1)n$.

\noindent \textbf{Case 4.} $t\geq1/2$, $r<t$. 
We proceed similarly as in Case 3 but we do not need $X_2,Y_2$.
Correspondingly, we let $X_3,Y_3$ have size $(2-2t)n$.
If $s\geq 1-t$ or $r\geq 1-t$, then we can build $Y_3$ with block matrix $B_2(0,\rho_i)$ or $B_2(\rho,\rho_i)$ respectively.
Hence $\text{Nullity}(\{X_3,Y_3\})=0$.
Otherwise we have $s<1/2$ and $r<1/2$, in which case we can order the eigenvalues of $Y_n$ such the any pair of adjacent eigenvalues is distinct and thus in particular $\text{Nullity}(\{X_3,Y_3\})=0$.
So the desired result follows similarly as in Case 3.
\end{proof}

\begin{remark}
One may want to include both calculations in this section in the more general situation $ e^{it}XY+e^{-it}YX$. It turns out that, whenever  $ \cos(t)\neq 0$, the situation is exactly the same as in the anticommutator case, by just changing $2\mu\lambda$ to $2 cos(t) \mu\lambda $.  Indeed, considering $\tilde X=e^{it}X$ and using the same notation as in Theorem \ref{thm:anticommutator}, one sees that $AB_1+B_1^*A$ is invertible if $\lambda_i,\lambda_j,\rho_i,\rho_j$ are non-zero and $AB_2+B_2^*A$ is invertible if $\lambda_i\neq0$, $\lambda_j=0$ and $\rho_i\neq\rho_j$. The rest of the proof then works mutatis mutandis.
\end{remark}

\subsection{Multiplicative commutator}

Now we consider the rational function $XYX^{-1}Y^{-1}$ where X and Y are free invertible elements. The particular case, when $X$ and $Y$ are unitaries is of special interest but for the calculation here, it is not needed.

Since when $X$ and $Y$ are commutative, clearly $XYX^{-1}Y^{-1}=1$, then the only possible atom is at $1$. In principle we could try to find some $2\times 2$ matrix to have a similar calculations as for the commutator, however the following calculation shows that the  size of the atom at $0$ of $XY-YX$ is the same as the size of the atom at $1$ for  $XYX^{-1}Y^{-1}$. 

$$\rank(XY-YX)=\rank((XY-YX)X^{-1}Y^{-1})=\rank(XYX^{-1}Y^{-1}-1).$$

Thus arriving to the following conclusion.

\begin{corollary} Let $X$ and $Y$ be free random variables that are invertible. Let $t$ and $s$ be the size of the largest atoms of $X$ and $Y$, then the only possible atom of  $XYX^{-1}Y^{-1}$ is at $1$ with mass $max(2t-1,2s-1,0)$.
\end{corollary}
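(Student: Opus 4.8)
The plan is to deduce this corollary from the description of the atom at $0$ of the free commutator proved earlier in this section, via two elementary reductions. \emph{First step: well-definedness and the location of atoms.} I would pick commuting normal operators $Y_1,Y_2$ in some abelian $W^\ast$-probability space with $\mu^p_{Y_1}=\mu^p_{X}$, $\mu^p_{Y_2}=\mu^p_{Y}$ and with no atom at $0$ (possible since invertibility of $X,Y$ forces $\mu_X(\{0\})=\mu_Y(\{0\})=0$, so that $X^{-1},Y^{-1}\in L^0(\M)$). Then the rational expression $R(y_1,y_2)=y_1y_2y_1^{-1}y_2^{-1}$ evaluates to $R(Y_1,Y_2)=1$, which is trivially well-defined; hence Theorem~\ref{thm:comparison-rational functions} applies and gives at once that $XYX^{-1}Y^{-1}$ is well-defined as an operator affiliated to $\M$ and that $\mu^p_{XYX^{-1}Y^{-1}}\le\delta_1$. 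In particular there is no atom outside of $1$. (One could instead run the diagonal-matrix argument together with Proposition~\ref{bound from matrices}, but since $R$ is genuinely rational, invoking the comparison theorem directly is cleaner and avoids any approximation step.)

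\emph{Second step: the size of the atom at $1$.} Here I would use the algebraic identity inside the $\ast$-algebra $L^0(\M)$,
\[
XYX^{-1}Y^{-1}-1=(XY-YX)\,X^{-1}Y^{-1},
\]
together with the fact that $\rank$ is a faithful regular Sylvester rank function on $L^0(\M)$ (Lemma~\ref{lem:vN rank is Sylvester}). Since $X^{-1}Y^{-1}$ is invertible in $L^0(\M)$, Lemma~\ref{lem:properties of Sylvester rank}(1) gives $\rank(XYX^{-1}Y^{-1}-1)=\rank(XY-YX)$, and multiplication by the (invertible) scalar $i$ gives $\rank(XY-YX)=\rank\!\big(i(XY-YX)\big)$. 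Rewriting ranks as point-spectrum weights via $\mu^p_Z(\{\lambda\})=1-\rank(\lambda-Z)$, I obtain
\[
\mu^p_{XYX^{-1}Y^{-1}}(\{1\})=\mu^p_{i(XY-YX)}(\{0\}),
\]
and the right-hand side equals $\max(2t-1,2s-1,0)$ by the theorem on the free commutator established above, where $t,s$ are the sizes of the largest atoms of $X$ and $Y$ — the same quantities appearing in the present statement.

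I do not expect a genuine obstacle: all of the hard analytic content is already packaged in the free-commutator theorem, and what remains is bookkeeping. The only points needing a little care are (i) checking that the products $XY,YX,X^{-1}Y^{-1}$ and their combinations are legitimately manipulated inside $L^0(\M)$ — which is fine, as $L^0(\M)$ is a $\ast$-algebra in which $X,Y$ are invertible — and (ii) reading ``invertible'' as $0\notin\sigma_p(X)\cup\sigma_p(Y)$, which is exactly what is needed for the inverses to exist as affiliated operators and for the identity above to make sense. If one wishes to allow non-selfadjoint $X,Y$ (e.g.\ unitaries), the same computation applies verbatim, the only change being which commutator result one quotes.
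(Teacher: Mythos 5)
Your proof is correct and takes essentially the same route as the paper: the heart of the matter is the factorization $XYX^{-1}Y^{-1}-1=(XY-YX)X^{-1}Y^{-1}$, which (since $X^{-1}Y^{-1}$ is invertible) gives $\rank(XYX^{-1}Y^{-1}-1)=\rank(XY-YX)=\rank(i(XY-YX))$ and thus reduces the mass of the atom at $1$ to the free-commutator theorem, while the location of the atoms is handled by comparison with a commuting realization. You simply make explicit the supporting references (Theorem~\ref{thm:comparison-rational functions} for well-definedness and atom location, Lemma~\ref{lem:properties of Sylvester rank} for invariance of $\rank$ under invertible right-multiplication) that the paper's terse proof leaves implicit.
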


\subsection{ Universal Covers of Graphs}

Now we present an application to the problem of relating the spectrum of the adjacency matrix $A(G)$ of a graph with the point spectrum of the density of states of its universal cover (or universal covering tree). We thank Jorge Garza-Vargas for leading our attention to this possible application. 

Let $G=(V(G),E(G))$ be a finite simple undirected connected graph. The universal cover of $G$, that we denote by  $T(G)$, is the graph constructed as follows:

\begin{enumerate}
    \item 
 Fix any vertex $v\in V(G)$. The set of vertices of $T(G)$ is given by non-backtracking walks in $G$ starting at $v$, including the empty walk. Here, a non-backtracking walk means a finite sequence of vertices $v_1,v_2,\cdots,v_n$, so that $v_i\sim v_{i+1}$, for $i=1,\ldots,n-1$ and $v_i\neq v_{i+2}$ for $i=1,\ldots,n-2$.

 \item The set of edges of $T(G)$ is given by the pair of non-backtracking walks such that one of them can be obtained by appending a vertex. 
\end{enumerate}

$T(G)$ is independent of $v$, up to isomorphism. $T(G)$ is called the universal cover because it is the unique graph, up to isomorphism,  that covers every other cover of $G$.  

One can associate to $T(G)$ a probability measure known as the density of states of $T(G)$, we refer for precise definitions to \cite{BGVM20} and references therein. The relation with our results comes from the fact, that as observed in \cite{BC19} and \cite{GVK19}, if $G$ has adjacency matrix $A=(A_{i,j})_{i,j}$ and $\{u_{i,j}\}_{0\leq i,j\leq n}$ is a family of Haar distributed unitaries with $u_{i,j}=u_{j,i}^{-1}$ and $\{u_{i,j}\}_{i>j}$ freely independent, then the matrix $U(A)$ defined by  $U(A)_{i,j}=A_{i,j}u_{i,j}$ has the same distribution as the density of states associated to the universal cover $T(G)$.

\begin{prop}\cite[Theorem 3.2]{BGVM20} \label{prop:spectrumgraphs}The point spectrum of $T(G)$ is contained in the set of eigenvalues of $A(G)$. Moreover if $\mu$ denotes the density of states of $T(G)$ and $\mu_{A(G)}$ denotes the empirical eigenvalue distribution of $A(G)$
then $\mu_{T(G)}^p\leq\mu_{A(G)}$. 
\end{prop}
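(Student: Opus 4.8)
The plan is to realise the density of states of the universal cover as the analytic distribution of a concrete operator built from free Haar unitaries, to recognise that operator as the evaluation of a matrix-valued rational expression at a $*$-free tuple, and then to compare it with the evaluation of the \emph{same} expression at a constant tuple — which returns $A(G)$ — by means of Theorem~\ref{thm:comparison-rational functions}. First I would set $n:=|V(G)|$, enumerate the edges $e_1,\dots,e_m$ of $G$ with $e_k=\{i_k,j_k\}$ ($i_k<j_k$), and recall, as cited just before the statement, that $\mu_{T(G)}$ is the analytic distribution of the selfadjoint operator
$$U(A)=\sum_{k=1}^m\big(E_{i_kj_k}\otimes W_k+E_{j_ki_k}\otimes W_k^*\big)\in M_n(\M),$$
where $(E_{ij})_{i,j=1}^n$ are the matrix units of $M_n(\C)$ and $W_1,\dots,W_m$ are $*$-free Haar unitaries in a tracial $W^*$-probability space $(\M,\tau)$ (with $W_k$ in the role of $u_{i_kj_k}$, so $W_k^*=u_{j_ki_k}$). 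Since $U(A)$ and the adjacency operator of $T(G)$ have the same $*$-distribution, they have the same atomic part, so $\mu^p_{T(G)}=\mu^p_{U(A)}$; hence it suffices to show $\mu^p_{U(A)}\leq\mu_{A(G)}$.

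Each $W_k$ is a Haar unitary, hence a normal operator with $W_k^*=W_k^{-1}$ and without atoms, so $\mu^p_{W_k}=0\leq\delta_1=\mu^p_{1}$. I would therefore introduce the rational expression with coefficients in $M_n(\C)$
$$R(x_1,\dots,x_m):=\sum_{k=1}^m\big(E_{i_kj_k}\,x_k+E_{j_ki_k}\,x_k^{-1}\big),$$
equivalently a rational expression in $x_1,\dots,x_m$ together with the finitely many scalar matrices $E_{ij}$. Then $R(W_1,\dots,W_m)=U(A)$, while $R(1,\dots,1)=\sum_{k=1}^m(E_{i_kj_k}+E_{j_ki_k})=A(G)$ (using that $G$ is simple), which is certainly well-defined. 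Applying Theorem~\ref{thm:comparison-rational functions} with the $*$-free normal tuple $X=(W_1,\dots,W_m)$ and the constant normal tuple $Y=(1,\dots,1)$ — for which $\mu^p_{X_k}=0\leq\mu^p_{Y_k}$ and $R(Y)=A(G)$ is well-defined — gives that $R(X)=U(A)$ is well-defined and $\mu^p_{U(A)}=\mu^p_{R(X)}\leq\mu^p_{R(Y)}=\mu^p_{A(G)}$. Since $A(G)$ is a selfadjoint matrix, $\mu^p_{A(G)}$ is its purely atomic analytic distribution, i.e.\ the normalized eigenvalue-counting measure $\mu_{A(G)}$. Thus $\mu^p_{T(G)}=\mu^p_{U(A)}\leq\mu_{A(G)}$, and in particular every point of the point spectrum of $T(G)$ is an atom of $\mu_{A(G)}$, that is, an eigenvalue of $A(G)$.

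The one point that needs care — and the step I expect to be the main obstacle — is the passage from Theorem~\ref{thm:comparison-rational functions}, stated for rational expressions in the scalar variables alone, to our $R$, which carries coefficients in $M_n(\C)$ and uses the inverses $x_k^{-1}$. This is exactly the situation already handled in Subsection~\ref{subsect:Atiyah} (proof of the Atiyah-type theorem) and in Lemma~\ref{lem:comparison-rational functions}: one passes to a formal linear representation of $R$, uses Lemma~\ref{lem:inner rank of display} to rewrite the relevant von Neumann ranks of $R(X)$ and $R(Y)$ in terms of the rank of the display evaluated at $I_n\otimes X_k$, respectively $I_n\otimes Y_k$ — a \emph{linear} matrix over $\C\langle x_1,\dots,x_m\rangle$ with the matrix units absorbed into its scalar coefficients — and then invokes part~(1) of Theorem~\ref{thm:comparison-rational functions}. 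One should also verify the bookkeeping that $R(Y)$ being well-defined forces $R(X)$ well-defined, but this is part of the conclusion of that theorem (and here it is anyway clear, since $1$ is invertible). Modulo this reduction, the argument is a direct application of the comparison theorem, and it makes transparent why atoms of the universal cover can only sit at eigenvalues of $A(G)$ with at most the corresponding multiplicity: the randomly rotated model $U(A)$ realises the minimal possible point spectrum among all operators with the prescribed $M_n(\C)$-structure.
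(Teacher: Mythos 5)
Your proposal is correct and takes essentially the same route as the paper: realise $\mu_{T(G)}$ as the distribution of $R(U)$ for a matrix-valued rational expression $R$ evaluated at $*$-free Haar unitaries, observe that $\mu^p_{U_k}=0\leq\delta_1=\mu^p_{1}$, and apply Theorem~\ref{thm:comparison-rational functions} to compare $R(U)$ with $R(\mathbf{1})=A(G)$, handling the matrix coefficients and inverses via the formal linear representation and Lemma~\ref{lem:inner rank of display} exactly as the paper notes. The only difference is cosmetic indexing (edges $e_k$ rather than ordered pairs $(i,j)$ with $i>j$).
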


Here we want to see how this fact may deduced from  Theorem \ref{thm:comparison-rational functions}. To do this, we consider the matrix $R(A)$ on the variables $X:=(x_{i,j})_{i>j}$ given by $R(A)(x)_{i,j}=A_{i,j}x_{i,j}$  if $i>j$,  $R(A)(X)_{i,j}=A_{i,j}x^{-1}_{j,i}$ if $i<j$ and $R(A)(X)_{i,i}=0$.

Now, $R(A)(X)$ does not belong to $M_n(\C\left<x_{2,1},\dots,x_{n,n-1}\right>)$, nor is a rational expression in the variables $\{x_{ij}\}_{i>j}$. However by following the same line of proof of part (2) in Theorem \ref{thm:comparison-rational functions},  one may arrive to the same conclusion for matrices with entries which are rational expressions (in this case $x_{i,j}^{-1}$).

Now we only need to observe that the variables $u_{ij}$ are normal and $\mu^p_{u_{i,j}}\leq \mu^p_{1}=\delta_1$, since the variables  $u_{i,j}$ are absolutely continuous.  So $U=(u_{i,j})_{i>j}$ and $\mathbf{1}=(1)_{i>j}$ satisfy the hypothesis of Theorem \ref{thm:comparison-rational functions} and we conclude that $\mu^p_{R(U)}\leq\mu^p_{R(\mathbf{1})}$. Proposition \ref{prop:spectrumgraphs}, i.e $\mu^p_{U(A)}\leq \mu^p_{A}=\mu_A$, now follows from the observation that $R(U)=U(A)$ and $R(\mathbf{1})=A$.

\bibliographystyle{amsalpha}
\bibliography{References}

\newcommand{\etalchar}[1]{$^{#1}$}
\providecommand{\bysame}{\leavevmode\hbox to3em{\hrulefill}\thinspace}
\providecommand{\MR}{\relax\ifhmode\unskip\space\fi MR }
\providecommand{\MRhref}[2]{%
  \href{http://www.ams.org/mathscinet-getitem?mr=#1}{#2}
}
\providecommand{\href}[2]{#2}
\begin{thebibliography}{GGOW16}

\bibitem[{Ami}66]{Ami66}
S.~{Amitsur}, \emph{{Rational identities and applications to algebra and
  geometry}}, {J. Algebra} \textbf{3} (1966), no.~3, 304--359.

\bibitem[BBL21]{BBL2021}
S.T. Belinschi, H.~Bercovici, and W.~Liu, \emph{The atoms of operator-valued
  free convolutions}, Journal of Operator Theory \textbf{85} (2021), no.~1,
  303--320.

\bibitem[BC19]{BC19}
C.~Bordenave and B.~Collins, \emph{Eigenvalues of random lifts and polynomials
  of random permutation matrices}, Annals of Mathematics \textbf{190} (2019),
  no.~3, 811--875.

\bibitem[BCD{\etalchar{+}}10]{BCDLT2010}
H.~Bercovici, B.~Collins, K.~Dykema, W.-S. Li, and D.~Timotin,
  \emph{Intersections of {S}chubert varieties and eigenvalue inequalities in an
  arbitrary finite factor}, Journal of Functional Analysis \textbf{258} (2010),
  no.~5, 1579--1627.

\bibitem[Bel03]{Bel03}
S.~T. Belinschi, \emph{The atoms of the free multiplicative convolution of two
  probability distributions}, Integral Equations and Operator Theory
  \textbf{46} (2003), no.~4, 377--386.

\bibitem[BGVM17]{BGVM20}
Jess Banks, Jorge Garza-Vargas, and Satyaki Mukherjee, \emph{Point spectrum of
  periodic operators on universal covering trees}, International Mathematics
  Research Notices \textbf{2020} (2017), rnab152.

\bibitem[BMS17]{BMS17}
S.T. Belinschi, T.~Mai, and R.~Speicher, \emph{Analytic subordination theory of
  operator-valued free additive convolution and the solution of a general
  random matrix problem}, Journal f{\"u}r die reine und angewandte Mathematik
  (Crelles Journal) \textbf{2017} (2017), no.~732, 21--53.

\bibitem[BV93]{BV93}
H.~Bercovici and D.~Voiculescu, \emph{Free convolution of measures with
  unbounded support}, Indiana University Mathematics Journal \textbf{42}
  (1993), no.~3, 733--773.

\bibitem[BV98]{BV98}
\bysame, \emph{Regularity questions for free convolution}, Nonselfadjoint
  operator algebras, operator theory, and related topics, Springer, 1998,
  pp.~37--47.

\bibitem[CMM{\etalchar{+}}21]{CMMPY2021}
B.~Collins, T.~Mai, A.~Miyagawa, F.~Parraud, and S.~Yin, \emph{Convergence for
  noncommutative rational functions evaluated in random matrices},
  {arXiv:2103.05962} (2021).

\bibitem[{Coh}06]{Coh06}
P.~M. {Cohn}, \emph{{Free ideal rings and localization in general rings}},
  Cambridge: Cambridge University Press, 2006.

\bibitem[CR99]{CR99}
P.~M. {Cohn} and C.~{Reutenauer}, \emph{{On the construction of the free
  field}}, {Int. J. Algebra Comput.} \textbf{9} (1999), no.~3--4, 307--324.

\bibitem[CS05]{CS05}
A.~{Connes} and D.~{Shlyakhtenko}, \emph{{$L^2$-homology for von Neumann
  algebras}}, {J. Reine Angew. Math.} \textbf{586} (2005), 125--168.

\bibitem[CU16]{CU16}
G.~Cébron and M.~Ulrich, \emph{Haar states and {L}évy processes on the
  unitary dual group}, Journal of Functional Analysis \textbf{270} (2016),
  no.~7, 2769 -- 2811.

\bibitem[EL21]{LW21}
W.~Ejsmont and F.~Lehner, \emph{Sums of commutators in free probability},
  Journal of Functional Analysis \textbf{280} (2021), no.~2, 108791.

\bibitem[GGOW16]{GGOW16}
A.~{Garg}, L.~{Gurvits}, R.~{Oliveira}, and A.~{Wigderson}, \emph{{A
  Deterministic Polynomial Time Algorithm for Non-Commutative Rational Identity
  Testing with Applications}}, 2016 IEEE 57th Annual Symposium on Foundations
  of Computer Science (FOCS), 2016, pp.~109--117.

\bibitem[GVK19]{GVK19}
J.~Garza-Vargas and A.~Kulkarni, \emph{Spectra of infinite graphs via freeness
  with amalgamation}, arXiv preprint arXiv:1912.10137 (2019).

\bibitem[HMS18]{HMS18}
J.~W. {Helton}, T.~{Mai}, and R.~{Speicher}, \emph{{Applications of
  realizations (aka linearizations) to free probability}}, {J. Funct. Anal.}
  \textbf{274} (2018), no.~1, 1--79.

\bibitem[HS09]{Haagerup2009}
U.~Haagerup and H.~Schultz, \emph{Invariant subspaces for operators in a
  general {II1}-factor}, Publications Math{\'e}matiques de l'IH{\'E}S
  \textbf{109} (2009), 19--111.

\bibitem[HW00]{Haagerup2000}
U.~Haagerup and C.~Winsl{\o}w, \emph{{The Effros--Marechal topology in the
  space of von Neumann algebras, II}}, Journal of Functional Analysis
  \textbf{171} (2000), no.~2, 401--431.

\bibitem[HW15]{HW15}
P.~{Hrube\v{s}} and A.~{Wigderson}, \emph{{Noncommutative arithmetic circuits
  with division}}, {Theory Comput.} \textbf{11} (2015), 357--393.

\bibitem[JZ19]{Jaikin2019}
A.~Jaikin-Zapirain, \emph{The base change in the {Atiyah} and the {L\"u}ck
  approximation conjectures}, Geometric and Functional Analysis \textbf{29}
  (2019), no.~2, 464--538.

\bibitem[KLS17]{KLS17}
A.~{Knebusch}, P.~{Linnell}, and T.~{Schick}, \emph{On the center-valued
  {A}tiyah conjecture for {$L^2$-B}etti numbers}, Documenta Mathematica
  \textbf{22} (2017), 659--–677.

\bibitem[KR86]{KR86}
R.V. {Kadison} and J.R. {Ringrose}, \emph{{Fundamentals of the theory of
  operator algebras}}, vol.~II, {Academic Press, New York}, 1986.

\bibitem[KV]{K-VV12}
D.~S. {Kaliuzhnyi-Verbovetskyi} and V.~{Vinnikov}, \emph{{Noncommutative
  rational functions, their difference-differential calculus and
  realizations}}, Multidimensional Syst. Signal Process.

\bibitem[L{\"u}c02]{Luc02}
W.~L{\"u}ck, \emph{{$L^{2}$--Invariants: Theory and Applications to Geometry
  and K--Theory}}, Berlin: Springer, 2002.

\bibitem[Mai15]{Mai2015}
T.~Mai, \emph{{Regularity of distributions of Wigner integrals}}, arXiv
  preprint arXiv:1512.07593 (2015).

\bibitem[{Mal}80]{Mal80}
P.~{Malcolmson}, \emph{{Determining homomorphisms to skew fields}}, {J.
  Algebra} \textbf{64} (1980), no.~2, 399--413.

\bibitem[MPS21]{MPS2021}
J.A. Mingo, M.~Popa, and K.~Szpojankowski, \emph{On the partial transpose of a
  {H}aar unitary matrix}, 2021.

\bibitem[MS17]{MS17}
J.A. {Mingo} and R.~{Speicher}, \emph{{Free Probability and Random Matrices}},
  Fields Institute Monographs, Springer New York, 2017.

\bibitem[MSY19]{MSY19}
T.~{Mai}, R.~{Speicher}, and S.~{Yin}, \emph{{The free field: realization via
  unbounded operators and Atiyah property}}, {arXiv:1905.08187} (2019).

\bibitem[NS98]{NS98}
Alexandru Nica and Roland Speicher, \emph{Commutators of free random
  variables}, Duke Mathematical Journal \textbf{92} (1998), no.~3, 553--592.

\bibitem[NS06]{NS06}
A.~{Nica} and R.~{Speicher}, \emph{{Lectures on the combinatorics of free
  probability}}, Cambridge: Cambridge University Press, 2006.

\bibitem[NSS02]{NSS02}
A.~{Nica}, D.~{Shlyakhtenko}, and R.~{Speicher}, \emph{{Operator-valued
  distributions: I. Characterizations of freeness}}, {International Mathematics
  Research Notices} \textbf{2002} (2002), no.~29, 1509--1538.

\bibitem[Ole12]{O2012}
K.K. Olesen, \emph{{The Connes Embedding Problem}}, K{\o}benhavns Universitet,
  Institut for Matematiske Fag, 2012.

\bibitem[Rei98]{Rei98}
H.~Reich, \emph{{Group von Neumann algebras and related algebras}}, PhD thesis,
  University of Göttingen (1998).

\bibitem[RS89]{RS89}
I.~Raeburn and A.M. Sinclair, \emph{{The C*-algebra generated by two
  projections}}, Mathematica Scandinavica (1989), 278--290.

\bibitem[{Sch}85]{Sch85}
A.~{Schofield}, \emph{Representations of rings over skew fields}, London
  Mathematical Society Lecture Notes 92, University Press, Cambridge, 1985.

\bibitem[Shl06]{S2006}
D.~Shlyakhtenko, \emph{Remarks on free entropy dimension}, Operator Algebras,
  Springer, 2006, pp.~249--257.

\bibitem[SS15]{SS15}
D.~{Shlyakhtenko} and P.~{Skoufranis}, \emph{{Freely independent random
  variables with non--atomic distributions}}, {Trans. Am. Math. Soc.}
  \textbf{367} (2015), no.~9, 6267--6291.

\bibitem[VDN92]{VDN92}
D.~{Voiculescu}, K.~J. {Dykema}, and A.~{Nica}, \emph{{Free random variables}},
  {CRM Monograph Series} \textbf{1} (1992).

\bibitem[Voi99]{Voi99}
D.~Voiculescu, \emph{The analogues of entropy and of {F}isher's information
  measure in free probability theory: {VI}. {L}iberation and mutual free
  information}, Advances in Mathematics \textbf{146} (1999), no.~2, 101--166.

\end{thebibliography}
\end{document}